\numberwithin{equation}{section}
\theoremstyle{plain} 
\newtheorem{thm}{Theorem}[section] 
\newtheorem{prop}[thm]{Proposition} 
\newtheorem{lemma}[thm]{Lemma} 
\newtheorem{cor}[thm]{Corollary} 
\newtheorem{conj}[thm]{Conjecture} 
\theoremstyle{definition}
\newtheorem{remark}[thm]{Remark} 
\newtheorem*{notation}{Notation} 
\newtheorem*{note}{Note}
\newcommand{\C}{{\mathbb C}}
\newcommand{\p}{{\mathbb P}} 
\newcommand{\pii}{{{\mathbb P}^2}}
\newcommand{\piii}{{{\mathbb P}^3}}
\newcommand{\piv}{{{\mathbb P}^4}}
\newcommand{\pv}{{{\mathbb P}^5}}
\newcommand{\z}{{\mathbb Z}} 
\newcommand{\sca}{\mathscr{A}}
\newcommand{\sce}{\mathscr{E}}
\newcommand{\scf}{\mathscr{F}} 
\newcommand{\scg}{\mathscr{G}}
\newcommand{\sco}{\mathscr{O}} 
\newcommand{\sch}{\mathscr{H}}
\newcommand{\sci}{\mathscr{I}} 
\newcommand{\sck}{\mathscr{K}}
\newcommand{\scl}{\mathscr{L}}
\newcommand{\scq}{\mathscr{Q}}
\newcommand{\fm}{{\mathfrak m}}
\newcommand{\fp}{{\mathfrak p}}
\newcommand{\fS}{{\mathfrak S}}
\newcommand{\tH}{\text{H}} 
\newcommand{\h}{\text{h}}
\newcommand{\izo}{\overset{\sim}{\rightarrow}} 
\newcommand{\Izo}{\overset{\sim}{\longrightarrow}} 
\newcommand{\ra}{\rightarrow} 
\newcommand{\lra}{\longrightarrow} 
\newcommand{\xra}{\xrightarrow}
\newcommand{\vb}{\, \vert \, } 
\newcommand{\prim}{{\prime}} 
\newcommand{\secund}{{\prime \prime}}
\newcommand{\Ker}{\text{Ker}\, }
\newcommand{\Cok}{\text{Coker}\, }
\newcommand{\rk}{\text{rk}\, }
\newcommand{\e}{\varepsilon}
\begin{document} 

\title[Globally generated vector bundles]{Globally generated vector bundles 
       on $\p^n$ with $c_1 = 4$} 

\author[Anghel,~Coand\u{a}~and~Manolache]{Cristian~Anghel,~Iustin~Coand\u{a}~
and~Nicolae~Manolache}
\address{Institute of Mathematics of the Romanian Academy, P.O. Box 1--764, 
RO--014700, Bucharest, Romania}
\email{Iustin.Coanda@imar.ro~Cristian.Anghel@imar.ro~Nicolae.Manolache@imar.ro}

\subjclass[2010]{Primary: 14J60; Secondary: 14H50, 14N25} 

\keywords{projective space, vector bundle, globally generated sheaf} 

\begin{abstract}
We provide a complete classification of globally generated vector bundles with 
first Chern class $c_1 \leq 5$ one the projective plane and with $c_1 \leq 4$ 
on the projective $n$-space for $n \geq 3$. This reproves and extends, in a 
systematic manner,  previous results obtained for $c_1 \leq 2$ by Sierra and 
Ugaglia [J. Pure Appl. Algebra 213(2009), 2141--2146], and for $c_1 = 3$ by 
Anghel and Manolache [Math. Nachr. 286(2013), 1407--1423] and, independently, 
by Sierra and Ugaglia [J. Pure Appl. Algebra 218(2014), 174--180]. It turns 
out that the case $c_1 = 4$ is much more involved than the previous cases, 
especially on the projective 3-space. Among the bundles appearing in our 
classification one can find the Sasakura rank 3 vector bundle on the 
projective 4-space (conveniently twisted). We also propose a conjecture 
concerning the classification of globally generated vector bundles with 
$c_1 \leq n - 1$ on the projective $n$-space. We verify the conjecture for 
$n \leq 5$.  
\end{abstract}

\maketitle 
\tableofcontents

\section*{Introduction} 

In the last years there was some interest in the classification of globally 
generated vector bundles on projective spaces. These bundles are related to 
the embeddings of projective spaces into Grassmannians (see, for example, 
the classical papers of Tango \cite{t1}, \cite{t2}, \cite{t3}, or the 
recent papers of Sierra and Ugaglia \cite{su0} and of Huh \cite{h}) 
and to linear spaces of matrices of constant 
rank (see Manivel and Mezzetti \cite{mm}, and Fania and Mezzetti \cite{fm}). 
We are concerned with the problem of the classification of this kind 
of bundles for small values of their first Chern class. 

To be more precise, let $E$ be a globally generated rank $r$ vector bundle 
(= locally free sheaf) on $\p^n$ (= projective $n$-space over an algebraically 
closed field $k$ of characteristic 0), $n \geq 2$, with Chern classes 
$c_i(E) = c_i$, $i = 1, \ldots , n$. The degeneracy locus $D(\phi)$ of a 
general morphism $\phi : (r - i + 1)\sco_\p \ra E$ is a closed subscheme of 
$\p^n$, empty or of pure codimension $i$ and degree $c_i$. It follows that 
$c_i \geq 0$, $i = 1, \ldots , n$. 
In particular, for $i=2$, $r-1$ general global sections of $E$ define an exact 
sequence: 
\begin{equation}
\label{E:oeiy}
0 \lra (r - 1)\sco_\p \overset{\sigma}{\lra} E \lra \sci_Y(c_1) 
\lra 0 
\end{equation}
where $Y$ is a locally Cohen-Macaulay closed subscheme of $\p^n$, of pure 
codimension 2 (or empty), and with $\text{codim}(\text{Sing}\, Y,\p^n) 
\geq 6$ (if $\sigma$ is general then $Y = D(\sigma)$ is nonsigular outside the 
locus where $\sigma$ has rank $\leq r-3$). It follows that $Y$ is nonsingular 
for $n \leq 5$ and reduced and irreducible for $n \geq 4$. One has  
$c_2 = \deg Y \geq 0$ and $c_2 = 0$ iff $Y = \emptyset$ iff 
$E \simeq (r - 1)\sco_\p \oplus \sco_\p(c_1)$. 

Ellia \cite{e} determined the Chern classes $(c_1,c_2)$ of the globally 
generated rank 2 vector bundles on $\pii$, and Chiodera and Ellia 
\cite{ce} determined the Chern classes $(c_1,c_2)$ of the globally generated 
rank 2 vector bundles on $\p^n$ for $c_1 \leq 5$. On the other hand, 
Sierra and Ugaglia \cite{su} classified the globally generated vector bundles 
(of arbitrary rank) on $\p^n$ with $c_1 \leq 2$, while, recently, 
Anghel and Manolache \cite{am} and, independently, Sierra and Ugaglia  
\cite{su2}, did the same thing for $c_1 = 3$. Before stating their results, 
we recall some elementary but useful reductions, already used by Sierra and 
Ugaglia. First of all, when classifying globally generated vector bundles 
one can assume that they fulfil the condition $\tH^i(E^\ast) = 0$, $i = 0, 1$, 
because any globally generated vector bundle can be obtained from a globally 
generated bundle satisfying this additional condition by quotienting a trivial 
subbundle and then adding a trivial summand (see Lemma~\ref{L:h0h1}). 
Secondly, there is a simple construction allowing one to get a new 
globally generated vector bundle from an old one. We were able to trace it 
back, at least, to the paper of Brugui\`{e}res \cite{b}, where it is used, 
in Appendix A, to get a quick proof of Atiyah's classification of vector 
bundles on an elliptic curve. Using the notation from \cite{b}, if $E$ is a 
globally generated vector bundle as above then the dual $P(E)$ of the kernel 
of the evaluation morphism $\text{ev} : \tH^0(E)\otimes_k\sco_\p \ra E$ is a 
globally generated vector bundle. One has, by definition, an exact 
sequence$\, :$  
\[
0 \lra P(E)^\ast \lra \tH^0(E)\otimes_k\sco_\p 
\overset{\text{ev}}{\lra} E \lra 0\, .
\]
One deduces that $c_1(P(E)) = c_1$, $c_2(P(E)) = c_1^2 - c_2$ (and $c_3(P(E)) = 
c_3 + c_1(c_1^2 - 2c_2)$ if $n \geq 3$). Since, as we 
have noticed above, $c_2(P(E)) \geq 0$, one gets that $c_2 \leq c_1^2$. 

Now, the results of Sierra and Ugaglia and Anghel and Manolache are summarized 
in the following statement (see, also, Remark~\ref{R:c1leq3})$\, :$  

\begin{thm}\label{T:suam} 
Let $E$ be an indecomposable globally generated vector bundle on $\p^n$, 
$n \geq 2$, with $1 \leq c_1 \leq 3$, and such that ${\fam0 H}^i(E^\ast) = 0$, 
$i = 0, 1$. Then one of the following holds$\, :$ 
\begin{enumerate} 
\item[(i)] $E \simeq \sco_\p(a)$, $1 \leq a \leq 3\, ;$ 
\item[(ii)] $E \simeq P(\sco_\p(a))$, $1 \leq a \leq 3\, ;$ 
\item[(iii)] $n = 3$ and $E \simeq \Omega_\piii(2)\, ;$ 
\item[(iv)] $n = 4$ and $E \simeq \Omega_\piv(2)\, ;$ 
\item[(v)] $n = 4$ and $E \simeq \Omega_\piv^2(3)$.  
\end{enumerate}
\end{thm}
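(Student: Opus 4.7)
The plan is to combine the exact sequence (\ref{E:oeiy}) with the involution $E\mapsto P(E)$, reducing to a finite case analysis on $(c_1,c_2)$ and then, for each admissible pair, enumerating the possible subschemes $Y$ and classifying the extensions.

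First I would verify that, under the hypothesis $\tH^0(E^\ast)=\tH^1(E^\ast)=0$, the operation $P$ is an involution preserving indecomposability. Dualizing the defining sequence of $P(E)$ yields $0\to E^\ast\to\tH^0(E)^\ast\otimes\sco_\p\to P(E)\to 0$; taking global sections and invoking the two vanishings gives $\tH^0(P(E))\simeq\tH^0(E)^\ast$, the inheritance of the same vanishings by $P(E)$, and the isomorphism $P(P(E))\simeq E$. Since $P$ is compatible with direct sums, indecomposability is preserved. Because $P$ sends $(c_1,c_2)\mapsto(c_1,c_1^2-c_2)$, it suffices to classify bundles with $2c_2\leq c_1^2$---so that for $c_1\leq 3$ only finitely many pairs remain---and then apply $P$ to recover the complementary range.

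Next I would dispose of the extreme cases within the reduced range. If $c_2=0$, (\ref{E:oeiy}) degenerates to $E\simeq\sco_\p^{\oplus r-1}\oplus\sco_\p(c_1)$, and the vanishing $\tH^0(E^\ast)=0$ combined with indecomposability forces $r=1$, giving case (i). If $c_2=1$, then $Y$ is a codimension-two linear subspace $\Lambda$, and its Koszul resolution $0\to\sco_\p(-2)\to\sco_\p(-1)^{\oplus 2}\to\sci_\Lambda\to 0$ makes every needed cohomology group of $\sci_\Lambda(c_1)$ explicit. Analysing (\ref{E:oeiy}) and imposing the cohomology vanishings should then recover $P(\sco_\p(1))$ when $c_1=1$, and rule out $(c_1,c_2)=(2,1)$ and $(3,1)$ by forcing a splitting of the extension.

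The heart of the argument is the remaining pairs $(c_1,c_2)\in\{(2,2),(3,2),(3,3),(3,4)\}$. For each, I would enumerate the locally Cohen-Macaulay codimension-two subschemes $Y\subset\p^n$ of the required degree---disjoint unions of linear spaces, smooth conics, plane or twisted cubics, smooth elliptic quartics, and their degenerations---compute $\text{Ext}^1(\sci_Y(c_1),\sco_\p^{\oplus r-1})$ using the appropriate resolution of $\sci_Y$, and then test each extension against local freeness, global generation, the vanishings $\tH^i(E^\ast)=0$, and indecomposability. I expect the surviving cases to be exactly $\Omega_{\p^3}(2)$ for $(c_1,c_2,n)=(2,2,3)$---which turns out to be self-dual under $P$ and so appears only once---and $\Omega_{\p^4}(2)$ for $(c_1,c_2,n)=(3,4,4)$; applying $P$ to the latter produces $\Omega_{\p^4}^2(3)$, identifiable with $P(\Omega_{\p^4}(2))$ via the generalized Euler sequence $0\to\Omega_{\p^4}^2(3)\to\sco_\p(1)^{\oplus 10}\to\Omega_{\p^4}(3)\to 0$. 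I expect the main obstacle to lie in this enumeration-and-extension analysis: no single step is deep, but the bookkeeping required to rule out the many spurious intermediate candidates while pinning down the three exceptional bundles is what makes the argument substantial.
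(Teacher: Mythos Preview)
Your overall strategy---reduce via $P$ to $c_2\le c_1^2/2$, then run a case analysis on the pair $(c_1,c_2)$ by enumerating the possible $Y$ in the exact sequence \eqref{E:oeiy}---is the same as the paper's (see Remark~\ref{R:reduction} and Remark~\ref{R:c1leq3}). Two organizational shortcuts in the paper would streamline your plan considerably.

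First, you propose to compute $\text{Ext}^1(\sci_Y(c_1),\sco_\p^{\oplus r-1})$ and test each extension. This is unnecessary: under the hypotheses $\tH^i(E^\ast)=0$ for $i=0,1$, the extension with locally free middle term is \emph{unique} (Lemma~\ref{L:uniqueext}); equivalently, by Lemma~\ref{L:h0h1delta}, the connecting map $\delta$ in \eqref{E:dual} is forced to be the evaluation morphism of $\omega_Y(n+1-c_1)$. So once $Y$ is known, $E$ is determined and there is nothing to test except global generation and indecomposability. In particular, when $Y$ is a complete intersection, Lemma~\ref{L:yci} writes $E$ down explicitly, disposing at once of most of the decomposable candidates you list.

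Second, for $n\ge 4$ you propose to enumerate the admissible $Y\subset\p^n$ directly. This works in principle for degree $\le 4$, but the paper avoids it entirely: it classifies on $\piii$ first, then uses the lifting lemmas (Lemma~\ref{L:lift1} and Lemma~\ref{L:lift2}) to decide which bundles extend. This is how one sees, without re-examining $Y$, that $\Omega_\piii(2)$ does not extend beyond $n=3$ while the line $\sco_\Pi(1)^{\oplus 2}\oplus\Omega_\Pi(2)$ (arising at $(c_1,c_2)=(3,4)$) lifts exactly to $\Omega_\piv(2)$ on $\piv$ and no further. Your identification of the exceptional bundles and the self-duality $P(\Omega_\piii(2))\simeq\Omega_\piii(2)$, $P(\Omega_\piv(2))\simeq\Omega_\piv^2(3)$ is correct.
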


Notice that $P(\Omega_\p^i(i+1)) \simeq \Omega_\p^{i+1}(i+1)^\ast \simeq 
\Omega_\p^{n-i-1}(n-i)$, $i = 0, \ldots, 
n-1$. In particular, $P(\sco_\p(1)) \simeq \text{T}_\p(-1)$.

In the present paper we go one step further and prove the following$\, :$  

\begin{thm}\label{T:main} 
Let $E$ be an indecomposable globally generated vector bundle on $\p^n$, 
$n \geq 2$, with $c_1 = 4$, and such that ${\fam0 H}^i(E^\ast) = 0$, 
$i = 0, 1$. Then one of the following holds$\, :$ 
\begin{enumerate}
\item[(i)] $E \simeq \sco_\p(4)\, ;$ 
\item[(ii)] $n = 3$, $c_2 = 5$, and $E \simeq N(2)$, where $N$ is a 
nullcorrelation bundle$\, ;$ 
\item[(iii)] $n = 3$, $c_2 = 6$, and $E \simeq F(2)$, where $F$ is a 
$2$-instanton$\, ;$ 
\item[(iv)] $n = 3$, $c_2 = 6$, and, up to a linear change of coordinates, 
$E$ is the kernel of the epimorphism    
$(X_0,X_1,X_2,X_3^2) : 3\sco_\piii(2) \oplus \sco_\piii(1) 
\lra \sco_\piii(3)\, ;$
\item[(v)] $n = 3$, $c_2 = 7$, and $E \simeq F(2)$, where $F$ is a general 
$3$-instanton$\, ;$ 
\item[(vi)] $n = 3$, $c_2 = 7$, and, up to a linear change of coordinates, 
$E$ is the kernel of the epimorphism   
\[
\begin{pmatrix}
X_0 & X_1 & X_2 & X_3 & 0\\
0 & X_0 & X_1 & X_2 & X_3
\end{pmatrix}
: 5\sco_\piii(2) \lra 2\sco_\piii(3)\, ;
\] 
\item[(vii)] $n = 3$, $c_2 = 7$, and, up to a linear change of coordinates, 
$E$ is the kernel of the epimorphism   
$(X_0,X_1,X_2^2,X_2X_3,X_3^2) : 2\sco_\piii(2) \oplus 
3\sco_\piii(1) \lra \sco_\piii(3)\, ;$
\item[(viii)] $n = 5$, $c_2 = 7$, and $E \simeq \Omega_\pv(2)\, ;$ 
\item[(ix)] $E \simeq P(E^\prim)$, where $E^\prim$ is one of the above 
bundles$\, ;$ 
\item[(x)] $n = 3$, $c_2 = 8$, and $E \simeq F(2)$, where $F$ is a general 
$4$-instanton$\, ;$  
\item[(xi)] $n = 3$, $c_2 = 8$, and one has an exact sequence$\, :$  
\[
0 \lra E(-2) \lra 4\sco_\piii \overset{\phi}{\lra}  
\sco_\piii(2) \lra 0 
\]
with $\phi$ an arbitrary epimorphism$\, ;$
\item[(xii)] $n = 3$, $c_2 = 8$, and one has an exact sequence$\,:$ 
\[
0 \lra E(-2) \lra 3\sco_\piii(1) \oplus 3\sco_\piii  
\overset{\phi}{\lra} \Omega_\piii(3) \lra 0 
\]
with $\phi$ a general epimorphism$\, ;$
\item[(xiii)] $n = 3$, $c_2 = 8$, and one has an exact sequence$\, :$ 
\[
0 \lra E(-2) \lra 4\sco_\piii \oplus 2\sco_\piii(-1)  
\overset{\phi}{\lra} 2\sco_\piii(1) \lra 0 
\] 
with $\phi$ a general epimorphism$\, ;$ 
\item[(xiv)] $n = 3$, $c_2 = 8$, and one has an exact sequence$\, :$ 
\[
0 \lra E(-2) \lra \sco_\piii \oplus 5\sco_\piii(-1)  
\overset{\phi}{\lra} \sco_\piii(1) \lra 0
\]
with $\phi$ a general epimorphism$\, ;$
\item[(xv)] $n = 3$, $c_2 = 8$, and, up to a linear change of coordinates, 
$E$ is the cohomology of the monad$\, :$ 
\[
\sco_\piii(-1) \xra{\binom{s}{u}}  
2\sco_\piii(2) \oplus 2\sco_\piii(1) \oplus 4\sco_\piii \xra{(p\, ,\, 0)} 
\sco_\piii(3) 
\]
where $\sco_\piii(-1) \overset{s}{\lra} 2\sco_\piii(2) \oplus 
2\sco_\piii(1) \overset{p}{\lra} \sco_\piii(3)$ is a subcomplex of the 
Koszul complex associated to $X_0, X_1, X_2^2, X_3^2$ and $u : \sco_\piii(-1) 
\ra 4\sco_\piii$ is defined by $X_0, X_1, X_2, X_3\, ;$ 
\item[(xvi)] $n = 4$, $c_2 = 8$, and, up to a linear change of coordinates, 
denoting by $(C_p,\, \delta_p)_{p \geq 0}$ the Koszul complex associated to the 
epimorphism $\delta_1 : 4\sco_\piv(-1) \oplus \sco_\piv(-2) \ra \sco_\piv$ 
defined by $X_0, \ldots ,X_3,X_4^2$, one has exact sequences$\, :$ 
\begin{gather*}
0 \lra \sco_\piv(-2) \xra{\delta_5(4)}
\sco_\piv \oplus 4\sco_\piv(-1) \xra{\delta_4(4)} 
4\sco_\piv(1) \oplus 6\sco_\piv \lra E^\prim \lra 0\, ,\\
0 \lra E \lra E^\prim \overset{\phi}{\lra} \sco_\piv(2) \lra 0\, , 
\end{gather*}  
where $\phi : E^\prim \ra \sco_\piv(2)$ is any morphism with the property that 
${\fam0 H}^0(\phi(-1)) : {\fam0 H}^0(E^\prim(-1)) \ra {\fam0 H}^0(\sco_\piv(1))$ 
is injective $($such morphisms exist and are automatically epimorphisms$)$. 
\end{enumerate}  
\end{thm}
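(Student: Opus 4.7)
The plan is to extend the strategy of Sierra--Ugaglia and Anghel--Manolache: analyze the codimension-$2$ subscheme $Y$ in~\eqref{E:oeiy}, reconstruct $E$ from the minimal graded free resolution of $\sci_Y(4)$, and exploit the Brugui\`{e}res $P$-duality to halve the range of $c_2$ that must be handled by hand. Since $c_2(P(E)) = 16 - c_2$ and $P$ preserves both indecomposability and the vanishings $\tH^i((-)^\ast) = 0$ for $i = 0,1$, it suffices to classify $E$ with $c_2 \leq 8$: those with $c_2 > 8$ are captured by clause~(ix). The extreme $c_2 = 0$ forces $Y = \emptyset$ and, by indecomposability, yields~(i). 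For each intermediate $c_2$ I would next bound the ambient dimension $n$: the vanishings $\tH^i(E^\ast)=0$ dualize through~\eqref{E:oeiy} into restrictive vanishings of $\tH^\ast(\sci_Y(-4))$ and $\tH^\ast(\sci_Y)$, and combined with $\deg Y \leq 7$ in this range and the smoothness, reducedness and irreducibility of $Y$ for $n \geq 4$, they leave only the cotangent cases~(viii) on $\pv$ and~(xvi) on $\piv$, plus bundles subsumed by~(ix); everything else localizes to $n \leq 3$.

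The bulk of the argument therefore takes place on $\piii$ with $5 \leq c_2 \leq 8$, where $Y$ is a locally Cohen--Macaulay curve of degree $c_2$. For each such $c_2$ I would (a) compute the Hilbert polynomial of $Y$ from the cohomology of~\eqref{E:oeiy}; (b) extract the dimensions $\h^0(\sci_Y(k))$ for small $k$ from the imposed vanishings; (c) enumerate the finitely many minimal graded free resolutions of $I_Y$ compatible with those numerics, organized by the Hartshorne--Rao module so as to separate instanton-type curves from the remainder; and (d) for each admissible resolution, determine which extension class in $\text{Ext}^1(\sci_Y(4),\sco_\piii^{\oplus r-1})$ yields a locally free, globally generated, indecomposable $E$ satisfying the prescribed $\tH^i(E^\ast)$ vanishings. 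For $c_2 = 5, 6, 7$ the classification of low-degree curves in $\piii$ should rapidly collapse the list onto cases~(ii)--(vii): the instanton cases~(iii),~(v) come from stable rank-$2$ bundles with the expected Rao module, while~(iv),~(vi),~(vii) correspond to nongeneric curves (hyperplane components, rational normal scrolls, \ldots) whose special resolutions produce the explicit matrices recorded in the statement.

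The principal obstacle is $c_2 = 8$, covering cases~(x)--(xv) on $\piii$ and~(xvi) on $\piv$. Since $8 = c_1^2/2$ is the $P$-fixed stratum, $P$-duality offers no reduction here, and degree-$8$ curves in $\piii$ admit a genuinely rich collection of Hilbert polynomials---hyperplane sections, curves on quadrics and cubics, $(2,4)$ complete intersections, and various reducible configurations---each with its own resolution shape and its own verification of global generation. I expect case~(xv) to be the most delicate: its monad is carved out of the Koszul complex on $X_0, X_1, X_2^2, X_3^2$, so the squared coordinates must first be recovered from purely numerical data and then the monad must be shown to have locally free cohomology. For~(xvi), my approach is to identify the intermediate bundle $E^\prim$ from its forced Beilinson table, recognize the given resolution as a truncation of the Koszul complex on $X_0,\ldots,X_3,X_4^2$, and parametrize the quotients $E^\prim \twoheadrightarrow \sco_\piv(2)$ by the injectivity of $\tH^0(\phi(-1))$; this case is what should produce the Sasakura rank-$3$ bundle advertised in the abstract.
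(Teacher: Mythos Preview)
Your overall architecture matches the paper's---reduce via $P$-duality to $c_2 \leq 8$, then classify---but the paper's central tool on $\piii$ differs from yours. Rather than working with the curve $Y$ and its Rao module, the paper first disposes of the case $\tH^0(E(-2)) \neq 0$ (which forces $E$ decomposable, via Prop.~\ref{P:h0e-c1+2n3}), then quotients $E$ by $r-2$ general sections to obtain a \emph{stable rank-$2$ reflexive sheaf} $\scf$, and organizes all cases by the \emph{spectrum} of $\scf$ in Hartshorne's sense. The spectrum immediately yields a short finite list of possibilities for each $c_2$, directly controls $\h^1(\scf(l))$ and $\h^2(\scf(l))$, and lets several cases be ruled out a priori (via $p_a(Y)$, connectedness of $Y$, etc.). From there the paper uses Horrocks' method of killing cohomology to produce the monad or resolution of $E$. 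Your Rao-module route encodes overlapping information and could in principle succeed, but the spectrum gives a much cleaner and more uniform case division, especially for $c_2 = 8$ where there are nine allowed spectra but far more degree-$8$ curve types.

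Your order is also reversed: you propose to bound $n$ first, while the paper classifies on $\pii$, then $\piii$, and only afterwards decides which bundles extend to $n \geq 4$ via explicit lifting lemmas (Lemmas~\ref{L:lift1}, \ref{L:lift2}) together with surface theory on $\piv$ (Lanteri, Okonek, the double-point formula) to bound $c_3$ from below. Your assertion that cohomological constraints on $\sci_Y$ ``leave only the cotangent cases~(viii) and~(xvi)'' hides a real argument: deciding which $\piii$ bundles with $c_2 = 8$, $c_3 = 8$ extend to $\piv$ (and that nothing extends to $\pv$) occupies most of Section~\ref{S:c1=4n4} and is where the Sasakura bundle emerges, so bounding $n$ cannot be done before the $\piii$ classification is in hand.
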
  

Let us make some comments about the above statement. The meaning of the term 
``general'' in (v) is explained in Remark~\ref{R:general}, while the meaning 
of this term in (x), (xii), (xiii), and (xiv) is explained in 
Remark~\ref{R:generalc2=8}. It means, in particular, that not all the 
bundles belonging to the respective families are globally generated. For 
example, in case (xii), the degeneracy locus of the component  
$3\sco_\piii(1) \ra \Omega_\piii(3)$ of a ``generic'' epimorphism 
$\phi : 3\sco_\piii(1) \oplus 3\sco_\piii \ra \Omega_\piii(3)$ is a 
smooth quadric surface $Q \subset \piii$ and the cokernel of that component 
is isomorphic to $\sco_Q(1,3)$. The composite epimorphism $3\sco_\piii  
\ra \Omega_\piii(3) \ra \sco_Q(1,3)$ is defined by a base point free, 
3-dimensional vector subspace $\Lambda \subset \tH^0(\sco_Q(1,3))$. It turns 
out that $(\Ker \phi)(2)$ is globally generated if and only if the linear 
system $\vert \, \Lambda \, \vert$ contains no divisor whose components are 
lines contained in $Q$. 

\vskip2mm 

In case (xiii), the cokernel of the component $4\sco_\piii \ra 
2\sco_\piii(1)$ of a ``generic'' epimorphism $\phi : 4\sco_\piii  
\oplus 2\sco_\piii(-1) \ra 2\sco_\piii(1)$ is of the form 
$\sco_Z(1)$, where $Z$ consists of 4 simple points not contained in a plane 
(see Remark~\ref{R:2x4} from Appendix~\ref{A:case4}). Choose a linear 
form $\lambda \in \tH^0(\sco_\piii(1))$ vanishing at none of the points of $Z$. 
The composite epimorphism $2\sco_\piii(-1) \ra 2\sco_\piii(1) \ra 
\sco_Z(1)$ can be written as a composite morphism $2\sco_\piii(-1)  
\ra \sco_Z \overset{\lambda}{\lra} \sco_Z(1)$. Since $\tH^0(\sco_\piii(1)) 
\izo \tH^0(\sco_Z(1))$, the morphism $2\sco_\piii(-1) \ra \sco_Z$ 
factorizes uniquely as $2\sco_\piii(-1) \ra \sco_\piii \ra \sco_Z$. The 
image of the morphism $2\sco_\piii(-1) \ra \sco_\piii$ is the ideal sheaf 
of a line $L \subset \piii$. It turns out that $(\Ker \phi)(2)$ is 
globally generated if and only if the line $L$ intersects none of the edges 
of the tetrahedron with vertices the points of $Z$. 

\vskip2mm 

In case (xiv), the cokernel of the component $\sco_\piii \ra \sco_\piii(1)$ of a 
``generic'' epimorphism $\phi : \sco_\piii \oplus 5\sco_\piii(-1) \ra 
\sco_\piii(1)$ is of the form $\sco_H(1)$, for some plane $H \subset \piii$.    
The composite epimorphism $5\sco_\piii(-1) \ra \sco_\piii(1) \ra \sco_H(1)$ 
is defined by a base point free, 5-dimensional vector subspace 
$V \subset \tH^0(\sco_H(2))$. If $M_V$ is the syzygy bundle associated to $V$ 
(on $H$) then it turns out that $(\Ker \phi)(2)$ is globally generated if and 
only if $M_V$ is stable. For a discussion about the stability of $M_V$, see 
\cite[Example~2.1]{c2}. The bundles $M_V$ were originally considered, in a 
different manner, by Elencwajg~\cite{el}.  

In the cases (xii) and (xiii) there are also some ``degenerate'' 
configurations  which are quite difficult to analyse. We defered this analysis 
to the Appendices~\ref{A:case3} and \ref{A:case4}. 

\vskip2mm 

As a final comment concerning the statement of Theorem~\ref{T:main}$\, :$ the 
bundle $E$ occuring in (xvi) coincides with ${\mathcal G}(1)$, where 
$\mathcal G$ is the bundle occuring in the statement of the main result of 
the paper of Abo, Decker and Sasakura \cite{ads}. The difference of 
description comes from the fact that Abo et al. use the Beilinson monad of 
the bundle while we use its Horrocks monad (due to our method of proof). We 
investigate this bundle in the proof of Prop.~\ref{P:c1=4c2=8n4}.    

\vskip2mm 

As for the proof of Theorem~\ref{T:main}, we classify firstly the globally 
generated vector bundles with $c_1 = 4$ on $\pii$, which is easy (we are even   
able to classify the globally generated vector bundles with $c_1 = 5$ on 
$\pii$, see the second part of Section~\ref{S:n=2c1=4}).  
Then we classify them on $\piii$ (which is quite lengthy and involved), and 
then we try to decide which of the globally generated vector bundles we found 
on $\piii$ extend to higher dimensional vector spaces. We use, as a basic tool, 
Horrocks' method of ``killing cohomology''. We also use information about 
2-codimensional subvarieties of small degree of projective spaces (via the 
exact sequence \eqref{E:oeiy}), and the properties of the \emph{spectrum} 
of stable rank 2 reflexive sheaves on $\piii$, via the following observation:  
if $E$ is a globally generated vector bundle on $\piii$ of rank $r$ then 
$r-2$ general global sections of $E$ define an exact sequence:   
\[
0 \lra (r - 2)\sco_\piii \lra E \lra \sce^\prim \lra 0   
\]
with $\sce^\prim$ a rank 2 reflexive sheaf. Finally, our way of investigating 
the extensions of a globally generated vector bundle on $\piii$ to higher 
dimensional projective spaces is based on the results 
\eqref{L:h1=0}--\eqref{R:lift2} from Section~\ref{S:preliminaries}.   

Looking at the statements of the above theorems one sees that, for fixed 
$c_1$, there are fewer and fewer globally generated vector bundles on 
$\p^n$ while $n$ increases. This suggests that it would be more natural to 
look for a classification of globally generated vector bundles on $\p^n$ 
not for $c_1$ uniformly bounded but for $c_1$ bounded in terms of $n$. 
Therefore, we propose the following$\, :$  

\begin{conj}\label{C:c1leqn-1}
Let $E$ be a globally generated vector bundle on $\p^n$, $n \geq 2$, 
with $1 \leq c_1 \leq n-1$, such that ${\fam0 H}^i(E^\ast) = 0$, $i = 0, 1$. 
Then one of the following holds$\, :$
\begin{enumerate}
\item[(i)] $E \simeq A \oplus P(B)$, where $A$ and $B$ are direct sums of 
line bundles on $\p^n$, with $c_1(A) + c_1(B) \leq n-1$, and such that 
${\fam0 H}^0(A^\ast) = 0$, ${\fam0 H}^0(B^\ast) = 0\, ;$
\item[(ii)] $n \geq 3$ and $E \simeq \Omega_\p(2)\, ;$
\item[(iii)] $n \geq 4$ and $E \simeq \Omega_\p^{n-2}(n-1)$. 
\end{enumerate}
\end{conj}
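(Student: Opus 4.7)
The plan is as follows. For $n \le 5$ the hypothesis $c_1 \le n-1$ forces $c_1 \le 4$, so the conjecture reduces to a direct inspection of Theorems~\ref{T:suam} and~\ref{T:main}: every indecomposable summand appearing there fits one of (i)--(iii), using the identification $P(\Omega_\p(2)) \cong \Omega_\p^{n-2}(n-1)$ to route the $\Omega^{n-2}$-cases through the $P$-construction. Henceforth I would assume $n \ge 6$ and $5 \le c_1 \le n-1$, and proceed by induction on $n$, with $c_1 \le 4$ (any $n$) and $n \le 5$ as base.

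After splitting $E$ into indecomposable summands, each still satisfying $\tH^i(E_j^\ast) = 0$ for $i = 0,1$ by Lemma~\ref{L:h0h1}, it suffices to handle an indecomposable $E$. Replacing $E$ by $P(E)$ if needed --- this preserves $c_1$ and the cohomological hypotheses and sends $c_2$ to $c_1^2 - c_2$ --- I may assume $c_2 \le c_1^2/2 \le (n-1)^2/2$. Then \eqref{E:oeiy} gives $0 \to \sco_\p^{\op r-1} \to E \to \sci_Y(c_1) \to 0$ with $Y \subset \p^n$ a locally Cohen--Macaulay codimension-$2$ subscheme of degree $c_2$, smooth and irreducible since $n \ge 6$. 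The case $Y = \emptyset$ gives $E \cong \sco_\p(c_1)$ by indecomposability; the bulk of the argument is the analysis of $Y \ne \emptyset$.

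When $c_1 \le n-2$ I would restrict $E$ to a general hyperplane $H \cong \p^{n-1}$: then $E|_H$ is globally generated with $c_1(E|_H) = c_1 \le (n-1)-1$ and (after possibly trimming trivial summands via Lemma~\ref{L:h0h1}) still satisfies the cohomological vanishing, so by the inductive hypothesis it has the form (i) on $\p^{n-1}$. Horrocks' ``killing cohomology'' method, combined with the resulting intermediate-cohomology vanishing for $E$ on $\p^n$, would then force $E$ itself to split into bundles of the form $\sco_\p(a)$ and $P(\sco_\p(b))$; indecomposability identifies $E$. The extremal case $c_1 = n-1$ is more delicate: here restriction to $\p^{n-1}$ sits at the edge of the inductive range, and one must geometrically rule out every $Y \ne \emptyset$ other than a linear $\p^{n-2}$ (yielding $E \cong P(\sco_\p(c_1))$ via the Koszul resolution) and the specific $Y$ attached to $\Omega_\p(2)$ or $\Omega_\p^{n-2}(n-1)$.

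The main obstacle is precisely this extremal case $c_1 = n-1$: it amounts to a Hartshorne-type classification of smooth codimension-$2$ subvarieties $Y \subset \p^n$ with $\deg Y \le (n-1)^2/2$ and $\sci_Y(n-1)$ globally generated. Hartshorne's codimension-two complete-intersection conjecture is famously open for $n \ge 6$, and the rigidity at one's disposal --- the globally generated twist, the degree bound halved via the $P$-construction, and the inductive information from hyperplane restriction --- must be combined in quite a delicate way. Ruling out exotic $Y$, analogous in spirit to the zero loci responsible for the Sasakura-type bundle of case (xvi) of Theorem~\ref{T:main}, is where the statement remains a conjecture rather than a theorem.
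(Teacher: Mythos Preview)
The statement is labelled a \emph{conjecture} in the paper, and the paper offers no proof of it. What the paper does say is a single sentence: ``The above results show that the conjecture is true for $n \leq 5$.'' In other words, the only ``proof'' in the paper is the observation that for $n \le 5$ the bound $c_1 \le n-1$ forces $c_1 \le 4$, and then one inspects the lists in Theorems~\ref{T:suam} and~\ref{T:main}. Your verification of the case $n \le 5$ is exactly this, and is correct; the identification $P(\Omega_\p(2)) \cong \Omega_\p^{n-2}(n-1)$ you invoke is the one the paper records just after Theorem~\ref{T:suam}.

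Everything you write about $n \ge 6$ is a heuristic sketch, and you yourself flag the endpoint $c_1 = n-1$ as the open obstruction. That is honest, but note that even your ``easy'' inductive step for $c_1 \le n-2$ has a gap: when $c_1 = n-2$, the restriction $E|_H$ to $H \cong \p^{n-1}$ satisfies $c_1(E|_H) = (n-1)-1$, so by the inductive hypothesis it could land in case (ii) or (iii) on $H$, not only in case (i) as you assert. Handling that possibility requires precisely the kind of lifting analysis carried out in Lemma~\ref{L:lift2} (which in the paper is only stated for restrictions to a $3$-plane, not an arbitrary hyperplane), and one must then reconcile the resulting $c_1$ on $\p^n$ with the assumption $c_1 = n-2$. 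So the induction does not cleanly split into a trivial range $c_1 \le n-2$ and a hard boundary $c_1 = n-1$; the boundary case on $H$ already feeds back into the sub-boundary case on $\p^n$.

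In summary: there is nothing to compare against, because the paper proves only the range $n \le 5$, and on that range your argument coincides with the paper's. Your sketch beyond that is a reasonable outline of how one might attack the conjecture, with the caveat above about the case $c_1 = n-2$, but it is not a proof --- nor does the paper claim one.
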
 

\noindent 
The conjecture says, in particular, that the only globally generated vector 
bundles of rank $r$, $2 \leq r < n$, on $\p^n$ with $c_1 \leq n-1$ 
(which are not direct sums of line bundles) are the nullcorrelation bundles 
for $n$ odd and the bundles discovered, independently, by Trautmann-Vetter 
\cite{tr}, \cite{v}, and by Tango \cite{t2}.  The above results show that the 
conjecture is true for $n \leq 5$. 

\begin{notation}
(i) If $X$ is a closed subscheme of $\p^n$, we denote by 
$\sci_X \subset \sco_\p$ its ideal sheaf. If $Y$ is a closed subscheme of 
$X$, we denote by $\sci_{Y,X} \subset \sco_X$ the ideal sheaf defining $Y$ as 
a closed subscheme of $X$. In other words, $\sci_{Y,X} = \sci_Y/\sci_X$. If 
$\scf$ is a coherent sheaf on $\p^n$, we put $\scf_X := \scf 
\otimes_{\sco_\p} \sco_X$ and $\scf \vb X := i^\ast\scf$, where $i : X \ra \p^n$ 
is the inclusion. 

(ii) By a point of a quasi-projective scheme $X$ we always mean a 
\emph{closed point}. If $\scf$ is a coherent sheaf on $X$ and $x \in X$, we 
denote by $\scf(x)$ the \emph{reduced stalk} $\scf_x/\fm_x\scf_x$ of $\scf$ 
at $x$, where $\fm_x$ is the maximal ideal of $\sco_{X,x}$. Notice that 
$\scf \otimes_{\sco_X} \sco_{\{x\}} \simeq \scf(x) \otimes_k \sco_{\{x\}}$.  

(iii) We denote by $S = k[X_0, \ldots ,X_n]$ the projective coordinate ring 
of $\p^n$. If $\scf$ is a coherent sheaf on $\p^n$, we denote by 
$\tH^i_\ast(\scf)$ the graded $S$-module $\bigoplus_{l \in \z}\tH^i(\scf(l))$. 

(iv) If $V$ is a $k$-vector space, we denote by $\p(V)$ the classical 
projective space parametrizing the 1-dimensional vector subspaces of $V$. 

(v) We shall sometimes write ``g.~g.'' instead of ``globally generated''. 
\end{notation}

\vskip2mm 

\noindent 
{\bf Acknowledgements.}\quad 
N. Manolache expresses his thanks to the Institute of Mathematics, Oldenburg 
University, especially to Udo Vetter, for warm hospitality during the 
preparation of this paper.

\section{Preliminaries}\label{S:preliminaries}

We gather, in this section, a number of results that we need in the sequel. 
Most of them are (more or less) well known but we state and (for the benefit 
of the reader) prove them in a form that is appropriate for our purposes. 
First of all, we relate globally generated vector bundles on $\pii$ to 
group of points in the plane satisfying the Cayley-Bacharach condition. Then, 
following Sierra and Ugaglia \cite{su}, we show that, when classifying g.~g. 
vector bundles $E$ on $\p^n$, one can assume that $\tH^i(E^\ast) = 0$, 
$i = 0,\, 1$. (The drawback of this kind of reduction is that, when one 
encounters a concrete vector bundle not satisfying this condition, it might 
not be that easy to decide whether it is globally generated or not just by 
looking at our list. However, without it, a classification would be almost 
impossible). 

After that, we prove some properties of the bundle $P(E)$ defined, in the 
Introduction, as the dual of the kernel of the evaluation morphism of a g.~g.  
vector bundle $E$. An amusing application is that a g.~g. rank 2 vector bundle 
on $\piii$ satisfies $c_2 \leq c_1^2/2$ (compare with Chiodera and Ellia 
\cite[Lemma~2.5]{ce}). Actually, the functor $E \mapsto P(E)$ allows one to 
reduce the classification of g.~g. vector bundles on $\p^n$ (of any rank) to 
the case where $c_2 \leq c_1^2/2$. 

The most substantial part of the section consists of the results 
(\ref{L:h1=0})--(\ref{R:lift2}) investigating the possibility of extending 
g.~g. vector bundles on $\p^n$, $n \geq 3$, to higher dimensional projective 
spaces. Lemma~\ref{L:lift1} and Lemma~\ref{L:lift2} can be considered as the 
main results of this section. 

Finally, we recall some facts concerning the notion of Castelnuovo-Mumford 
regularity and the Theorem of Beilinson.    

\vskip2mm

We use, throughout this section, the notation and conventions stated in the 
Introduction. Dualizing the exact sequence  
\[
0 \lra (r-1)\sco_\p \lra E \lra \sci_Y(c_1) \lra 0
\]
from the Introduction, one gets an exact sequence$\, :$ 
\begin{equation}
\label{E:dual}
0 \lra \sco_\p(-c_1) \lra E^\ast \lra (r - 1)\sco_\p  
\overset{\delta}{\lra} \omega_Y(n+1-c_1) \lra 0
\end{equation}   
where $\omega_Y$ is the dualizing sheaf of $Y$. One deduces that 
$\omega_Y(n+1-c_1)$ is generated by $r-1$ global sections. 

\begin{remark}\label{R:CayleyB} 
In the case $n=2$, where $Y$ consists of $c_2$ simple points, 
the fact that $\omega_Y(3-c_1)$ is globally generated provides no useful 
information. However, if $n=2$ and $r=2$ then, applying 
$\sch om_{\sco_\pii}(-,\omega_\pii)$ to the locally free resolution$\, :$ 
\[
0 \lra \sco_\pii(-c_1) \lra E(-c_1) \lra \sco_\pii \lra \sco_Y \lra 0\, , 
\]  
one gets an exact sequence$\, :$
\[
0 \lra \omega_\pii \lra E^\ast(c_1-3) \lra \sco_\pii(c_1-3) \lra 
\omega_Y \lra 0 
\]
which decomposes into two short exact sequences$\, :$ 
\begin{gather*}
0 \lra \sci_Y(c_1-3) \lra \sco_\pii(c_1-3) \lra \omega_Y \lra 0\, ,\\
0 \lra \omega_\pii \lra E^\ast(c_1-3) \lra \sci_Y(c_1-3) \lra 0\, .
\end{gather*} 
If $\scf$ is a coherent $\sco_Y$-module then, for any locally free sheaf 
$\scl$ on $\pii$, $\text{Ext}_{\sco_\pii}^i(\scf, \scl) = 0$ for 
$i < 2$ (this follows from standard results that can be found in 
Hartshorne's book~\cite{hag}: see the last line of the proof of 
\cite[III,~Lemma~7.4]{hag} and \cite[III,~Prop.~6.7]{hag}). Moreover, 
\cite[III,~Prop.~6.7]{hag} implies that if $\phi : \scl \ra \scl^\prim$ is a 
morphism of locally free sheaves on $\pii$ such that $\phi \vb Y = 0$ then 
the morphism $\text{Ext}_{\sco_\pii}^2(\scf, \scl) \ra 
\text{Ext}_{\sco_\pii}^2(\scf, \scl^\prim)$ is 0. One deduces that the 
composite morphism$\, :$ 
\[
\text{Hom}_{\sco_Y}(\scf, \omega_Y) \lra 
\text{Ext}_{\sco_\pii}^1(\scf, \sci_Y(c_1-3)) \lra 
\text{Ext}_{\sco_\pii}^2(\scf, \omega_\pii) 
\] 
is an isomorphism. This implies easily that the pair: 
\[
(\omega_Y,\,  \tH^0(\omega_Y) \ra \tH^1(\sci_Y(c_1-3)) \ra \tH^2(\omega_\pii) 
\overset{\text{tr}}{\lra} k)
\]
is a \emph{dualizing sheaf} for $Y$. It follows that the composite 
morphism$\, :$ 
\[
\tH^0(\sco_\pii(c_1 - 3)) \lra \tH^0(\omega_Y) 
\overset{\text{tr}}{\lra} k
\] 
is 0, condition which is equivalent to the fact that the points of $Y$ 
satisfy the \emph{Cayley-Bacharach condition} with respect to plane curves 
of degree $c_1 - 3$, which means that any plane curve of degree $c_1 - 3$ 
containing $c_2 - 1$ of the points of $Y$ must contain $Y$. This result goes 
back to Schwarzenberger \cite{sw} who used a geometric argument based on 
blowing-up $\pii$ at the points of $Y$.    
\end{remark} 

\vskip2mm 

We recall, now, some observations that appear in the papers of Sierra and 
Ugaglia. 

\begin{lemma}\label{L:h0h1} 
Let $E$ be a globally generated vector bundle on $\p^n$. 
Then there exists a globally generated vector bundle $F$ on $\p^n$ with 
${\fam0 H}^i(F^\ast) = 0$, $i = 0,1$, and a quotient $G$ of $F$ by a trivial 
subbundle$\, :$  
\[
0 \lra s\sco_\p \lra F \lra G \lra 0
\] 
such that $E \simeq G\oplus t\sco_\p$. Moreover, $F$ and $G$ are 
uniquely determined up to an isomorphism, $t = {\fam0 h}^0(E^\ast)$, and 
$s = {\fam0 h}^1(E^\ast)$. 
\end{lemma}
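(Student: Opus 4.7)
The plan is to proceed in two stages: first extract the maximal trivial direct summand of $E$ to produce $G$, then build $F$ as a universal extension of $G$ by copies of $\sco_\p$ that kills $\tH^1$ of the dual.

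The crucial preliminary observation I would make is that \emph{every nonzero morphism} $\phi : E \ra \sco_\p$ is surjective when $E$ is globally generated. Indeed, the composition $\tH^0(E)\otimes_k\sco_\p \twoheadrightarrow E \overset{\phi}{\ra} \sco_\p$ is nothing but the linear form $\tH^0(\phi)\otimes \sco_\p$, which is a surjective morphism of sheaves as soon as $\tH^0(\phi)$ is nonzero. Setting $t := \h^0(E^\ast)$ and picking a basis $\phi_1,\dots ,\phi_t$ of $\tH^0(E^\ast)$, it follows that for every point $x \in \p^n$ and every nonzero $(\lambda_i) \in k^t$ the combination $\sum \lambda_i\phi_i$ is nonzero as a section of $E^\ast$, hence surjective as a morphism $E \ra \sco_\p$, hence nonvanishing at $x$. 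Consequently the morphism $\phi := (\phi_1,\dots ,\phi_t) : E \ra \sco_\p^{\op t}$ is fibrewise surjective, so it is an epimorphism with locally free kernel $G$.

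Next I would exhibit a splitting of $0 \ra G \ra E \ra \sco_\p^{\op t} \ra 0$. The evaluation pairing $\tH^0(E)\otimes_k \tH^0(E^\ast) \ra k$ induces a map $\tH^0(E^\ast) \ra \tH^0(E)^\ast$ which is injective, because a linear form on $\tH^0(E)$ vanishing on all global sections of $E$ dualizes to a morphism $E \ra \sco_\p$ vanishing on $\tH^0(E)\otimes \sco_\p$, hence zero. Therefore the tuple $(\tH^0(\phi_1),\dots ,\tH^0(\phi_t)) : \tH^0(E) \ra k^t$ is surjective, and a choice of preimages of the standard basis defines a section $\sco_\p^{\op t} \ra E$ of $\phi$. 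This yields $E \simeq G \oplus \sco_\p^{\op t}$, from which $\tH^0(G^\ast) = 0$ is immediate, and in particular $\h^1(E^\ast) = \h^1(G^\ast)$, since $\tH^1(\sco_\p) = 0$ for $n \geq 2$.

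For the second stage, set $s := \h^1(E^\ast) = \h^1(G^\ast)$ and use the canonical identification $\text{Ext}^1(G,\sco_\p^{\op s}) \simeq \text{Hom}_k(k^s,\tH^1(G^\ast))$ to define the extension
\[
0 \lra \sco_\p^{\op s} \lra F \lra G \lra 0
\]
whose class corresponds to a fixed isomorphism $k^s \izo \tH^1(G^\ast)$. Dualizing and taking cohomology, the connecting map $\tH^0(\sco_\p^{\op s}) \ra \tH^1(G^\ast)$ is precisely this isomorphism by construction, so $\tH^0(F^\ast) = 0$ and $\tH^1(F^\ast) = 0$ (using $\tH^1(\sco_\p) = 0$ again). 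That $F$ is globally generated follows from global generation of $G$ and the surjectivity of $\tH^0(F) \ra \tH^0(G)$, whose obstruction $\tH^1(\sco_\p^{\op s})$ vanishes; fibrewise one concludes by combining sections of $\sco_\p^{\op s}$ with lifts of sections of $G$.

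Finally, uniqueness follows from two standard facts. The decomposition $E \simeq G \oplus \sco_\p^{\op t}$ with $\tH^0(G^\ast) = 0$ is unique up to isomorphism by the Krull--Schmidt property for vector bundles on projective varieties. Any other extension $0 \ra \sco_\p^{\op s^\prim} \ra F^\prim \ra G \ra 0$ with $\tH^i(F^{\prim\ast}) = 0$, $i=0,1$, must have $s^\prim = s$ (by the same long exact sequence argument, which forces $k^{s^\prim} \izo \tH^1(G^\ast)$), and the resulting extension classes differ by an automorphism of $\sco_\p^{\op s}$, yielding $F \simeq F^\prim$. The only step requiring real care is the splitting argument in stage one; everything else is formal manipulation of extensions and long exact sequences.
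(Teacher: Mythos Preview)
Your proof is correct and follows essentially the same approach as the paper's: the paper works with $E^\ast$, showing that the evaluation $\tH^0(E^\ast)\otimes\sco_\p \ra E^\ast$ is a split monomorphism (using precisely your injectivity of $\tH^0(E^\ast)\ra \tH^0(E)^\ast$), then sets $G = (\Cok)^\ast$ and obtains $F$ by the dual of the canonical extension --- your argument is the dual formulation of this. You also spell out the uniqueness (via Krull--Schmidt and comparison of extension classes), which the paper's proof leaves tacit.
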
 

\begin{proof}
Let $\e : \tH^0(E^\ast)\otimes_k\sco_\p \ra E^\ast$ be the evaluation 
morphism of $E^\ast$. Consider the composite morphism$\, :$ 
\[
\tH^0(E^\ast)\otimes_k\sco_\p \overset{\e}{\lra} E^\ast 
\overset{\text{ev}^\ast}{\lra} \tH^0(E)^\ast \otimes_k\sco_\p \, .
\]
Since $\tH^0(\e)$ is an isomorphism and $\text{ev}^\ast$ is a 
monomorphism it follows that $\tH^0(\text{ev}^\ast \circ \e)$ is 
injective. This implies that $\text{ev}^\ast \circ \e$ maps 
$\tH^0(E^\ast)\otimes_k\sco_\p$ isomorphically onto a trivial subbundle of 
$\tH^0(E)^\ast\otimes_k\sco_\p$, hence $\e$ has a left inverse. 
Let $E_0 := \Cok \e$. It follows that the sequence$\, :$ 
\[
0 \lra \tH^0(E^\ast)\otimes_k\sco_\p \overset{\e}{\lra} E^\ast 
\lra E_0 \lra 0
\]
is split exact. Moreover, $\tH^0(E_0) = 0$ and $\tH^1(E^\ast) \izo \tH^1(E_0)$.  
Consider, now, the canonical extension$\, :$ 
\[
0 \lra E_0 \lra E_1 \lra \tH^1(E^\ast)\otimes_k\sco_\p \lra 0\, .
\]
One has $\tH^i(E_1) = 0$, $i = 0,\, 1$ and one can take $G := E_0^\ast$ and 
$F := E_1^\ast$. 
\end{proof} 

\begin{note} 
The conclusion of Lemma~\ref{L:h0h1} remains valid, with the same proof, 
under the weaker hypothesis that $E$ is generically globally generated, i.e., 
that the evaluation morphism of $E$ is an epimorphism over a dense open 
subset of $\p^n$.  
\end{note}

\begin{lemma}\label{L:h1e=h1f}
Under the hypothesis and with the notation from Lemma~\ref{L:h0h1}, one has 
$c_i(E) = c_i(F)$, $i = 1, \ldots ,n$, ${\fam0 H}^i_\ast(E) \simeq 
{\fam0 H}^i_\ast(F)$, $1 \leq i \leq n - 2$, and ${\fam0 H}^{n-1}(E(l)) \simeq 
{\fam0 H}^{n-1}(F(l))$ for $l \geq -n$ $($hence ${\fam0 H}^1(E^\ast(l)) \simeq 
{\fam0 H}^1(F^\ast(l))$ for $l \leq -1$$)$.   
\qed 
\end{lemma}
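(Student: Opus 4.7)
The plan is to exploit the two relations connecting $E$, $G$, and $F$, namely the splitting $E \simeq G \oplus \sco_\p^{\op t}$ and the short exact sequence
\[
0 \lra \sco_\p^{\op s} \lra F \lra G \lra 0,
\]
together with the standard vanishing $\tH^i(\sco_\p(l)) = 0$ for $1 \leq i \leq n-1$ and every $l \in \z$.

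First, for the Chern class equality, I would invoke the Whitney sum formula. The direct sum gives $c(E) = c(G)\cdot c(\sco_\p)^t = c(G)$, and the short exact sequence above gives $c(F) = c(\sco_\p)^s\cdot c(G) = c(G)$. Therefore $c_i(E) = c_i(F)$ for all $i \geq 1$.

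For the cohomological assertions, decompose $\tH^i(E(l)) \simeq \tH^i(G(l)) \oplus \tH^i(\sco_\p(l))^{\op t}$, so $\tH^i(E(l)) \simeq \tH^i(G(l))$ for every $l$ as long as $1 \leq i \leq n-1$. Next, the long exact sequence of the short exact sequence reads
\[
\tH^i(\sco_\p(l))^{\op s} \lra \tH^i(F(l)) \lra \tH^i(G(l)) \lra \tH^{i+1}(\sco_\p(l))^{\op s}.
\]
For $1 \leq i \leq n-2$, both flanking groups vanish for every $l$, hence $\tH^i(F(l)) \simeq \tH^i(G(l)) \simeq \tH^i(E(l))$, proving the middle assertion. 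For $i = n-1$, the left flank still vanishes, while the right flank $\tH^n(\sco_\p(l))^{\op s}$ vanishes precisely when $l \geq -n$, yielding $\tH^{n-1}(F(l)) \simeq \tH^{n-1}(E(l))$ in this range.

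Finally, the parenthetical consequence is obtained by Serre duality applied to the vector bundles $E$ and $F$: one has $\tH^1(F^\ast(l)) \simeq \tH^{n-1}(F(-l-n-1))^\ast$, and the condition $-l-n-1 \geq -n$ translates to $l \leq -1$, in which range the preceding paragraph gives the desired isomorphism. The proof should be essentially mechanical; the only mildly delicate point is tracking the correct range of $l$ in the boundary case $i = n-1$, which is where the hypothesis $l \geq -n$ (equivalently $l \leq -1$ on the dual side) enters.
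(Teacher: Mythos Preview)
Your proof is correct and follows exactly the straightforward argument the paper has in mind; the paper simply marks the lemma with \qed and gives no explicit proof, since the statement is an immediate consequence of the Whitney sum formula, the long exact cohomology sequence, and the standard vanishing $\tH^i(\sco_\p(l))=0$ for $1\leq i\leq n-1$.
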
 

\begin{lemma}\label{L:osfg} 
Let $F$ be a globally generated vector bundle on $\p^n$ and 
$s \leq {\fam0 rk}\, F$ a positive integer. Then there exists an 
exact sequence$\, :$  
\[
0 \lra s\sco_\p \lra F \lra G \lra 0
\]
with $G$ locally free iff $c_i(F) = 0$ for $i = {\fam0 rk}\, F - s + 1$. 
\end{lemma}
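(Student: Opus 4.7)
The plan is to handle the two implications separately, both of which should be very short given what is already established in the Introduction. Throughout, set $r := \mathrm{rk}\, F$.

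For the implication $(\Rightarrow)$, I would argue directly from the Whitney multiplicativity formula for Chern classes. Given an exact sequence $0 \to \sco_\p^{\oplus s} \to F \to G \to 0$ with $G$ locally free, one has $\mathrm{rk}\, G = r-s$ and $c(F) = c(\sco_\p^{\oplus s}) \cdot c(G) = c(G)$. Since $c_j(G) = 0$ for every $j > r-s$, this forces $c_{r-s+1}(F) = c_{r-s+1}(G) = 0$.

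For the converse $(\Leftarrow)$, my plan is to invoke the degeneracy locus statement already recalled in the paragraph of the Introduction preceding \eqref{E:oeiy}. Because $F$ is globally generated, the evaluation map $\mathrm{H}^0(F)\otimes_k\sco_\p \to F$ is surjective, so there are plenty of morphisms $\sco_\p^{\oplus s} \to F$ and it is meaningful to pick a general one; let $\phi : \sco_\p^{\oplus s} \to F$ be such a general morphism. Applying the quoted fact with $i = r - s + 1$, the degeneracy locus $D(\phi)$ is either empty or a closed subscheme of $\p^n$ of pure codimension $r - s + 1$, and in either case its degree equals $c_{r-s+1}(F)$. Under the hypothesis $c_{r-s+1}(F) = 0$, and since any nonempty equidimensional closed subscheme of $\p^n$ of positive codimension has positive degree as an effective cycle in $A^{r-s+1}(\p^n) = \z$, we conclude $D(\phi) = \emptyset$. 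Thus $\phi$ is injective on every fiber, its cokernel $G$ is locally free of rank $r-s$, and we obtain the desired exact sequence.

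No step in this argument strikes me as a genuine obstacle: one direction is the Whitney formula and the other reduces at once to the Thom--Porteous--type statement that the authors have already put on record, together with the trivial remark that an effective cycle of degree zero in $\p^n$ is empty. The only point deserving a brief justification within the written proof is the legitimacy of speaking about a ``general'' morphism $\phi$, which is ensured by the global generation of $F$.
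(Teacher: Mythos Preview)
Your proposal is correct and follows essentially the same approach as the paper: the authors also invoke the degeneracy-locus statement from the Introduction to conclude that a general $\phi : \sco_\p^{\oplus s} \to F$ has $D(\phi) = \emptyset$ when $c_{r-s+1}(F) = 0$, hence is a locally split monomorphism. The only difference is that the paper leaves the forward implication implicit, whereas you spell it out via the Whitney formula.
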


\begin{proof}
Let $i = \text{rk}\, F - s + 1$. Since $F$ is globally generated, 
the degeneracy locus $D(\phi)$ of a general morphism $\phi : s\sco_\p  
\ra F$ is a closed subscheme of $\p^n$, empty or of pure codimension $i$ and  
degree $c_i(F)$. It follows that if $c_i(F) = 0$ then $D(\phi)$ is 
empty, that is, $\phi$ is a locally split monomorphism.    
\end{proof}

\begin{cor}\label{C:osfg}
Let $E$ be a globally generated vector bundle of rank $r$ on $\p^n$, 
$n \geq 4$, and let $H \subset \p^n$ be a hyperplane. 
Consider an integer $r^\prim \leq r$ such that $2 \leq r^\prim < n-1$. 

\emph{(a)} If $c_{r^\prim +1}(E_H) = 0$ then there exists a globally generated 
vector bundle $E^\prim$ of rank $r^\prim$ on $\p^n$ such that $E$ and $E^\prim$ 
have the same Chern classes. 

\emph{(b)} If, moreover, $E_H \simeq G \oplus (r - r^\prim)\sco_H$ with $G$ a 
vector bundle of rank $r^\prim$ on $H$ and if $c_{r^\prim}(G) \neq 0$  
then there exists a globally generated vector bundle $E^\prim$ on $\p^n$ such 
that $E_H^\prim \simeq G$.    
\end{cor}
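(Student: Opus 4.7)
The plan is to deduce both parts from Lemma~\ref{L:osfg}, applied to a suitably general morphism $\phi : \sco_\p^{\op r-r^\prim} \to E$.

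For part (a), the starting observation is that, since $r^\prim + 1 \leq n-1 = \dim H$, the integers $c_{r^\prim+1}(E)$ and $c_{r^\prim+1}(E_H)$ agree by naturality of Chern classes under the restriction $H \hookrightarrow \p^n$. The hypothesis therefore forces $c_{r^\prim+1}(E) = 0$, so Lemma~\ref{L:osfg} with $s = r - r^\prim$ produces an exact sequence $0 \to \sco_\p^{\op r-r^\prim} \to E \to E^\prim \to 0$ with $E^\prim$ locally free of rank $r^\prim$. As a quotient of the globally generated sheaf $E$, $E^\prim$ is globally generated; and since the total Chern class of a trivial bundle is $1$, one has $c(E^\prim) = c(E)$.

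For part (b), note first that $\rk G = r^\prim$ implies $c_{r^\prim+1}(G) = 0 = c_{r^\prim+1}(E)$, so the argument of part (a) still shows that a general $\phi : \sco_\p^{\op r-r^\prim} \to E$ is a locally split monomorphism. The new ingredient is to refine the choice so that $\phi_H$ splits off the trivial summand of $E_H = G \oplus \sco_H^{\op r-r^\prim}$. Letting $\pi : E_H \twoheadrightarrow \sco_H^{\op r-r^\prim}$ denote the projection, the central claim is that the linear map
\[
\tH^0(E)^{\op r-r^\prim} = \text{Hom}(\sco_\p^{\op r-r^\prim}, E) \lra \text{End}(\sco_H^{\op r-r^\prim}) = k^{(r-r^\prim)^2}, \quad \phi \mapsto \pi \circ \phi_H
\]
is surjective. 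This is verified by composing the sheaf surjections $\tH^0(E)\otimes_k\sco_\p \twoheadrightarrow E \twoheadrightarrow \sco_H^{\op r-r^\prim}$: the target being a trivial $\sco_H$-module, Nakayama identifies the residues of its global sections with their values, whence the induced map $\tH^0(E) \twoheadrightarrow k^{r-r^\prim}$ must be surjective (hence so is each factor of the displayed map).

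With surjectivity in hand, the condition ``$\pi \circ \phi_H$ is invertible'' cuts out a nonempty Zariski-open subset of $\tH^0(E)^{\op r-r^\prim}$, compatible with the nonempty open condition that $\phi$ be a locally split monomorphism. Picking $\phi$ in the intersection and writing $\phi_H = (\alpha, \beta)$ with $\beta$ invertible, an elementary change of basis (applying $\beta^{-1}$ on the source and subtracting $\alpha\beta^{-1}$ from the $G$-component on the target) transforms $\phi_H$ into the canonical inclusion $\sco_H^{\op r-r^\prim} \hookrightarrow G \oplus \sco_H^{\op r-r^\prim}$; hence $\Cok(\phi_H) \simeq G$, and $E^\prim := \Cok(\phi)$ is globally generated with $E^\prim_H \simeq G$. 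The only real (and mild) obstacle is the bookkeeping needed to ensure both open conditions are simultaneously nonempty; the hypothesis $c_{r^\prim}(G) \neq 0$ is not used in the argument above and serves, I believe, only to exclude the degenerate situations in which $G$ itself has a further trivial summand.
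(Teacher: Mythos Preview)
Your proof of part (a) matches the paper's. For part (b), your argument is correct but takes a genuinely different route. The paper does \emph{not} further refine the choice of $\phi$: it simply restricts the exact sequence from (a) to $H$, obtaining a locally split monomorphism $u : \sco_H^{\op r-r^\prim} \to G \oplus \sco_H^{\op r-r^\prim}$, and then observes that the component $\sco_H^{\op r-r^\prim} \to \sco_H^{\op r-r^\prim}$ of $u$ must be an isomorphism. The reason is exactly the hypothesis $c_{r^\prim}(G) \neq 0$: were this component not an isomorphism, some nonzero vector in its kernel would yield a locally split monomorphism $\sco_H \hookrightarrow G$, forcing $c_{r^\prim}(G) = 0$. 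So your guess that the hypothesis is unused is wrong for the paper's argument, though it is indeed unused in yours.

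The trade-off is this: your approach proves a slightly stronger statement (the hypothesis $c_{r^\prim}(G) \neq 0$ can be dropped), at the cost of an extra genericity argument and the surjectivity verification. The paper's approach is shorter and shows more, namely that \emph{every} $E^\prim$ produced in part (a) already satisfies $E^\prim_H \simeq G$, not merely a further-specialized one; but it genuinely needs the Chern-class hypothesis to rule out the possibility that part of the trivial summand of $E_H$ lands inside $G$.
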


\begin{proof}
(a) 
Since $r^\prim +1 \leq n-1$, one has $c_{r^\prim +1}(E) = c_{r^\prim +1}(E_H) = 0$. 
Lemma~\ref{L:osfg} implies that there exists an exact sequence$\, :$ 
\[
0 \lra (r - r^\prim)\sco_\p \lra E \lra E^\prim \lra 0
\] 
with $E^\prim$ locally free of rank $r^\prim$. 

(b)
Restricting to $H$ the exact sequence from the proof of (a) one gets an exact 
sequence$\, :$ 
\[
0 \lra (r - r^\prim)\sco_H \overset{u}{\lra} G \oplus 
(r - r^\prim)\sco_H \lra E_H^\prim \lra 0
\]
Since $c_{r^\prim}(G) \neq 0$ there exists no locally split monomorphism 
$\sco_H \ra G$, hence the component $(r - r^\prim)\sco_H \ra 
(r - r^\prim)\sco_H$ of $u$ must be an isomorphism. It follows that 
$E_H^\prim \simeq G$.  
\end{proof}

One can get results about globally generated vector bundles on $\p^n$ by 
using the exact sequences \eqref{E:oeiy} and \eqref{E:dual} and information 
about two codimensional subvarieties of $\p^n$. The previous observations are 
related to this approach via the following two results$\, :$  

\begin{lemma}\label{L:h0h1delta} 
If $n \geq 3$ then ${\fam0 H}^i(E^\ast) = 0$, $i = 0,1$, iff the morphism 
$\delta$ from the exact sequence \eqref{E:dual} can be identified with the 
evaluation morphism$\, :$  
\[
{\fam0 H}^0(\omega_Y(n+1-c_1))\otimes_k\sco_\p \lra \omega_Y(n+1-c_1)\, .  
\qed 
\]  
\end{lemma}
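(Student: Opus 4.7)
My plan is to split the four-term sequence \eqref{E:dual} into the two short exact sequences
\[
0 \lra \sco_\p(-c_1) \lra E^\ast \lra \sck \lra 0, \qquad
0 \lra \sck \lra \sco_\p^{\oplus \, r-1} \xra{\delta} \omega_Y(n+1-c_1) \lra 0,
\]
where $\sck$ is the image of $E^\ast \ra \sco_\p^{\oplus \, r-1}$, and then to read off $\tH^0(E^\ast)$ and $\tH^1(E^\ast)$ as the kernel and cokernel, respectively, of the induced linear map $\tH^0(\delta) : k^{r-1} \ra \tH^0(\omega_Y(n+1-c_1))$. The case $c_1 = 0$ is vacuous (then $Y=\emptyset$ and $E \simeq \sco_\p^{\op r}$, so $\tH^0(E^\ast)\neq 0$ and the claim is only meaningful when $c_1 \geq 1$), so I assume $c_1 \geq 1$ throughout.

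The key input, and the only place where the hypothesis $n\geq 3$ is used, is that $\tH^i(\sco_\p(-c_1)) = 0$ for $i=0,1,2$: indeed $i=0$ follows from $c_1\geq 1$, and $i = 1, 2$ follow from the standard vanishing of intermediate cohomology of line bundles on $\p^n$ when $n\geq 3$. Feeding this into the long exact sequence of the first short exact sequence yields canonical isomorphisms $\tH^i(E^\ast) \izo \tH^i(\sck)$ for $i=0,1$.

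Next, applying $\tH^\bullet$ to the second short exact sequence and using that $\tH^1(\sco_\p^{\oplus \, r-1}) = 0$, I obtain
\[
0 \lra \tH^0(\sck) \lra k^{r-1} \xra{\tH^0(\delta)} \tH^0(\omega_Y(n+1-c_1)) \lra \tH^1(\sck) \lra 0,
\]
so $\tH^0(\sck) = \Ker \tH^0(\delta)$ and $\tH^1(\sck) = \Cok \tH^0(\delta)$. Combined with the previous step, $\tH^0(E^\ast) = \tH^1(E^\ast) = 0$ is equivalent to $\tH^0(\delta)$ being simultaneously injective and surjective, i.e., an isomorphism $k^{r-1} \izo \tH^0(\omega_Y(n+1-c_1))$.

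It remains to recognise this condition as the assertion that $\delta$ is the evaluation morphism. Any morphism $\sco_\p^{\oplus \, r-1} \ra \omega_Y(n+1-c_1)$ factors tautologically as
\[
\sco_\p^{\oplus \, r-1} \xra{\tH^0(\delta)\otimes \text{id}} \tH^0(\omega_Y(n+1-c_1))\otimes_k\sco_\p \xra{\text{ev}} \omega_Y(n+1-c_1),
\]
so, up to identifying $k^{r-1}$ with $\tH^0(\omega_Y(n+1-c_1))$ via $\tH^0(\delta)$, the morphism $\delta$ coincides with the evaluation morphism precisely when $\tH^0(\delta)$ is an isomorphism, which closes both directions of the iff. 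The whole argument is routine cohomology chasing; there is no real obstacle, only the bookkeeping point of remembering that the vanishing of $\tH^0(E^\ast)$ corresponds to injectivity and the vanishing of $\tH^1(E^\ast)$ to surjectivity of $\tH^0(\delta)$, and of invoking $n\geq 3$ to kill $\tH^1$ and $\tH^2$ of $\sco_\p(-c_1)$.
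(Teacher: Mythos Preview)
Your argument is correct and is exactly the routine verification the authors had in mind: the paper states this lemma with a \qed\ and gives no proof, so there is nothing to compare against beyond the expected cohomology chase you carried out. The only small quibble is your remark on the case $c_1=0$: rather than ``vacuous'', the iff still holds there (both sides are false once $r\geq 2$), but this edge case is irrelevant to the paper's applications anyway.
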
 

\begin{lemma}\label{L:uniqueext} 
Let $Y$ be a locally Cohen-Macaulay closed subscheme of $\p^n$ of pure 
codimension $2$. Assume that there are exact sequences$\, :$  
\begin{gather*}
0 \lra (r - 1)\sco_\p \lra E \lra \sci_Y(c_1) \lra 0\, ,\\
0 \lra (s - 1)\sco_\p \lra F \lra \sci_Y(c_1) \lra 0\, ,
\end{gather*} 
with $E$ and $F$ locally free sheaves. If ${\fam0 H}^i(E^\ast)=0$ and 
${\fam0 H}^i(F^\ast) = 0$, $i = 0,1$, then $r=s$ and $E \simeq F$. 
\end{lemma}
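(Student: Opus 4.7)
The plan is to show that both extensions, viewed as classes in a common $\text{Ext}^1$ group, correspond to bases of the same vector space and hence lie in a single orbit of $\text{Aut}(\sco_\p^{\op r-1})$, which will force $E\simeq F$. I shall focus on $n\geq 3$ so that Lemma~\ref{L:h0h1delta} applies; the case $n=2$ can be handled separately via Remark~\ref{R:CayleyB}.

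The first step is to dualize both given sequences to obtain four-term exact sequences of the form \eqref{E:dual}, with last nontrivial maps $\delta_E$ and $\delta_F$ respectively. By Lemma~\ref{L:h0h1delta}, the hypothesis $\tH^i(E^\ast)=\tH^i(F^\ast)=0$ for $i=0,1$ identifies both $\delta_E$ and $\delta_F$ with the canonical evaluation morphism $V\otimes_k\sco_\p\to\omega_Y(n+1-c_1)$, where $V:=\tH^0(\omega_Y(n+1-c_1))$. This immediately yields $r-1=\dim_k V=s-1$, so $r=s$.

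Next, I would compute $\text{Ext}^1(\sci_Y(c_1),\sco_\p)$. Applying $\text{Hom}(-,\sco_\p)$ to $0\to\sci_Y(c_1)\to\sco_\p(c_1)\to\sco_Y(c_1)\to 0$, and combining (i) $\tH^i(\sco_\p(-c_1))=0$ for $i=1,2$, (ii) $\text{Ext}^1(\sco_Y(c_1),\sco_\p)=0$ (from the local-to-global spectral sequence, using that $Y$ is locally Cohen-Macaulay of pure codimension $2$, so the sheaf Ext from $\sco_Y$ vanishes below degree $2$), and (iii) the duality isomorphism $\text{Ext}^2(\sco_Y(c_1),\sco_\p)\cong\tH^0(\omega_Y(n+1-c_1))=V$, one obtains an isomorphism $\text{Ext}^1(\sci_Y(c_1),\sco_\p)\izo V$, and hence $\text{Ext}^1(\sci_Y(c_1),\sco_\p^{\op r-1})\cong V^{\op r-1}$.

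Finally, under this identification, together with its compatibility with the connecting map that recovers $\delta$ from an extension class, the class of $E$ corresponds to the tuple $(\delta_E(e_1),\ldots,\delta_E(e_{r-1}))$, where $(e_i)$ is the standard basis of $\sco_\p^{\op r-1}$; this tuple is a basis of $V$ because $\delta_E$ is an evaluation morphism, and likewise for $F$. Any two bases of $V$ differ by a unique element $g\in GL_{r-1}(k)=\text{Aut}(\sco_\p^{\op r-1})$, and the induced pushout action sends the class of $F$ to that of $E$. Standard functoriality then provides a commutative diagram whose left vertical map is $g$, whose right vertical map is the identity on $\sci_Y(c_1)$, and whose middle vertical map is the desired isomorphism $E\izo F$. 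The main obstacle I anticipate is in the second step, namely verifying that the ``spectral-sequence'' isomorphism $\text{Ext}^1\izo V$ intertwines with the connecting map controlling $\delta$, so that the tuple attached to $E$ really does read off from $\delta_E$; once this compatibility is in place, the rest of the argument is formal.
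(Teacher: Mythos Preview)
Your proposal is correct (modulo the compatibility check you flag, which does work out), but it takes a different route from the paper. The paper's proof is shorter and avoids computing $\text{Ext}^1(\sci_Y(c_1),\sco_\p)$ altogether: since $\text{Ext}^1(E,\sco_\p^{\oplus s-1}) \simeq \tH^1(E^\ast)^{\oplus s-1} = 0$, the identity on $\sci_Y(c_1)$ lifts to a morphism $E \to F$ fitting into a commutative diagram of extensions; this yields an exact sequence $0 \to \sco_\p^{\oplus r-1} \to E \oplus \sco_\p^{\oplus s-1} \to F \to 0$, which splits because $\tH^1(F^\ast)=0$, giving $E \oplus \sco_\p^{\oplus s-1} \simeq F \oplus \sco_\p^{\oplus r-1}$; then Krull--Schmidt and the absence of trivial summands (from $\tH^0(E^\ast)=\tH^0(F^\ast)=0$) finish. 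This works uniformly for all $n\geq 2$ and sidesteps both the spectral-sequence compatibility and the $n=2$/$n\geq 3$ case split. Your approach, on the other hand, is more conceptual: it explains that extensions with the given vanishing are parametrized by bases of $V=\tH^0(\omega_Y(n+1-c_1))$ up to $\text{GL}_{r-1}(k)$, which is why uniqueness holds --- but at the cost of the extra verification you anticipate.
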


\begin{proof}
Since $\text{Ext}^1(E,(s - 1)\sco_\p) \simeq (s - 1)\tH^1(E^\ast) = 0$, 
one deduces the existence of a commutative diagram$\, :$ 
\[
\begin{CD}
0 @>>> (r - 1)\sco_\p @>>> E @>>> \sci_Y(c_1) @>>> 0\\
@. @VVV @VVV @\vert\\
0 @>>> (s - 1)\sco_\p @>>> F @>>> \sci_Y(c_1) @>>> 0
\end{CD}
\]
from which one deduces the existence of an exact sequence$\, :$  
\[
0 \lra (r - 1)\sco_\p \lra E \oplus (s - 1)\sco_\p \lra F \lra 0\, .
\]
Since $\tH^1(F^\ast) = 0$, this exact sequence is split exact, hence$\, :$  
\[
E \oplus (s - 1)\sco_\p \simeq F \oplus (r - 1)\sco_\p \, . 
\] 
But $E$ and $F$ have no trivial direct summand because $\tH^0(E^\ast) = 0$ and 
$\tH^0(F^\ast) = 0$. One deduces that $s = r$ and $E\simeq F$. 
\end{proof}

Next, we state some properties of the bundle $P(E)$ associated, as in the 
Introduction, to a globally generated vector bundle $E$ on $\p^n$. 

\begin{lemma}\label{L:pe}
\emph{(a)} If $E$ is a globally generated vector bundle on $\p^n$ then   
${\fam0 H}^i(P(E)^\ast) = 0$, $i = 0,1$. Moreover, if ${\fam0 H}^i(E^\ast) = 0$, 
$i = 0,1$, then $E \simeq P(P(E))$. 

\emph{(b)} With the notation from the statement of Lemma~\ref{L:h0h1}, 
one has $P(E) \simeq P(F)$.    
\qed 
\end{lemma}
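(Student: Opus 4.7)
The plan is to unpack the defining sequence of $P(E)$,
\[
0 \lra P(E)^\ast \lra \tH^0(E)\otimes_k\sco_\p \overset{\mathrm{ev}}{\lra} E \lra 0, \tag{$\star$}
\]
and exploit it at the level of global sections and of duals.

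\textbf{Part (a), vanishing.} Take cohomology of $(\star)$. The connecting map $\tH^0(E)\to\tH^0(E)$ is $\tH^0(\mathrm{ev})$, which is the identity by the universal property of the evaluation morphism. This forces
$\tH^0(P(E)^\ast)=0$ and, using also that $\tH^1(\tH^0(E)\otimes\sco_\p)=0$, $\tH^1(P(E)^\ast)=0$.

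\textbf{Part (a), biduality.} Dualize $(\star)$, which is a short exact sequence of locally free sheaves:
\[
0 \lra E^\ast \lra \tH^0(E)^\ast\otimes_k\sco_\p \overset{\phi}{\lra} P(E) \lra 0. \tag{$\star\star$}
\]
Taking $\tH^0$ and using $\tH^0(E^\ast)=\tH^1(E^\ast)=0$ gives an isomorphism $\alpha:\tH^0(E)^\ast\izo\tH^0(P(E))$. Now for any vector space $W$, the functor $\mathrm{Hom}(W\otimes\sco_\p,-)=\mathrm{Hom}(W,\tH^0(-))$ is representable, so a morphism $W\otimes\sco_\p\to V$ is determined by its effect on global sections; in particular the evaluation $\mathrm{ev}_{P(E)}:\tH^0(P(E))\otimes\sco_\p\to P(E)$ agrees with $\phi$ after twisting the source by $\alpha\otimes\sco_\p$. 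Comparing kernels of $\phi$ and of $\mathrm{ev}_{P(E)}$ yields $E^\ast\simeq P(P(E))^\ast$, hence $E\simeq P(P(E))$.

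\textbf{Part (b).} With the notation of Lemma \ref{L:h0h1}, we first observe that $P$ annihilates trivial summands: writing $E\simeq G\op\sco_\p^{\op t}$ and splitting $(\star)$ into its two summands, the $\sco_\p^{\op t}$-factor contributes the identity map of $\sco_\p^{\op t}$ with zero kernel, so $P(E)\simeq P(G)$. It remains to identify $P(F)$ with $P(G)$ from the exact sequence $0\to\sco_\p^{\op s}\to F\to G\to 0$. Since $\tH^1(\sco_\p^{\op s})=0$, the map $\tH^0(F)\to\tH^0(G)$ is surjective, fitting into the commutative diagram with exact rows
\[
\begin{CD}
0 @>>> k^{\op s}\otimes\sco_\p @>>> \tH^0(F)\otimes\sco_\p @>>> \tH^0(G)\otimes\sco_\p @>>> 0 \\
@. @VV{=}V @VV{\mathrm{ev}_F}V @VV{\mathrm{ev}_G}V \\
0 @>>> \sco_\p^{\op s} @>>> F @>>> G @>>> 0
\end{CD}
\]
where commutativity of both squares is naturality of evaluation. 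Both $\mathrm{ev}_F$ and $\mathrm{ev}_G$ are surjective (since $F$ and $G$ are globally generated), so the snake lemma gives $P(F)^\ast=\Ker\mathrm{ev}_F\izo\Ker\mathrm{ev}_G=P(G)^\ast$, whence $P(F)\simeq P(G)\simeq P(E)$.

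The only subtle point is the naturality/representability argument in the biduality step; the rest is bookkeeping with $(\star)$ and the snake lemma.
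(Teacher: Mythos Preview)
Your proof is correct. The paper itself offers no proof of this lemma (the \qed\ follows the statement directly), so there is no approach to compare against; your argument via the long exact sequence of $(\star)$, the representability of $\mathrm{Hom}(W\otimes\sco_\p,-)$, and the snake lemma is exactly the kind of routine verification the authors evidently had in mind.
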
 

\begin{remark}\label{R:reduction}
The above lemmata allow one, when studying globally generated vector bundles 
$E$ on $\p^n$, to assume that, moreover, $\tH^i(E^\ast) = 0$, $i=0,1$, and that 
$c_2 \leq c_1^2/2$ (because $c_1(P(E)) = c_1$ and $c_2(P(E)) = c_1^2 - c_2$). 
Actually, this inequality is automatically satisfied by the  
Chern classes of any globally generated rank 2 vector bundle on $\p^n$, 
$n \geq 3$, as the next lemma shows.  
\end{remark}

\begin{lemma}\label{L:eofrk2} 
If $E$ is a globally generated rank $2$ vector bundle on $\piii$ then 
$c_2 \leq c_1^2/2$. 
\end{lemma}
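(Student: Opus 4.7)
The plan is to exploit the evaluation sequence of $E$ together with the non-negativity of Chern classes of globally generated bundles (as recalled in the introduction), and reduce everything to a single Chern class identity. Set $r := \tH^0(E)$ and consider the evaluation epimorphism; it fits in
\[
0 \lra K \lra \sco_\piii^{\op r} \lra E \lra 0
\]
with $K = P(E)^\ast$ locally free of rank $r-2$. Since $E$ has rank $2$ one has $c_3(E) = 0$, so the Whitney identity $c(K) \cdot c(E) = 1$ in $A^\ast(\piii)$ yields
\[
c_1(K) = -c_1, \quad c_2(K) = c_1^2 - c_2, \quad c_3(K) = -c_1(c_1^2 - 2c_2).
\]

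First I would dispose of the small values of $r$. If $r = 2$ then $K = 0$ and $E \simeq \sco_\piii^{\op 2}$, so the bound is trivial. If $r = 3$ then $K$ is a line bundle, so $c_2(K) = 0$ and $c_3(K) = 0$ give $c_2 = c_1^2$ and $c_1^3 = 0$; this forces $c_1 = c_2 = 0$ and hence $E \simeq \sco_\piii^{\op 2}$, contradicting $r = 3$.

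The key step is then, for $r \geq 4$, to show that $c_3(P(E)) = c_1(c_1^2 - 2c_2) \geq 0$. If $r = 4$ the bundle $P(E) = K^\ast$ has rank $2$, so $c_3(P(E)) = 0$ automatically, and the Chern relation above already gives $c_1(c_1^2 - 2c_2) = 0$. If $r \geq 5$ then $P(E)$ is globally generated of rank $r-2 \geq 3$; by the degeneracy-locus interpretation of Chern classes recalled in the introduction, the degeneracy locus of a general morphism $\sco_\piii^{\op (r-4)} \to P(E)$ is a closed subscheme of $\piii$ of pure codimension $3$ (or empty), hence of non-negative degree equal to $c_3(P(E)) = -c_3(K)$. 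Either way, $c_1(c_1^2 - 2c_2) \geq 0$.

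Since $E$ is globally generated one has $c_1 \geq 0$. If $c_1 = 0$ the general inequality $c_2 \leq c_1^2$ from the introduction forces $c_2 = 0$ and the claim is immediate; otherwise $c_1 \geq 1$ and one divides to obtain $c_2 \leq c_1^2/2$. The only real ``obstacle'' here is conceptual rather than technical: one has to spot that the right quantity to examine is precisely $c_3(P(E))$, and that the Whitney bookkeeping makes the argument go through uniformly once $r \geq 4$.
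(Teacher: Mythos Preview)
Your argument is correct and follows essentially the same route as the paper: both proofs rest on the observation that $c_3(P(E)) = c_1(c_1^2 - 2c_2)$ (since $c_3(E)=0$ for a rank $2$ bundle) together with $c_3(P(E)) \geq 0$. The only difference is presentational: the paper quotes the Chern class formula for $P(E)$ directly from the introduction and uses the blanket inequality $c_i \geq 0$ for globally generated bundles (valid for all $i$, since $c_i = 0$ once $i$ exceeds the rank), so no case distinction on $r = \h^0(E)$ is needed; your separate treatment of $r=2,3,4$ is harmless but unnecessary. (One typo: you want $r := \h^0(E)$, not $r := \tH^0(E)$.)
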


\begin{proof}
One can assume that $c_1 > 0$ (if $c_1 = 0$ then $E \simeq 2\sco_\piii$). 
One has $c_3(P(E)) = c_1(c_1^2 - 2c_2)$ because $c_3 = 0$. 
But $P(E)$ is globally generated, hence $c_3(P(E)) \geq 0$. 
\end{proof} 

\begin{lemma}\label{L:peh} 
Let $E$ be a globally generated vector bundle on $\p^n$ and let 
$\Pi \subset \p^n$ be a linear subspace. Then $P(E) \vb \Pi \simeq G \oplus 
t\sco_\Pi$ with $t = \dim {\fam0 Ker}({\fam0 H}^0(E) \ra 
{\fam0 H}^0(E_\Pi))$ and with $G$ defined by an exact sequence$\, :$ 
\[
0 \lra s\sco_\Pi \lra P(E_\Pi) \lra G \lra 0
\]
where $s = \dim {\fam0 Coker}({\fam0 H}^0(E) \ra {\fam0 H}^0(E_\Pi))$. 
\qed
\end{lemma}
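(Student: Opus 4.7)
The plan is to compare the defining sequences of $P(E)$ and $P(E_\Pi)$ via the restriction-of-sections map. Let $V := \tH^0(E)$, $W := \tH^0(E_\Pi)$, and let $\rho : V \to W$ be the restriction, with kernel $K$ of dimension $t$ and cokernel of dimension $s$. Tensoring the defining sequence
\[
0 \lra P(E)^\ast \lra V \otimes_k \sco_\p \lra E \lra 0
\]
with $\sco_\Pi$ preserves exactness (since $E$ is locally free, so $\mathrm{Tor}_1(\sco_\Pi, E) = 0$), yielding
\[
0 \lra P(E)^\ast \vb \Pi \lra V \otimes_k \sco_\Pi \lra E_\Pi \lra 0.
\]
Placing this above the analogous sequence $0 \to P(E_\Pi)^\ast \to W \otimes_k \sco_\Pi \to E_\Pi \to 0$, with middle vertical $\rho \otimes \sco_\Pi$ and right vertical the identity, produces a commutative diagram. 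The middle column has kernel $K \otimes \sco_\Pi \simeq \sco_\Pi^{\op t}$ and cokernel isomorphic to $\sco_\Pi^{\op s}$, so the snake lemma yields an induced map $\alpha : P(E)^\ast \vb \Pi \to P(E_\Pi)^\ast$ with $\Ker \alpha \simeq \sco_\Pi^{\op t}$ and $\Cok \alpha \simeq \sco_\Pi^{\op s}$.

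The key step is to split off $\Ker \alpha$ inside $P(E)^\ast \vb \Pi$. Every section in $K$ vanishes identically on $\Pi$, so $K \otimes \sco_\Pi$ is already contained in $P(E)^\ast \vb \Pi$. A vector-space splitting $V = K \oplus V^\prim$ gives a projection $V \otimes \sco_\Pi \to K \otimes \sco_\Pi$ whose restriction to $P(E)^\ast \vb \Pi$ is a retraction of the inclusion $K \otimes \sco_\Pi \hookrightarrow P(E)^\ast \vb \Pi$. Hence
\[
P(E)^\ast \vb \Pi \simeq \sco_\Pi^{\op t} \oplus M,
\]
where $M := \mathrm{Im}\, \alpha$ fits, by the snake lemma, into
\[
0 \lra M \lra P(E_\Pi)^\ast \lra \sco_\Pi^{\op s} \lra 0.
\]

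All sheaves involved are locally free ($M$ because it is a direct summand of the locally free sheaf $P(E)^\ast \vb \Pi$), so dualizing both sequences preserves exactness. Setting $G := M^\ast$ yields $P(E) \vb \Pi \simeq \sco_\Pi^{\op t} \oplus G$ together with the short exact sequence $0 \to \sco_\Pi^{\op s} \to P(E_\Pi) \to G \to 0$ asserted in the lemma. The only conceptually non-formal point is the splitting of $\Ker \alpha$, but the retraction argument settles this immediately, so I do not foresee any real obstacle.
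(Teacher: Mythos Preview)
Your proof is correct. The paper states this lemma with a bare \qed{} and no argument, treating it as a routine consequence of the definitions; your snake-lemma comparison of the restricted evaluation sequence with the evaluation sequence of $E_\Pi$, together with the explicit retraction coming from a vector-space splitting $V = K \oplus V'$, is exactly the sort of direct verification the authors had in mind.
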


\begin{lemma}\label{L:ggquasib}
Let $E$ be a globally generated vector bundle on $\p^n$ with 
${\fam0 H}^0(E^\ast) = 0$ and $x$ a point of $\p^n$. Consider an epimorphism 
$\e : E \ra \sco_{\{x\}}$. ${\fam0 H}^0(\e) : {\fam0 H}^0(E) \ra 
{\fam0 H}^0(\sco_{\{x\}}) \simeq k$ is a non-zero element of 
${\fam0 H}^0(E)^\ast$ 
which defines a non-zero global section $\sigma$ of $P(E)$. Then$\, :$ 

\emph{(a)} $\sigma$ vanishes at $x$ and if $Z \subset \p^n$ is the zero-scheme  
of $\sigma$ then $\e$ factorizes as$\, :$ 
\[
E \overset{\e^\prim}{\lra} \sco_Z \xra{{\fam0 can}} \sco_{\{x\}} \, .
\] 

\emph{(b)} ${\fam0 Ker}\, \e$ is globally generated if and only if $Z$ 
consists of the simple point $x$. 
\end{lemma} 

\begin{proof}
Consider the commutative diagram$\, :$ 
\[
\begin{CD}
0 @>>> P(E)^\ast @>{u}>> \tH^0(E)\otimes \sco_\p @>{\text{ev}_E}>> E @>>> 0\\
@. @VVV @VV{\tH^0(\e)\otimes \text{id}_\sco}V @VV{\e}V\\
0 @>>> \sci_{\{x\}} @>>> \sco_\p @>>> \sco_{\{x\}} @>>> 0 
\end{CD}
\] 

(a) Considering $\sigma$ as a morphism $\sco_\p \ra P(E)$, one has 
$(\tH^0(\e)\otimes{\text{id}}_\sco) \circ u = \sigma^\ast$. Since, by 
definition, $\text{Im}\, \sigma^\ast = \sci_Z$ it follows that $x \in Z$ and 
that $\e$ factorizes as in the statement. 

(b) The morphism $\Ker \tH^0(\e) \otimes_k \sco_\p \ra \Ker \e$ induced by 
${\text{ev}}_E$ can be identified with the evaluation morphism of 
$\Ker \e$. One applies, now, the Snake Lemma to the above diagram.  
\end{proof}

\begin{cor}\label{C:ggquasib}
Under the hypothesis of Lemma~\ref{L:ggquasib}, assume that 
${\fam0 H}^i(E^\ast) = 0$, $i = 0,\, 1$.  
Then there is a bijection between the set of epimorphisms $\e : E \ra 
\sco_{\{x\}}$ with ${\fam0 Ker}\, \e$ globally generated and the set of global 
sections of $P(E)$ whose zero scheme consists of the simple point $x$. 
\end{cor}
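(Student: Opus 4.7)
The plan is to derive the corollary as a direct specialization of Lemma~\ref{L:ggquasib} applied to $\scf = \sco_{\{x\}}$. First I would verify the hypothesis of that lemma: the skyscraper sheaf $\sco_{\{x\}}$ is torsion, so it trivially admits no direct summand isomorphic to $\sco_\p$. Next, since $\tH^0(\sco_{\{x\}}) = k$ is one-dimensional, the only subspace $W \subseteq \tH^0(\sco_{\{x\}})$ for which the evaluation $W \otimes_k \sco_\p \ra \sco_{\{x\}}$ can be surjective (which is required by the exactness at $\sco_{\{x\}}$ demanded in Lemma~\ref{L:ggquasib}) is $W = \tH^0(\sco_{\{x\}})$; in that case $W\otimes_k\sco_\p = \sco_\p$ and $\text{ev}_\scf : \sco_\p \ra \sco_{\{x\}}$ is the canonical surjection with kernel $\sci_{\{x\}}$.

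With this reduction, Lemma~\ref{L:ggquasib} puts epimorphisms $\e : E \ra \sco_{\{x\}}$ with globally generated kernel into bijection with those morphisms $\phi : P(E)^\ast \ra \sco_\p$ that make the sequence
\[
P(E)^\ast \xra{\phi} \sco_\p \lra \sco_{\{x\}} \lra 0
\]
exact; equivalently, with those $\phi$ whose image subsheaf in $\sco_\p$ equals $\sci_{\{x\}}$.

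Finally, since $P(E)$ is locally free, the canonical isomorphism $\text{Hom}_{\sco_\p}(P(E)^\ast,\sco_\p) \simeq \tH^0(P(E))$ identifies such a $\phi$ with a global section $s \in \tH^0(P(E))$, namely its dual. Working in any local trivialization of $P(E)$, $s$ is represented by a tuple $(s_1,\dots,s_r)$ of regular functions whose vanishing ideal cuts out $Z(s)$, and the dual $\phi = s^\ast$ becomes the contraction $(f_1,\dots,f_r) \mapsto \sum f_is_i$, whose image is precisely the ideal $(s_1,\dots,s_r)$. Therefore $\text{Im}(\phi) = \sci_{Z(s)}$, and the condition $\text{Im}(\phi) = \sci_{\{x\}}$ is equivalent to $Z(s)$ being the simple reduced point $x$. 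Since every step is a direct translation through Lemma~\ref{L:ggquasib} and standard duality of locally free sheaves, I do not anticipate any substantive obstacle.
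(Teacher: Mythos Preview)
Your proposal is correct and is precisely the intended derivation: the paper states the corollary without proof, as an immediate specialization of Lemma~\ref{L:ggquasib} to $\scf=\sco_{\{x\}}$, and your argument spells out exactly that specialization together with the standard identification of $\text{Hom}(P(E)^\ast,\sco_\p)$ with $\tH^0(P(E))$ and of the image of $s^\ast$ with $\sci_{Z(s)}$.
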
 

\begin{proof}
The hypothesis $\tH^i(E^\ast) = 0$, $i = 0,\, 1$, implies that any morphism 
$P(E)^\ast \ra \sco_\p$ can be uniquely extended to a morphism 
$\tH^0(E)\otimes_k \sco_\p \ra \sco_\p$. 
\end{proof}

Another useful technique that can be used in the study of globally generated 
vector bundles on projective spaces is to classify them on low dimensional 
projective spaces and then to try to decide whether they extend, as globally 
generated vector bundles, to higher dimensional projective spaces or not. An 
elementary result one can use for this purpose is the following slight 
generalization of the well known fact asserting that if the restriction of a 
vector bundle on $\p^n$, $n \geq 3$, to a hyperplane splits (as a direct sum 
of line bundles) then the bundle splits, too. 

\begin{lemma}\label{L:h1=0} 
Let $S = k[X_0,\ldots ,X_n]$ be the projective coordinate ring of $\p^n$, 
$n \geq 3$, let 
$H \subset \p^n$ be a hyperplane of equation $h = 0$, $h\in S_1$, and let 
$E$ be a vector bundle on $\p^n$. Consider a minimal graded free resolution 
of the graded $S$-module ${\fam0 H}^0_\ast(E) := 
\bigoplus_{i\in \z}{\fam0 H}^0(E(i))\, :$ 
\[
0 \lra L_{n-1} \lra \cdots \lra L_0 \lra {\fam0 H}^0_\ast(E) \lra 0\, .
\]
Then$\, :$ \emph{(a)} ${\fam0 H}^1_\ast(E) = 0$ iff $L_{n-1} = 0\, ;$   

\emph{(b)} If ${\fam0 H}^1_\ast(E) = 0$ then $L_\bullet /hL_\bullet$ is a 
minimal free resolution of the graded $S/hS$-module ${\fam0 H}^0_\ast(E_H)$. 
\end{lemma}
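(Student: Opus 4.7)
The proof rests on the standard correspondence between the minimal graded free resolution of the finitely generated graded $S$-module $M := \tH^0_\ast(E)$ and its depth. Because $E$ is a vector bundle on $\p^n$ one has $\widetilde{M} = E$ and the tautological isomorphism $M \izo \tH^0_\ast(E)$, which via the local cohomology long exact sequence forces $\tH^0_\fm(M) = \tH^1_\fm(M) = 0$ and hence $\text{depth}_S\, M \geq 2$. The Auslander--Buchsbaum formula then gives $\text{pd}_S\, M \leq n-1$, which is exactly why the resolution displayed in the statement terminates at $L_{n-1}$.

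For part (a), I would invoke the canonical isomorphisms $\tH^i_\fm(M) \simeq \tH^{i-1}_\ast(E)$ for every $i \geq 2$, which come from the same local cohomology sequence (equivalently, from the \v{C}ech complex). Thus $\tH^1_\ast(E) = 0$ is equivalent to $\tH^2_\fm(M) = 0$, hence to $\text{depth}_S\, M \geq 3$, hence, by Auslander--Buchsbaum, to $\text{pd}_S\, M \leq n-2$, i.e.\ to $L_{n-1} = 0$.

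For part (b), I would tensor the exact sequence $0 \ra \sco_\p(-1) \xra{h} \sco_\p \ra \sco_H \ra 0$ with $E$ and take $\tH^0_\ast$, obtaining
\[
0 \lra M(-1) \xra{\,h\,} M \lra \tH^0_\ast(E_H) \lra \tH^1_\ast(E(-1)) \xra{\,h\,} \tH^1_\ast(E) \lra \cdots
\]
Under the hypothesis $\tH^1_\ast(E) = 0$ this collapses to a short exact sequence
\[
0 \lra M(-1) \xra{\,h\,} M \lra \tH^0_\ast(E_H) \lra 0,
\]
which simultaneously exhibits $h$ as a nonzerodivisor on $M$ and identifies $M/hM \izo \tH^0_\ast(E_H)$ as $S/hS$-modules. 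Since $h$ is NZD on $M$, the Koszul resolution $0 \ra S(-1) \xra{h} S \ra S/hS \ra 0$ yields $\text{Tor}^S_i(M, S/hS) = 0$ for all $i \geq 1$, so $L_\bullet/hL_\bullet = L_\bullet \otimes_S S/hS$ is a free resolution of $M/hM \simeq \tH^0_\ast(E_H)$ over $S/hS$. Minimality is preserved because the differentials of the minimal resolution $L_\bullet$ have matrix entries in $\fm_S = (X_0,\dots,X_n)$, whose reductions mod $h$ lie in the irrelevant maximal ideal $\fm_S/(h)$ of $S/hS$.

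Neither part presents a substantial obstacle. The only points that require a little attention are the standard identification of $\tH^i_\fm(M)$ with shifted sheaf cohomology of $E$, and the Nakayama-style verification that minimality descends along $S \ra S/hS$; both are entirely routine.
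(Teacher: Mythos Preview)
Your proof is correct and follows essentially the same approach as the paper. For (a), the paper simply cites Eisenbud's \emph{Commutative Algebra} (Theorems~A4.1 and~A4.2) and \cite[Theorem~1.1]{c}, whose content is precisely the Auslander--Buchsbaum / local cohomology argument you spell out; for (b), the paper's argument is identical to yours (restriction sequence, $h$ regular on $M$, vanishing of $\mathrm{Tor}$), though it does not explicitly comment on minimality as you do.
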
 

\begin{proof}
(a) follows from \cite[Theorem~A4.1~and~Theorem~A4.2]{eca}. A direct proof of 
these results in the case of graded modules over a polynomial ring can be 
found in \cite[Theorem~1.1]{c}.  

(b) Using the exact sequences$\, :$ 
\[
0 \lra \tH^0(E(i-1)) \overset{h}{\lra} \tH^0(E(i)) \lra 
\tH^0(E_H(i)) \lra \tH^1(E(i-1)) = 0
\]
one deduces that $\tH^0_\ast(E_H) \simeq \tH^0_\ast(E)/h\tH^0_\ast(E)$. Since 
$h$ is $\tH^0_\ast(E)$-regular it follows that $\text{Tor}_i^S(\tH^0_\ast(E), 
S/hS) = 0$, $\forall \, i > 0$, hence, by applying $-\otimes_SS/hS$ to the 
exact sequence $L_\bullet \ra \tH^0_\ast(E) \ra 0$ one gets an exact sequence 
$L_\bullet/hL_\bullet \ra \tH^0_\ast(E_H) \ra 0$.  
\end{proof} 

\begin{remark}\label{R:reslifting}
Under the hypothesis of Lemma~\ref{L:h1=0}, assume that $\tH^1_\ast(E) = 0$. 
Let 
\[
0 \lra K_{n-2} \lra \cdots \lra K_0 \lra \tH^0_\ast(E_H) \lra 0
\]
be a graded free resolution of the graded $S/hS$-module $\tH^0_\ast(E_H)$. 
Then there exists a graded free resolution$\, :$ 
\[
0 \lra L_{n-2} \lra \cdots \lra L_0 \lra \tH^0_\ast(E) \lra 0
\]
of the graded $S$-module $\tH^0_\ast(E)$ such that $L_\bullet/hL_\bullet = 
K_\bullet$ (that is, these two complexes are identical, not only isomorphic). 

\emph{Indeed}, let $0 \ra L_{n-2}^\prim \ra \cdots 
\ra L_0^\prim \ra \tH^0_\ast(E) \ra 0$ be a \emph{minimal} graded free 
resolution of $\tH^0_\ast(E)$. By Lemma~\ref{L:h1=0}(b) and by a general fact 
about graded free resolutions over polynomial rings, $K_\bullet$ is isomorphic 
to the direct sum of $L_\bullet^\prim/hL_\bullet^\prim$ and of some complexes of 
the form$\, :$ 
\[
\cdots \lra 0 \lra (S/hS)(a) \overset{c}{\lra} (S/hS)(a) \lra 0 
\lra \cdots  
\]     
where $a$ is an integer and $c$ a nonzero constant. It suffices, now, to notice 
that if $P$ is a graded free $S$-module (of finite rank) then any automorphism 
of the graded $S/hS$-module $P/hP$ lifts to an automorphism of $P$. 
\end{remark} 

\begin{lemma}\label{L:hieh=0}
Let $E$ be a vector bundle on $\p^n$, $n \geq 3$, let $H \subset \p^n$ be a 
hyperplane, and let $i\geq 0$ and $a < b$ be fixed integers. Then$\, :$

\emph{(a)} ${\fam0 h}^i(E(b)) - {\fam0 h}^i(E(a)) \leq 
\sum_{l=a+1}^b{\fam0 h}^i(E_H(l))\, ;$ 

\emph{(b)} ${\fam0 h}^{i+1}(E(a)) - {\fam0 h}^{i+1}(E(b)) \leq 
\sum_{l=a+1}^b\dim {\fam0 Coker}({\fam0 H}^i(E(l)) \ra {\fam0 H}^i(E_H(l)))$.  
\qed 
\end{lemma}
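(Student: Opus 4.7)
The plan is to extract both inequalities by telescoping the long exact cohomology sequence associated to the restriction sequence. Let $h \in S_1$ be an equation of $H$; tensoring $0 \lra \sco_\p(-1) \overset{h}{\lra} \sco_\p \lra \sco_H \lra 0$ with $E(l)$ gives the short exact sequence $0 \lra E(l-1) \lra E(l) \lra E_H(l) \lra 0$, whence a long exact sequence
\[
\cdots \lra \tH^i(E(l-1)) \lra \tH^i(E(l)) \lra \tH^i(E_H(l)) \lra \tH^{i+1}(E(l-1)) \lra \tH^{i+1}(E(l)) \lra \cdots
\]
which is the only input I would need.

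For part (a), exactness at $\tH^i(E(l))$ gives $\h^i(E(l)) - \h^i(E(l-1)) \leq \dim \text{Image}(\tH^i(E(l)) \ra \tH^i(E_H(l))) \leq \h^i(E_H(l))$. Summing this inequality over $l = a+1,\ldots,b$ telescopes the left-hand side to $\h^i(E(b)) - \h^i(E(a))$, which yields (a).

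For part (b), exactness at $\tH^{i+1}(E(l-1))$ identifies the kernel of $\tH^{i+1}(E(l-1)) \ra \tH^{i+1}(E(l))$ with the image of the connecting map $\tH^i(E_H(l)) \ra \tH^{i+1}(E(l-1))$, and exactness at $\tH^i(E_H(l))$ identifies this image with $\text{Coker}(\tH^i(E(l)) \ra \tH^i(E_H(l)))$. Thus
\[
\h^{i+1}(E(l-1)) = \dim \text{Image}(\tH^{i+1}(E(l-1)) \ra \tH^{i+1}(E(l))) + \dim \text{Coker}(\tH^i(E(l)) \ra \tH^i(E_H(l))),
\]
and since the image term is at most $\h^{i+1}(E(l))$ we obtain $\h^{i+1}(E(l-1)) - \h^{i+1}(E(l)) \leq \dim \text{Coker}(\tH^i(E(l)) \ra \tH^i(E_H(l)))$. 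Summing for $l = a+1,\ldots,b$ telescopes the left-hand side to $\h^{i+1}(E(a)) - \h^{i+1}(E(b))$ and gives (b).

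There is essentially no obstacle: the lemma is a purely formal bookkeeping statement extracted from one long exact sequence. The only thing to be mindful of is the direction of the inequality in (b), namely that one bounds the drop of $\h^{i+1}$ by summing cokernels of the restriction maps rather than the restriction cohomologies themselves, so one should record the identification of the connecting map's image with the cokernel before summing.
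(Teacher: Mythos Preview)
Your proof is correct and is exactly the standard argument the paper has in mind: the lemma is stated with a bare \qed and no proof, so the authors regard the telescoping of the long exact sequence of $0 \ra E(l-1) \ra E(l) \ra E_H(l) \ra 0$ as routine. Your write-up fills in precisely that routine.
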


\begin{lemma}\label{L:h1ehast=0}
Let $E$ be a vector bundle on $\p^n$, $n \geq 4$, and let $H \subset \p^n$ be 
a hyperplane. Then$\, :$   

\emph{(a)} ${\fam0 H}^0(E^\ast) = 0$ $\Rightarrow$  
${\fam0 h}^0(E_H^\ast) \leq \sum_{l\leq -1}{\fam0 h}^1(E_H^\ast(l))$.  

\emph{(b)} ${\fam0 H}^1(E^\ast) = 0$ $\Rightarrow$ ${\fam0 h}^1(E_H^\ast) \leq  
\sum_{l>-n}\dim {\fam0 Coker}({\fam0 H}^{n-3}(E(l)) \ra {\fam0 H}^{n-3}(E_H(l)))$. 
\end{lemma}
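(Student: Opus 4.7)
My plan is to derive both inequalities from the restriction short exact sequence $0 \to E^\ast(-1) \to E^\ast \to E_H^\ast \to 0$ combined with the telescoping bounds of Lemma~\ref{L:hieh=0}, using Serre duality and Serre vanishing to kill the boundary terms at the ``far'' end of the range. In both parts the first step will be a one-line consequence of the restriction LES, and the second step will be a single application of Lemma~\ref{L:hieh=0}.

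For (a), the hypothesis $\tH^0(E^\ast) = 0$ forces $\tH^0(E^\ast(-1)) = 0$ as well, since $E^\ast$ is torsion-free (a nonzero section of $E^\ast(-1)$ would give a nonzero section of $E^\ast$ after multiplication by any linear form). The long exact sequence attached to the restriction sequence then yields the injection $\tH^0(E_H^\ast) \hookrightarrow \tH^1(E^\ast(-1))$, so $\h^0(E_H^\ast) \leq \h^1(E^\ast(-1))$. I apply Lemma~\ref{L:hieh=0}(a) to $E^\ast$ with $i = 1$, $b = -1$, and $a$ chosen so negative that $\h^1(E^\ast(a)) = 0$; such $a$ exists because Serre duality gives $\tH^1(E^\ast(a))^\ast \simeq \tH^{n-1}(E(-a-n-1))$, and this vanishes for $a \ll 0$ by Serre vanishing (using $n - 1 \geq 1$). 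The telescoping bound then delivers $\h^1(E^\ast(-1)) \leq \sum_{l \leq -1}\h^1(E_H^\ast(l))$, which combined with the previous step gives (a).

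For (b), I first translate the quantity of interest via Serre duality on $H \simeq \p^{n-1}$: $\h^1(E_H^\ast) = \h^{n-2}(E_H(-n))$. Next, the long exact sequence associated to $0 \to E(-n-1) \to E(-n) \to E_H(-n) \to 0$, together with the Serre-dual vanishing $\tH^{n-1}(E(-n-1))^\ast \simeq \tH^1(E^\ast) = 0$, shows that $\tH^{n-2}(E(-n)) \to \tH^{n-2}(E_H(-n))$ is surjective, hence $\h^1(E_H^\ast) \leq \h^{n-2}(E(-n))$. Finally, I apply Lemma~\ref{L:hieh=0}(b) to $E$ with $i = n-3$, $a = -n$, and $b \gg 0$; the vanishing $\h^{n-2}(E(b)) = 0$ holds by Serre vanishing (valid since $n - 2 \geq 2$), so the bound reduces to $\h^{n-2}(E(-n)) \leq \sum_{l > -n}\dim\Cok(\tH^{n-3}(E(l)) \to \tH^{n-3}(E_H(l)))$, which is exactly what (b) claims.

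There is no real obstacle; the argument is formal once the two telescoping bounds of Lemma~\ref{L:hieh=0} are in hand. The only point needing a bit of care is the asymmetry between the two parts: (a) is handled ``from the left'' by working directly with $E^\ast$, whereas (b) must be handled ``from the right'' by first passing through Serre duality to $E$ and $E_H$ at twist $-n$, so that the sharper cokernel estimate of Lemma~\ref{L:hieh=0}(b) can be invoked instead of the cruder estimate that one would get by staying on the $E^\ast$-side and applying Lemma~\ref{L:hieh=0}(a) to $\h^2(E^\ast(-1))$.
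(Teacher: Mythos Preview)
Your proof is correct and follows essentially the same approach as the paper: bound $\h^0(E_H^\ast)$ (resp.\ $\h^1(E_H^\ast)$) by $\h^1(E^\ast(-1))$ (resp.\ $\h^{n-2}(E(-n))$) via the restriction sequence, then invoke Lemma~\ref{L:hieh=0}. Your route in (b) --- Serre duality on $H$ first, then the restriction LES for $E$ --- is a harmless reordering of the paper's ``restriction LES for $E^\ast$, then Serre duality on $\p^n$''; both yield the same bound $\h^1(E_H^\ast) \leq \h^{n-2}(E(-n))$. The only superfluous step is the remark in (a) that $\tH^0(E^\ast(-1)) = 0$: the injection $\tH^0(E_H^\ast) \hookrightarrow \tH^1(E^\ast(-1))$ follows directly from $\tH^0(E^\ast) = 0$ without it.
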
 

\begin{proof}
Using the exact sequence $0 \ra E^\ast(-1) \ra E^\ast \ra E_H^\ast \ra 0$, one 
gets that $\h^0(E_H^\ast) \leq \h^1(E^\ast(-1))$ in case (a) 
and that $\h^1(E_H^\ast) \leq \h^2(E^\ast(-1)) = \h^{n-2}(E(-n))$ in case (b). 
One can apply, now, Lemma~\ref{L:hieh=0}. 
\end{proof}

\begin{lemma}\label{L:lift1}
Let $E$ be a vector bundle on $\p^n$, $n \geq 4$, such that 
${\fam0 H}^i(E^\ast) = 0$, $i = 0, 1$, and let $H \subset \p^n$ be a 
hyperplane. Assume that $E_H \simeq G \oplus t\sco_H$, with $G$ defined 
by an exact sequence$\, :$ 
\[
0 \lra s\sco_H \lra A \oplus P(B) \lra G \lra 0
\]
where $A$ and $B$ are direct sums of line bundles on $H$ such that 
${\fam0 H}^0(A^\ast) = 0$ and ${\fam0 H}^0(B^\ast) = 0$. Then $s = 0$ and 
$E \simeq {\widehat A} \oplus P({\widehat B})$, where ${\widehat A}$ 
and ${\widehat B}$ are direct sums of line bundles on $\p^n$ lifting $A$ and 
$B$, respectively.  
\end{lemma}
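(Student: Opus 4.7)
The plan is to construct a candidate bundle $E_0 := \widehat{A} \oplus P(\widehat{B})$ on $\p^n$ and prove $E \simeq E_0$. First, I would lift $A, B$ uniquely to direct sums $\widehat{A}, \widehat{B}$ of line bundles on $\p^n$ of the same degrees and set $E_0 := \widehat{A} \oplus P(\widehat{B})$. By Lemma \ref{L:pe}(a), together with the positivity of the degrees of the summands of $\widehat{A}$ (forced by $\tH^0(A^\ast) = 0$), one has $\tH^i(E_0^\ast) = 0$ for $i = 0, 1$. Restricting the tautological sequence $0 \to P(\widehat{B})^\ast \to \tH^0(\widehat{B}) \otimes \sco_{\p^n} \to \widehat{B} \to 0$ to $H$ and using the splitting $\tH^0(\widehat{B}) \simeq \tH^0(B) \oplus \tH^0(\widehat{B}(-1))$ (from $\tH^1(\widehat{B}(-1)) = 0$), I would obtain $P(\widehat{B})|_H \simeq P(B) \oplus \sco_H^{\op d}$ with $d := \h^0(\widehat{B}(-1))$, hence $E_0|_H \simeq A \oplus P(B) \oplus \sco_H^{\op d}$.

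Next, I would show $s = 0$ and $t = d$. Dualizing $0 \to \sco_H^{\op s} \to A \oplus P(B) \to G \to 0$ and using $\tH^0(A^\ast) = 0 = \tH^0(P(B)^\ast)$ (the latter by Lemma \ref{L:pe}(a)) together with $\tH^1(A^\ast(l)) = 0 = \tH^1(P(B)^\ast(l))$ for all $l$ (from the tautological sequence for $P(B)^\ast$ and $\dim H = n - 1 \geq 3$), one computes $\tH^0(G^\ast) = 0$ and $\tH^1(G^\ast(l)) \simeq \tH^0(B(l))$ for $l \leq -1$. Since $E_H^\ast \simeq G^\ast \oplus \sco_H^{\op t}$, Lemma \ref{L:h1ehast=0}(a) applied to $\tH^0(E^\ast) = 0$ yields $t \leq \sum_{l \leq -1}\h^0(B(l)) = d$. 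For the bound $s = 0$, apply the same argument to $P(E)$, which satisfies the hypotheses by Lemma \ref{L:pe}(a); by Lemma \ref{L:peh} and a snake-lemma computation showing $P(G) \simeq P(A) \oplus B$, the restriction $P(E)|_H$ has the analogous form with $A, B$ interchanged, and the preceding inequality applied to $P(E)$ forces $s = 0$. A rank comparison then yields $t = d$.

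Finally, with $s = 0$ and $t = d$ we have $E|_H \simeq E_0|_H$. I would lift this isomorphism of restrictions to a morphism $\phi : E_0 \to E$ on $\p^n$ using the tautological sequences for $\widehat{A}$ and $P(\widehat{B})$ together with the hypothesis $\tH^i(E^\ast) = 0$ ($i = 0, 1$) to check that the lifting obstructions vanish. Since $\phi|_H$ is an isomorphism, $\phi$ itself is, and so $E \simeq \widehat{A} \oplus P(\widehat{B})$. The main obstacle is the middle step: establishing $s = 0$ via the $P$-duality argument, which hinges on the snake-lemma identification $P(G) \simeq P(A) \oplus B$ and a careful tracking of the combinatorial parameters under $P$.
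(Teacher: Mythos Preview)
Your approach is different from the paper's, and the step you flag as ``the main obstacle'' is where the argument breaks down. The $P$-duality argument does not give $s=0$. Applying Lemma~\ref{L:h1ehast=0}(a) to $P(E)$ bounds $\h^0((P(E)\vert_H)^\ast)$, which by Lemma~\ref{L:peh} equals $t' := \dim\Ker(\tH^0(E)\to\tH^0(E_H))$; this is a bound on the number of trivial summands of $P(E)\vert_H$, \emph{not} on $s=\h^1(E_H^\ast)$. There is no identification $s=t'$ in general, so ``the preceding inequality applied to $P(E)$'' does not force $s=0$. The paper instead applies Lemma~\ref{L:h1ehast=0}(b) directly to $E$: since $\tH^{n-3}_\ast(A\oplus P(B))=0$ on $H\simeq\p^{n-1}$ (both summands have vanishing intermediate cohomology in this range for $n\geq 4$), Lemma~\ref{L:h1e=h1f} gives $\tH^{n-3}_\ast(E_H)=0$, and then Lemma~\ref{L:h1ehast=0}(b) yields $s=\h^1(E_H^\ast)=0$ immediately.

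Your lifting step is also underspecified. The obstruction to lifting a morphism $P(\widehat B)\vert_H\to E_H$ to $\p^n$ lives in $\tH^1(P(\widehat B)^\ast\otimes E(-1))$, which is not controlled by $\tH^i(E^\ast)=0$ alone. Once $s=0$ is established, however, one has $\tH^1_\ast(E_H)=0$ and hence (by Lemma~\ref{L:hieh=0}(a)) $\tH^1_\ast(E)=0$; the paper then invokes Lemma~\ref{L:h1=0} to lift the two-term resolution $0\to B^\ast\to A\oplus(\tH^0(B)^\ast\otimes\sco_H)\oplus\sco_H^{\op t}\to E_H\to 0$ to $\p^n$, and reads off the decomposition from the condition $\tH^i(E^\ast)=0$. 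If you want to salvage your lifting idea, you should lift this resolution rather than the isomorphism $E_0\vert_H\simeq E\vert_H$ directly --- but at that point you have reproduced the paper's proof.
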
 

\begin{proof} 
The fact that $s = 0$ follows from Lemma~\ref{L:h1ehast=0}(b) (taking into 
account Lemma~\ref{L:h1e=h1f}).  
Let $S = k[X_0,\ldots ,X_n]$ be the projective coordinate ring of $\p^n$ and 
let $h = 0$ be an equation of $H$. One has an exact sequence$\, :$  
\[
0 \lra B^\ast \lra A \oplus (\tH^0(B)^\ast\otimes_k\sco_H) 
\oplus t\sco_H \lra E_H \lra 0\, .
\] 
Applying $\tH^0_\ast(-)$ to this exact sequence one gets a minimal graded free 
resolution of the $S/hS$-module $\tH^0_\ast(E_H)$. 
It follows, from Lemma~\ref{L:h1=0}, that the above resolution of $E_H$ must 
be the restriction to $H$ of a resolution of $E$ having the same shape, 
namely$\, :$ 
\[
0 \lra {\widehat B}^\ast \overset{u}{\lra}  
{\widehat A} \oplus (\tH^0(B)^\ast\otimes_k\sco_\p) 
\oplus t\sco_\p \lra E \lra 0\, .
\]
The condition $\tH^i(E^\ast) = 0$, $i = 0, 1$, is equivalent to the fact that 
$\tH^0(u^\ast)$ is an isomorphism. Since $\tH^0({\widehat A}^\ast) = 0$, it 
follows that the component $(\tH^0(B)\otimes_k\sco_\p)\oplus 
t\sco_\p \ra {\widehat B}$ of $u^\ast$ can be identified with the 
evaluation morphism $\tH^0({\widehat B})\otimes_k\sco_\p \ra {\widehat B}$. 
Since, for $a > 0$, any morphism $\sco_\p(-a) \ra {\widehat B}$ factorizes 
through the evaluation morphism of ${\widehat B}$, one can assume (up to an 
automorphism of ${\widehat A} \oplus 
(\tH^0({\widehat B})^\ast\otimes_k\sco_\p)$) 
that the component ${\widehat A}^\ast \ra {\widehat B}$ of $u^\ast$ is 0. One 
gets, now, that $E \simeq {\widehat A} \oplus P({\widehat B})$. 
\end{proof}

\begin{lemma}\label{L:lift2}
Let $E$ be a vector bundle on $\p^n$, $n \geq 4$, such that 
${\fam0 H}^i(E^\ast) = 0$, $i = 0,\, 1$, and let $\Pi \subset \p^n$ be a fixed 
$3$-plane. Assume that $E_\Pi \simeq G \oplus t\sco_\Pi$, with $G$ 
defined by an exact sequence$\, :$ 
\[
0 \lra s\sco_\Pi \lra A \oplus P(B) \oplus \Omega_\Pi(2) \lra 
G \lra 0  
\]
where $A$ and $B$ are direct sums of line bundles on $\Pi$ such that 
${\fam0 H}^0(A^\ast) = 0$ and ${\fam0 H}^0(B^\ast) = 0$. Then one of the 
following holds$\, :$ 

\emph{(i)} $A \simeq A_1 \oplus (m - 4)\sco_\Pi(1)$ for some $m$ with 
$4 \leq m \leq n+1$ and $E \simeq {\widehat A}_1 \oplus E^\prim$, where 
${\widehat A}_1$ is a direct sum of line bundles on $\p^n$ lifting $A_1$ and 
$E^\prim$ is defined by an exact sequence$\, :$ 
\[
0 \lra E^\prim \lra m\sco_\p(1) \oplus P(\widehat B) \lra \sco_\p(2) 
\lra 0
\]
such that ${\fam0 Coker}(m\sco_\p(1) \ra \sco_\p(2)) \simeq 
\sco_\Lambda(2)$ for some $m$-codimensional linear subspace $\Lambda \subset 
\p^n$ not intersecting $\Pi$$\, ;$

\emph{(ii)} $B \simeq B_1 \oplus (m - 4)\sco_\Pi(1)$ for some $m$ with 
$4 \leq m \leq n+1$ and $E \simeq P({\widehat B}_1) \oplus E^\secund$, where 
${\widehat B}_1$ is a direct sum of line bundles on $\p^n$ lifting $B_1$ and 
$E^\secund$ admits a minimal resolution of the form$\, :$ 
\[
0 \lra \sco_\p(-2) \overset{d_2}{\lra} p\sco_\p \oplus 
m\sco_\p(-1) \overset{d_1}{\lra} {\widehat A} \oplus 
{\textstyle \binom{m}{2}}\sco_\p \lra E^\secund \lra 0
\] 
for some $p \leq s$, whose linear part $0 \ra \sco_\p(-2) \ra 
m\sco_\p(-1) \ra \binom{m}{2}\sco_\p$ is the dual of the first part of 
the Koszul resolution$\, :$ 
\[
{\textstyle \binom{m}{2}}\sco_\p \overset{\delta_2}{\lra} 
m\sco_\p(1)  \overset{\delta_1}{\lra} 
\sco_\p(2) \overset{\delta_0}{\lra} \sco_\Lambda(2) \lra 0
\] 
of the structure sheaf of an $m$-codimensional linear subspace 
$\Lambda \subset \p^n$ not intersecting $\Pi$, and such that, denoting by 
$d_2^\prim$ the component $\sco_\p(-2) \ra p\sco_\p$ of $d_2$ and by 
$\sigma$ the component $p\sco_\p \ra {\widehat A}$ of $d_1$, 
${\fam0 H}^0(\delta_0 \circ d_2^{\prim \ast})$ injects 
${\fam0 H}^0(p\sco_\p)$ into ${\fam0 H}^0(\sco_\Lambda(2))$ and the 
sequence$\, :$ 
\[
{\widehat A}^\ast \overset{\sigma^\ast}{\lra} p\sco_\p  
\xra{\delta_0 \, \circ \, d_2^{\prim \ast}} \sco_\Lambda(2) \lra 0
\]
is exact. 
\end{lemma} 

\noindent 
Notice that, in case (i) of Lemma~\ref{L:lift2}, if $m = n + 1$ (i.e., if 
$\Lambda = \emptyset$) then $E \simeq {\widehat A}_1 \oplus P(\widehat B) 
\oplus \Omega_\p(2)$ and, in case (ii), if $m = n + 1$ (i.e., if 
$\Lambda = \emptyset$ and $s = 0$) then $E \simeq {\widehat A} \oplus 
P({\widehat B}_1) \oplus \Omega_\p^{n-2}(n-1)$. 

\begin{proof}
Consider a saturated flag $\Pi = \Pi_0 \subset \Pi_1 \subset \ldots \subset 
\Pi_{n-3} = \p^n$ of linear subspaces of $\p^n$ and let $E_i := E \vb \Pi_i$. 
In particular, $E_0 = E_\Pi$. One has $\tH^1_\ast(E_0) \simeq k(2)$. It follows, 
applying inductively Lemma~\ref{L:hieh=0}(a), that $\tH^1(E_i(l)) = 0$ for 
$l \leq -3$, $i = 1, \ldots, n-3$, and that, moreover, $\tH^1(E_1(-2))$ 
injects into $\tH^1(E_0(-2)) \simeq k$. 

\vskip2mm

\noindent
{\bf Case 1.}\quad $\tH^1(E_1(-2)) \neq 0$. 

\vskip2mm

\noindent
In this case, applying Lemma~\ref{L:hieh=0}(b) to $E_1$ one gets 
$\tH^2_\ast(E_1) = 0$. Applying now, inductively, Lemma~\ref{L:hieh=0} one 
gets that $\tH^p_\ast(E_i) = 0$ for $2 \leq p \leq i+1$, $i = 1, \ldots , n-3$. 
Applying Lemma~\ref{L:h1ehast=0}(b), inductively, to $E = E_{n-3}$, 
$E_{n-4}$, $\ldots$, $E_1$ one gets that $\tH^1(E_i^\ast) = 0$, $i = n-4, \ldots, 
0$. In particular, $s = \h^1(E_0^\ast) = 0$. 

Since $\tH^1(E_i(-3)) = 0$, $i = 0, \ldots , n-3$, and $\tH^2(E_i(-3)) = 0$, 
$i = 1, \ldots, n-3$, it follows that $\tH^1(E_i(-2)) \izo \tH^1(E_{i-1}(-2))$, 
$i = 1, \ldots , n-3$. A non-zero element of $\tH^1(E(-2))$ defines an 
extension$\, :$ 
\[
0 \lra E \lra F \lra \sco_\p(2) \lra 0 
\]     
with $F_\Pi \simeq A \oplus P(B) \oplus 4\sco_\Pi(1) \oplus 
t\sco_\Pi$. Since $\tH^i(F^\ast) = 0$, $i = 0,\, 1$, Lemma~\ref{L:lift1} 
implies that $F \simeq {\widehat A} \oplus 4\sco_\p(1) \oplus 
P(\widehat B)$, whence an exact sequence$\, :$ 
\[
0 \lra E \lra {\widehat A} \oplus 4\sco_\p(1) \oplus P(\widehat B) 
\overset{\alpha}{\lra} \sco_\p(2) \lra 0\, .
\] 
Since any automorphism of $A \oplus 4\sco_\Pi(1) \oplus P(B) \oplus 
t\sco_\Pi$ maps $A \oplus 4\sco_\Pi(1)$ isomorphically onto itself 
and since any automorphism of $A \oplus 4\sco_\Pi(1)$ lifts to an 
automorphism of ${\widehat A} \oplus 4\sco_\p(1)$, one can assume that 
the component $A \ra \sco_\Pi(2)$ of $\alpha \vb \Pi$ is 0 and that the 
component $4\sco_\Pi(1) \ra \sco_\Pi(2)$ is an epimorphism. Consequently, 
one can find a decomposition ${\widehat A} \simeq {\widehat A}_1 \oplus 
(m - 4)\sco_\p(1)$ (restricting to a decomposition $A \simeq A_1 \oplus 
(m - 4)\sco_\Pi(1)$) for some $m$ with $4 \leq m \leq n+1$ such that 
$\alpha \vb {\widehat A}_1 = 0$ and the component $(m - 4)\sco_\p(1)   
\oplus 4\sco_\p(1) \ra \sco_\p(2)$ of $\alpha$  
is defined by $m$ linearly independent linear forms on $\p^n$. 
These linear forms define an $m$-codimensional linear subspace $\Lambda$ of 
$\p^n$ not intersecting $\Pi$. It follows that $E \simeq {\widehat A}_1 \oplus 
E^\prim$, where $E^\prim$ is defined by the exact sequence$\, :$ 
\[
0 \lra E^\prim \lra m\sco_\p(1) \oplus P(\widehat B) 
\overset{\alpha^\prim}{\lra} \sco_\p(2) \lra 0\, ,
\] 
$\alpha^\prim$ being the restriction of $\alpha$. 

\vskip2mm

\noindent
{\bf Case 2.}\quad $\tH^1(E_1(-2)) = 0$. 

\vskip2mm 

\noindent 
In this case, applying Lemma~\ref{L:hieh=0}(a) inductively one gets that 
$\tH^1_\ast(E_i) = 0$, $i = 1, \ldots , n-3$. It follows, now, from 
Lemma~\ref{L:h1=0}(b) that $E$ admits a resolution of the form$\, :$ 
\[
0 \ra \sco_\p(-2) \overset{\beta}{\lra} 
s\sco_\p \oplus 4\sco_\p(-1) \oplus {\widehat B}^\ast 
\overset{\alpha}{\lra} 
{\widehat A} \oplus 6\sco_\p \oplus (\tH^0(B)^\ast\otimes_k\sco_\p) 
\oplus t\sco_\p \ra E \ra 0\, .  
\]  
Let us denote this resolution by $\scl_\bullet$. Since the restriction to 
$\Pi$ of the component $\sco_\p(-2) \ra 4\sco_\p(-1)$ of $\beta$ is a 
locally split monomorphism and since the restriction to $\Pi$ of the 
component $\sco_\p(-2) \ra {\widehat B}^\ast$ of $\beta$ is 0, it follows  
that there exists a decomposition 
${\widehat B} \simeq {\widehat B}_1 \oplus (m - 4)\sco_\p(1)$ for some $m$ 
with $4 \leq m \leq n+1$ (restricting to a decomposition $B \simeq B_1 \oplus 
(m - 4)\sco_\Pi$) such that the component $\sco_\p(-2) \ra 
{\widehat B}_1^\ast$ of $\beta$ is 0 and the component $\sco_\p(-2) 
\ra 4\sco_\p(-1) \oplus (m - 4)\sco_\p(-1)$ of $\beta$ is defined by $m$ 
linearly independent linear forms $h_1, \ldots , h_m$ on $\p^n$. 
These linear forms define an $m$-codimensional linear subspace $\Lambda$ of 
$\p^n$ not intersecting $\Pi$. 

Now, cancelling some direct summands isomorphic to $\sco_\p$ from $\scl_1$ 
and $\scl_0$, one gets a \emph{minimal} resolution for $E$ of the form$\, :$ 
\[
0 \ra \sco_\p(-2) \overset{\beta^\prim}{\lra} 
p\sco_\p \oplus m\sco_\p(-1) \oplus {\widehat B}_1^\ast 
\overset{\alpha^\prim}{\lra} {\widehat A} \oplus q\sco_\p  
\ra E \ra 0 
\]   
for some $p \leq s$ and some $q$. Of course, the component $\sco_\p(-2) \ra 
m\sco_\p(-1)$ of $\beta^\prim$ is defined by the linear forms 
$h_1, \ldots , h_m$ considered above, the component 
$\sco_\p(-2) \ra {\widehat B}_1^\ast$ of $\beta^\prim$ is 0, and the component 
$p\sco_\p \ra q\sco_\p$ of $\alpha^\prim$ is 0. 

The hypothesis $\tH^i(E^\ast) = 0$, $i = 0,\, 1$, becomes, now, equivalent to 
the exactness of the sequence$\, :$ 
\[
0 \lra \tH^0(q\sco_\p) \xra{\tH^0(\alpha^{\prim \ast})} 
\tH^0(p\sco_\p) \oplus \tH^0(m\sco_\p(1)) \oplus 
\tH^0({\widehat B}_1) \xra{\tH^0(\beta^{\prim \ast})} 
\tH^0(\sco_\p(2))\, .
\] 
Since $\Ker \tH^0(\beta^{\prim \ast}) = \text{Im}\, \tH^0(\alpha^{\prim \ast}) 
\subseteq \tH^0(m\sco_\p(1)) \oplus \tH^0({\widehat B}_1)$ it follows 
that$\, :$ 
\[
\Ker \tH^0(\beta^{\prim \ast}) = \Ker(\tH^0(m\sco_\p(1)) \lra 
\tH^0(\sco_\p(2))) \oplus \tH^0({\widehat B}_1)\, .
\] 
One deduces, in particular, that the composite map$\, :$ 
\[
\tH^0(p\sco_\p) \xra{\tH^0(\beta^{\prim \ast})} \tH^0(\sco_\p(2)) 
\lra \Cok(\tH^0(m\sco_\p(1)) \lra \tH^0(\sco_\p(2))) \simeq 
\tH^0(\sco_\Lambda(2)) 
\]
is \emph{injective}. But $\Ker(\tH^0(m\sco_\p(1)) \ra \tH^0(\sco_\p(2)))$ 
consists of the Koszul relations between the linear forms $h_1, \ldots , h_m$. 
Since $\tH^0(\alpha^{\prim \ast})$ maps $\tH^0(q\sco_\p)$ isomorphically 
onto $\Ker \tH^0(\beta^{\prim \ast})$, there exists a decomposition$\, :$ 
\[
q\sco_\p \simeq {\textstyle \binom{m}{2}}\sco_\p \oplus 
(\tH^0({\widehat B}_1)\otimes_k\sco_\p)
\]
such that $\tH^0(\alpha^{\prim \ast})$ maps $\tH^0(\binom{m}{2}\sco_\p)$ 
isomorphically onto the kernel of 
$\tH^0(m\sco_\p(1)) \ra \tH^0(\sco_\p(2))$ and 
such that the restriction of 
$\alpha^{\prim \ast}$ to $\tH^0({\widehat B}_1)\otimes \sco_\p$ 
coincides with the evaluation morphism $\tH^0({\widehat B}_1)\otimes \sco_\p 
\ra {\widehat B}_1$. It follows that $E \simeq P({\widehat B}_1) \oplus 
E^\secund$, with $E^\secund$ admitting a minimal resolution of the form$\, :$ 
\[
0 \lra \sco_\p(-2) 
\xra{\left(\begin{smallmatrix} d_2^\prim\\ \delta_1^\ast 
\end{smallmatrix}\right)} 
p\sco_\p \oplus m\sco_\p(-1)  
\xra{\left(\begin{smallmatrix} \sigma & d_1^\prim\\ 0 & \delta_2^\ast 
\end{smallmatrix}\right)} 
{\widehat A} \oplus {\textstyle \binom{m}{2}}\sco_\p \lra E^\secund \lra 0\, ,
\]
where $\binom{m}{2}\sco_\p \overset{\delta_2}{\lra} m\sco_\p(1)  
\overset{\delta_1}{\lra} \sco_\p(2) \overset{\delta_0}{\lra} 
\sco_\Lambda(2) \ra 0$ is the Koszul complex associated to $h_1, \ldots , h_m$. 
Let us denote this resolution by $\scl_\bullet^\secund$. Its dual 
$\scl_\bullet^{\secund \ast}$ is the mapping cone of the morphism of 
complexes$\, :$ 
\[
\begin{CD}
{\textstyle \binom{m}{2}}\sco_\p @>{\delta_2}>> m\sco_\p(1) @>{\delta_1}>> 
\sco_\p(2)\\
@AAA @A{-d_1^{\prim \ast}}AA @AA{d_2^{\prim \ast}}A\\
0 @>>> {\widehat A}^\ast @>{\sigma^\ast}>> p\sco_\p  
\end{CD}
\]
It follows that the exactness of the sequence $\scl_0^{\secund \ast} \ra 
\scl_1^{\secund \ast} \ra \scl_2^{\secund \ast} \ra 0$ is equivalent to the 
exactness of the sequence$\, :$
\[
{\widehat A}^\ast \overset{\sigma^\ast}{\lra} p\sco_\p  
\xra{\delta_0 \, \circ \, d_2^{\prim \ast}} \sco_\Lambda(2) 
\lra 0\, .
\qedhere
\]
\end{proof}

\begin{remark}\label{R:lift2}
(a) If $F$ is a 0-regular vector bundle on $\p^n$ then $P(F)^\ast$ is 
1-regular (use the exact sequence $0 \ra P(F)^\ast \ra \tH^0(F)\otimes_k\sco_\p 
\ra F \ra 0$). It follows that if $a \geq 1$ is an integer and if $\Lambda$ 
is a linear subspace of $\p^n$ then any morphism $P(F) \ra \sco_\Lambda(a)$ 
lifts to a morphism $P(F) \ra \sco_\p(a)$. 

\vskip2mm 

(b) Let $\delta_1 : m\sco_\p(1) \ra \sco_\p(2)$ and $\psi : P(\widehat B) 
\ra \sco_\p(2)$ be the components of the morphism $\phi$ 
whose kernel is the bundle 
$E^\prim$ from the statement of Lemma~\ref{L:lift2}(i). The fact that $\phi$ 
is an epimorphism is equivalent to the fact that the composite morphism 
$P(\widehat B) \overset{\psi}{\lra} \sco_\p(2) \overset{\delta_0}{\lra} 
\sco_\Lambda(2)$ is an epimorphism. Since $\Ker \delta_1$ is globally generated 
and since $\tH^1(\Ker \delta_1) = 0$, one sees easily that $E^\prim$ is globally 
generated if and only if $\Ker(\delta_0 \circ \psi)$ is globally generated. 
Let $W\subseteq \tH^0(\sco_\Lambda(2))$ be the image of $\tH^0(\delta_0 \circ 
\psi) : \tH^0(P(\widehat B)) \ra \tH^0(\sco_\Lambda(2))$. Then, as in the proof  
of Lemma~\ref{L:ggquasib}(b), $\Ker(\delta_0 \circ \psi)$ is globally 
generated if and only if the bottom row of the diagram$\, :$ 
\[
\begin{CD}
{\widehat B}^\ast @>>> \tH^0(P(\widehat B))\otimes_k\sco_\p @>>> 
P(\widehat B) @>>> 0\\
@\vert @VVV @VV{\delta_0 \, \circ \, \psi}V\\
{\widehat B}^\ast @>>> W\otimes_k\sco_\p @>>> \sco_\Lambda(2) @>>> 0
\end{CD}
\]   
is exact. 

Conversely, since any morphism $P(\widehat B) \ra \sco_\Lambda(2)$ lifts to a 
morphism $P(\widehat B) \ra \sco_\p(2)$, to any exact sequence 
${\widehat B}^\ast \ra p\sco_\p \overset{\eta}{\lra} \sco_\Lambda(2) \ra 0$ 
with $\tH^0(\eta) : \tH^0(p\sco_\p) \ra \tH^0(\sco_\Lambda(2))$ injective 
one can associate an epimorphism $\phi : m\sco_\p(1) \oplus 
P(\widehat B) \ra \sco_\p(2)$ with $\Ker \phi$ globally generated. 

\vskip2mm

(c) Let $\Lambda$ be an $m$-codimensional linear subspace of $\p^n$ and 
consider a presentation ${\widehat B}^\ast \ra p\sco_\p \ra 
\sco_\Lambda(2) \ra 0$, with $\widehat B$ a direct sum of line bundles on 
$\p^n$ such that $\tH^0({\widehat B}^\ast) = 0$. Assume that $\Lambda \neq 
\emptyset$, i.e., that $m \leq n$. Then $p \geq n - m + 1$ and 
$\text{rk}\, {\widehat B}^\ast - p + 1 \geq m$, hence $\text{rk}\, 
{\widehat B}^\ast \geq n$, hence $c_1(\widehat B) \geq n$. It follows that if 
the bundle $E^\prim$ from the statement of Lemma~\ref{L:lift2}(i) is globally 
generated and $m \leq n$ then $c_1(E^\prim) \geq n + m - 2 \geq n + 2$. 

The same kind of argument shows that if $m \leq n$ in the statement of 
Lemma~\ref{L:lift2}(ii) then $c_1(E^\secund) \geq n + 2$. 

\vskip2mm 

(d) The functor $P(-)$ induces a certain bijection between the globally 
generated vector bundles described in Lemma~\ref{L:lift2}(i) and the 
bundles described in Lemma~\ref{L:lift2}(ii) (with $A$ and $B$ interchanged). 

\emph{Indeed}, let $E$ be a vector bundle on $\p^n$ as in 
Lemma~\ref{L:lift2}(i) and assume that, moreover, $E$ is globally generated 
(see item (b) of this remark). Then, by Lemma \ref{L:peh} and 
Lemma~\ref{L:pe}(b), $P(E)_\Pi \simeq G^\prim \oplus t^\prim \sco_\Pi$, with 
$G^\prim$ defined by an exact sequence$\, :$ 
\[
0 \lra s^\prim \sco_\Pi \lra B \oplus P(A) \oplus \Omega_\Pi(2) 
\lra G^\prim \lra 0\, .
\]
Using the description of $E$ from the statement of Lemma~\ref{L:lift2}(i), 
one sees easily that $\tH^1_\ast(P(E)) \simeq \tH^2_\ast(E^\ast) = 0$. The 
proof of Lemma~\ref{L:lift2} shows, now, that $P(E)$ is as in the 
statement of Lemma~\ref{L:lift2}(ii), with $A$ and $B$ interchanged. 
\end{remark}

We shall also need the following well known fact, which is a slight 
generalization of the Lemma of Castelnuovo-Mumford$\, :$ 

\begin{lemma}\label{L:cm} 
Let $\scf$ be a coherent sheaf on $\p^n$, let $S = k[X_0, \ldots ,X_n]$ be the 
projective coordinate ring of $\p^n$, and let $0 \leq i < n$ and $m$ be 
integers. Assume that ${\fam0 H}^j(\scf(m-j)) = 0$, for $i < j \leq n$. 
Then$\, :$  

\emph{(a)} ${\fam0 H}^j(\scf(m+1-j)) = 0$, for $i < j \leq n$$\, ;$  

\emph{(b)} The graded $S$-module ${\fam0 H}_\ast^i(\scf)$ is generated in 
degrees $\leq m-i$. 
\end{lemma}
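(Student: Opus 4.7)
The plan is to prove (a) and (b) together by induction on $n$, via restriction to a general hyperplane. Choose a general $h \in S_1$, let $H = V(h) \subset \p^n$, and replace $\scf$ by its quotient modulo the maximal subsheaf of $0$-dimensional support; this subsheaf has no higher cohomology and only a bounded contribution in degree zero, so neither the hypotheses nor the conclusions are affected. For generic $h$, multiplication by $h$ on $\scf$ is then injective, yielding a short exact sequence
\[
0 \lra \scf(-1) \xra{h} \scf \lra \scf_H \lra 0,
\]
with $\scf_H$ regarded as a coherent sheaf on $\p^{n-1}$.

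For (a), twisting this sequence by $m-j$ and reading off the long exact cohomology sequence, the hypotheses $\tH^j(\scf(m-j)) = 0$ and $\tH^{j+1}(\scf(m-(j+1))) = 0$ together force $\tH^j(\scf_H(m-j)) = 0$ for every $i < j \leq n-1$. Hence $\scf_H$ on $\p^{n-1}$ satisfies the analogous hypothesis of the lemma; by induction, $\tH^j(\scf_H(m+1-j)) = 0$ in the same range, and substituting back into the long exact sequence twisted by $m+1-j$ yields $\tH^j(\scf(m+1-j)) = 0$ for $i < j \leq n-1$. The remaining case $j = n$ is automatic, since $\scf_H$ lives on $\p^{n-1}$.

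For (b), iterating (a) yields $\tH^{i+1}(\scf(\ell)) = 0$ for every $\ell \geq m-i-1$. Consequently, for $\ell \geq m-i+1$ the long exact sequence identifies the cokernel of the multiplication $\tH^i(\scf(\ell-1)) \xra{h} \tH^i(\scf(\ell))$ with $\tH^i(\scf_H(\ell))$, while simultaneously the restriction maps $\tH^i(\scf(\ell')) \ra \tH^i(\scf_H(\ell'))$ are surjective for every $\ell' \geq m-i$. By the inductive hypothesis applied to $\scf_H$ on $\p^{n-1}$, the graded $S/hS$-module $\tH^i_\ast(\scf_H)$ is generated in degrees $\leq m-i$; lifting a set of such generators to $\tH^i_\ast(\scf)$ via the surjection and combining with multiplication by $h \in S_1$ exhibits $\tH^i(\scf(\ell)) = S_1 \cdot \tH^i(\scf(\ell-1))$ for every $\ell > m-i$.

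The main obstacle is the base case $n = i+1$, where the recursion for (b) fails because on $\p^{n-1} = \p^i$ the lemma would require $i < n' = i$. Part (a) still holds directly, since the top cohomology $\tH^{i+1}(\scf_H(m-i))$ on $\p^i$ vanishes automatically. Part (b) in this base case must be argued separately; one can either descend via the same hyperplane argument down to $n = 1$, $i = 0$ and lift elements of the fibre $\scf_H$ using a second generic linear form $h'$ (exploiting $\tH^1(\scf(\ell-1)) = 0$ for surjectivity), or invoke the classical Castelnuovo-Mumford generation theorem for the finitely generated graded $S$-module $\tH^i_\ast(\scf)$, which supplies the desired bound directly. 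All remaining steps are routine long exact sequence manipulations.
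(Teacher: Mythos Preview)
The paper does not prove this lemma; it is stated without proof as a well-known slight generalization of the Castelnuovo--Mumford lemma, so there is no argument in the paper to compare against. Your hyperplane-section induction is the standard approach and is correct in outline: part (a) is fine, and the inductive step for (b) is the usual diagram chase.

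Two points in the write-up deserve tightening. First, the reduction modulo the maximal $0$-dimensional subsheaf is unnecessary: for generic $h\in S_1$ one has $h(p)\neq 0$ at every $0$-dimensional associated point $p$ of $\scf$, so $h$ is already a unit on the stalk there and multiplication by $h$ is injective on $\scf$ without any modification. (If you do keep the reduction, note that for $i=0$ you must check that $\tH^0_\ast$ of a sheaf with $0$-dimensional support is generated in any single degree, which is true but is not what ``bounded contribution'' literally says.)

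Second, the base case $n=i+1$ for (b) is left vague. The phrase ``descend \dots\ down to $n=1$, $i=0$'' does not make sense with $i$ fixed, and the alternative appeal to a ``classical generation theorem for the finitely generated graded module $\tH^i_\ast(\scf)$'' is off target since $\tH^i_\ast(\scf)$ need not be finitely generated for $i\geq 1$ (e.g.\ $\tH^1_\ast(\sco_L)$ for a line $L\subset\p^2$). Your first idea---a second generic linear form $h'$---does work, and directly for arbitrary $i$: for $\ell\geq m-i+1$ the cokernel of $h:\tH^{n-1}(\scf(\ell-1))\to\tH^{n-1}(\scf(\ell))$ is $\tH^{n-1}(\scf_H(\ell))$, and the composite of multiplication by $h'$ with restriction to $H$ factors through $h'\vert_H:\tH^{n-1}(\scf_H(\ell-1))\to\tH^{n-1}(\scf_H(\ell))$. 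The latter is surjective for any $h'\notin kh$, since its cokernel is $\tH^{n-1}$ of a sheaf supported on $H\cap H'\simeq\p^{n-2}$, hence zero. Combined with the surjectivity of $\tH^{n-1}(\scf(\ell-1))\to\tH^{n-1}(\scf_H(\ell-1))$, this finishes the base case.
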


The following easy exercise concerning the notion of Castelnuovo-Mumford 
regularity is sometimes called the ``Lemma of Le Potier''. 

\begin{lemma}\label{L:LePotier}
Let $\scg$ be a coherent sheaf on $\p^n$, $n \geq 2$, $H \subset \p^n$ a 
hyperplane of equation $h = 0$, and $m$ an integer. Assume that the 
multiplication by $h : \scg(-1) \ra \scg$ is a monomorphism. If $\scg_H$ is 
$m$-regular then ${\fam0 H}^i(\scg(l)) = 0$ for $i \geq 2$ and $l \geq m-i$  
and, for every $l \geq m$, ${\fam0 h}^1(\scg(l-1)) \geq {\fam0 h}^1(\scg(l))$, 
with equality iff ${\fam0 H}^1(\scg(l-1)) = 0$. 
\end{lemma}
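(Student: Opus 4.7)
The plan is to exploit the short exact sequence
\[
0 \lra \scg(l-1) \overset{h}{\lra} \scg(l) \lra \scg_H(l) \lra 0
\]
provided by the hypothesis that multiplication by $h$ is a monomorphism, together with the long exact cohomology sequence
\[
\cdots \lra \tH^{i-1}(\scg_H(l)) \lra \tH^i(\scg(l-1)) \overset{h}{\lra} \tH^i(\scg(l)) \lra \tH^i(\scg_H(l)) \lra \cdots \, .
\]
The $m$-regularity of $\scg_H$ gives $\tH^j(\scg_H(k)) = 0$ for $j \geq 1$ and $k \geq m-j$.

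For the first assertion, fix $i \geq 2$ and $l \geq m-i$. Then for every $k \geq l+1$ one has $k \geq m - i + 1 = m - (i-1)$, so $\tH^{i-1}(\scg_H(k)) = 0$ and the long exact sequence shows that $h : \tH^i(\scg(k-1)) \ra \tH^i(\scg(k))$ is injective. Iterating this chain of injections from $k = l+1$ upwards and combining it with Serre vanishing $\tH^i(\scg(k)) = 0$ for $k \gg 0$, we conclude $\tH^i(\scg(l)) = 0$.

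For the second assertion, fix $l \geq m$. Since $\tH^1(\scg_H(l)) = 0$ (because $l \geq m \geq m-1$), the map $h : \tH^1(\scg(l-1)) \ra \tH^1(\scg(l))$ is surjective, whence $\h^1(\scg(l-1)) \geq \h^1(\scg(l))$. The easy direction of the equality claim is that $\tH^1(\scg(l-1)) = 0$ forces $\tH^1(\scg(l)) = 0$ via this surjection, so equality holds. For the converse, assume $\h^1(\scg(l-1)) = \h^1(\scg(l))$, so that $h$ acts bijectively on $\tH^1(\scg(l-1))$; equivalently, the connecting homomorphism $\tH^0(\scg_H(l)) \ra \tH^1(\scg(l-1))$ vanishes, i.e., the restriction map $\tH^0(\scg(l)) \ra \tH^0(\scg_H(l))$ is surjective.

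The main step is to bootstrap this surjectivity to all higher twists. Because $\scg_H$ is $m$-regular, Lemma~\ref{L:cm} implies that $\tH^0_\ast(\scg_H)$ is generated in degrees $\leq m$, so the multiplication map $\tH^0(\scg_H(k)) \otimes \tH^0(\sco_H(1)) \ra \tH^0(\scg_H(k+1))$ is surjective for every $k \geq m$. Combined with the factorization through $\tH^0(\scg(k)) \otimes \tH^0(\sco_\p(1))$, an inductive argument shows that the surjectivity $\tH^0(\scg(k)) \twoheadrightarrow \tH^0(\scg_H(k))$ propagates from $k = l$ to every $k \geq l$. Reading this back in the long exact sequence, the connecting map $\tH^0(\scg_H(k)) \ra \tH^1(\scg(k-1))$ vanishes for all $k \geq l$, so $h : \tH^1(\scg(k-1)) \ra \tH^1(\scg(k))$ is an isomorphism for every $k \geq l$. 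Serre vanishing finally forces $\tH^1(\scg(l-1)) = 0$, completing the proof. The only delicate point is this last bootstrapping, but it is purely formal once Castelnuovo--Mumford regularity is invoked.
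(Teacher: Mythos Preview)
Your proof is correct and is exactly the standard argument for this result; the paper itself omits the proof, calling it an ``easy exercise'' (the Lemma of Le Potier). Your bootstrapping step via the surjectivity of $\tH^0(\scg_H(k))\otimes\tH^0(\sco_H(1))\to\tH^0(\scg_H(k+1))$ for $k\geq m$ is precisely the point where Castelnuovo--Mumford regularity (Lemma~\ref{L:cm}) enters, and it is handled correctly.
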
 

We also state, for the reader's convenience, the well known theorem of 
Beilinson \cite{be} (a different proof of this theorem can be found in the 
paper of Eisenbud et al. \cite[(6.1)]{efs}). 

\begin{thm}\label{T:beilinson}
If $\scf$ is a coherent sheaf on $\p^n$ then there exists a complex 
$C^\bullet$ of locally free sheaves on $\p^n$ with $C^p = \bigoplus_{j \geq p}
{\fam0 H}^j(\scf(p-j))\otimes_k\Omega_\p^{j-p}(j-p)$, $p \in \z$, 
such that $\sch^p(C^\bullet) = 0$ for $p \neq 0$ and $\sch^0(C^\bullet) \simeq 
\scf$.  
\end{thm}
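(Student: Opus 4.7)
The plan is to prove this via the classical construction using the Beilinson resolution of the diagonal on $\p^n \times \p^n$. The complex $C^\bullet$ on $\p^n$ will be obtained by pushing forward a Koszul-type resolution of the structure sheaf of the diagonal $\Delta \subset \p^n \times \p^n$ tensored with a pullback of $\scf$.

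First, I would construct the resolution of the diagonal. Let $\pi_1, \pi_2 : \p^n \times \p^n \to \p^n$ be the two projections. The composition of the pulled-back Euler sequences
\[
\pi_1^\ast\sco_\p(-1) \hookrightarrow \sco_{\p \times \p}^{\, n+1} \twoheadrightarrow \pi_2^\ast\text{T}_\p(-1)
\]
defines a global section $s$ of the rank $n$ bundle $\pi_1^\ast\sco_\p(1) \otimes \pi_2^\ast\text{T}_\p(-1)$ whose zero scheme is precisely $\Delta$. Since $\Delta$ has codimension $n$, $s$ is a regular section and the associated Koszul complex
\[
0 \lra \pi_1^\ast\sco_\p(-n) \otimes \pi_2^\ast\Omega_\p^n(n) \lra \cdots \lra \pi_1^\ast\sco_\p(-1) \otimes \pi_2^\ast\Omega_\p^1(1) \lra \sco_{\p \times \p} \lra \sco_\Delta \lra 0
\]
is exact, using the identification $\bigwedge^k(\text{T}_\p(-1))^\ast \simeq \Omega_\p^k(k)$. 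Denote the truncated (non-augmented) complex by $K^\bullet$, with $K^{-k} = \pi_1^\ast\sco_\p(-k) \otimes \pi_2^\ast\Omega_\p^k(k)$ placed in cohomological degree $-k$, so that $K^\bullet$ is quasi-isomorphic to $\sco_\Delta$.

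Next, I tensor $K^\bullet$ with $\pi_1^\ast\scf$ and apply $R\pi_{2\ast}$. Since $\pi_2 \circ i_\Delta = \text{id}_{\p^n}$ and $i_\Delta^\ast\pi_1^\ast\scf \simeq \scf$, one has $R\pi_{2\ast}(\pi_1^\ast\scf \otimes^{\mathbf{L}} \sco_\Delta) \simeq \scf$, concentrated in degree $0$. On the other hand, by the projection formula and flat base change,
\[
R^j\pi_{2\ast}(\pi_1^\ast\scf \otimes K^{-k}) \simeq \tH^j(\p^n, \scf(-k)) \otimes_k \Omega_\p^k(k),
\]
so the hypercohomology spectral sequence of the composition reads $E_1^{-k,\, j} = \tH^j(\scf(-k)) \otimes_k \Omega_\p^k(k)$ and abuts to $\scf$ in total degree $0$ (and to $0$ otherwise). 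Reindexing $p := j - k$ collects on the line of total degree $p$ precisely the terms $\tH^j(\scf(p-j)) \otimes_k \Omega_\p^{j-p}(j-p)$ with $j \geq p$, matching the announced shape of $C^p$.

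The remaining and main technical step is to promote the spectral sequence to an honest complex of locally free sheaves with these terms and the right cohomology. The standard way is to replace each $\pi_1^\ast\scf \otimes K^{-k}$ by an injective (or $\pi_{2\ast}$-acyclic) resolution $\scj^{-k,\, \bullet}$, form the double complex $\pi_{2\ast}\scj^{\bullet,\, \bullet}$, and take its total complex, which represents $R\pi_{2\ast}(\pi_1^\ast\scf \otimes \sco_\Delta) \simeq \scf$ in the derived category. The subtle point is to collapse the two differentials into a single genuine complex of locally free sheaves whose $p$-th term is the announced direct sum; this uses the formality of $R\Gamma(\p^n,\scf(-k))$ as a complex of $k$-vector spaces (automatically split), so each $R\pi_{2\ast}(\pi_1^\ast\scf \otimes K^{-k})$ is quasi-isomorphic in $D^b(\p^n)$ to $\bigoplus_j \tH^j(\scf(-k)) \otimes_k \Omega_\p^k(k)[-j]$. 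Choosing such splittings compatibly — equivalently, observing that after the reindexing the $E_1$ differentials already assemble into the required complex and that no nontrivial higher differentials survive because each row of the spectral sequence has already been rewritten as a complex of sheaves in a single degree — produces the desired $C^\bullet$ with $\sch^0(C^\bullet) \simeq \scf$ and $\sch^p(C^\bullet) = 0$ for $p \neq 0$.
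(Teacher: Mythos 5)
The paper itself gives no proof of this theorem: it is recalled from Beilinson's paper \cite{be}, with the alternative exterior-algebra proof of Eisenbud--Fl\o ystad--Schreyer \cite{efs} also cited. Your construction of the Koszul resolution of the diagonal from the canonical section of $\pi_1^\ast\sco_\p(1)\otimes\pi_2^\ast\text{T}_\p(-1)$, and of the resulting hypercohomology spectral sequence with $E_1^{-k,\,j}=\tH^j(\scf(-k))\otimes_k\Omega_\p^k(k)$ abutting to $\scf$ in total degree $0$, is correct and is indeed Beilinson's original route.

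The gap is in the final step, which you yourself flag as the main technical point. First, the assertion that ``the $E_1$ differentials already assemble into the required complex'' and that ``no nontrivial higher differentials survive'' is false: the Beilinson spectral sequence does not degenerate at $E_2$ in general, and the differential of the monad $C^\bullet$ is not the $E_1$ differential alone. The paper's own Remark~\ref{R:linbeilinson} records that $d_C^p$ carries $\tH^j(\scf(p-j))\otimes\Omega_\p^{j-p}(j-p)$ into $\bigoplus_{l\leq j}\tH^l(\scf(p+1-l))\otimes\Omega_\p^{l-p-1}(l-p-1)$, i.e.\ it has components with $l<j$ beyond the ``linear part''; these extra components encode precisely the higher differentials you are discarding, and a complex built from the $d_1$'s alone has the wrong cohomology in general. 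Second, the formality of each row $R\Gamma(\scf(-k))\otimes_k\Omega_\p^k(k)$ is true but not sufficient: to splice the rows into a single honest complex with the stated terms one must realize the connecting morphisms of the iterated cones, which a priori live only in $D^b(\p^n)$, by genuine maps of sheaves. The essential input for this, absent from your argument, is the vanishing $\text{Ext}^i_{\sco_\p}(\Omega_\p^a(a),\Omega_\p^b(b))=0$ for all $i>0$ (the sheaves $\sco_\p,\Omega_\p^1(1),\dots,\Omega_\p^n(n)$ form a strong exceptional collection). This vanishing guarantees that every morphism in the derived category between bounded complexes of direct sums of the $\Omega_\p^j(j)$ is an honest chain map, so that the total complex of the double complex can be replaced, by descending induction on the rows, by a complex with terms $C^p=\bigoplus_{j\geq p}\tH^j(\scf(p-j))\otimes_k\Omega_\p^{j-p}(j-p)$. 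Without it your argument only yields the Beilinson spectral sequence, not the Beilinson monad asserted in the statement.
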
  

\begin{remark}\label{R:contraction}
Let us recall, in connection with Beilinson's theorem, the interpretation of 
the morphisms between sheaves of the form $\Omega_\p^i(i)$ in terms of the 
exterior algebra. 

\vskip2mm 

(a) Let $V$ denote the $k$-vector space $k^{n+1}$ such that $\p^n = \p(V) = 
(V\setminus \{0\})/k^\ast$. Using the perfect pairing$\, :$ 
\[
-\cdot - : \overset{p}{\textstyle \bigwedge} V^\ast \times 
\overset{p}{\textstyle \bigwedge} V \lra k\, ,\  
(f_1\wedge \ldots \wedge f_p,\, v_1\wedge \ldots \wedge v_p) \mapsto 
\text{det}(f_i(v_j))\, ,  
\]
the dual of the exterior multiplication $-\wedge - : 
\overset{p}{\bigwedge} V \otimes \overset{q}{\bigwedge} V \ra 
\overset{p+q}{\bigwedge} V$ can be identified with the map  
$\overset{p+q}{\bigwedge} V^\ast \ra 
\overset{p}{\bigwedge} V^\ast \otimes 
\overset{q}{\bigwedge} V^\ast$ given by$\, :$
\[    
f_1 \wedge \ldots \wedge f_{p+q} \mapsto 
\sum_{\substack{\sigma \in \fS_{p+q}\\ \sigma(1) < \cdots < \sigma(p)\\ 
\sigma(p+1) < \cdots < \sigma(p+q)}} \e(\sigma) f_{\sigma(1)} \wedge \ldots 
\wedge f_{\sigma(p)} \otimes f_{\sigma(p+1)} \wedge \ldots \wedge f_{\sigma(p+q)} 
\, .
\]
It follows that, for $\omega \in \overset{p}{\bigwedge} V$, the dual of 
$\omega \wedge - : \overset{q}{\bigwedge} V \ra \overset{p+q}{\bigwedge} V$ 
can be identified with the map $\overset{p+q}{\bigwedge} V^\ast \ra 
\overset{q}{\bigwedge} V^\ast$ given by$\, :$ 
\[    
f_1 \wedge \ldots \wedge f_{p+q} \mapsto 
\sum_{\substack{\sigma \in \fS_{p+q}\\ \sigma(1) < \cdots < \sigma(p)\\ 
\sigma(p+1) < \cdots < \sigma(p+q)}} \e(\sigma) 
(f_{\sigma(1)} \wedge \ldots \wedge f_{\sigma(p)} \cdot \omega) 
f_{\sigma(p+1)} \wedge \ldots \wedge f_{\sigma(p+q)} 
\, .
\]
We \emph{denote} the image of any $\varphi \in \overset{p+q}{\bigwedge} V^\ast$ 
by $\varphi \cdot \omega$. 

For $\varphi \in \overset{p+q+r}{\bigwedge} V^\ast$, $\omega \in 
\overset{p}{\bigwedge} V$ and $\eta \in \overset{q}{\bigwedge} V$ 
one has $(\varphi \cdot \omega)\cdot \eta = \varphi \cdot (\omega \wedge 
\eta)$ (because $(\eta \wedge -)^\ast \circ (\omega \wedge -)^\ast = 
((\omega \wedge -)\circ (\eta \wedge -))^\ast = 
(\omega \wedge \eta \wedge -)^\ast$). 

Moreover, using the perfect pairing $-\wedge - : 
\overset{n+1-p}{\bigwedge} V \times \overset{p}{\bigwedge} V \ra 
\overset{n+1}{\bigwedge} V \simeq k$, the map $- \cdot \omega : 
\overset{p+q}{\bigwedge} V^\ast \ra \overset{q}{\bigwedge} V^\ast$ can be 
identified with $- \wedge \omega : \overset{n+1-p-q}{\bigwedge} V \ra 
\overset{n+1-q}{\bigwedge} V$ (because, for $\eta \in 
\overset{n+1-p-q}{\bigwedge} V$ and $\xi \in \overset{q}{\bigwedge} V$, one 
has $\eta \wedge (\omega \wedge \xi) = (\eta \wedge \omega) \wedge \xi$). 

\vskip2mm

(b) Let $K_\bullet$ denote the tautological Koszul complex on 
$\p^n = \p(V)$$\, :$ 
\[
0 \lra \overset{n+1}{\textstyle \bigwedge} V^\ast \otimes \sco_\p(-n-1) 
\xra{d_{n+1}} \overset{n}{\textstyle \bigwedge} V^\ast \otimes 
\sco_\p(-n) \lra \cdots \lra V^\ast \otimes \sco_\p(-1) 
\overset{d_1}{\lra} \sco_\p \lra 0 
\]  
associated to the canonical epimorphism $d_1 : V^\ast \otimes \sco_\p(-1) 
\ra \sco_\p$. The reduced stalk of $d_{i+1}(i)$ at a point $[v] \in \p(V)$ is 
the map $\overset{i+1}{\bigwedge} V^\ast \otimes kv \ra 
\overset{i}{\bigwedge} V^\ast$, $\xi \otimes v \mapsto \xi \cdot v$. It 
follows that, for any $\omega \in \overset{p}{\bigwedge} V$, the 
diagrams$\, :$ 
\[
\begin{CD}
\overset{p+q+1}{\textstyle \bigwedge} V^\ast \otimes \sco_\p(-1) 
@>{d_{p+q+1}(p+q)}>> \overset{p+q}{\textstyle \bigwedge} V^\ast \otimes \sco_\p\\
@V{\displaystyle -\cdot \omega}VV @VV{\displaystyle -\cdot \omega}V\\
\overset{q+1}{\textstyle \bigwedge} V^\ast \otimes \sco_\p(-1) 
@>{(-1)^pd_{q+1}(q)}>> \overset{q}{\textstyle \bigwedge} V^\ast \otimes \sco_\p
\end{CD}
\] 
are commutative. Since $\Omega_\p^i(i) \simeq \text{Im}\, d_{i+1}(i) = 
\Ker d_i(i)$, one gets a linear map$\, :$ 
\[
\overset{p}{\textstyle \bigwedge} V \lra 
\text{Hom}_{\sco_\p}(\Omega_\p^{p+q}(p+q),\, \Omega_\p^q(q))
\]
which turns out to be an \emph{isomorphism}. In this way, the composition 
map$\, :$ 
\begin{gather*}
\text{Hom}_{\sco_\p}(\Omega_\p^{q+r}(q+r),\, \Omega_\p^r(r)) \times 
\text{Hom}_{\sco_\p}(\Omega_\p^{p+q+r}(p+q+r),\, \Omega_\p^{q+r}(q+r)) \lra\\
\text{Hom}_{\sco_\p}(\Omega_\p^{p+q+r}(p+q+r),\, \Omega_\p^r(r))
\end{gather*}
is identified with the map $(-1)^{pq}(-\wedge -) : 
\overset{q}{\bigwedge} V \times \overset{p}{\bigwedge} V \ra 
\overset{p+q}{\bigwedge} V$. 

\vskip2mm 

(c) Consider, now, a decomposition $V = V^\prim \oplus V^\secund$ with 
$\dim V^\prim = n^\prim + 1$, $\dim V^\secund = n^\secund$, $n^\prim + n^\secund = n$, 
and the dual decomposition $V^\ast = V^{\prim \ast} \oplus V^{\secund \ast}$. The 
exterior multiplication induces an isomorphism$\, :$ 
\[
{\textstyle \bigwedge}(V^{\prim \ast}) \otimes_k 
{\textstyle \bigwedge}(V^{\secund \ast}) \Izo 
{\textstyle \bigwedge}(V^\ast)\, . 
\]
Consider the complexes $K^\prim_\bullet$ and $K^\secund_\bullet$ on 
$\p^\prim := \p(V^\prim) \simeq \p^{n^\prim}$$\, :$ 
\begin{gather*}
\overset{n^\prim+1}{\textstyle \bigwedge} 
V^{\prim \ast} \otimes \sco_{\p^\prim}(-n^\prim-1) 
\xra{d^\prim_{n^\prim +1}} \overset{n^\prim}{\textstyle \bigwedge} 
V^{\prim \ast} \otimes \sco_{\p^\prim}(-n^\prim) 
\ra \cdots \ra V^{\prim \ast} \otimes \sco_{\p^\prim}(-1) 
\overset{d^\prim_1}{\lra} \sco_{\p^\prim}\, ,\\
\overset{n^\secund}{\textstyle \bigwedge} 
V^{\secund \ast} \otimes \sco_{\p^\prim}(-n^\secund) 
\overset{0}{\ra} \overset{n^\secund -1}{\textstyle \bigwedge} 
V^{\secund \ast} \otimes \sco_{\p^\prim}(-n^\secund +1) 
\overset{0}{\ra} \cdots \overset{0}{\ra}  
V^{\secund \ast} \otimes \sco_{\p^\prim}(-1) 
\overset{0}{\ra} \sco_{\p^\prim} \, ,
\end{gather*} 
$K^\prim_\bullet$ being the tautological Koszul complex on $\p(V^\prim)$. 
Then $K_\bullet \vb \p(V^\prim) \simeq K^\prim_\bullet 
\otimes_{\sco_{\p(V^\prim)}} K^\secund_\bullet$, hence 
$\Omega_\p^p(p) \vb \p(V^\prim) \simeq 
\bigoplus_{i+j = p}\Omega^i_{\p(V^\prim)}(i)\otimes_k 
\overset{j}{\textstyle \bigwedge} V^{\secund \ast}$. 
\end{remark} 

\begin{remark}\label{R:linbeilinson}
Let $\scf$ be a coherent sheaf on $\p^n$ and let $C^\bullet$ be the Beilinson 
monad of $\scf$ from Thm.~\ref{T:beilinson}. It turns out that the 
differential $d_C^p : C^p \ra C^{p+1}$ maps 
$\tH^j(\scf(p-j))\otimes_k\Omega_\p^{j-p}(j-p)$ into 
$\bigoplus_{l \leq j}\tH^l(\scf(p+1-l))\otimes_k\Omega_\p^{l-p-1}(l-p-1)$. 
Moreover, according to the results of Eisenbud, Fl\o ystad and Schreyer 
\cite{efs}, the linear part $\text{lin}\, C^\bullet$ of the complex 
$C^\bullet$ obtained by turning into 0 the components 
$\tH^j(\scf(p-j))\otimes_k\Omega_\p^{j-p}(j-p) \ra 
\tH^l(\scf(p+1-l))\otimes_k\Omega_\p^{l-p-1}(l-p-1)$ with $l < j$ of 
$d_C^p$, $p \in \z$, is the total complex of a double complex 
$X^{\bullet \bullet}$ with terms $X^{ij} = \tH^j(\scf(i)) \otimes_k 
\Omega_\p^{-i}(-i)$ concentrated in the square $-n \leq i \leq 0$, 
$0 \leq j \leq n$, with $d_{II}^{-i,j} : X^{-i,j} \ra X^{-i,j+1}$ equal to 0 and 
with $d_I^{-i,j} : X^{-i,j} \ra X^{-i+1,j}$ equal to the composite morphism$\, :$ 
\[
\tH^j(\scf(-i))\otimes \Omega^i(i) \ra 
\tH^j(\scf(-i))\otimes V^\ast \otimes \Omega^{i-1}(i-1) \ra 
\tH^j(\scf(-i+1))\otimes \Omega^{i-1}(i-1) 
\]
where $\Omega_\p^i(i) \ra V^\ast\otimes_k\Omega_\p^{i-1}(i-1)$ is induced by 
$\overset{i}{\bigwedge}V^\ast \otimes_k\sco_\p \ra V^\ast \otimes_k 
\overset{i-1}{\bigwedge}V^\ast \otimes_k \sco_\p$ and 
$\tH^j(\scf(-i))\otimes_kV^\ast \ra \tH^j(\scf(-i+1))$ is the multiplication 
map of the graded $S$-module $\tH_\ast^j(\scf)$. 
\end{remark}

\section{Some general results}\label{S:c2leq4}

We prove, in this section, several general classification results for 
globally generated vector bundles on projective spaces. General as they are, 
they suffice, in particular, to reobtain the classification of g.~g. 
vector bundles with $c_1 \leq 3$ (see Remark~\ref{R:c1leq3}). 

We classify, firstly, the g.~g. vector bundles $E$ for which the scheme 
$Y$ appearing in the exact sequence \eqref{E:oeiy} from the Introduction is a 
complete intersection. Then, following an idea of Ellia \cite{e} based on 
the use of the Cayley-Bacharach property, we classify the globally generated 
vector bundles with $c_2 \leq c_1$. 

The main result of this section asserts that if $E$ is a g.~g. vector bundle 
with $c_1 \geq 3$ and such that $\tH^i(E^\ast) = 0$, $i = 0,\, 1$, and 
$\tH^0(E(-c_1+2)) \neq 0$ then $E \simeq \sco_\p(c_1-a) \oplus F$, where   
$0 \leq a \leq 2$ and $F$ is a g.~g. vector bundle with $c_1(F) = a$. The 
proof is based on a classification of the locally Cohen-Macaulay curves 
$Z \subset \piii$ with $\sci_Z(2)$ globally generated. 

\vskip2mm 

We continue to use the notation and conventions from the Introduction.      

\begin{lemma}\label{L:yci} 
Assume that the scheme $Y$ in the exact sequence \eqref{E:oeiy} is a complete 
intersection of type $(a,b)$, $a \leq b$, and that ${\fam0 H}^i(E^\ast) = 0$, 
$i = 0,1$. Then one of the following holds$\, :$  
\begin{enumerate}   
\item[(i)] $b < c_1 \leq a+b$ and $E \simeq \sco_\p(c_1-b) \oplus 
\sco_\p(c_1 -a) \oplus P(\sco_\p(a+b-c_1))$$\, ;$  
\item[(ii)] $a < b = c_1$ and $E \simeq \sco_\p(c_1-a) \oplus 
P(\sco_\p(a+b-c_1))$$\, ;$ 
\item[(iii)] $a=b=c_1$ and $E \simeq P(\sco_\p(a+b-c_1))$.  
\end{enumerate}  
\end{lemma}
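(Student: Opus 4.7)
The plan is to realize $E$ as the cokernel of an explicit morphism built from the Koszul complex of $Y = V(f_a, f_b)$, and then split off the line-bundle summands so that the remaining factor has the form $P(\sco_\p(a+b-c_1))$.

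\emph{Resolution of $E$.}  Since $\tH^1(\sco_\p(a-c_1)) = \tH^1(\sco_\p(b-c_1)) = 0$ for $n \geq 2$, the Koszul epimorphism $\sco_\p(c_1-a) \oplus \sco_\p(c_1-b) \twoheadrightarrow \sci_Y(c_1)$ lifts through $E \to \sci_Y(c_1)$ to a morphism $\sigma : \sco_\p(c_1-a) \oplus \sco_\p(c_1-b) \to E$.  Combining $\sigma$ with the inclusion $\sco_\p^{\op r-1} \hookrightarrow E$ from \eqref{E:oeiy} yields an epimorphism $\sco_\p^{\op r-1} \oplus \sco_\p(c_1-a) \oplus \sco_\p(c_1-b) \twoheadrightarrow E$ whose kernel, by a diagram chase, is $\sco_\p(c_1-a-b)$ embedded via $t \mapsto (-tg_1, \ldots, -tg_{r-1}, f_b t, -f_a t)$ for some tuple $(g_1, \ldots, g_{r-1}) \in \tH^0(\sco_\p(a+b-c_1))^{\op r-1}$.

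\emph{Constraints from $\tH^i(E^\ast) = 0$.}  Dualizing this resolution and taking $\tH^0$ (again using $\tH^1_\ast(\sco_\p) = 0$ for $n \geq 2$), the hypothesis $\tH^i(E^\ast) = 0$ for $i = 0, 1$ is equivalent to the isomorphism assertion
\[
k^{r-1} \oplus \tH^0(\sco_\p(a-c_1)) \oplus \tH^0(\sco_\p(b-c_1)) \xra{\sim} \tH^0(\sco_\p(a+b-c_1)),\quad ((\lambda_i), u, v) \mapsto \textstyle\sum_i \lambda_i g_i + f_b u - f_a v.
\]
Because $(f_a, f_b)$ is a regular sequence, the last two summands inject with image $(f_a, f_b)_{a+b-c_1}$, so this forces $(g_i)$ to project to a basis of $\tH^0(\sco_\p(a+b-c_1))/(f_a, f_b)_{a+b-c_1}$; in particular $a+b-c_1 \geq 0$.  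On the other hand $\sci_Y(c_1)$ inherits global generation from $E$, which rules out $c_1 < b$: for $a \leq c_1 < b$ every section of $\sci_Y(c_1)$ is divisible by $f_a$ and so vanishes on $V(f_a) \setminus Y$; for $c_1 < a$ there are no sections at all.  Hence $b \leq c_1 \leq a+b$, giving the three cases (i)--(iii).

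\emph{Splitting.}  In case (i) both $\tH^0(\sco_\p(a-c_1))$ and $\tH^0(\sco_\p(b-c_1))$ vanish, so $\{g_i\}$ is already a basis of $\tH^0(\sco_\p(a+b-c_1))$.  The projection of the middle term of the resolution onto its $\sco_\p^{\op r-1}$-factor then recovers the defining resolution of $P(\sco_\p(a+b-c_1))$, and a snake chase on the resulting diagram produces the exact sequence
\[
0 \lra \sco_\p(c_1-a) \oplus \sco_\p(c_1-b) \lra E \lra P(\sco_\p(a+b-c_1)) \lra 0.
\]
In cases (ii) and (iii) one first absorbs $f_a$ (and $f_b$) into $\{g_i\}$ through an automorphism of the trivial direct summands in the middle term, so that the resulting augmented tuple forms a basis of $\tH^0(\sco_\p(a))$; this produces the analogous sequence $0 \to \sco_\p(c_1-a) \to E \to P(\sco_\p(a)) \to 0$ in case (ii), and the identification $E \simeq P(\sco_\p(a))$ directly in case (iii).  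The non-trivial sequences split, since for $d := a+b-c_1 \geq 0$ one reads off from the defining sequence of $P(\sco_\p(d))$ that $\text{Ext}^1(P(\sco_\p(d)), \sco_\p(c_1-a))$ and $\text{Ext}^1(P(\sco_\p(d)), \sco_\p(c_1-b))$ are the cokernels of the surjective multiplication maps $\tH^0(\sco_\p(d)) \otimes \tH^0(\sco_\p(c_1-a)) \to \tH^0(\sco_\p(b))$ and $\tH^0(\sco_\p(d)) \otimes \tH^0(\sco_\p(c_1-b)) \to \tH^0(\sco_\p(a))$, surjectivity being automatic in the range $c_1 \geq b \geq a$.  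The main obstacle is the absorption step in cases (ii)--(iii): one must use the regular-sequence property of $(f_a, f_b)$ to identify the classifying data of $E$ precisely up to a change of basis so that the line-bundle summands split off cleanly.
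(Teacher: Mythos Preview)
Your proof is correct and follows essentially the same route as the paper's: both build the resolution $0 \to \sco_\p(c_1-a-b) \to \sco_\p^{\op r-1} \oplus \sco_\p(c_1-a) \oplus \sco_\p(c_1-b) \to E \to 0$ from the Koszul complex, dualize, and use $\tH^i(E^\ast)=0$ to pin down the map on $\tH^0$. The only difference is cosmetic: for the splitting step the paper works directly on the dual sequence and observes that any morphism $\sco_\p(-t) \to \sco_\p(a+b-c_1)$ (for $t>0$) factors through the evaluation morphism, so an automorphism of the middle term kills the $\sco_\p(a-c_1)$ and $\sco_\p(b-c_1)$ components and gives $E^\ast \simeq \sco_\p(a-c_1) \oplus \sco_\p(b-c_1) \oplus P(\sco_\p(a+b-c_1))^\ast$ in one stroke --- whereas you extract a short exact sequence via the snake lemma and then check that $\mathrm{Ext}^1$ vanishes, which amounts to the same surjectivity of multiplication maps that underlies the paper's factorization.
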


\begin{proof}
Since $\text{Ext}^1(\sco_\p(c_1-a) \oplus \sco_\p(c_1-b),(r - 1)\sco_\p) 
= 0$, one gets a commutative diagram$\, :$ 
\[
\begin{CD}  
0 @>>> \sco_\p(c_1 \, \text{--} \, a \, \text{--} \, b) 
@>>> \sco_\p(c_1 \, \text{--} \, a) \oplus \sco_\p(c_1 \, \text{--} \, b) 
@>>> \sci_Y(c_1) @>>> 0\\
@. @VVV @VVV @\vert\\
0 @>>> (r \, \text{--} \, 1)\sco_\p @>>> E @>>> \sci_Y(c_1) @>>> 0
\end{CD}
\]
from which one deduces the existence of an exact sequence$\, :$   
\[
0 \lra \sco_\p(c_1-a-b) \lra \sco_\p(c_1-a) \oplus \sco_\p(c_1-b) \oplus 
(r - 1)\sco_\p \lra E \lra 0\, .
\] 
Taking duals, one gets an exact sequence$\, :$ 
\[
0 \lra E^\ast \lra \sco_\p(a-c_1)\oplus \sco_\p(b-c_1) \oplus (r - 1)\sco_\p  
\lra \sco_\p(a+b-c_1) \lra 0\, .
\]
The hypothesis on $E$ implies that, by taking global sections, one gets an 
isomorphism 
\[
\tH^0(\sco_\p(a-c_1)\oplus \sco_\p(b-c_1) \oplus (r - 1)\sco_\p) \Izo 
\tH^0(\sco_\p(a+b-c_1))\, .
\]
In particular, since $r \geq 2$, it follows that $c_1 \leq a+b$. 

Assume that $b < c_1 \leq a+b$. One deduces that $r-1 = 
\text{h}^0(\sco_\p(a+b-c_1))$ and the component $(r - 1)\sco_\p \ra 
\sco_\p(a+b-c_1)$ of the morphism 
\[
\sco_\p(a-c_1)\oplus \sco_\p(b-c_1) \oplus (r - 1)\sco_\p  
\lra \sco_\p(a+b-c_1) 
\]
can be identified with the evaluation morphism of $\sco_\p(a+b-c_1)$. Since, 
for any $t > 0$, every morphism $\sco_\p(-t) \ra \sco_\p(a+b-c_1)$ factorizes 
through this evaluation morphism, one gets that$\, :$ 
\[
E^\ast \simeq \sco_\p(a-c_1) \oplus \sco_\p(b-c_1) \oplus 
P(\sco_\p(a+b-c_1))^\ast \, . 
\]
The other two cases are treated similarly. 
\end{proof}

As we saw at the beginning of the Introduction, if $c_2 = 0$ then 
$E \simeq \sco_\p(c_1) \oplus (r - 1)\sco_\p$. One deduces that if 
$c_1 = 0$ then $E \simeq r\sco_\p$, and if $c_1 = 1$ then either 
$c_2 = 0$ and $E \simeq \sco_\p(1) \oplus (r - 1)\sco_\p$ or $c_2 = 1$ and 
$E \simeq P(\sco_\p(1)) \oplus t\sco_\p \simeq \text{T}_\p(-1) \oplus 
t\sco_\p$ (because $c_n(\text{T}_\p(-1)) > 0$).  

We prove, now, two results that are based on an idea of 
Ellia~\cite[Lemma~2.4]{e}. 

\begin{prop}\label{P:c2=c1-1} 
Let $E$ be a globally generated vector bundle on $\p^n$ with $c_1 \geq 2$. 
If $c_2 > 0$ then $c_2 \geq c_1 - 1$ and if $c_2 = c_1 - 1$ then 
$E \simeq \sco_\p(1) \oplus \sco_\p(c_1-1) \oplus (r - 2)\sco_\p$. 
\end{prop}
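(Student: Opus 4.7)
I would restrict $E$ to a general $2$-plane $\Pi \subset \p^n$ (no restriction is needed if $n = 2$). The restriction $E_\Pi$ is globally generated of rank $r$ with Chern classes $c_1, c_2$, fitting in
\[
0 \lra \sco_\Pi^{\op r-1} \lra E_\Pi \lra \sci_{Y'}(c_1) \lra 0
\]
where $Y' = Y \cap \Pi$ has length $c_2$. Twisting by $-1$ and using $\tH^1(\sco_\Pi(-1)) = 0$ gives $\h^0(E_\Pi(-1)) = \h^0(\sci_{Y'}(c_1 - 1)) \geq \binom{c_1 + 1}{2} - c_2$, which is strictly positive whenever $c_1 \geq 2$ and $c_2 \leq c_1 - 1$. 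Hence a nonzero morphism $\sco_\Pi(1) \ra E_\Pi$ exists.

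\textbf{Whitney bound on $\Pi$.} Let $\sco_\Pi(b) \hookrightarrow E_\Pi$ be the saturation of the image of this morphism (so $b \geq 1$) and set $Q := E_\Pi/\sco_\Pi(b)$; then $Q$ is globally generated, torsion-free, of rank $r-1$, with $c_i(Q) \geq 0$. If $b = c_1$, then $c_1(Q) = 0$ forces $Q \simeq \sco_\Pi^{\op r-1}$, hence $E_\Pi \simeq \sco_\Pi(c_1) \oplus \sco_\Pi^{\op r-1}$ and $c_2 = 0$, contradicting $c_2 > 0$. So $1 \leq b \leq c_1 - 1$, and Whitney gives
\[
c_2 = b(c_1 - b) + c_2(Q) \geq b(c_1 - b) \geq c_1 - 1,
\]
the last inequality being the minimum of $a \mapsto a(c_1 - a)$ on the integer interval $[1, c_1 - 1]$, attained at $a \in \{1, c_1 - 1\}$. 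This establishes $c_2 \geq c_1 - 1$.

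\textbf{Equality case and lifting to $\p^n$.} Suppose $c_2 = c_1 - 1$. Then equality throughout the display forces $c_2(Q) = 0$ and $b \in \{1, c_1 - 1\}$. Taking $r - 2$ general sections of $Q$, the rank-$1$ cokernel is torsion-free with $c_1 = c_1 - b$ and $c_2 = 0$, hence equal to the line bundle $\sco_\Pi(c_1 - b)$, so $Q$ is an extension of line bundles — in particular locally free — and splits as $\sco_\Pi(c_1 - b) \oplus \sco_\Pi^{\op r-2}$ by the vanishing of $\text{Ext}^1$ on $\p^2$. The same vanishing makes the extension $0 \ra \sco_\Pi(b) \ra E_\Pi \ra Q \ra 0$ split, yielding $E_\Pi \simeq \sco_\Pi(1) \oplus \sco_\Pi(c_1 - 1) \oplus \sco_\Pi^{\op r-2}$, so that $Y'$ is a complete intersection of type $(1, c_1 - 1)$ on $\Pi$. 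For $n \geq 3$, this forces $Y \subset \p^n$ to be a complete intersection of the same type $(1, c_1 - 1)$, after which Lemma~\ref{L:yci} yields $E \simeq \sco_\p(1) \oplus \sco_\p(c_1 - 1) \oplus \sco_\p^{\op r-2}$ (one may first reduce to $\tH^i(E^\ast) = 0$, $i = 0, 1$, via Lemma~\ref{L:h0h1}, and invoke Lemma~\ref{L:uniqueext} for uniqueness). The main obstacle is this last step — propagating the plane decomposition to the global statement on $\p^n$ — which I would handle via the Horrocks-style lifting tools of Lemmas~\ref{L:h1=0} and~\ref{L:lift1}.
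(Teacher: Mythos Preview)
Your approach is sound and genuinely different from the paper's. The paper restricts to $\pii$, passes to a rank-2 quotient, and then invokes the Cayley--Bacharach condition (Remark~\ref{R:CayleyB}) together with a case analysis on $c_1$ and a lemma of Ellia to force the $c_2$ points of $Y$ to be collinear. Your subbundle-plus-Whitney argument bypasses all of this: it is more uniform (no case split on $c_1$, no external citation) and purely Chern-class-theoretic. What the paper's route buys is a direct geometric picture of $Y$; what yours buys is brevity.

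Two places need tightening. First, the assertion $c_2(Q)\geq 0$ is not immediate when $Q$ is torsion-free of rank $>1$: globally generated does not obviously transfer to $Q^{\ast\ast}$, and a rank-one globally generated sheaf \emph{with} torsion can have $c_2<0$. The clean fix is to pass to a rank-2 locally free quotient $E'$ of $E_\Pi$ first (via $r-2$ general sections, as the paper does); then the saturated sub-line-bundle $\sco_\Pi(b)\hookrightarrow E'$ has cokernel $\sci_W(c_1-b)$ with $W$ zero-dimensional, and $c_2(Q)=\text{length}(W)\geq 0$ is automatic.

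Second, the lifting step should not go through ``$Y'$ CI forces $Y$ CI'': that implication is unjustified and in fact false in general (e.g.\ two skew lines in $\piii$ meet every general plane in a collinear pair). The right route is the one you gesture at: once $E_\Pi$ splits as a sum of line bundles on a 2-plane, the standard Horrocks criterion (iterating Lemma~\ref{L:h1=0} up through hyperplanes) gives that $E$ itself splits on $\p^n$, and the splitting type is then forced by the Chern classes. Lemma~\ref{L:yci} and the detour through $Y$ are unnecessary here.
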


\begin{proof}
Restricting to a $\pii \subseteq \p^n$ one may assume that $n=2$. 
In this case, the exact sequence \eqref{E:oeiy} can be obtained in two 
steps$\, :$  
\begin{gather*}
0 \lra (r - 2)\sco_\pii \lra E \lra F \lra 0\, ,\\
0 \lra \sco_\pii \lra F \lra \sci_Y(c_1) \lra 0\, , 
\end{gather*} 
where $F$ is a globally generated rank 2 vector bundle on $\p^2$ and $Y$ 
consists of $c_2$ simple points. It follows, from Remark~\ref{R:CayleyB}, 
that the points of $Y$ satisfy the Cayley-Bacharach condition with respect to 
plane curves of degree $c_1 - 3$. We split the proof into several cases 
according to the values of $c_1$. 

\vskip2mm 

\noindent 
$\bullet$ If $c_1=2$ and $c_2 = 1$ then $Y$ is a complete intersection of 
type $(1,1)$ and the assertion follows from Lemma~\ref{L:yci}(i).    

\vskip2mm

\noindent 
$\bullet$ If $c_1=3$ then one cannot have $c_2 = 1$ because, if this were the 
case, $Y$ would be a complete intersection of type $(1,1)$ which is not 
possible by Lemma~\ref{L:yci}. The assertion about the case $c_2=2$ follows 
from Lemma~\ref{L:yci}(i), again, because in this case $Y$ is a complete 
intersection of type $(1,2)$. 

\vskip2mm

\noindent 
$\bullet$ If $c_1 \geq 4$ the fact that $c_2 \geq c_1-1$ follows from the fact 
that $Y$ satisfies the Cayley-Bacharach condition with respect to plane 
curves of degree $c_1 - 3$ (see \cite[Lemma~2.4]{e} where one uses suitable 
unions of lines to get a contradiction if $c_2 < c_1-1$). The same condition  
implies, also, that if $c_2 = c_1-1$ then the points of $Y$ are colinear, 
i.e., $Y$ is a complete intersection of type $(1,c_1-1)$. The proof can be 
concluded, now, using Lemma~\ref{L:yci}(i).  
\end{proof} 

\begin{prop}\label{P:c2=c1}
Let $E$ be a globally generated vector bundle on $\p^n$ with $c_2 = c_1 \geq 2$ 
and such that ${\fam0 H}^i(E^\ast) = 0$, $i = 0,1$. Then one of the following 
holds$\, :$ 
\begin{enumerate}
\item[(i)] $E \simeq \sco_\p(c_1-1) \oplus {\fam0 T}_\p(-1)\, ;$ 
\item[(ii)] $c_1 = 2$, $n = 3$ and $E \simeq \Omega_\piii(2)\, ;$ 
\item[(iii)] $c_1 = 3$ and $E \simeq 3\sco_\p(1)\, ;$
\item[(iv)] $c_1 = 4$ and $E \simeq 2\sco_\p(2)$.  
\end{enumerate}
\end{prop}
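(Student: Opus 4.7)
The plan is to classify the codimension-2 locally Cohen--Macaulay subscheme $Y \subset \p^n$ of degree $c_2 = c_1$ that appears in the exact sequence \eqref{E:oeiy}; by Lemma~\ref{L:uniqueext}, $E$ is then determined up to isomorphism by $Y$. I first treat the \emph{degenerate} case, in which $Y$ is contained in some hyperplane $H \subset \p^n$: then $Y$ is a degree-$c_1$ hypersurface in $H$, hence a complete intersection of type $(1, c_1)$ in $\p^n$. Since $c_1 \geq 2$, Lemma~\ref{L:yci}(ii) with $a = 1$, $b = c_1$ immediately gives $E \simeq \sco_\p(c_1 - 1) \oplus P(\sco_\p(1)) = \sco_\p(c_1 - 1) \oplus \text{T}_\p(-1)$, which is case~(i).

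In the \emph{nondegenerate} case, I first bound $c_1$. Restricting $E$ to a general $2$-plane $\Pi \subset \p^n$ and extracting from $E \vb \Pi$ a rank-$2$ bundle $F$ by quotienting $r-2$ general sections yields $0 \to \sco_\Pi \to F \to \sci_{Y^\prim}(c_1) \to 0$, with $Y^\prim = Y \cap \Pi$ consisting of $c_1$ points. By Remark~\ref{R:CayleyB}, $Y^\prim$ satisfies the Cayley--Bacharach condition with respect to plane curves of degree $c_1 - 3$. Since $Y$ is nondegenerate in $\p^n$ and $\Pi$ is generic, the points of $Y^\prim$ are not all collinear, and the Ellia-type argument used in the proof of Proposition~\ref{P:c2=c1-1} then forces $c_1 \leq 4$.

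For the three remaining values I proceed case by case. If $c_1 = 2$, then $Y$ is a codim-$2$ degree-$2$ nondegenerate subscheme; for $n \geq 4$ the minimum-degree bound for nondegenerate codimension-$2$ subvarieties together with $\text{codim}(\text{Sing}\, Y, \p^n) \geq 6$ forces $Y$ to be degenerate, a contradiction, while for $n = 3$ the only possibility is that $Y$ is two disjoint lines, and the standard resolution of $\sci_{L_1 \cup L_2}$ combined with Lemma~\ref{L:uniqueext} yields $E \simeq \Omega_\piii(2)$, case~(ii). If $c_1 = 3$, then $Y$ is a codimension-$2$ variety of minimal degree, and Hilbert--Burch provides a resolution $0 \to \sco_\p(-3)^2 \to \sco_\p(-2)^3 \to \sci_Y \to 0$ by the $2 \times 2$ minors of a $2 \times 3$ matrix of linear forms; twisting by $3$ and invoking Lemma~\ref{L:uniqueext} yields $E \simeq \sco_\p(1)^3$, case~(iii). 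If $c_1 = 4$, the restriction analysis shows $Y^\prim$ consists of $4$ points no three of which are collinear, hence a complete intersection of two conics, so $\h^0(\sci_{Y^\prim}(2)) = 2$; lifting these to quadrics on $\p^n$ and verifying that they cut out $Y$ scheme-theoretically shows $Y$ is a CI of type $(2, 2)$ in $\p^n$, and Lemma~\ref{L:yci}(i) with $a = b = 2$ (noting $P(\sco_\p) = 0$) yields $E \simeq \sco_\p(2)^2$, case~(iv).

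The main obstacle lies in the nondegenerate case: in particular, (a) confirming that $Y \cap \Pi$ is not collinear when $Y$ is reducible (which is standard for irreducible nondegenerate $Y$ but requires extra care otherwise), and (b) in the $c_1 = 4$ analysis, verifying that the two quadrics containing $Y$, obtained by lifting from $\Pi$, really cut out $Y$ scheme-theoretically on $\p^n$ with no extra embedded or lower-dimensional components.
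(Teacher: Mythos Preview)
Your overall strategy matches the paper's: both reduce to a rank-$2$ bundle on $\pii$ via general sections and restriction, then use the Cayley--Bacharach condition (Remark~\ref{R:CayleyB}) to force $c_1 \leq 4$ in the nondegenerate case. Your obstacle~(a) is a standard fact --- a general $2$-plane section of a nondegenerate subscheme remains nondegenerate --- so that is not a serious difficulty.

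The genuine gap is your obstacle~(b). For $c_1 = 4$ you try to lift the two conics through $Y' = Y \cap \Pi$ to quadrics through $Y$ on $\p^n$ and then argue that they cut out $Y$; this is awkward because you have no a priori control over $\tH^0(\sci_Y(2))$ or over possible extra components of the intersection. The paper avoids this entirely by working with the bundle rather than the subscheme. Reorganize your reduction so that you first pass to a rank-$2$ quotient $F$ of $E$ on $\p^n$ and \emph{then} restrict to $\Pi$. The $\pii$ analysis (Cayley--Bacharach gives $Y'$ a complete intersection of type $(2,2)$, and the unique locally free extension of $\sci_{Y'}(4)$ by $\sco_\Pi$ is $\sco_\Pi(2)^{\op 2}$) shows $F|_\Pi \simeq \sco_\Pi(2)^{\op 2}$. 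Now the standard splitting criterion --- a vector bundle on $\p^n$ whose restriction to a plane splits as a sum of line bundles must itself split --- gives $F \simeq \sco_{\p^n}(2)^{\op 2}$, and then $E \simeq \sco_{\p^n}^{\op r-2} \oplus \sco_{\p^n}(2)^{\op 2}$ since $\text{Ext}^1(\sco_\p(2)^{\op 2}, \sco_\p^{\op r-2}) = 0$; finally $\tH^0(E^\ast) = 0$ forces $r = 2$. This is the paper's route, and it sidesteps the geometry of $Y$ on $\p^n$ completely.

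There is also a small gap in your $c_1 = 3$ argument when $n = 3$: invoking ``variety of minimal degree'' and Hilbert--Burch presupposes that $Y$ is irreducible, which is automatic for $n \geq 4$ but not for $n = 3$. The paper handles this by observing that $\omega_Y(1)$ is globally generated (from the dual sequence~\eqref{E:dual}), which rules out line components and forces a nondegenerate nonsingular degree-$3$ curve in $\piii$ to be a twisted cubic.
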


\begin{proof} 
We split the proof into several cases because the small values of $c_1$ 
need to be treated separatedly. 

\vskip2mm 

\noindent
$\bullet$ If $c_1 = 2$ and $n = 2$ or $n \geq 4$ then $Y$ is a complete 
intersection of type $(1,2)$ and one can apply Lemma~\ref{L:yci}(ii). If 
$n = 3$ then either $Y$ is a complete intersection of type $(1,2)$ or 
$Y$ is the union of two disjoint lines. In the latter case, there is an exact 
sequence$\, :$ 
\[
0 \lra 2\sco_\piii \lra \Omega_\piii(2) \lra \sci_Y(2) \lra 0
\] 
and Lemma~\ref{L:uniqueext} implies that $E \simeq \Omega_\piii(2)$. 

\vskip2mm

\noindent 
$\bullet$ If $c_1 = 3$ and $n = 2$ then $Y$ consists of three simple points. 
If these points are colinear then $Y$ is a complete intersection of type 
$(1,3)$ and one can apply Lemma~\ref{L:yci}(ii). If they are not colinear 
then $\sci_Y$ admits a resolution of the form$\, :$  
\[
0 \lra 2\sco_\pii \lra 3\sco_\pii(1) \lra \sci_Y(3) \lra 0  
\]  
and Lemma~\ref{L:uniqueext} implies that $E \simeq 3\sco_\pii(1)$. 

If $n = 3$ then the curve $Y$ must be connected (because $\omega_Y(1)$ is 
globally generated) hence either $Y$ is a complete intersection of type 
$(1,3)$ or it is a twisted cubic curve. In the latter case, $\sci_Y$ 
admits a resolution of the form$\, :$  
\[
0 \lra 2\sco_\piii \lra 3\sco_\piii(1) \lra \sci_Y(3) \lra 0  
\]
hence, by Lemma~\ref{L:uniqueext}, $E \simeq 3\sco_\piii(1)$. 

If $n \geq 4$ then, using Lemma~\ref{L:lift1} and the case $n = 3$ already 
settled, one shows easily that either 
$E \simeq \sco_\p(c_1-1) \oplus \text{T}_\p(-1)$ or 
$E \simeq 3\sco_\p(1)$.   

\vskip2mm 

\noindent
$\bullet$ If $c_1 \geq 4$ then, 
taking into account Lemma~\ref{L:yci}, it suffices to prove that if $E$ is a 
globally generated vector bundle on $\p^n$ with $c_2 = c_1 \geq 4$ then 
either $Y$ is a complete intersection of type $(1,c_1)$ or $c_1 = 4$ and 
$E \simeq 2\sco_\p(2) \oplus (r - 2)\sco_\p$. 
The proof of this assertion can be obviously reduced to the case $n=2$ and 
$r = 2$. Now, in this case, $Y$ consists of $c_1$ simple points satisfying 
the Cayley-Bacharach condition with respect to plane curves of degree $c_1-3$, 
i.e., any curve of degree $c_1-3$ containing $c_1-1$ of the points of $Y$ 
must contain $Y$. One deduces easily, using suitable unions of lines, that 
either the points of $Y$ are colinear or any 3 of them are non-colinear. 
Since a set of $2d+1$ points of $\pii$, any 3 of them non-colinear, impose 
independent conditions on forms of degree $d$, one deduces from the 
Cayley-Bacharach condition, once more, that the points of $Y$ must be colinear 
if $c_1 \geq 5$. Consequently, either the points of $Y$ are colinear or 
$c_1 = 4$ and $Y$ is a complete intersection of type $(2,2)$. In the later 
case, $E \simeq 2\sco_{\p^2}(2)$.     
\end{proof} 

Next, we discuss an idea that appears in the papers of Sierra and Ugaglia. 
It follows from the exact sequence \eqref{E:oeiy}  
that $\tH^0(E(-c_1-1)) = 0$ and that $\tH^0(E(-c_1)) \neq 0$ iff $Y = 
\emptyset$ iff $E \simeq \sco_\p(c_1) \oplus (r - 1)\sco_\p$ (this is a 
particular case of a result of Sierra~\cite[Prop.~1]{s}). The next proposition 
and its corollary extend a result of Sierra and Ugaglia~\cite[Prop.~2.2]{su2}. 

\begin{prop}\label{P:h0e-c1+1} 
Let $E$ be a globally generated vector bundle of rank $r \geq 2$ on $\p^n$ 
with $c_1 \geq 2$ and such that ${\fam0 H}^i(E^\ast) = 0$, $i = 0, 1$. 
If ${\fam0 H}^0(E(-c_1+1))\neq  0$ and ${\fam0 H}^0(E(-c_1)) = 0$ 
then $E \simeq \sco_\p(c_1-1) \oplus F$, where $F= \sco_\p(1)$ or 
$F = {\fam0 T}_\p(-1)$. 
\end{prop}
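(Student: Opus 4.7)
The plan is to identify $Y$ as a complete intersection of type $(1, c_2)$ with $c_2 \in \{c_1-1, c_1\}$, and then read off $E$ from Lemma~\ref{L:yci}.

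First, I would translate the hypothesis. Twisting the exact sequence \eqref{E:oeiy} by $-c_1+1$ and using $\tH^i(\sco_\p(-c_1+1)) = 0$ for $i = 0, 1$ (since $c_1 \geq 2$ and $n \geq 2$), one obtains $\tH^0(E(-c_1+1)) \cong \tH^0(\sci_Y(1))$. The nonvanishing of the former thus yields a hyperplane $H \subset \p^n$ containing $Y$, and the vanishing of $\tH^0(E(-c_1))$ gives $Y \neq \emptyset$ (else $E \simeq \sco_\p^{\op r-1} \oplus \sco_\p(c_1)$, for which $\tH^0(E(-c_1)) \neq 0$). Since $Y$ is locally Cohen-Macaulay of pure codimension 2 in $\p^n$ and contained in $H$, it is locally Cohen-Macaulay of pure codimension 1 in the smooth variety $H \cong \p^{n-1}$; hence a degree-$c_2$ hypersurface in $H$, cut out by some $f_H \in \tH^0(\sco_H(c_2))$. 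Lifting $f_H$ to $f \in \tH^0(\sco_\p(c_2))$ and taking $h$ to be an equation of $H$, one has $\sci_Y = (h, f)$, so $Y$ is a complete intersection of type $(1, c_2)$ in $\p^n$.

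Next, I would establish the upper bound $c_2 \leq c_1$. Since $\sci_Y(c_1)$ is a quotient of the globally generated bundle $E$, it is itself globally generated. At a point $x \in Y$ the fibre $\sci_Y(c_1)(x) \simeq \sci_{Y,x}/\fm_x \sci_{Y,x}$ is two-dimensional, with basis the classes $\overline h, \overline f$. A global section of $\sci_Y(c_1)$ has the form $ah + bf$ with $a \in \tH^0(\sco_\p(c_1-1))$ and $b \in \tH^0(\sco_\p(c_1-c_2))$, and reduces in the fibre to $a(x)\overline h + b(x)\overline f$. Surjectivity onto the $\overline f$-direction forces $\tH^0(\sco_\p(c_1-c_2)) \neq 0$, that is, $c_2 \leq c_1$. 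Combined with $c_2 \geq c_1 - 1$ from Prop.~\ref{P:c2=c1-1} (using $c_1 \geq 2$ and $c_2 > 0$), this yields $c_2 \in \{c_1-1, c_1\}$.

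Finally, I would apply Lemma~\ref{L:yci} with $(a, b) = (1, c_2)$. For $c_2 = c_1 - 1$, case (i) gives $E \simeq \sco_\p(1) \oplus \sco_\p(c_1-1) \oplus P(\sco_\p)$; since $P(\sco_\p) = 0$, this is $E \simeq \sco_\p(c_1-1) \oplus \sco_\p(1)$. For $c_2 = c_1$, case (ii) gives $E \simeq \sco_\p(c_1-1) \oplus P(\sco_\p(1)) = \sco_\p(c_1-1) \oplus {\fam0 T}_\p(-1)$. Case (iii) would require $c_1 = 1$ and is excluded by hypothesis. The delicate step is the fibrewise surjectivity argument that gives $c_2 \leq c_1$; once $Y$ is identified as a complete intersection, the remainder is mechanical.
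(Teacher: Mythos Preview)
Your proof is correct and follows the same route as the paper's: translate the hypotheses into $Y\neq\emptyset$ and $\tH^0(\sci_Y(1))\neq 0$, conclude that $Y$ is a complete intersection of type $(1,c_2)$, use global generation of $\sci_Y(c_1)$ together with Prop.~\ref{P:c2=c1-1} to pin down $c_2\in\{c_1-1,c_1\}$, and finish with Lemma~\ref{L:yci}. You simply supply more detail at each step (the Koszul presentation of sections of $\sci_Y(c_1)$, the fibrewise surjectivity check, and the explicit case split in Lemma~\ref{L:yci}) than the paper does.
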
  

\begin{proof}
The hypotheses are equivalent to $Y \neq \emptyset$ 
(hence $c_2 > 0$) and $\tH^0(\sci_Y(1)) \neq 0$. 
It follows that $Y$ is a complete intersection of type $(1,c_2)$. 
$\sci_Y(c_1)$ globally generated implies that $c_1 \geq c_2$. One deduces, 
from Prop.~\ref{P:c2=c1-1}, that $c_2 = c_1-1$ or $c_2 = c_1$.    
The conclusion of the proposition follows, now, from Lemma~\ref{L:yci}. 
\end{proof}

\begin{cor}\label{C:h0e-c1+1}
Let $E$ be a globally generated vector bundle on $\p^n$ with $c_1 \geq 2$. 
If ${\fam0 H}^0(E(-c_1+1)) = 0$ then, for every linear subspace  
$\Pi \subseteq \p^n$ of dimension $\geq 2$, one has  
${\fam0 H}^0(E_\Pi(-c_1+1)) = 0$.  
\end{cor}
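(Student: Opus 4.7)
The plan is to prove the contrapositive: assuming $\tH^0(E_\Pi(-c_1+1))\neq 0$ for some $\Pi$ of dimension $\geq 2$, I would produce a nonzero section of $E(-c_1+1)$ on $\p^n$. I would first reduce to $\dim \Pi = 2$: any nonzero section of $E_\Pi(-c_1+1)$ has a proper zero locus in $\Pi$, so a general hyperplane of $\Pi$ of dimension $\geq 2$ escapes this locus and the restricted section stays nonzero; iteration brings us to a two-plane. Independently, I would use Lemma~\ref{L:h0h1} to assume $\tH^i(E^\ast)=0$, $i=0,1$; because $c_1\geq 2$ and $\dim\Pi\geq 2$, the cohomology $\tH^i(\sco(-c_1+1))$ vanishes for $i=0,1$ on both $\p^n$ and $\Pi$, so this reduction preserves both the hypothesis $\tH^0(E(-c_1+1))=0$ and the assumed nonvanishing on $\Pi$.

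Next I would apply Proposition~\ref{P:h0e-c1+1} to $E_\Pi$ on $\Pi\simeq\pii$, after running the analogous Lemma~\ref{L:h0h1} reduction intrinsically on $\Pi$ to obtain a bundle $F_\Pi$ with $\tH^i(F_\Pi^\ast)=0$ and $\tH^0(F_\Pi(-c_1+1))\neq 0$. If $\tH^0(F_\Pi(-c_1))\neq 0$, the associated subscheme $Y$ is empty, which together with $\tH^0(F_\Pi^\ast)=0$ forces $F_\Pi\simeq\sco_\Pi(c_1)$; otherwise Proposition~\ref{P:h0e-c1+1} applies directly and gives $F_\Pi\simeq\sco_\Pi(c_1-1)\oplus F^{\#}$ with $F^{\#}\in\{\sco_\Pi(1),\, \text{T}_\Pi(-1)\}$. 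In either outcome, after reinserting the trivial summands and trivial subbundle from the reduction, $E_\Pi$ decomposes as $A\oplus P(B)\oplus\sco_\Pi^{\op t}$ for sums of line bundles $A,B$ on $\Pi$, with $\sco_\Pi(c_1-1)$ a direct summand of $A$ (or the summand $\sco_\Pi(c_1)$ in the degenerate subcase).

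The final step would be to propagate this plane-level decomposition up to $\p^n$ along a saturated flag $\Pi\subset\Pi_3\subset\cdots\subset\Pi_n=\p^n$ by iterated use of Lemma~\ref{L:lift1}, concluding $E\simeq\widehat A\oplus P(\widehat B)$ with $\sco_{\p^n}(c_1-1)$ as a direct summand of $\widehat A$; this immediately gives $\tH^0(E(-c_1+1))\supseteq \tH^0(\sco_{\p^n})\neq 0$, contradicting the hypothesis. The hard part will be this lifting step, since Lemma~\ref{L:lift1} is only available when the ambient dimension is $\geq 4$; for the intermediate stage $\pii\subset\piii$ one expects to argue by hand, using the minimal graded free resolution of $\tH^0_\ast(E)$ and Lemma~\ref{L:h1=0} to lift the resolution of $\tH^0_\ast(E_\Pi)$, and then recognizing the lifted complex as the resolution of the desired sum of line bundles plus a copy of $P(\sco_\piii(1))=\text{T}_\piii(-1)$.
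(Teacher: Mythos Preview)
Your outline has a genuine gap in the lifting step, and the paper's argument avoids that step entirely.

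The problematic case is $F_\Pi\simeq\sco_\Pi(c_1-1)\oplus\text{T}_\Pi(-1)$ (equivalently $c_2=c_1$). To carry the structure up a flag via Lemma~\ref{L:h1=0} at the stage $\pii\subset\piii$ you would need $\tH^1_\ast(E\vert_{\Pi_3})=0$, and nothing in your setup provides this; indeed $\tH^1_\ast(E_\Pi)\simeq k(2)\neq 0$ already on the plane, so there is no reason the intermediate cohomology vanishes one dimension up. Likewise, iterating Lemma~\ref{L:lift1} requires $\tH^i((E\vert_{\Pi_k})^\ast)=0$ at each stage, which your global reduction on $\p^n$ does not guarantee after restriction. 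Your phrase ``one expects to argue by hand'' is exactly where the argument is missing.

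The paper's proof sidesteps lifting altogether by working with Chern classes. From $\tH^0(E_\Pi(-c_1+1))\neq 0$ one reads off, via the proof of Proposition~\ref{P:h0e-c1+1} (the part preceding the use of $\tH^i(E^\ast)=0$), that $0<c_2\leq c_1$; since $c_2(E)=c_2(E_\Pi)$, this is a constraint on $E$ itself. If $c_2=c_1-1$, Proposition~\ref{P:c2=c1-1} (which needs no hypothesis on $\tH^i(E^\ast)$) forces $E$ to split on $\p^n$, contradicting $\tH^0(E(-c_1+1))=0$. If $c_2=c_1$, one applies Lemma~\ref{L:h0h1} on $\p^n$ and then Proposition~\ref{P:c2=c1} to the resulting bundle $F$ on $\p^n$; inspecting each possible $F$ yields the contradiction directly. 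No transport of bundle structure from $\Pi$ to $\p^n$ is required beyond the classical fact that a split restriction to a plane forces a split bundle. The moral: extract the numerical invariant $c_2$ from $\Pi$ and apply the classification theorems on $\p^n$, rather than trying to lift the explicit decomposition.
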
 

\begin{proof}
Assume that there exists a linear subspace $\Pi \subseteq \p^n$ of dimension 
$\geq 2$ such that $\tH^0(E_\Pi(-c_1+1)) \neq 0$. If $\tH^0(E_\Pi(-c_1)) \neq 0$ 
then $E_\Pi \simeq \sco_\Pi(c_1) \oplus (r - 1)\sco_\Pi$ hence $E \simeq 
\sco_\p(c_1) \oplus (r - 1)\sco_\p$, which contradicts the hypothesis 
$\tH^0(E(-c_1+1)) = 0$. 
If $\tH^0(E_\Pi(-c_1)) = 0$ then, by the proof of Prop.~\ref{P:h0e-c1+1}, either 
$E_\Pi \simeq \sco_\Pi(1) \oplus \sco_\Pi(c_1-1) \oplus (r - 2)\sco_\Pi$ or 
$c_2 = c_1$. In the former case, 
$E \simeq \sco_\p(1) \oplus \sco_\p(c_1-1) \oplus (r - 2)\sco_\p$, which  
contradicts the hypothesis. In the latter case one gets a contradiction by 
using Prop.~\ref{P:c2=c1} (taking into account Lemma~\ref{L:h0h1}). 
\end{proof} 

We study, now, the globally generated vector bundles $E$ on $\p^n$ with 
$\tH^0(E(-c_1+2)) \neq 0$. We shall treat separatedly the cases $n = 2$, 
$n = 3$ and $n \geq 4$ because we use different kind of arguments for each of 
them. We shall need some general facts about \emph{liaison in codimension} 2  
which we recall, for the reader's convenience, refering to the classical paper 
of Peskine and Szpiro \cite{ps} for proofs. 

\begin{remark}\label{R:liaison}  
Consider closed subschemes $Y \subset X \subset \p^n$ of $\p^n$, $n \geq 2$,  
with $Y$ locally Cohen-Macaulay of pure codimension 2 and with $X$ a complete 
intersection of type $(a,b)$. Let $\sci_{Y,X} := \sci_Y/\sci_X$ denote the 
ideal sheaf of $\sco_X$ defining $Y$ as a closed subscheme of $X$. Define a 
closed subscheme $Y^\prim$ of $X$ by$\, :$  
\[
\sci_{Y^\prim,X} := \sca nn_{\sco_X}(\sci_{Y,X}) \simeq \sch om_{\sco_X}(\sco_Y,
\sco_X)\, .
\]  
One knows that $Y^\prim$ is locally Cohen-Macaulay of pure codimension 2 in 
$\p^n$ and that $\sci_{Y,X} \simeq \sca nn_{\sco_X}(\sci_{Y^\prim,X})$. Since 
$\omega_X \simeq \sco_X(a+b-n-1)$ one gets that$\, :$  
\[
\sci_{Y^\prim,X} \simeq \sch om_{\sco_X}(\sco_Y,\sco_X) \simeq 
\omega_Y(-a-b+n+1) 
\] 
whence the so called \emph{fundamental exact sequence of liaison}$\, :$  
\[
0 \lra \omega_Y(-a-b+n+1) \lra \sco_X \lra \sco_{Y^\prim} \lra 0  
\]
with its variant$\, :$ 
\[
0 \lra \sco_\p(-a-b) \lra \sco_\p(-a) \oplus \sco_\p(-b) \lra 
\sci_{Y^\prim} \lra \omega_Y(-a-b+n+1) \lra 0\, .
\]
Of course, there are similar exact sequences with the roles of $Y$ and 
$Y^\prim$ interchanged. Using Hibert polynomials, one deduces that  
$\text{deg}\, Y + \text{deg}\, Y^\prim = ab$.\footnote{and 
$\chi(\sco_{Y^\prim}) - \chi(\sco_Y) = \frac{1}{2}(a + b - 4)(\text{deg}\, Y - 
\text{deg}\, Y^\prim)$ if $n = 3$.} 

\vskip2mm

If one has a resolution $0 \ra L \ra F \ra \sci_Y \ra 0$ 
with $L$ a direct sum of line bundles and $F$ a vector bundle then a result 
attributed in \cite[Prop.~2.5]{ps} to D. Ferrand asserts that the 
diagram$\, :$  
\[
\xymatrix{{} & {} & {} & \sco_\p(-a)\oplus \sco_\p(-b) \ar[d] 
\ar @{-->}[dl]\\ 
0 \ar[r] & L \ar[r] & F \ar[r] & \sci_Y \ar[r] & 0}
\] 
induces a resolution$\, :$  
\[
0 \lra F^\ast \lra L^\ast \oplus \sco_\p(a) \oplus \sco_\p(b) \lra 
\sci_{Y^\prim}(a+b) \lra 0\, .
\] 
\end{remark} 

\begin{lemma}\label{L:iz(2)n2}
Let $Z$ be a $0$-dimensional subscheme of $\pii$ with $\sci_Z(2)$ globally 
generated. Then one of the following holds$\, :$ 
\begin{enumerate}
\item[(i)] $Z$ is a complete intersection of type $(1,1)$, $(1,2)$, or 
$(2,2)\, ;$ 
\item[(ii)] $\sci_Z(2)$ admits a resolution of the form$\, :$ 
\[
0 \lra 2\sco_\pii(-1) \lra 3\sco_\pii \lra \sci_Z(2) \lra 0\, .
\] 
\end{enumerate}
\end{lemma}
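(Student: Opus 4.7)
The plan is to read off $\sci_Z$ from its minimal graded free resolution. Since $Z \subset \pii$ is $0$-dimensional on a smooth surface, $S/I_Z$ is Cohen--Macaulay of codimension $2$, so by the Hilbert--Burch theorem $\sci_Z$ admits a minimal resolution
\[
0 \lra \bigoplus_{i=1}^{r-1}\sco_\pii(-b_i) \lra \bigoplus_{j=1}^{r}\sco_\pii(-a_j) \lra \sci_Z \lra 0
\]
with $r \geq 2$, $a_1 \leq \cdots \leq a_r$, $b_1 \leq \cdots \leq b_{r-1}$, every nonzero entry of the matrix having positive degree (minimality), and $\sum_j a_j = \sum_i b_i$ (comparison of Hilbert polynomials). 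Global generation of $\sci_Z(2)$ is equivalent to the saturated ideal $I_Z = \tH^0_\ast(\sci_Z)$ being generated in degrees $\leq 2$, so every $a_j \leq 2$.

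I would then split on whether or not $Z$ lies on a line. If some $a_j = 1$, then $Z$ sits in a line $L \subset \pii$, and twisting the exact sequence $0 \to \sci_L \to \sci_Z \to \sci_{Z,L} \to 0$ by $\sco_\pii(2)$ exhibits $\sci_{Z,L}(2) \simeq \sco_L(2 - \deg Z)$ as a quotient of the globally generated $\sci_Z(2)$; since $\sco_L(2-\deg Z)$ on $L \simeq \p^1$ is globally generated only when $\deg Z \leq 2$, one concludes that $Z$ is a complete intersection of type $(1,1)$ or $(1,2)$, i.e.\ case (i). In the remaining case every $a_j$ equals $2$, so $\sum_i b_i = 2r$ with each $b_i \geq 3$ (as $b_i - 2 \geq 1$ by minimality), forcing $3(r-1) \leq 2r$ and hence $r \in \{2,3\}$. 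For $r = 2$ one gets $b_1 = 4$ and $Z$ is a $(2,2)$ complete intersection (case (i)); for $r = 3$ the only possibility is $(b_1,b_2) = (3,3)$, and twisting the resolution $0 \to \sco_\pii(-3)^{\op 2} \to \sco_\pii(-2)^{\op 3} \to \sci_Z \to 0$ by $\sco_\pii(2)$ yields exactly the presentation of (ii).

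The argument is essentially bookkeeping on Hilbert--Burch data; no step is deep. The main point requiring care is the translation of the global-generation hypothesis into the bound $a_r \leq 2$, after which the only numerical solutions available once $Z$ avoids every line are the $(2,2)$ complete intersection and the $2\times 3$ linear Hilbert--Burch matrix producing (ii).
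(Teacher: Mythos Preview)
Your proof is correct and takes a genuinely different route from the paper. The paper argues by degree: since $\sci_Z(2)$ is globally generated one has $\deg Z \leq 4$; the cases $\deg Z = 1$ and $\deg Z = 4$ are immediate, $\deg Z = 2$ is handled by an Euler-characteristic count showing $\tH^0(\sci_Z(1)) \neq 0$, and $\deg Z = 3$ is done by linking $Z$ to a simple point inside a $(2,2)$ complete intersection and invoking Ferrand's result (Remark~\ref{R:liaison}) to read off the resolution. Your argument instead classifies the Hilbert--Burch data directly, splitting on whether a linear minimal generator exists and then doing the numerics on $(a_j,b_i)$. Your approach is more self-contained---it avoids the liaison machinery entirely---while the paper's approach has the advantage that the same liaison template is reused verbatim in the subsequent Lemmas~\ref{L:iz(2)n3} and~\ref{L:iz(3)n2}, where Hilbert--Burch is not available (codimension-$2$ subschemes of $\piii$) or where the casework would be longer.
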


\begin{proof}
One has $\text{deg}\, Z \leq 4$. If $\text{deg}\, Z = 1$ then $Z$ is a 
complete intersection of type $(1,1)$. If $\text{deg}\, Z = 4$ then $Z$ is a 
complete intersection of type $(2,2)$. If $\text{deg}\, Z = 2$ then, using the 
exact sequence$\, :$ 
\[
0 \lra \sci_Z(1) \lra \sco_\pii(1) \lra \sco_Z(1) \lra 0 
\] 
one derives that $\tH^0(\sci_Z(1)) \neq 0$ hence $Z$ is a complete 
intersection of type $(1,2)$. 

Finally, if $\text{deg}\, Z = 3$ then $Z$ is linked by a complete intersection 
of type $(2,2)$ to a simple point hence, using Ferrand's result recalled in 
Remark~\ref{R:liaison}, $\sci_Z(2)$ admits a resolution of the form 
stated in item (ii). 
\end{proof}

\begin{prop}\label{P:h0e-c1+2n2} 
Let $E$ be a globally generated vector bundle of rank $r \geq 2$  
on $\pii$ with $c_1 \geq 3$ and such that ${\fam0 H}^i(E^\ast) = 0$, $i = 0, 1$. 
If ${\fam0 H}^0(E(-c_1+1)) = 0$ and ${\fam0 H}^0(E(-c_1+2)) \neq 0$ 
then $E \simeq \sco_\pii(c_1-2) \oplus F$, 
where $F$ is one of the bundles $\sco_\pii(2)$ $($for $c_1 \geq 4)$, 
$2\sco_\pii(1)$, $\sco_\pii(1) \oplus {\fam0 T}_\pii(-1)$, 
$2{\fam0 T}_\pii(-1)$, and $P(\sco_\pii(2))$.  
\end{prop} 

\begin{proof}
Considering $r-2$ general global sections of $E$ one gets an exact 
sequence$\, :$  
\[
0 \lra (r - 2)\sco_\pii \lra E \lra E^\prim \lra 0
\]
with $E^\prim$ a rank 2 vector bundle. Since 
$\tH^0(E^\prim(-c_1+2)) \neq 0$ and $\tH^0(E^\prim(-c_1+1)) = 0$ it follows that 
any non-zero global section of $E^\prim(-c_1+2)$ defines an exact 
sequence$\, :$  
\[
0 \lra \sco_\pii(c_1-2) \lra E^\prim \lra \sci_Z(2) \lra 0
\] 
with $Z$ a closed subscheme of $\pii$ which is either empty or 
0-dimensional. One deduces an exact sequence$\, :$  
\[
0 \lra \sco_\pii(c_1-2) \oplus (r - 2)\sco_\pii \lra E \lra \sci_Z(2) 
\lra 0\, .
\]
If $Z = \emptyset$ then $E \simeq \sco_\pii(c_1-2) \oplus \sco_\pii(2)$ (recall  
that $\tH^0(E^\ast) = 0$). If $Z$ is 0-dimensional then one can use 
Lemma~\ref{L:iz(2)n2}. Assume, firstly, that $Z$ is a complete intersection 
of type $(a,b)$, with $(a,b) \in \{(1,1), (1,2), (2,2)\}$. In this case, 
as at the beginning of the proof of Lemma~\ref{L:yci}, $E$ admits a 
resolution of the form$\, :$ 
\[
0 \ra \sco_\pii(-a-b+2) \lra \sco_\pii(c_1-2) \oplus \sco_\pii(-a+2) \oplus 
\sco_\pii(-b+2) \oplus (r - 2)\sco_\pii \lra E \ra 0 
\] 
Dualizing this resolution and taking into account that $\tH^i(E^\ast) = 0$, 
$i = 0, 1$, one deduces, as in the proof of Lemma~\ref{L:yci}, that 
$E \simeq \sco_\pii(c_1-2) \oplus F$ where $F = 2\sco_\pii(1)$ if 
$(a,b) = (1,1)$, $F = \sco_\pii(1) \oplus \text{T}_\pii(-1)$ if $(a,b) = 
(1,2)$, and $F = P(\sco_\pii(2))$ if $(a,b) = (2,2)$. 

Assume, finally, that $\sci_Z(2)$ admits a resolution as in 
Lemma~\ref{L:iz(2)n2}(ii). Then $E$ admits a resolution of the form$\, :$  
\[
0 \lra 2\sco_\pii(-1) \lra \sco_\pii(c_1-2) \oplus (r + 1)\sco_\pii  
\lra E \lra 0\, . 
\]
Dualizing this resolution and using the fact that $\tH^i(E^\ast) = 0$, 
$i = 0, 1$, one gets that $E \simeq \sco_\pii(c_1-2) \oplus 
2\text{T}_\pii(-1)$. 
\end{proof} 

\begin{lemma}\label{L:iz(2)n3}
Let $Z$ be a locally Cohen-Macaulay subscheme of $\piii$ of pure codimension 
$2$. If $\sci_Z(2)$ is globally generated then one of the following 
holds$\, :$ 
\begin{enumerate}
\item[(i)] $Z$ is a complete intersection of type $(1,1)$, $(1,2)$, or 
$(2,2)\, ;$
\item[(ii)] $\sci_Z(2)$ admits a resolution of the form$\, :$ 
\[
0 \lra 2\sco_\piii(-1) \lra 3\sco_\piii \lra \sci_Z(2) \lra 0\, ;
\]
\item[(iii)] $\sci_Z(2)$ admits a resolution of the form$\, :$
\[
0 \lra 2\sco_\piii \lra \Omega_\piii(2) \lra \sci_Z(2) \lra 0\, .
\]
\end{enumerate}
\end{lemma}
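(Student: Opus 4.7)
The strategy is liaison combined with the Serre correspondence. Since $\sci_Z(2)$ is globally generated and $Z \neq \emptyset$, I would pick two general sections $Q_1, Q_2 \in \tH^0(\sci_Z(2))$ whose common zero locus is a complete intersection $X$ of type $(2,2)$ containing $Z$. By liaison (Remark~\ref{R:liaison}), the linked subscheme $Z^\prim$ is again locally Cohen--Macaulay of pure codimension 2, with $\deg Z + \deg Z^\prim = 4$, so $\deg Z \in \{1,2,3,4\}$.

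Three of the four subcases are handled directly. If $\deg Z = 1$, then $Z$ is a line, a complete intersection of type $(1,1)$; if $\deg Z = 4$, then $Z^\prim = \emptyset$ and $Z = X$ is a complete intersection of type $(2,2)$. If $\deg Z = 3$, then $Z^\prim$ is a line whose Koszul resolution
\[
0 \lra \sco_\piii(-2) \lra \sco_\piii(-1)^{\op 2} \lra \sci_{Z^\prim} \lra 0,
\]
when fed into Ferrand's lifting formula from Remark~\ref{R:liaison} with $a=b=2$, produces a resolution of $\sci_Z(4)$ which, twisted by $\sco_\piii(-2)$, is precisely the one in (ii).

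The remaining case, $\deg Z = 2$, is the essential one. If $\tH^0(\sci_Z(1)) \neq 0$, then $Z$ sits inside a plane and must be a plane conic, hence a complete intersection of type $(1,2)$. Otherwise $Z$ is non-degenerate, and I would invoke the Serre construction: a general section of $\sci_Z(2)$ produces an extension
\[
0 \lra \sco_\piii \lra \sce \lra \sci_Z(2) \lra 0
\]
with $\sce$ a rank 2 reflexive sheaf on $\piii$ satisfying $c_1(\sce) = 2$ and $c_2(\sce) = \deg Z = 2$. Granting that $\sce$ is locally free, one has $c_3(\sce) = 0$, so the normalised bundle $\sce(-1)$ has $c_1 = 0$, $c_2 = 1$, $c_3 = 0$, and $\tH^0(\sce(-2)) = 0$; by the standard classification of stable rank 2 bundles on $\piii$ with these invariants, $\sce(-1)$ must be a null-correlation bundle $N$. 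Splicing the Serre sequence with the defining sequence $0 \to \sco_\piii \to \Omega_\piii(2) \to N(1) \to 0$ then yields the resolution in (iii).

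The delicate point is establishing that the Serre extension $\sce$ is locally free in the non-degenerate $\deg Z = 2$ subcase; equivalently, that $Z$ is a local complete intersection in $\piii$. This reduces to a short local analysis of the possible non-degenerate degree 2 locally Cohen--Macaulay curves, namely pairs of skew lines and the non-planar ``ribbon'' double structure on a line, each of which is immediately seen to be locally defined by two equations. Once local freeness is in hand, the identification of $\sce(-1)$ with a null-correlation bundle and the splicing step above are standard.
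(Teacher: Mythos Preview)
Your treatment of degrees $1$, $3$, $4$, and of the planar degree-$2$ case, coincides with the paper's. The substantive difference is in the non-degenerate degree-$2$ case: the paper handles the two possibilities separately and explicitly --- for two skew lines it builds a monad from the Koszul complexes of the lines and recognises the kernel of $\sco_\piii(-1)^{\op 4}\to\sco_\piii$ as $\Omega_\piii$; for a non-planar double line it first proves (via an ``algebraic observation'') that $\sci_L^2\subseteq\sci_Z$, then writes down a concrete $2\times 4$ matrix of linear forms whose degeneracy scheme is $Z$ and reads off the resolution (iii). Your route through the Serre correspondence and the identification $\sce(-1)\simeq N$ is more conceptual and uniform; the paper's route is self-contained and makes the map $\sco_\piii^{\op 2}\to\Omega_\piii(2)$ completely explicit.

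There is, however, a genuine gap in your argument. You assert that local freeness of the Serre extension $\sce$ is \emph{equivalent} to $Z$ being a local complete intersection, but LCI is only necessary: one also needs $Z$ to be subcanonical, i.e.\ $\omega_Z\simeq\sco_Z(-2)$, so that $\omega_Z(2)$ carries a nowhere-vanishing section furnishing the extension class. This does hold in both subcases (for skew lines trivially; for the ribbon realise $Z$ as a divisor of type $(2,0)$ on a smooth quadric and apply adjunction), but it must be checked and you do not mention it. A cleaner fix, avoiding this computation, is to take any extension class nonzero at every generic point of $Z$, obtaining a rank $2$ \emph{reflexive} $\sce$; then $\sce(-1)$ is stable with $c_1=0$, $c_2=1$, and the spectrum constraints of Theorem~\ref{T:spectrum} force the single entry to be $0$, hence $c_3=0$ and $\sce$ is locally free after all. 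Two minor corrections: the extension is produced by an element of $\mathrm{Ext}^1(\sci_Z(2),\sco_\piii)\simeq\tH^0(\omega_Z(2))$, not by ``a general section of $\sci_Z(2)$''; and for stability of $\sce(-1)$ you need $\tH^0(\sce(-1))=0$ (which follows from $\tH^0(\sci_Z(1))=0$), not the weaker $\tH^0(\sce(-2))=0$.
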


\begin{proof}
One has $\text{deg}\, Z \leq 4$. If $\text{deg}\, Z = 1$ then $Z$ is a 
complete intersection of type $(1,1)$. If $\text{deg}\, Z = 4$ then $Z$ is a 
complete intersection of type $(2,2)$. If $\text{deg}\, Z = 3$ then $Z$ is 
linked to a line by a complete intersection of type $(2,2)$ hence 
$\sci_Z(2)$ admits a resolution as in item (ii) of the statement (see, again, 
Remark~\ref{R:liaison}). 

If $\text{deg}\, Z = 2$ then there are three possibilities: $Z$ is a complete 
intersection of type $(1,2)$, $Z = L_1 \cup L_2$ with $L_1$ and $L_2$ 
disjoint lines, or $Z$ is a multiple structure supported by a line $L$. 

If $Z = L_1 \cup L_2$ then, using the fact that $\sco_Y = \sco_{L_1} \oplus 
\sco_{L_2}$ and the exact sequences$\, :$ 
\begin{gather*}
0 \lra \sco_\piii(-2) \lra 2\sco_\piii(-1) \lra \sco_\piii \lra 
\sco_{L_i} \lra 0,\  i = 0, 1,\\
0 \lra \sco_\piii \xra{\binom{\text{id}}{\text{id}}}  
\sco_\piii \oplus \sco_\piii \xra{(-\text{id}\, ,\, \text{id})} \sco_\piii \lra 
0\, ,  
\end{gather*} 
one gets that $\sci_Z$ is the middle cohomology of a monad$\, :$  
\[
0 \lra 2\sco_\piii(-2) \lra 4\sco_\piii(-1) \lra \sco_\piii 
\lra 0\, .
\]
Since the kernel of any epimorphism $4\sco_\piii(-1) \ra \sco_\piii$ is 
isomorphic to $\Omega_\piii$, one gets a resolution for $\sci_Z(2)$ as in 
item (iii) of the statement. 

Finally, it remains to analyse the case where $\text{deg}\, Z = 2$ and 
$Z_{\text{red}} = L$, with $L$ a line in $\piii$. In this case one uses 
standard techniques from the theory of multiple structures supported by 
smooth curves, cf. B\u{a}nic\u{a} and Forster~\cite{bf} and 
Manolache~\cite{m1}. One starts by showing that $\sci_L^2 \subseteq \sci_Z$. 
For that one needs the following \emph{algebraic observation}: 
there is a classical trick asserting that is $\scf$ is a 
torsion free sheaf on an integral scheme $X$ with $\text{Supp}\, \scf \neq X$ 
then $\scf = 0$. This can be generalized to a locally Cohen-Macaulay scheme 
$X$ of pure dimension $n \geq 1$ as it follows: if $\scf$ is a coherent sheaf 
on $X$ which can be locally embedded into $r\sco_X$, for some $r \geq 1$, 
and if $X \setminus \text{Supp}\, \scf$ is (topologically) dense in $X$ then 
$\scf = 0$ (if $A$ is a local Cohen-Macaulay ring, if $M$ is an $A$-submodule 
of $rA$ and if $M_\fp = (0)$, $\forall \, \fp$ minimal prime ideal of $A$,  
then $M = (0)$ because the prime ideals of $A$ that are minimal over 
$\text{Ann}_A(M)$ belong to $\text{Ass}_A(M) \subseteq \text{Ass}_A(rA) = 
\text{Ass}_A(A)$ and $\text{Ass}_A(A)$ consists of the minimal prime ideals 
of $A$).    

Now, if $\sci_{L,Z}/\sci_{L,Z}^2$ would have finite support then it would follow, 
from the Lemma of Nakayama, that $\sci_{L,Z}$ has finite support hence, 
from the above algebraic observation, $\sci_{L,Z}$ would be 0, which is not the 
case. It remains that the $\sco_L$-module $\sci_{L,Z}/\sci_{L,Z}^2$ has rank 
$\geq 1$. Using Hilbert polynomials, one deduces that the subscheme of $Z$ 
defined by $\sci_{L,Z}^2$ has degree $\geq 2$, hence it has, actually, degree 
2. Using Hilbert polynomials again, one gets that $\sci_{L,Z}^2$ has finite 
support hence, by the above algebraic observation, it is 0 as we have asserted. 

Since $\sci_Z(2)$ is globally generated and since the subscheme $L^{(1)}$ of 
$\piii$ defined by $\sci_L^2$ has degree 3, it follows that $\tH^0(\sci_Z(2))$ 
is strictly larger that $\tH^0(\sci_L^2(2))$. Assuming that $L$ is the line 
of equations $X_2 = X_3 = 0$, this means that $\tH^0(\sci_Z(2))$ contains a 
non-zero element of the form $fX_2 + gX_3$, with $f,\, g \in k[X_0,X_1]_1$. 
If $f$ and $g$ are linearly dependent then $\tH^0(\sci_Z(1)) \neq 0$ and $Z$ 
is a complete intersection of type $(1,2)$. 

If $f$ and $g$ are linearly independent, consider the morphism$\, :$  
\[
\phi = 
\begin{pmatrix}
f & g & X_2 & X_3\\
-X_3 & X_2 & 0 & 0
\end{pmatrix} 
: 4\sco_\piii \lra 2\sco_\piii(1)\, .
\] 
The degeneracy scheme $D(\phi)$ of $\phi$ is exactly $Z$. Let $\sck$ be the 
kernel of $\phi$. $\sck$ is a rank 2 reflexive sheaf with $c_1(\sck(1)) = 0$ 
and $\tH^0(\sck) = 0$. Moreover, since $\tH^0(\sck(1))$ contains the 
elements $(0,0,-X_3,X_2)$ and $(X_2,X_3,-f,-g)$, one has $\text{h}^0(\sck(1)) 
\geq 2$. One deduces that $\sck(1) \simeq 2\sco_\piii$. Now, applying the 
Snake Lemma to the diagram$\, :$ 
\[
\begin{CD}
0 @>>> 0 @>>> 4\sco_\piii @>>> 4\sco_\piii @>>> 0\\
@. @VVV @VV{\phi}V @VVV\\
0 @>>> \sco_\piii(1) @>>> 2\sco_\piii(1) @>>> \sco_\piii(1) @>>> 0
\end{CD}
\]
one gets an exact sequence$\, :$ 
\[
0 \lra 2\sco_\piii(-1) \lra \Omega_\piii(1) \lra \sco_\piii(1) \lra 
\Cok \phi \lra 0\, .
\] 
Using the defining property of Fitting ideals, it follows that $\Cok \phi 
\simeq \sco_{D(\phi)}(1) \simeq \sco_Z(1)$ hence $\sci_Z(2)$ admits a resolution 
as in item (iii) of the statement.   
\end{proof} 

\begin{prop}\label{P:h0e-c1+2n3} 
Let $E$ be a globally generated vector bundle on $\piii$ with $c_1 \geq 3$ and 
such that ${\fam0 H}^i(E^\ast) = 0$, $i = 0, 1$. If ${\fam0 H}^0(E(-c_1+1)) = 
0$ and ${\fam0 H}^0(E(-c_1+2)) \neq 0$ then $E \simeq \sco_\piii(c_1-2) \oplus 
F$, where $F$ is one of the bundles $\sco_\piii(2)$ $($for $c_1 \geq 4)$,  
$2\sco_\piii(1)$, $\sco_\piii(1) \oplus {\fam0 T}_\piii(-1)$, 
$\Omega_\piii(2)$, $2{\fam0 T}_\piii(-1)$, and $P(\sco_\piii(2))$. 
\end{prop}

\begin{proof}
We use the same kind of argument as in the proof of Prop.~\ref{P:h0e-c1+2n2}. 
Considering $r-2$ general global sections of $E$ one gets an exact 
sequence$\, :$  
\[
0 \lra (r - 2)\sco_\piii \lra E \lra \sce^\prim \lra 0
\]
with $\sce^\prim$ a rank 2 reflexive sheaf. Since 
$\tH^0(\sce^\prim(-c_1+2)) \neq 0$ and $\tH^0(\sce^\prim(-c_1+1)) = 0$ it follows 
that any non-zero global section of $\sce^\prim(-c_1+2)$ defines an exact 
sequence$\, :$ 
\[
0 \lra \sco_\piii(c_1-2) \lra \sce^\prim \lra \sci_Z(2) \lra 0
\] 
with $Z$ a closed subscheme of $\piii$ which is either empty or locally 
Cohen-Macaulay of pure codimension 2 (and locally complete intersection except 
at finitely many points). One deduces an exact sequence$\, :$  
\[
0 \lra \sco_\piii(c_1-2) \oplus (r - 2)\sco_\piii \lra E \lra \sci_Z(2) 
\lra 0\, .
\]
If $Z = \emptyset$ then $E \simeq \sco_\piii(c_1-2) \oplus \sco_\piii(2)$ 
(recall that $\tH^0(E^\ast) = 0$). If $Z$ is not empty then we can use 
Lemma~\ref{L:iz(2)n3} in the same way we used Lemma~\ref{L:iz(2)n2} in the 
proof of Prop.~\ref{P:h0e-c1+2n2}. There is only one case which has no 
analogue in the proof of Prop.~\ref{P:h0e-c1+2n2}, namely the case where 
$\sci_Z(2)$ admits a resolution as in Lemma~\ref{L:iz(2)n3}(iii). In this case, 
since $\text{Ext}^1(\Omega_\piii(2),\, \sco_\piii(c_1-2) \oplus 
(r - 2)\sco_\piii) = 0$ one deduces, as in the proof of 
Lemma~\ref{L:yci}, that $E$ admits a resolution of the form$\, :$  
\[
0 \lra 2\sco_\piii \lra \sco_\piii(c_1-2) \oplus (r - 2)\sco_\piii 
\oplus \Omega_\piii(2) \lra E \lra 0\, .
\] 
Dualizing this exact sequence and taking into account that $\tH^i(E^\ast) = 0$, 
$i = 0, 1$, one gets that $E \simeq \sco_\piii(c_1-2) \oplus 
\Omega_\piii(2)$. 
\end{proof} 

\begin{prop}\label{P:h0e-c1+2n4}
Let $E$ be a globally generated vector bundle on $\p^n$, $n\geq 4$, with 
$c_1 \geq 3$ and such that ${\fam0 H}^i(E^\ast) = 0$, $i = 0, 1$. Let   
$\Pi \subset \p^n$ be a fixed $3$-plane. If ${\fam0 H}^0(E(-c_1+1)) = 0$ 
and ${\fam0 H}^0(E_\Pi(-c_1+2)) \neq 0$ then one of the following holds$\, :$  
\begin{enumerate} 
\item[(i)] $E \simeq \sco_\p(c_1-2) \oplus F$, where $F$ is one of the bundles 
$\sco_\p(2)$ $($for $c_1 \geq 4$$)$, $2\sco_\p(1)$, 
$\sco_\p(1) \oplus {\fam0 T}_\p(-1)$, $2{\fam0 T}_\p(-1)$, and 
$P(\sco_\p(2))\, ;$ 
\item[(ii)] $n = 4$, $c_1 = 3$, and $E \simeq \Omega_\piv(2)$.  
\end{enumerate}  
\end{prop}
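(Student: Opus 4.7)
The plan is to reduce to the three-plane classification of Proposition \ref{P:h0e-c1+2n3} and then lift back to $\p^n$ via the lemmas of Section \ref{S:preliminaries}. First, Corollary \ref{C:h0e-c1+1} applied to the hypothesis $\tH^0(E(-c_1+1)) = 0$ yields $\tH^0(E_\Pi(-c_1+1)) = 0$. Applying Lemma \ref{L:h0h1} to $E_\Pi$ (together with Lemma \ref{L:h1e=h1f}) extracts a vector bundle $F_\Pi$ on $\Pi$ with $\tH^i(F_\Pi^\ast) = 0$ for $i = 0,1$, and an isomorphism $E_\Pi \simeq G \oplus \sco_\Pi^{\op t}$ with $0 \to \sco_\Pi^{\op s} \to F_\Pi \to G \to 0$. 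Since $-c_1+1,\, -c_1+2 \leq -1$ under the hypothesis $c_1 \geq 3$, the trivial summands contribute nothing to the relevant global sections, so $F_\Pi$ inherits $\tH^0(F_\Pi(-c_1+1)) = 0$ and $\tH^0(F_\Pi(-c_1+2)) \neq 0$. Proposition \ref{P:h0e-c1+2n3} then gives $F_\Pi \simeq \sco_\Pi(c_1-2) \oplus F^\prim$ with $F^\prim$ in the list there. None of the possibilities has a trivial direct summand, so $F_\Pi$ has none either, which forces $s = 0$ and $E_\Pi \simeq F_\Pi \oplus \sco_\Pi^{\op t}$.

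For each of the five cases $F^\prim \in \{\sco_\Pi(2), \sco_\Pi(1)^{\op 2}, \sco_\Pi(1) \oplus \text{T}_\Pi(-1), \text{T}_\Pi(-1)^{\op 2}, P(\sco_\Pi(2))\}$, one can write $F_\Pi$ as $A \oplus P(B)$ with $A$ and $B$ direct sums of line bundles on $\Pi$ satisfying $\tH^0(A^\ast) = \tH^0(B^\ast) = 0$ (using $\text{T}_\Pi(-1) \simeq P(\sco_\Pi(1))$ and the fact that $P$ commutes with direct sums). Iteratively applying Lemma \ref{L:lift1} through a flag $\Pi \subset \Pi_4 \subset \cdots \subset \Pi_n = \p^n$ then lifts the decomposition to $\p^n$, yielding $E \simeq \sco_\p(c_1-2) \oplus \widehat{F}^\prim$, which is conclusion (i).

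The remaining case $F^\prim = \Omega_\Pi(2)$ is the main obstacle, and I would handle it via Lemma \ref{L:lift2} with $A = \sco_\Pi(c_1-2)$ and $B = 0$. The second case of that lemma is ruled out by a rank comparison: it would demand an exact sequence $\sco_\p(-c_1+2) \xra{\sigma^\ast} \sco_\p^{\op p} \to \sco_\Lambda(2) \to 0$ with $\Lambda \subset \p^n$ of codimension $4$; the kernel of the surjection onto the torsion sheaf $\sco_\Lambda(2)$ has rank $p$, while the image of $\sigma^\ast$ has rank at most $\rk \sco_\p(-c_1+2) = 1$, so exactness forces $p = 1$, and then the image of $\sigma^\ast$ is a principal subsheaf of $\sco_\p$, whereas $\sci_\Lambda$ is not principal, since $\Lambda$ has codimension $4$.

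It remains to analyse case (i) of Lemma \ref{L:lift2}. The decomposition $A = A_1 \oplus \sco_\Pi(1)^{\op m-4}$ with $A = \sco_\Pi(c_1-2)$ a single line bundle forces either $m = 4$ with $A_1 = A$, or $m = 5$ with $c_1 = 3$ and $A_1 = 0$. In the first subcase, $E \simeq \sco_\p(c_1-2) \oplus E^\prim$ with $E^\prim$ defined by $0 \to E^\prim \to \sco_\p(1)^{\op 4} \to \sco_\p(2) \to 0$; since $4 = m \leq n$, $\Lambda$ is non-empty and Remark \ref{R:lift2}(c) gives $c_1(E^\prim) \geq n+2 \geq 6$, whereas $c_1(E^\prim) = c_1 - (c_1-2) = 2$, a contradiction. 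In the second subcase, $E$ itself fits in $0 \to E \to \sco_\p(1)^{\op 5} \to \sco_\p(2) \to 0$; for $n = 4$ one has $m = 5 = n+1$, so $\Lambda = \emptyset$ and the five linear forms are a basis of $\tH^0(\sco_\piv(1))$, giving $E \simeq \Omega_\piv(2)$ (conclusion (ii)); for $n \geq 5$, Remark \ref{R:lift2}(c) forces $c_1(E) \geq n + 3 \geq 8$, contradicting $c_1 = 3$.
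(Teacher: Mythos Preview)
Your approach is the same as the paper's: reduce to $\Pi$ via Corollary~\ref{C:h0e-c1+1}, Lemma~\ref{L:h0h1}, and Proposition~\ref{P:h0e-c1+2n3}, then lift using Lemmas~\ref{L:lift1} and~\ref{L:lift2}. The detailed analysis of the $\Omega_\Pi(2)$ case through Lemma~\ref{L:lift2} (which the paper leaves implicit) is carried out correctly.

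Two points need attention. First, the argument ``$F_\Pi$ has no trivial direct summand, which forces $s = 0$'' is invalid: $\tH^0(F_\Pi^\ast) = 0$ rules out trivial \emph{summands}, not locally split trivial \emph{subbundles} (e.g., $\sco_\Pi(1)^{\oplus 4}$ admits $\sco_\Pi \hookrightarrow \sco_\Pi(1)^{\oplus 4}$ with quotient $\text{T}_\Pi(-1)$). Fortunately this step is redundant, since both lifting lemmas accept arbitrary $s \geq 0$ in their hypotheses.

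Second, and more substantively, ``iteratively applying Lemma~\ref{L:lift1} through a flag $\Pi \subset \Pi_4 \subset \cdots \subset \Pi_n$'' does not work as stated: Lemma~\ref{L:lift1} requires $\tH^i(E^\ast) = 0$ ($i=0,1$) on the ambient space, and this is given only on $\p^n$, not on the intermediate $\Pi_j$. The paper's phrase ``by induction on $n \geq 4$'' is the correct formulation: for $n = 4$ the 3-plane is a hyperplane and Lemma~\ref{L:lift1} applies directly; for $n > 4$ take a hyperplane $H \supset \Pi$, apply Lemma~\ref{L:h0h1} to $E_H$ to produce $F_H$ with the required vanishing, check (as you did for $F_\Pi$) that $F_H$ inherits the hypotheses of the proposition, apply the inductive hypothesis to $F_H$ on $H \simeq \p^{n-1}$, and then apply Lemma~\ref{L:lift1} to the pair $(E,H)$.
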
 

\begin{proof}
It follows, from Corollary~\ref{C:h0e-c1+1}, that $\tH^0(E_\Pi(-c_1+1)) = 0$. 
Prop.~\ref{P:h0e-c1+2n3} and Lemma~\ref{L:h0h1} imply, now, that 
$E_\Pi \simeq G \oplus t\sco_\Pi$, with $G$ defined by an exact sequence$\, :$  
\[
0 \lra s\sco_\Pi \lra \sco_\Pi(c_1-2) \oplus F^\prim \lra G \lra 0
\]  
where $F^\prim$ is one of the bundles on $\Pi \simeq \piii$ appearing in the 
last part of the statement of Prop.~\ref{P:h0e-c1+2n3}. One can conclude the 
proof by induction on $n \geq 4$, using Lemma~\ref{L:lift1} and 
Lemma~\ref{L:lift2}. 
\end{proof} 

\begin{remark}\label{R:c1leq3}
The results of this section can be used to settle completely the case 
$c_1 \leq 3$. \emph{Indeed}, let $E$ be a globally generated vector bundle on 
$\p^n$ with $c_1 \leq 3$, $c_2 \leq c_1^2/2$ and such that $\tH^i(E^\ast) = 0$, 
$i = 0, 1$. As we noticed before the statement of Prop.~\ref{P:c2=c1-1}, 
the cases $c_1 = 0$ and $c_1 = 1$ are easy. Prop.~\ref{P:c2=c1-1} and 
Prop.~\ref{P:c2=c1} settle the case $c_1 = 2$. They also settle the case 
$c_1 = 3$, $c_2 \leq 3$. It remains to consider only the case $c_1 = 3$ and 
$c_2 = 4$. This case can be split into several subcases according to the 
values of $n$. 

\vskip2mm 

\noindent
$\bullet$\quad If $n = 2$ then $Y$ consists of four simple points, hence 
$\tH^0(\sci_Y(2)) \neq 0$, which implies that $\tH^0(E(-1)) \neq 0$. 
Moreover, since 
$\sci_Y(3)$ is globally generated, it follows that $\tH^0(\sci_Y(1)) = 0$, 
hence $\tH^0(E(-2)) = 0$. Prop.~\ref{P:h0e-c1+2n2} implies, now, that 
$E \simeq 2\sco_\pii(1) \oplus \text{T}_\pii(-1)$. 

\vskip2mm

\noindent
$\bullet$\quad If $n = 3$ then $Y$ is a nonsingular curve of degree 4 with 
$\omega_Y(1)$ globally generated. Since $Y$ cannot be the union of two 
disjoint conics (because, in that case, the intersection line of the planes 
supporting the conics would be a 4-secant of $Y$ and this would contradict 
the fact that $\sci_Y(3)$ is globally generated), $Y$ must be connected, 
hence $Y$ is a complete intersection of type $(2,2)$ or a rational curve of 
degree 4. In the former case, by Lemma~\ref{L:yci}, $E \simeq 
2\sco_\piii(1) \oplus \text{T}_\piii(-1)$. In the latter case, one has 
$\tH^0(\sci_Y(2)) \neq 0$ and $\tH^0(\sci_Y(1)) = 0$, hence $\tH^0(E(-1)) \neq 
0$ and $\tH^0(E(-2)) = 0$. Prop.~\ref{P:h0e-c1+2n3} implies, now, that 
$E \simeq \sco_\piii(1) \oplus \Omega_\piii(2)$. 

\vskip2mm 

\noindent 
$\bullet$\quad For $n \geq 4$ one can use induction on $n$ based on 
Lemma~\ref{L:lift1} and Lemma~\ref{L:lift2}. It follows that either 
$E \simeq 2\sco_\p(1) \oplus \text{T}_\p(-1)$ or $n = 4$ and 
$E \simeq \Omega_\piv(2)$.    
\end{remark}

\section{The cases $c_1=4$ and $c_1 = 5$ on $\pii$}
\label{S:n=2c1=4} 

This section has two parts. In the first part we complete the classification 
of globally generated vector bundles with $c_1 = 4$ on $\pii$ by considering 
the cases $5 \leq c_2 \leq 8$. Besides its own interest, this part was 
meant to expose, in a simpler situation, the method we shall use to 
classify g.~g. vector bundles on $\piii$. More precisely, if 
$E$ is a g.~g. vector bundle on $\pii$ with $c_1 = 4$ then 
$r-2$ general global sections of $E$ define an exact squence$\, :$ 
\[
0 \lra (r-2)\sco_\pii \lra E \lra E^\prim \lra 0 
\]
where $E^\prim$ is a g.~g. rank 2 vector bundle with the same 
Chern classes as $E$. Consider the normalized bundle $F= E^\prim (-2)$ which 
has $c_1(F) = 0$. The bundles with $\tH^0(E(-2)) \neq 0$ were classified in 
Prop.~\ref{P:h0e-c1+2n2} ($-2 = -c_1+2$). If $\tH^0(E(-2)) = 0$ then 
$\tH^0(F) = 0$ hence $F$ is \emph{stable} and one might take advantage of the 
properties of stable rank 2 vector bundles on $\pii$. 

This was the initial plan but we realized, meanwhile, that it is easy to 
classify the 0-dimensional subschemes $Z$ of $\pii$ with $\sci_Z(3)$ globally 
generated and this leads to the classification of the g.~g. 
vector bundles on $\pii$ with $\tH^0(E(-c_1+3)) \neq 0$. The latter 
classification settles immediately the case $c_1 = 4$. 

\vskip2mm 

Stable rank 2 vector bundles on $\pii$ (this time with $c_1(F) = -1$) are, 
however, necessary for the classification of g.~g. vector bundles 
on $\pii$ with $c_1 = 5$, which we accomplish in the second part of the 
section. It turns out that the classification of g.~g. vector 
bundles $E$ on $\pii$ with $c_1 \leq 5$ and such that $\tH^i(E^\ast) = 0$, 
$i = 0,\, 1$, is discrete. They are direct sums of line bundles and of 
bundles appearing in the following list$\, :$ $\text{T}_\pii(-1)$, 
$P(\sco_\pii(a))$, $2 \leq a \leq 5$, $\text{T}_\pii(1)$, and 
$P(\text{T}_\pii(1))$. $\text{T}_\pii$ does not appear in this list because 
$\tH^1(\text{T}_\pii^\ast) \neq 0$; actually, one has an exact sequence 
$0 \ra \sco_\pii \ra 3\sco_\pii(1) \ra \text{T}_\pii \ra 0$. 

\vskip2mm   
 
We begin, now, the classification of globally generated vector bundles on 
$\pii$ with $c_1 = 4$ and $5 \leq c_2 \leq 5$. 
The results stated in Lemma~\ref{L:iz(2)n2} and Prop.~\ref{P:h0e-c1+2n2} can 
be easily pushed one step further. 

\begin{lemma}\label{L:iz(3)n2}
Let $Z$ be a $0$-dimensional subscheme of $\pii$ with $\sci_Z(3)$ globally 
generated. Then one of the following holds$\, :$ 
\begin{enumerate}
\item[(i)] $Z$ is a complete intersection of type $(a, b)$, 
$1 \leq a \leq b \leq 3\, ;$ 
\item[(ii)] $\sci_Z(3)$ admits a resolution of the form$\, :$ 
\[
0 \lra 2\sco_\pii \lra 3\sco_\pii(1) \lra \sci_Z(3) \lra 0\, ;
\]
\item[(iii)] $\sci_Z(3)$ admits a minimal resolution of the form$\, :$ 
\[
0 \lra \sco_\pii \oplus \sco_\pii(-1) \lra 2\sco_\pii(1) \oplus \sco_\pii 
\lra \sci_Z(3) \lra 0\, ;
\]
\item[(iv)] $\sci_Z(3)$ admits a resolution of the form$\, :$ 
\[
0 \lra 2\sco_\pii(-1) \lra \sco_\pii(1) \oplus 2\sco_\pii \lra 
\sci_Z(3) \lra 0\, ;
\]
\item[(v)] $\sci_Z(3)$ admits a resolution of the form$\, :$ 
\[
0 \lra 3\sco_\pii(-1) \lra 4\sco_\pii \lra \sci_Z(3) \lra 0\, ;
\] 
\item[(vi)] $\sci_Z(3)$ admits a resolution of the form$\, :$ 
\[
0 \lra \sco_\pii(-1) \oplus \sco_\pii(-2) \lra 3\sco_\pii \lra 
\sci_Z(3) \lra 0\, .
\]
\end{enumerate} 
\end{lemma}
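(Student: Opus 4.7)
The plan is to use the Hilbert--Burch theorem to list the possible minimal free resolutions of the graded ideal $\sci_Z$, subject to the constraint that $\sci_Z(3)$ is globally generated. For a $0$-dimensional $Z \subset \pii$, the minimal resolution has the form
\[
0 \to \bigoplus_{i=1}^{n-1}\sco_\pii(-a_i) \to \bigoplus_{j=1}^{n}\sco_\pii(-b_j) \to \sci_Z \to 0,
\]
with $\sci_Z$ generated by the maximal minors of the presenting matrix, $\sum a_i = \sum b_j$, and $\deg Z = \frac{1}{2}(\sum a_i^2 - \sum b_j^2)$. The first observation is that global generation of $\sci_Z(3)$ forces $b_j \leq 3$ for every $j$: otherwise the degree-$3$ piece $(\sci_Z)_3$ would not generate $\sci_Z$ at all stalks and the base locus of $\sci_Z(3)$ would strictly contain $Z$. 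For instance, a complete intersection of type $(a,b)$ with $a \leq 3 < b$ has $\tH^0(\sci_Z(3)) = f \cdot S_{3-a}$ (with $\deg f = a$), whose base locus contains the entire curve $V(f)$.

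First I would dispose of the complete intersections ($n = 2$): the admissible types are exactly $(a,b)$ with $1 \leq a \leq b \leq 3$, giving case~(i). For $n = 3$, enumerating $(b_1, b_2, b_3; a_1, a_2)$ with $b_j \leq 3$, minimality (every nonzero matrix entry of strictly positive degree), and the Hilbert--Burch codimension-$2$ requirement (no row or column can be identically zero, else the ideal of maximal minors drops below codimension $2$) leaves precisely $(2,2,2;3,3)$, $(2,2,3;3,4)$, $(2,3,3;4,4)$, and $(3,3,3;4,5)$, corresponding to cases (ii), (iii), (iv), and (vi). For $n = 4$ the only admissible sequence is $(3,3,3,3;4,4,4)$, giving case~(v); and for $n \geq 5$ the numerical conditions together with $a_i \geq 4$ are infeasible.

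To finish, I would verify that each listed resolution is actually realized. Cases (i), (ii), and (iii) are exhibited by explicit configurations; in case~(iii) one reads off from the shape of the Hilbert--Burch matrix that $Z$ consists of three collinear points together with a fourth point off the line. For cases (iv), (v), and (vi) the cleanest verification is by liaison via Remark~\ref{R:liaison}: two general elements of $\tH^0(\sci_Z(3))$ share no common factor (else the base locus would contain a curve), so they form a complete intersection of type $(3,3)$ of degree $9$ linking $Z$ to a residual $Z'$ of degree $9 - \deg Z \in \{2, 3, 4\}$; Ferrand's construction then translates the (already known) resolution of $\sci_{Z'}$ into precisely the predicted resolution of $\sci_Z$.

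The main obstacle will be excluding $\deg Z = 8$, the single ``gap'' in the list. I would argue again via liaison: such a $Z$ is residually linked to a single simple point $Z'$, whose resolution $0 \to \sco_\pii(-2) \to \sco_\pii(-1)^{\op 2} \to \sci_{Z'} \to 0$ Ferrand's construction transforms into the minimal resolution $0 \to \sco_\pii(-5)^{\op 2} \to \sco_\pii(-4) \oplus \sco_\pii(-3)^{\op 2} \to \sci_Z \to 0$. Hence $\tH^0(\sci_Z(3)) = (\sci_Z)_3$ is only two-dimensional, spanned by the two cubic Hilbert--Burch minors; but these two cubics cut out the full complete intersection $Z \cup Z'$ of degree $9$, so the base locus of $\sci_Z(3)$ strictly contains $Z$, contradicting global generation.
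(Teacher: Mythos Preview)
Your approach via Hilbert--Burch enumeration is different from the paper's and essentially correct, but the enumeration step has a genuine gap.

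\textbf{The gap.} Your pruning criteria for $n\ge 3$ --- minimality (no constant entries) plus ``no row or column identically zero'' --- are necessary but not sufficient. For instance, with $n=3$ the type $(b_1,b_2,b_3;a_1,a_2)=(2,3,3;3,5)$ passes both tests: column~1 has a nonzero entry in row~1, and rows~2,3 have nonzero entries in column~2. Yet minimality forces entries $(2,1)$ and $(3,1)$ to vanish, so the minor obtained by deleting row~1 has first column zero and hence vanishes, contradicting Hilbert--Burch. The same phenomenon rules out $(2,2,3,3;3,3,4)$ and $(2,3,3,3;3,4,4)$ for $n=4$, which also survive your stated tests. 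The correct necessary condition (ordering both sequences increasingly) is the interlacing $a_i>b_{i+1}$ for $1\le i\le n-1$: if $a_i\le b_{i+1}$ for some $i$, minimality forces the lower-left $(n-i)\times i$ block to vanish, and one checks that the first $i$ maximal minors are then zero. Summing $a_i\ge b_{i+1}+1$ against $\sum a_i=\sum b_j$ gives $b_1\ge n-1$, so $b_1\le 3$ forces $n\le 4$; for $n=4$ one gets $b_j\equiv 3$, $a_i\equiv 4$; for $n=3$ the interlacing leaves exactly your four types. With this correction your list is right and the argument is clean.

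\textbf{Comparison with the paper.} The paper organizes by $\deg Z\in\{1,\dots,9\}$ and uses liaison (Remark~\ref{R:liaison}) throughout: degrees $4,5$ are linked via $(2,3)$ to degrees $2,1$, and degrees $7,8$ via $(3,3)$ to degrees $2,1$, Ferrand's construction yielding the resolution each time; degrees $3,6$ are handled by regularity. Your route front-loads the classification into a numerical enumeration and invokes liaison only for realization and for the degree-$8$ exclusion (which is identical to the paper's). The paper's approach is more hands-on but avoids the Betti-table combinatorics; yours is more systematic once the interlacing condition is stated. Note also that the ``realization'' paragraph is not needed for the lemma as stated, which only asserts that one of (i)--(vi) holds.
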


\begin{proof}
One has $\text{deg}\, Z \leq 9$. If $\text{deg}\, Z = 1$, 2, or 9 then $Z$ is 
a complete intersection (for $\text{deg}\, Z = 2$ see the proof of 
Lemma~\ref{L:iz(2)n2}). We analyse the remaining degrees case by case.  
Consider the exact sequence$\, :$ 
\begin{equation}\label{E:izopiioz} 
0 \lra \sci_Z \lra \sco_\pii \lra \sco_Z \lra 0\, .
\end{equation} 

\noindent 
$\bullet$ If $\text{deg}\, Z = 3$ and $\tH^0(\sci_Z(1)) \neq 0$ then $Z$ is a 
complete intersection of type $(1,3)$. If $\tH^0(\sci_Z(1)) = 0$ then, using 
the exact sequence \eqref{E:izopiioz}, one gets that $\tH^1(\sci_Z(1)) = 0$, 
hence $\sci_Z$ is 2-regular. Using the exact sequence \eqref{E:izopiioz} and 
the fact that $\tH^1(\sci_Z(2)) = 0$ (by the Lemma of Castelnuovo-Mumford) one 
deduces that $\text{h}^0(\sci_Z(2)) = 3$ hence $\sci_Z(2)$ admits a linear  
resolution of the form$\, :$  
\[
0 \lra 2\sco_\pii(-1) \lra 3\sco_\pii \lra \sci_Z(2) \lra 0\, .
\]

\noindent
$\bullet$ If $\text{deg}\, Z = 4$ then one gets, using the exact sequence 
\eqref{E:izopiioz}, that $\tH^0(\sci_Z(2)) \neq 0$ hence $Z$ is linked, by a 
complete intersection of type $(2,3)$, to a 0-dimensional scheme $Z^\prim$ 
with $\text{deg}\, Z^\prim = 2$. $Z^\prim$ must be a complete intersection of 
type $(1,2)$ hence, using Ferrand's result recalled in Remark~\ref{R:liaison}, 
one deduces that $\sci_Z(5)$ admits a resolution of the form$\, :$  
\[
0 \lra \sco_\pii(1) \oplus \sco_\pii(2) \lra \sco_\pii(2) \oplus 
2\sco_\pii(3) \lra \sci_Z(5) \lra 0\, .
\] 
If this resolution is not minimal then $Z$ is a complete intersection of type 
$(2,2)$. 

\vskip2mm 

\noindent
$\bullet$ If $\text{deg}\, Z = 5$ then, as in the previous case, $Z$ is 
linked, by a complete intersection of type $(2,3)$, to a simple point hence 
$\sci_Z(5)$ admits a resolution of the form$\, :$ 
\[
0 \lra 2\sco_\pii(1) \lra 2\sco_\pii(2) \oplus \sco_\pii(3) \lra 
\sci_Z(5) \lra 0\, .
\] 

\noindent
$\bullet$ If $\text{deg}\, Z = 6$ and $\tH^0(\sci_Z(2)) \neq 0$ then $Z$ is a 
complete intersection of type $(2,3)$. If $\tH^0(\sci_Z(2)) = 0$ then, using 
the exact sequence \eqref{E:izopiioz}, one gets that $\tH^1(\sci_Z(2)) = 0$, 
hence $Z$ is 3-regular. Using the exact sequence \eqref{E:izopiioz} and the 
fact that $\tH^1(\sci_Z(3)) = 0$, one deduces that $\text{h}^0(\sci_Z(3)) = 4$ 
hence $\sci_Z(3)$ admits a linear resolution of the form$\, :$  
\[
0 \lra 3\sco_\pii(-1) \lra 4\sco_\pii \lra \sci_Z(3) \lra 0\, .
\] 

\noindent
$\bullet$ If $\text{deg}\, Z = 7$ then $Z$ is linked, by a complete 
intersection of type $(3,3)$, to a 0-dimensional scheme $Z^\prim$ of degree 2. 
One deduces, as in the case $\text{deg}\, Z = 4$, that $\sci_Z(6)$ admits a 
resolution of the form$\, :$  
\[
0 \lra \sco_\pii(1) \oplus \sco_\pii(2) \lra 3\sco_\pii(3) \lra 
\sci_Z(6) \lra 0\, .
\] 

\noindent 
$\bullet$ If $\text{deg}\, Z = 8$ then $Z$ is linked, by a complete 
intersection of type $(3,3)$, to a simple point hence $\sci_Z(6)$ admits a 
resolution of the form$\, :$  
\[
0 \lra 2\sco_\pii(1) \lra \sco_\pii(2) \oplus 2\sco_\pii(3)  
\lra \sci_Z(6) \lra 0\, .
\]
But this implies that $\sci_Z(3)$ is not globally generated, hence 
\emph{this case cannot occur}. 
\end{proof} 

\begin{prop}\label{P:h0e-c1+3n2} 
Let $E$ be a globally generated vector bundle of rank $r \geq 2$ on $\pii$ 
with $c_1 \geq 4$ and such that ${\fam0 H}^i(E^\ast) = 0$, $i = 0, 1$. 
If ${\fam0 H}^0(E(-c_1+2)) = 0$ and ${\fam0 H}^0(E(-c_1+3)) \neq 0$ then one 
of the following holds$\, :$ 
\begin{enumerate}
\item[(i)] $E \simeq \sco_\pii(c_1-3) \oplus F$, where $F$ is one of the 
bundles $\sco_\pii(3)$ $($for $c_1 \geq 6)$, $\sco_\pii(2) \oplus \sco_\pii(1)$  
$($for $c_1 \geq 5)$, $\sco_\pii(2) \oplus {\fam0 T}_\pii(-1)$ 
$($for $c_1 \geq 5)$, 
$3\sco_\pii(1)$, $2\sco_\pii(1) \oplus {\fam0 T}_\pii(-1)$, 
$\sco_\pii(1) \oplus 2{\fam0 T}_\pii(-1)$, $3{\fam0 T}_\pii(-1)$, 
$\sco_\pii(1) \oplus P(\sco_\pii(2))$, ${\fam0 T}_\pii(-1) \oplus 
P(\sco_\pii(2))$, and $P(\sco_\pii(3))\, ;$  
\item[(ii)] $c_1 \geq 5$ and $E$ admits a resolution of the form$\, :$ 
\[
0 \lra \sco_\pii(1) \lra \sco_\pii(c_1-3) \oplus 2\sco_\pii(2) 
\lra E \lra 0\, .
\] 
\end{enumerate}
\end{prop}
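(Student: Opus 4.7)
The argument follows the template of Proposition~\ref{P:h0e-c1+2n2}, now based on Lemma~\ref{L:iz(3)n2}. Taking $r-2$ general global sections of $E$ yields an exact sequence $0 \to \sco_\pii^{\op r-2} \to E \to E^\prim \to 0$ with $E^\prim$ of rank $2$; since $\tH^1$ of any line bundle on $\pii$ vanishes, both cohomology hypotheses transfer verbatim to $E^\prim$. A nonzero section of $E^\prim(-c_1+3)$ then yields $0 \to \sco_\pii(c_1-3) \to E^\prim \to \sci_Z(3) \to 0$ with $Z$ empty or $0$-dimensional, and splicing gives
\[
0 \lra \sco_\pii(c_1-3) \oplus \sco_\pii^{\op r-2} \lra E \lra \sci_Z(3) \lra 0.
\]
Twisting this by $-c_1+2$ identifies $\tH^0(E(-c_1+2))$ with $\tH^0(\sci_Z(5-c_1))$, so the hypothesis $\tH^0(E(-c_1+2))=0$ is automatic for $c_1\geq 5$, and, for $c_1=4$, forces $Z$ to be non-empty and not contained in a line.

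For each of the configurations of $\sci_Z(3)$ catalogued in Lemma~\ref{L:iz(3)n2} (including the six complete-intersection types in~(i)), together with $Z=\emptyset$, the resolution $0 \to L_1 \to L_0 \to \sci_Z(3) \to 0$ lifts along $E \to \sci_Z(3)$ to
\[
0 \lra L_1 \lra L_0 \oplus \sco_\pii(c_1-3) \oplus \sco_\pii^{\op r-2} \lra E \lra 0,
\]
since the pertinent $\text{Ext}^1$ groups vanish on $\pii$. Dualising and imposing ${\fam0 H}^i(E^\ast)=0$ for $i=0,1$ forces the induced map on global sections to be an isomorphism, which pins down $r$. Automorphisms of the codomain---specifically, using the freedom in the $\sco_\pii^{\op r-2} \to L_0$ and $\sco_\pii^{\op r-2} \to \sco_\pii(c_1-3)$ blocks---then decouple $\sco_\pii(c_1-3)$ from the remaining components, yielding $E \simeq \sco_\pii(c_1-3) \oplus F$ with $F$ one of the ten bundles listed in~(i). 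Several Lemma cases may furnish the same $F$ from different choices of section; for instance, (i)$(2,2)$ and (iii) both produce $F = \sco_\pii(1)^{\op 2}\oplus {\fam0 T}_\pii(-1)$, while (i)$(1,3)$ (which requires $c_1\geq 5$) produces $F = \sco_\pii(2)\oplus {\fam0 T}_\pii(-1)$.

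Case (i)$(1,1)$ of Lemma~\ref{L:iz(3)n2} is exceptional: the lift $\sco_\pii(1) \to \sco_\pii(2)^{\op 2} \oplus \sco_\pii(c_1-3) \oplus \sco_\pii^{\op r-2}$ has no freedom in its $\sco_\pii^{\op r-2}$ component because $\text{Hom}(\sco_\pii(1),\sco_\pii)=0$, and duality forces $r=2$. For $c_1=4$ the resulting $E\simeq \sco_\pii(2)^{\op 2}$ violates $\tH^0(E(-c_1+2))=0$; for $c_1\geq 5$, local freeness of $E$ requires the $\sco_\pii(1)\to \sco_\pii(c_1-3)$ component, taken modulo the Koszul syzygies $(X_0,X_1)\cdot \tH^0(\sco_\pii(c_1-5))$, to be nonzero, and it normalises to $X_2^{c_1-4}$ after a change of coordinates, producing exactly case~(ii) of the proposition.

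The main obstacle is the case-by-case bookkeeping: for each resolution type of Lemma~\ref{L:iz(3)n2}, one computes $r$ from the dual cohomology, verifies which automorphisms of the codomain are available to clean up the map, and identifies the resulting $F$. The delicate point is the non-splittability in case (i)$(1,1)$, which is the unique case in which the trivial-summand block of the lift cannot be used to separate $\sco_\pii(c_1-3)$ from the rest of the bundle.
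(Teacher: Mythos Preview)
Your approach is essentially the paper's own: reduce to a rank-$2$ quotient, extract the section of $E'(-c_1+3)$ to get the extension by $\sci_Z(3)$, run through the cases of Lemma~\ref{L:iz(3)n2}, lift each resolution through the extension, dualise, and use $\tH^i(E^\ast)=0$ to identify $F$. The treatment of the $(1,1)$ case is also the same---the paper simply records the resolution in (ii) without your further normalisation to $X_2^{c_1-4}$, but your extra step is correct.

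One small slip: the hypothesis $\tH^0(E(-c_1+2))=\tH^0(\sci_Z(5-c_1))=0$ is automatic only for $c_1\geq 6$, not $c_1\geq 5$. When $c_1=5$ it still carries content: it forces $Z\neq\emptyset$, which is exactly why the bundle $F=\sco_\pii(3)$ in part~(i) is flagged as occurring only for $c_1\geq 6$. Once you adjust that sentence, the argument is complete and matches the paper's.
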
 

\begin{proof} 
As at the beginning of the proof of Prop.~\ref{P:h0e-c1+2n2}, one deduces an 
exact sequence$\, :$ 
\[
0 \lra \sco_\pii(c_1-3) \oplus (r - 2)\sco_\pii \lra E \lra \sci_Z(3) 
\lra 0\, ,
\]
with $Z$ a closed subscheme of $\pii$ which is either empty or 0-dimensional. 
If $Z = \emptyset$ then $E \simeq \sco_\pii(c_1-3) \oplus \sco_\pii(3)$. If 
$Z \neq \emptyset$, one splits the proof into several cases, according to 
the items in the statement of Lemma~\ref{L:iz(3)n2}.   

\vskip2mm 

\noindent 
$\bullet$ If $Z$ is a complete intersection of type $(a,b)$, $1 \leq a \leq b 
\leq 3$ then, as at the beginning of the proof of Lemma~\ref{L:yci}, $E$ 
admits a resolution of the form$\, :$ 
\[
0 \ra \sco_\pii(-a-b+3) \ra \sco_\pii(c_1-3) \oplus \sco_\pii(-a+3) \oplus 
\sco_\pii(-b+3) \oplus (r - 2)\sco_\pii \ra E \ra 0\, .
\]
Dualizing this resolution and taking into account that $\tH^i(E^\ast) = 0$, 
$i = 0, 1$, one deduces, as in the proof of Lemma~\ref{L:yci}, that, except 
the case where $(a,b) = (1,1)$,  
$E \simeq \sco_\pii(c_1-3) \oplus F$ where $F = \sco_\pii(2) \oplus \sco_\pii(1)$ 
if $(a,b) = (1,2)$, $F = \sco_\pii(2) \oplus \text{T}_\pii(-1)$ if $(a,b) = 
(1,3)$, $F = 2\sco_\pii(1) \oplus \text{T}_\pii(-1)$ if $(a,b) = (2,2)$, 
$F = \sco_\pii(1) \oplus P(\sco_\pii(2))$ if $(a,b) = (2,3)$, and 
$F = P(\sco_\pii(3))$ if $(a,b) = (3,3)$. 

If $(a,b) = (1,1)$ then, since $E$ has no trivial direct summand, either 
$c_1 = 4$ and $E \simeq 2\sco_\pii(2)$, but this contradicts the fact that 
$\tH^0(E(-c_1+2)) = 0$, or $c_1 \geq 5$ and $E$ admits a resolution of the 
form$\, :$ 
\[
0 \lra \sco_\pii(1) \lra \sco_\pii(c_1-3)\oplus 2\sco_\pii(2) \lra 
E \lra 0\, .
\] 

\vskip2mm

\noindent
$\bullet$ If $Z$ is as in Lemma~\ref{L:iz(3)n2}(ii) then $E$ admits a 
resolution of the form$\, :$  
\[
0 \lra 2\sco_\pii \lra \sco_\pii(c_1-3) \oplus 3\sco_\pii(1) \oplus 
(r - 2)\sco_\pii \lra E \lra 0 
\]  
from which one deduces that $E \simeq \sco_\pii(c_1-3) \oplus 
3\sco_\pii(1)$. 

\vskip2mm 

\noindent
$\bullet$ If $Z$ is as in Lemma~\ref{L:iz(3)n2}(iii) then $E$ admits a 
resolution of the form$\, :$  
\[
0 \lra \sco_\pii \oplus \sco_\pii(-1) \lra \sco_\pii(c_1-3) \oplus 
2\sco_\pii(1) \oplus (r - 1)\sco_\pii \lra E \lra 0 
\]  
from which one deduces that $E \simeq \sco_\pii(c_1-3) \oplus 
2\sco_\pii(1) \oplus \text{T}_\pii(-1)$. 

\vskip2mm

\noindent
$\bullet$ If $Z$ is as in Lemma~\ref{L:iz(3)n2}(iv) then $E$ admits a 
resolution of the form$\, :$ 
\[
0 \lra 2\sco_\pii(-1) \lra \sco_\pii(c_1-3) \oplus 
\sco_\pii(1) \oplus r\sco_\pii \lra E \lra 0 
\]  
from which one deduces that $E \simeq \sco_\pii(c_1-3) \oplus 
\sco_\pii(1) \oplus 2\text{T}_\pii(-1)$. 

\vskip2mm 

\noindent
$\bullet$ If $Z$ is as in Lemma~\ref{L:iz(3)n2}(v) then $E$ admits a 
resolution of the form$\, :$ 
\[
0 \lra 3\sco_\pii(-1) \lra \sco_\pii(c_1-3) \oplus (r + 2)\sco_\pii  
\lra E \lra 0 
\]  
from which one deduces that $E \simeq \sco_\pii(c_1-3) \oplus 
3\text{T}_\pii(-1)$. 

\vskip2mm

\noindent
$\bullet$ If $Z$ is as in Lemma~\ref{L:iz(3)n2}(vi) then $E$ admits a 
resolution of the form$\, :$  
\[
0 \lra \sco_\pii(-1) \oplus \sco_\pii(-2) \lra \sco_\pii(c_1-3) \oplus 
(r + 1)\sco_\pii \lra E \lra 0 
\]  
from which one deduces that $E \simeq \sco_\pii(c_1-3) \oplus \text{T}_\pii(-1)  
\oplus P(\sco_\pii(2))$.
\end{proof} 

\begin{note} 
The result from Prop.~\ref{P:h0e-c1+3n2} has been extended to globally 
generated vector bundles on $\p^n$, $n \geq 3$, in the recent paper of the 
authors ``Locally Cohen-Macaulay space curves defined by cubic equations and 
globally generated vector bundles'', \texttt{arXiv:1502.05553}. 
\end{note}

\begin{prop}\label{P:n=2c1=4} 
Let $E$ be a globally generated vector bundle on $\pii$ with $c_1 = 4$, 
$5 \leq c_2 \leq 8$, and such that ${\fam0 H}^i(E^\ast) = 0$, $i = 0, 1$. 
Then one of the following holds$\, :$ 
\begin{enumerate}
\item[(i)] $c_2 = 5$ and $E \simeq 2\sco_\pii(1) \oplus 
\sco_\pii(2)$$\, ;$  
\item[(ii)] $c_2 = 6$ and $E \simeq \sco_\pii(1) \oplus \sco_\pii(2) \oplus 
{\fam0 T}_\pii(-1)$$\, ;$  
\item[(iii)] $c_2 = 6$ and $E \simeq 4\sco_\pii(1)$$\, ;$  
\item[(iv)] $c_2 = 7$ and $E \simeq \sco_\pii(2) \oplus 
2{\fam0 T}_\pii(-1)$$\, ;$  
\item[(v)] $c_2 = 7$ and $E \simeq 3\sco_\pii(1) \oplus 
{\fam0 T}_\pii(-1)$$\, ;$  
\item[(vi)] $c_2 = 8$ and $E \simeq \sco_\pii(2) \oplus P(\sco_\pii(2))$$\, ;$  
\item[(vii)] $c_2 = 8$ and $E \simeq 2\sco_\pii(1) \oplus 
2{\fam0 T}_\pii(-1)$. 
\end{enumerate} 
\end{prop}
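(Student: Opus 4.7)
The plan is to apply the two classification propositions already obtained, namely Proposition~\ref{P:h0e-c1+2n2} and Proposition~\ref{P:h0e-c1+3n2}, with $c_1=4$, and then sort the resulting bundles by the value of $c_2$. Since $c_2>0$, the exact sequence~\eqref{E:oeiy} yields $\tH^0(E(-4))=0$, so the only relevant twists to inspect are $-2$ and $-1$. The key preliminary observation is that $\tH^0(E(-1))$ never vanishes in this range: the scheme $Y$ appearing in~\eqref{E:oeiy} is zero-dimensional and smooth, hence a disjoint union of $c_2\leq 8$ simple points of $\pii$, and since ${\fam0 h}^0(\sco_\pii(3))=10>c_2$ these points must lie on some plane cubic. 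Twisting~\eqref{E:oeiy} by $\sco_\pii(-1)$ and taking cohomology yields $\tH^0(E(-1))\simeq \tH^0(\sci_Y(3))\neq 0$.

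Consequently, exactly one of two situations occurs. If $\tH^0(E(-2))\neq 0$, then Proposition~\ref{P:h0e-c1+2n2} gives $E\simeq \sco_\pii(2)\oplus F$ with $F$ one of the five bundles listed there (all admissible for $c_1=4$). A direct computation using $c(A\oplus B)=c(A)c(B)$ yields $c_2=4,5,6,7,8$ respectively for $F=\sco_\pii(2)$, $\sco_\pii(1)^{\oplus 2}$, $\sco_\pii(1)\oplus\text{T}_\pii(-1)$, $\text{T}_\pii(-1)^{\oplus 2}$, $P(\sco_\pii(2))$. The first value lies outside the range $5\leq c_2\leq 8$, while the remaining four produce exactly cases (i), (ii), (iv) and (vi) of the statement.

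If instead $\tH^0(E(-2))=0$, then thanks to $\tH^0(E(-1))\neq 0$ Proposition~\ref{P:h0e-c1+3n2} applies. Its alternative (ii) requires $c_1\geq 5$ and is therefore excluded, so $E\simeq \sco_\pii(1)\oplus F$ with $F$ chosen from the list in alternative (i). For $c_1=4$ the admissible $F$'s and the resulting $c_2(E)$ values are: $\sco_\pii(1)^{\oplus 3}$ giving $c_2=6$, $\sco_\pii(1)^{\oplus 2}\oplus\text{T}_\pii(-1)$ giving $c_2=7$, $\sco_\pii(1)\oplus\text{T}_\pii(-1)^{\oplus 2}$ giving $c_2=8$, $\text{T}_\pii(-1)^{\oplus 3}$ and $\sco_\pii(1)\oplus P(\sco_\pii(2))$ giving $c_2=9$, $\text{T}_\pii(-1)\oplus P(\sco_\pii(2))$ giving $c_2=10$, and $P(\sco_\pii(3))$ giving $c_2=12$. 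Only the first three fall in the prescribed range, corresponding to cases (iii), (v) and (vii). This exhausts all possibilities. There is no serious obstacle in the argument: the two propositions already do the heavy lifting, and the only delicate point, the non-vanishing $\tH^0(\sci_Y(3))\neq 0$, reduces to a trivial dimension count on $\pii$.
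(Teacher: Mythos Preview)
Your approach is the same as the paper's, but you have one genuine gap: you never rule out the case $\tH^0(E(-3))\neq 0$. Proposition~\ref{P:h0e-c1+2n2} has as hypothesis that $\tH^0(E(-c_1+1))=\tH^0(E(-3))=0$, so you are not entitled to invoke it merely from $\tH^0(E(-2))\neq 0$. The sentence ``the only relevant twists to inspect are $-2$ and $-1$'' is asserted, not argued.

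The fix is immediate and is exactly what the paper does: invoke Proposition~\ref{P:h0e-c1+1}. If $\tH^0(E(-3))\neq 0$ (and $\tH^0(E(-4))=0$), that proposition forces $E\simeq \sco_{\pii}(3)\oplus\sco_{\pii}(1)$ or $E\simeq \sco_{\pii}(3)\oplus\text{T}_{\pii}(-1)$, giving $c_2=3$ or $c_2=4$, both outside the range $5\leq c_2\leq 8$. Once this is said, your two remaining cases and the $c_2$-sorting are correct and coincide with the paper's argument.
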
 

\begin{proof}
One has an exact sequence $0 \ra (r - 1)\sco_\pii \ra E \ra \sci_Y(4) \ra 0$, 
where $Y$ is a closed subscheme of $\pii$ consisting of $c_2$ simple points. 
Using the exact sequence$\, :$ 
\[
0 \lra \sci_Y(3) \lra \sco_\pii(3) \lra \sco_Y(3) \lra 0 
\]
one deduces that $\text{h}^0(\sci_Y(3)) \geq 10 - c_2 \geq 2$. It follows that 
$\tH^0(E(-1)) \neq 0$. One can, now, use Prop.~\ref{P:h0e-c1+1}, 
Prop.~\ref{P:h0e-c1+2n2}, and Prop.~\ref{P:h0e-c1+3n2}. 
\end{proof} 

\vskip2mm

We begin the classification of globally generated vector bundles with 
$c_1 = 5$ on $\pii$ by recalling some facts about rank 2 vector bundles on the 
projective plane $\pii$. Let $F$ be such a bundle, with Chern classes 
$c_1(F) = c$ and $c_2(F) = d$. We assume that $F$ is 
\emph{normalized} so that $c \in \{0,-1\}$. Since $\text{rk}\, F = 2$, 
$F^\ast \simeq F(-c)$. The \emph{Riemann-Roch formula} says, 
in this particular case, that$\, :$
\[
\chi(F(l)) = \chi((\sco_\pii \oplus \sco_\pii(c))(l)) - d,\  \forall \, 
l \in \z\, .
\]

If $c=0$ then $F$ is semistable iff $\tH^0(F(-1)) = 0$ and $F$ is stable iff 
$\tH^0(F) = 0$. If $c = -1$ then $F$ is stable iff $\tH^0(F) = 0$. We shall 
need the following fact which should be well known$\, :$ 

\begin{lemma}\label{L:h1f} 
Under the above hypotheses, let $S = k[X_0,X_1,X_2]$ be the projective 
coordinate ring of $\pii$ and consider the graded $S$-module 
${\fam0 H}^1_\ast(F)$.  

\emph{(a)} If $c=0$ and $F$ is semistable then ${\fam0 H}^1_\ast(F)$ 
is generated in degrees $\leq -2$. 

\emph{(b)} If $c = -1$ and $F$ is stable then  
${\fam0 H}^1_\ast(F)$ is generated in degrees $\leq -2$, except when $F \simeq 
{\fam0 T}_\pii(-2) \simeq \Omega_\pii(1)$. 
\end{lemma}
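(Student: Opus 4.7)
The plan is to combine the Grauert--M\"ulich restriction theorem with the hyperplane-section sequence on $\tH^1$, and then dispose of one borderline degree in case (b) by a rigidity argument. For a line $L \subset \pii$ with equation $\lambda \in S_1$, the exact sequence $0 \ra F(l-1) \xra{\cdot \lambda} F(l) \ra F_L(l) \ra 0$ produces
\[
\tH^1(F(l-1)) \xra{\cdot \lambda} \tH^1(F(l)) \lra \tH^1(F_L(l)) \lra 0,
\]
so the cokernel of multiplication by $\lambda$ in degree $l$ is controlled by $\tH^1(F_L(l))$. By Grauert--M\"ulich the generic splitting of $F \vb L$ is the most balanced one: $F_L \simeq \sco_L^{\op 2}$ in case (a) and $F_L \simeq \sco_L \oplus \sco_L(-1)$ in case (b). For such an $L$ one has $\tH^1(F_L(l)) = 0$ for $l \geq -1$ in case (a), and for $l \geq 0$ in case (b). Case (a) follows immediately, and case (b) reduces to showing that
\[
\mu : S_1 \otimes \tH^1(F(-2)) \lra \tH^1(F(-1))
\]
is surjective, unless $F \simeq \Omega_\pii(1)$.

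For this borderline degree, Riemann--Roch plus stability give $\h^1(F(-2)) = d-1$ and $\h^1(F(-1)) = d$, and for a generic $L$ the above exact sequence reads $0 \ra \tH^1(F(-2)) \xra{\cdot \lambda} \tH^1(F(-1)) \ra k \ra 0$. Using Serre duality on $\pii$ together with the isomorphism $F^\ast \simeq F(1)$, one has $\tH^1(F(-1))^\vee \simeq \tH^1(F(-1))$ and the cokernel of $\mu$ is identified with the dual of
\[
K := \{\, \xi \in \tH^1(F(-1)) \mid \lambda \cdot \xi = 0 \text{ in } \tH^1(F) \text{ for all } \lambda \in S_1 \,\}.
\]
So the question reduces to showing $K = 0$, unless $F \simeq \Omega_\pii(1)$.

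The main obstacle is this rigidity claim. A nonzero $\xi \in K$ determines an extension $0 \lra F(-1) \lra G \lra \sco_\pii \lra 0$ whose pullback along each $\lambda : \sco_\pii \ra \sco_\pii(1)$ is split; twisting by $\sco_\pii(1)$, this precisely says that the map $\tH^0(G(1)) \ra \tH^0(\sco_\pii(1)) = S_1$ is surjective. Combined with $\tH^0(F) = 0$, this forces $\h^0(G(1)) = 3$ and produces a lift of the Euler evaluation $\sco_\pii^{\op 3} \twoheadrightarrow \sco_\pii(1)$ to $\sco_\pii^{\op 3} \ra G(1)$; comparing kernels with the Euler sequence yields a nonzero morphism $\Omega_\pii(1) \ra F$ which, by stability of $F$ and a comparison of ranks and $c_1$, must be an isomorphism. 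This forces $d = 1$ and $F \simeq \Omega_\pii(1)$, the unique stable rank $2$ bundle with $(c_1,c_2) = (-1,1)$. Making the kernel-comparison step rigorous -- and in particular verifying that the lift $\sco_\pii^{\op 3} \ra G(1)$ induces an honest (and injective) morphism $\Omega_\pii(1) \ra F$ -- is the principal technical point I anticipate.
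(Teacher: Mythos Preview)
Your argument is correct, and the technical point you flag is in fact routine: the evaluation $\text{ev}_{G(1)} : \tH^0(G(1))\otimes\sco_\pii \ra G(1)$ restricted to $\Omega_\pii(1) = \Ker(\tH^0(\sco_\pii(1))\otimes\sco_\pii \ra \sco_\pii(1))$ lands in $F = \Ker(G(1) \ra \sco_\pii(1))$ by commutativity, and if this restriction were zero then $\text{ev}_{G(1)}$ would factor through $\sco_\pii(1)$, splitting the extension and contradicting $\xi \neq 0$. A nonzero map between stable rank~2 bundles of equal slope is an isomorphism, so $F \simeq \Omega_\pii(1)$.

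The paper takes a different route. Rather than Grauert--M\"ulich, it tensors the Euler sequence by $F(i)$ to obtain
\[
S_1 \otimes \tH^1(F(i-1)) \lra \tH^1(F(i)) \lra \tH^2(\Omega_\pii \otimes F(i)),
\]
and then uses Serre duality plus $F^\ast \simeq F(-c)$ to identify the obstruction with $\tH^0(\Omega_\pii \otimes F(j))$, which sits inside $S_1 \otimes \tH^0(F(j-1))$ and vanishes by the stability bound $\h^0(F(j-1)) \leq 1$. In case~(b) this covers $\h^0(F(1)) \leq 1$; for $\h^0(F(1)) \geq 2$ the paper argues separately that the zero scheme of a section is a complete intersection of type $(1,d)$, writes down an explicit resolution of $F$, and reads off the generation degree. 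Your extension/rigidity argument is a hands-on incarnation of the same Serre-dual identification (indeed $K^\vee \simeq \tH^0(\Omega_\pii \otimes F(2)) \simeq \text{Hom}(\Omega_\pii(1),F)$), but it has the merit of handling all of case~(b) uniformly without the $\h^0(F(1)) \geq 2$ digression; the paper's approach is quicker once the Euler-sequence framework is in place.
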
 

\begin{proof}
(a) Tensorizing by $F(i)$, $i \geq -1$, the exact sequence$\, :$  
\[
0 \lra \Omega_\pii \lra S_1\otimes_k\sco_\pii(-1) \lra \sco_\pii \lra 0 
\] 
one gets an exact sequence$\, :$  
\[
S_1\otimes_k\tH^1(F(i-1)) \lra \tH^1(F(i)) \lra \tH^2(\Omega_\pii \otimes F(i)) 
\, .
\]
Taking into account that $\Omega_\pii \simeq \Omega_\pii^\ast \otimes 
\omega_\pii$, one derives, from Serre Duality, that 
\[
\tH^2(\Omega_\pii \otimes F(i)) \simeq \tH^0(\Omega_\pii \otimes F^\ast(-i))^\ast 
\, .
\]
Now, $F^\ast \simeq F(-c) = F$ and one has an exact sequence$\, :$  
\[
0 \lra \tH^0(\Omega_\pii \otimes F(-i)) \lra S_1\otimes_k\tH^0(F(-i-1)) \lra 
\tH^0(F(-i))\, .
\]
Since $F$ is semistable, $\tH^0(F(-i-1)) = 0$ for $i\geq 0$ and, except the 
case $F \simeq 2\sco_\pii$, one has $\text{h}^0(F) \leq 1$. One 
deduces that $\tH^0(\Omega_\pii \otimes F(-i)) = 0$ for $i \geq -1$. 

(b) Using analogous arguments, one sees that the assertion from the statement 
is true if $\text{h}^0(F(1)) \leq 1$. Assume, now, that $\text{h}^0(F(1)) 
\geq 2$. The scheme of zeroes of a non-zero section $s \in \tH^0(F(1))$ is a 
0-dimensional subscheme $Z$ of $\pii$, of degree $c_2(F) = d$, and one has an 
exact sequence$\, :$  
\[
0 \lra \sco_\pii(-1) \overset{s}{\lra} F \lra \sci_Z \lra 0\, .
\]   
The condition $\text{h}^0(F(1)) \geq 2$ implies that $\tH^0(\sci_Z(1)) \neq 0$ 
hence $Z$ is a complete intersection of type $(1,d)$. Using the same kind of 
arguments as at the beginning of the proof of Lemma~\ref{L:yci}, one derives 
that $F$ admits a resolution of the form$\, :$  
\[
0 \lra \sco_\pii(-d-1) \lra 2\sco_\pii(-1) \oplus \sco_\pii(-d) \lra 
F \lra 0\, .
\]
Dualizing this resolution and taking into account that $F^\ast \simeq F(1)$, 
one gets an exact sequence$\, :$ 
\[
0 \lra F(1) \lra 2\sco_\pii(1) \oplus \sco_\pii(d) \lra \sco_\pii(d+1) 
\lra 0
\]
from which one deduces that $\tH^1_\ast(F)$ is generated by $\tH^1(F(-d))$. 
It follows that $\tH^1_\ast(F)$ is generated in degrees $\leq -2$, except when 
$d = 1$. But if $d = 1$ then $F \simeq \text{T}_\pii(-2)$. 
\end{proof} 

Now, coming to the subject of this second part of the section, 
let $E$ be a globally generated rank $r$ vector bundle on $\pii$, with 
$c_1 = 5$, $6 \leq c_2 \leq 12$, and such that $\tH^i(E^\ast) = 0$, $i = 0, 1$. 
One has exact sequences$\, :$  
\begin{gather*}
0 \lra (r - 2)\sco_\pii \lra E \lra E^\prim \lra 0\, ,\\
0 \lra \sco_\pii \lra E^\prim \lra \sci_Y(5) \lra 0\, , 
\end{gather*} 
where $E^\prim$ is a globally generated rank 2 vector bundle and $Y$ consists 
of $c_2$ simple points. Consider the ``normalized'' vector bundle $F := 
E^\prim(-3)$. One has $c_1(F) = - 1$ and $c_2(F) = c_2 - 6$. 
\emph{Assume} that, moreover, $F$ is stable, i.e., $\tH^0(F) = 0$, and that it 
is not isomorphic to $\text{T}_\pii(-2)$.  
Dualizing the first of the above exact sequences and taking into account 
that $F^\ast \simeq F(1)$, one gets an exact sequence$\, :$  
\[
0 \lra F(-2) \lra E^\ast \lra (r - 2)\sco_\pii \lra 0
\]
from which one deduces that the graded $S := k[X_0,X_1,X_2]$-module 
$\tH^1_\ast(E^\ast)$ can be obtained from $\tH^1_\ast(F)(-2)$ by erasing the 
homogeneous components of degree $\geq 0$ (see Lemma~\ref{L:h1f}(b)). 
Moreover, since $\tH^i(E^\ast) = 0$, $i = 0, 1$, one has 
$r-2 = \text{h}^1(F(-2))$.  
By Serre duality, 
$\tH^2(F(-2)) \simeq \tH^0(F^\ast(-1))^\ast \simeq \tH^0(F)^\ast = 0$ hence, 
by the Riemann-Roch formula, $\text{h}^1(F(-2)) = c_2(F) - 1 = c_2 -7$.  
It follows that, under the above additional assumption (that $F$ is stable 
and not isomorphic to $\text{T}_\pii(-2)$),   
\begin{equation}
\label{E:r}
r = c_2 - 5\, .
\end{equation} 

\begin{lemma}\label{L:h1epriml} 
Let $E^\prim$ be a globally generated vector bundle on $\pii$ and let 
$l \geq -1$ be an integer. If ${\fam0 H}^1(E^\prim(l)) \neq 0$ then 
${\fam0 h}^1(E^\prim(l)) \leq {\fam0 h}^1(E^\prim(l-1)) - 2$. 
\end{lemma}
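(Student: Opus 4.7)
The plan is to translate the cohomological hypothesis about $E^\prim$ into a statement about global sections of $P(E^\prim)$, and then apply the Bilinear Map Lemma of Hartshorne~\cite[Lemma~5.1]{ha}. Twisting the defining exact sequence
\[
0 \lra P(E^\prim)^\ast \lra \tH^0(E^\prim) \otimes_k \sco_\pii \lra E^\prim \lra 0
\]
by $\sco_\pii(k)$ and taking cohomology, the vanishings $\tH^1(\sco_\pii(k)) = 0$ (all $k$) and $\tH^2(\sco_\pii(k)) = 0$ (for $k \geq -2$) give an isomorphism $\tH^1(E^\prim(k)) \izo \tH^2(P(E^\prim)^\ast(k))$ for every $k \geq -2$. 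Serre duality on $\pii$ (with $\omega_\pii \simeq \sco_\pii(-3)$) then yields
\[
\tH^1(E^\prim(k))^\ast \simeq \tH^0(P(E^\prim)(-k-3)), \quad k \geq -2.
\]
The hypothesis $l \geq -1$ puts both $l$ and $l-1$ in this range, so, writing $G := P(E^\prim)$ and $m := -l-3$ (hence $m \leq -2$), the desired inequality becomes $\h^0(G(m+1)) \geq \h^0(G(m)) + 2$ under the hypothesis $\h^0(G(m)) \neq 0$.

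I would then establish this last inequality using the Bilinear Map Lemma applied to the multiplication map
\[
\mu : \tH^0(\sco_\pii(1)) \otimes_k \tH^0(G(m)) \lra \tH^0(G(m+1))\, .
\]
Since $G = P(E^\prim)$ is locally free (being the dual of the kernel of the surjection $\tH^0(E^\prim) \otimes_k \sco_\pii \ra E^\prim$ between locally free sheaves), it is in particular torsion-free; hence for each nonzero $\ell \in \tH^0(\sco_\pii(1))$ the sheaf map $\ell \cdot - : G(m) \ra G(m+1)$ is injective, inducing an injection on global sections. Symmetrically, a nonzero $s \in \tH^0(G(m))$ gives an injective map $\tH^0(\sco_\pii(1)) \ra \tH^0(G(m+1))$, $\ell \mapsto \ell s$. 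Thus $\mu(\ell \otimes s) \neq 0$ whenever both factors are nonzero, and the Bilinear Map Lemma produces
\[
\h^0(G(m+1)) \geq \dim \tH^0(\sco_\pii(1)) + \h^0(G(m)) - 1 = \h^0(G(m)) + 2\, ,
\]
which is exactly what is needed. No serious obstacle is expected; the argument is fairly clean, and the only point that requires a bit of care is verifying that the range of $k$ in the first step matches exactly the hypothesis $l \geq -1$ (so that the isomorphism applies simultaneously to the twists $l$ and $l-1$).
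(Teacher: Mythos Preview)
Your proof is correct. Both your argument and the paper's hinge on Hartshorne's Bilinear Map Lemma, but they reach the nondegenerate pairing by different routes. The paper works directly with $\tH^1$ of $E^\prim$: for any nonzero linear form $\ell$ with zero line $L$, global generation forces $E^\prim_L$ to split into nonnegative line bundles, so $\tH^1(E^\prim_L(l)) = 0$ for $l \geq -1$, and the restriction sequence makes multiplication by $\ell$ on $\tH^1(E^\prim(l-1)) \ra \tH^1(E^\prim(l))$ surjective; dualizing gives the nondegenerate bilinear map $\tH^0(\sco_\pii(1)) \times \tH^1(E^\prim(l))^\ast \ra \tH^1(E^\prim(l-1))^\ast$. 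You instead pass to $G = P(E^\prim)$, use Serre duality to identify $\tH^1(E^\prim(k))^\ast$ with $\tH^0(G(-k-3))$ for $k \geq -2$, and obtain nondegeneracy of the multiplication map on $\tH^0$ from the torsion-freeness of $G$. After the Serre-duality identification these are literally the same bilinear map, so the difference is only in how nondegeneracy is verified: the paper's check (restriction to lines) is more direct and makes visible exactly where $l \geq -1$ and global generation enter, while your check is essentially automatic once the translation to $\tH^0(G(-))$ is in place, at the cost of introducing $P(E^\prim)$ as an auxiliary object.
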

 
\begin{proof} 
Consider an arbitrary linear form $0 \neq \ell \in \tH^0(\sco_\pii(1))$ and let 
$L \subset \pii$ be the line of equation $\ell = 0$. From Grothendieck's 
theorem and from the fact that $E^\prim_L$ is globally generated it follows 
that $\tH^1(E^\prim_L(l)) = 0$. Using the exact sequence$\, :$ 
\[
\tH^1(E^\prim(l-1)) \overset{\ell}{\lra} \tH^1(E^\prim(l)) \lra 
\tH^1(E^\prim_L(l)) = 0\, , 
\]
one deduces that the multiplication by $\ell : \tH^1(E^\prim(l-1)) \lra 
\tH^1(E^\prim(l))$ is surjective. Applying the Bilinear map lemma 
(see Hartshorne~\cite[Lemma~5.1]{ha}) to the bilinear map: 
\[
\tH^0(\sco_\pii(1)) \times \tH^1(E^\prim(l))^\ast \lra 
\tH^1(E^\prim(l-1))^\ast 
\] 
deduced from the multiplication map $\tH^0(\sco_\pii(1)) \times 
\tH^1(E^\prim(l-1)) \ra \tH^1(E^\prim(l))$, one gets the desired inequality. 
\end{proof} 

\begin{prop}\label{P:c1=5n2} 
Let $E$ be an indecomposable globally generated vector bundle on $\pii$ with 
$c_1 = 5$ and such that ${\fam0 H}^i(E^\ast) = 0$, $i = 0,\, 1$. Then $E$ is 
isomorphic to one of the bundles $\sco_\pii(5)$, ${\fam0 T}_\pii(1)$, 
$P({\fam0 T}_\pii(1))$, or $P(\sco_\pii(5))$. 
\end{prop} 

\begin{proof}
We shall, actually, enumerate all the globally generated vector bundles $E$ 
on $\pii$ with $c_1 = 5$ and such that $\tH^i(E^\ast) = 0$, $i = 0,\, 1$. 
Let $E$ be such a bundle. Assume, moreover, that $c_2 \leq 12$ (see Remark 
\ref{R:reduction}). If $c_2 = 0$ then $E \simeq \sco_\pii(5)$. 
If $c_2 > 0$ then, by Prop.~\ref{P:c2=c1-1}, $c_2 \geq 4$ and if $c_2 = 4$ 
then $E \simeq \sco_\pii(4) \oplus \sco_\pii(1)$. If $c_2 = 5$ then, by 
Prop.~\ref{P:c2=c1}, $E \simeq \sco_\pii(4) \oplus \text{T}_\pii(-1)$. 
Assume, now, that $c_2 \geq 6$. Prop.~\ref{P:h0e-c1+1} implies that, 
in this case, $\tH^0(E(-4)) = 0$. 

\vskip2mm 

If $\tH^0(E(-3)) \neq 0$ then, by Prop.~\ref{P:h0e-c1+2n2}, one of the 
following holds$\, :$ 
\begin{enumerate}
\item[(i)] $c_2 = 6$ and $E \simeq \sco_\pii(3) \oplus \sco_\pii(2)$$\, ;$  
\item[(ii)] $c_2 = 7$ and $E \simeq \sco_\pii(3) \oplus 2\sco_\pii(1)$$\, ;$  
\item[(iii)] $c_2 = 8$ and $E \simeq \sco_\pii(3) \oplus \sco_\pii(1) 
\oplus \text{T}_\pii(-1)$$\, ;$ 
\item[(iv)] $c_2 = 9$ and $E \simeq \sco_\pii(3) \oplus 
2\text{T}_\pii(-1)$$\, ;$  
\item[(v)] $c_2 = 10$ and $E \simeq \sco_\pii(3) \oplus P(\sco_\pii(2))$. 
\end{enumerate} 

\vskip2mm 

Assume, from now on, that $\tH^0(E(-3)) = 0$. If $\tH^0(E(-2)) \neq 0$ then, 
by Prop.~\ref{P:h0e-c1+3n2}, one of the following holds$\, :$  
\begin{enumerate}
\item[(vi)] $c_2 = 7$ and $E \simeq \text{T}_\pii(1)$ (cf. 
Prop.~\ref{P:h0e-c1+3n2}(ii))$\, ;$ 
\item[(vii)] $c_2 = 8$ and $E \simeq 2\sco_\pii(2) \oplus \sco_\pii(1)$$\, ;$  
\item[(viii)] $c_2 = 9$ and $E \simeq 2\sco_\pii(2) \oplus 
\text{T}_\pii(-1)$$\, ;$ 
\item[(ix)] $c_2 = 9$ and $E \simeq \sco_\pii(2) \oplus 3\sco_\pii(1)$$\, ;$  
\item[(x)] $c_2 = 10$ and $E \simeq \sco_\pii(2) \oplus 2\sco_\pii(1)  
\oplus \text{T}_\pii(-1)$$\, ;$ 
\item[(xi)] $c_2 = 11$ and $E \simeq \sco_\pii(2) \oplus \sco_\pii(1) \oplus 
2\text{T}_\pii(-1)$$\, ;$  
\item[(xii)] $c_2 = 12$ and $E \simeq \sco_\pii(2) \oplus 
3\text{T}_\pii(-1)$$\, ;$  
\item[(xiii)] $c_2 = 12$ and $E \simeq \sco_\pii(2) \oplus \sco_\pii(1) \oplus 
P(\sco_\pii(2))$. 
\end{enumerate} 

\vskip2mm 

Finally, let us assume that $\tH^0(E(-2)) = 0$. Consider the rank 2 vector 
bundles $E^\prim$ and $F$ defined after the proof of Lemma~\ref{L:h1f}. One has 
$\tH^0(F(1)) \simeq \tH^0(E(-2)) = 0$. Riemann-Roch implies that$\, :$  
\[
\text{h}^1(F(1)) = c_2(F) - 4 = c_2 - 10\, .
\]
In particular, $c_2 \geq 10$. Since we assumed that $c_2 \leq 12$, one gets 
that $\text{h}^1(F(1)) \leq 2$. One deduces, now, from 
Lemma~\ref{L:h1epriml}, that $\tH^1(F(l)) = 0$ for $l \geq 2$ hence, by Serre 
duality, $\tH^1(F(l)) \simeq \tH^1(F^\ast(-l-3))^\ast \simeq 
\tH^1(F(-l-2))^\ast = 0$ for $l \leq -4$. Moreover, $\text{h}^1(F(-3)) = 
\text{h}^1(F(1)) = c_2 - 10$. The discussion preceeding 
Lemma~\ref{L:h1epriml} shows, now, that $\tH^1_\ast(E^\ast) = 0$ if $c_2 = 10$, 
$\tH^1_\ast(E^\ast) \simeq k(1)$ if $c_2 = 11$, and that $\tH^1_\ast(E^\ast) 
\simeq 2k(1)$ if $c_2 = 12$ hence, by Serre duality, 
$\tH^1_\ast(E) = 0$ if $c_2 = 10$, $\tH^1_\ast(E) \simeq k(2)$ if $c_2 = 11$, 
and $\tH^1_\ast(E) \simeq 2k(2)$ if $c_2 = 12$. Recalling the relation 
\eqref{E:r}, it follows that one of the following holds$\, :$  
\begin{enumerate}
\item[(xiv)] $c_2 = 10$ and $E \simeq 5\sco_\pii(1)$$\, ;$  
\item[(xv)] $c_2 = 11$ and $E \simeq 4\sco_\pii(1) \oplus 
\text{T}_\pii(-1)$$\, ;$ 
\item[(xvi)] $c_2 = 12$ and $E \simeq 3\sco_\pii(1) \oplus 
2\text{T}_\pii(-1)$.     
\qedhere 
\end{enumerate}
\end{proof}

\section{The case $c_1 = 4$, $c_2 = 5, 6$ on $\piii$}
\label{S:c1=4c2=5,6n3} 

In this section we begin the classification of globally generated vector 
bundles $E$ on $\piii$ with $c_1 = 4$, $5 \leq c_2 \leq 8$, 
$\tH^i(E^\ast) = 0$, $i = 0, 1$, and with $\tH^0(E(-2)) = 0$ by considering the 
easiest cases $c_2 = 5$ and $c_2 = 6$ in order to make our strategy clear. 
Actually, this strategy was explained in the introduction to 
Section~\ref{S:n=2c1=4}. The technical difference between the cases $\pii$ and 
$\piii$ is that on $\piii$ one has to use stable rank 2 \emph{reflexive 
sheaves} instead of stable rank 2 vector bundles. We take advantage, in our 
analysis, of the properties of the \emph{spectrum} of such a sheaf which we 
recall, in Thm.~\ref{T:spectrum}, quoting the classical paper of Hartshorne 
\cite{ha}.   

\vskip2mm

Now, if $r$ is the rank of $E$ (as above) then $r-1$ general global sections 
of $E$ define an exact sequence$\, :$  
\begin{equation}
\label{E:oeiy4}
0 \lra (r - 1)\sco_\piii \lra E \lra \sci_Y(4) \lra 0
\end{equation}
with $Y$ a nonsingular (but, maybe, reducible) curve in $\piii$ of degree 
$d = c_2$. Dualizing \eqref{E:oeiy4}, one gets an exact sequence$\, :$ 
\begin{equation}
\label{E:dualoeiy4}
0 \lra \sco_\piii(-4) \lra E^\ast \lra (r - 1)\sco_\piii \lra \omega_Y 
\lra 0\, .
\end{equation} 
The following result was proven, under the additional assumption $\omega_Y 
\simeq \sco_Y$, by Chiodera and Ellia \cite[Prop.~2.6]{ce}.  

\begin{lemma}\label{L:yconnected} 
The nonsingular curve $Y\subset \piii$ is connected, except when $c_2 = 8$ 
and $Y$ is a disjoint union of two elliptic quartics $($in which case $r = 3$ 
and $c_3 = 0$$)$. 
\end{lemma} 

\begin{proof}
Since $\omega_Y$ is globally generated, each irreducible (= connected) 
component of $Y$ must have genus $\geq 1$, hence it must have degree $\geq 3$ 
and if it has degree 3 then it must be a plane cubic. Assume that $Y = C_0 
\cup C_1$, where $C_0$ is a plane cubic. Let $H_0\subset \piii$ be the plane 
containing $C_0$. There exists a line $L \subset H_0$ such that 
$\deg (L\cap C_1) \geq 2$. It follows that $\deg (L \cap Y) \geq 5$, which 
contradicts the fact that $\sci_Y(4)$ is globally generated. 

Consequently, if $Y$ is not connected then $Y = C_0 \cup C_1$, with 
$\deg C_i \geq 4$, $i = 0, 1$, hence with $\deg C_i = 4$, $i = 0, 1$. The same 
kind of argument as the one used above shows that none of the curves $C_0$, 
$C_1$ can be a plane quartic, hence they must be both elliptic quartics, i.e., 
complete intersections of type $(2,2)$. 
\end{proof} 

\begin{remark}\label{R:reflexive}
With the notation from the beginning of this section, 
one can take the quotient of $E$ by $r-1$ general global sections in two 
steps and one obtains two exact sequences$\, :$ 
\begin{gather*}
0 \lra (r - 2)\sco_\piii \lra E \lra \sce^\prim \lra 0\, ,\\ 
0 \lra \sco_\piii \lra \sce^\prim \lra \sci_Y(4) \lra 0\, ,  
\end{gather*}   
where $\sce^\prim$ is a rank 2 \emph{reflexive sheaf} on $\piii$. 
Let $\scf := \sce^\prim(-2)$ be the \emph{normalized}  
reflexive sheaf associated to $\sce^\prim$. One has 
$c_1(\scf) = 0$, $c_2(\scf) = c_2 - 4$, and, by \cite[Cor.~2.2]{ha}, 
$c_3(\scf) = c_3(\sce^\prim) = c_3(E) =: c_3$. Dualizing the last exact 
sequence, one gets an exact sequence$\, :$  
\[
0 \lra \sco_\piii(-4) \lra \sce^{\prim \ast} \lra \sco_\piii \lra \omega_Y \lra 
\sce xt^1(\sce^\prim ,\sco_\piii) \lra 0 
\] 
hence, by \cite[Prop.~2.6]{ha}$\, :$ 
\[
c_3(\scf) = c_3(\sce^\prim) = \text{h}^0(\sce xt^1(\sce^\prim ,\sco_\piii)) = 
\deg \omega_Y = 2p_a(Y) - 2\, . 
\]
It is convenient to write the exact sequences relating $E$ to $\sce^\prim$ 
and $\sce^\prim$ to $\sci_Y$ in the form$\, :$  
\begin{gather*}
0 \lra (r - 2)\sco_\piii \lra E \lra \scf(2) \lra 0\, ,\\ 
0 \lra \sco_\piii(-2) \lra \scf \lra \sci_Y(2) \lra 0\, .  
\end{gather*}  

Now, the condition $\tH^0(E(-2)) = 0$ is equivalent to $\tH^0(\scf) = 0$, 
which is equivalent to $\scf$ being \emph{stable}. According to Barth's 
restriction theorem \cite{ba}, if $H \subset \piii$ is a general plane 
avoiding the singular points of $\scf$, then $\scf \vb H$ is stable (i.e., 
$\tH^0(\scf \vb H) = 0$) except when $\scf$ is a nullcorrelation bundle, i.e.,  
when $\scf$ is a rank 2 vector bundle defined by an exact sequence 
$0 \ra \sco_\piii(-1) \ra \Omega_\piii(1) \ra \scf \ra 0$ (see, also, 
\cite[Thm.~3.3]{ehv}). Consequently, if $c_2 \geq 6$ then, for the general 
plane $H \subset \piii$, $\tH^0(E_H(-2)) = 0$.  

\vskip2mm

Assume, now, that $Y$ \emph{is connected}. Then $\tH^2(E^\ast) \simeq 
\tH^1(E(-4))^\ast \simeq \tH^1(\sci_Y)^\ast = 0$. Since $\tH^3(E^\ast) \simeq 
\tH^0(E(-4))^\ast = 0$, it follows, from the Riemann-Roch formula recalled 
below, that$\, :$ 
\[
0 = \chi(E^\ast) = (r-1) + \chi(\sco_\piii(-4)) - 2c_2 + 
\frac{1}{2}(-c_3 + 4c_2) = r -2 - \frac{1}{2}c_3\  \text{hence} \  
r = \frac{1}{2}c_3 + 2\, .  
\] 

Moreover, using the exact sequence of the hyperplane section and 
Serre duality one deduces that, for every plane $H \subset \piii$$\, :$  
\begin{gather*}
\tH^0(E^\ast_H) \simeq \tH^1(E^\ast(-1)) \simeq \tH^2(E(-3))^\ast \simeq 
\tH^2(\scf(-1))^\ast \, ,\\ 
\tH^1(E^\ast_H) \simeq \tH^2(E^\ast(-1)) \simeq \tH^1(E(-3))^\ast \simeq 
\tH^1(\scf(-1))^\ast \, . 
\end{gather*}      
\end{remark}

\begin{lemma}\label{L:east1}
Under the hypotheses and with the notation from Remark~\ref{R:reflexive}$\, :$ 

\emph{(a)} If ${\fam0 H}^1(\scf(-1)) = 0$ then $E^\ast$ is $1$-regular. 

\emph{(b)} If $c_2 \geq 6$ and ${\fam0 h}^1(\scf(-1)) = 1$ then 
${\fam0 H}^1(E^\ast(l)) = 0$ for $l \geq 0$ hence ${\fam0 H}^2(E(l)) = 0$ 
for $l \leq -4$. 

\emph{(c)} If ${\fam0 H}^2(\scf(-1)) = 0$ then the graded $S$-module 
${\fam0 H}^1_\ast(E)$ is generated in degrees $\leq -2$. 
\end{lemma} 

\begin{proof}
(a) One has $\tH^1(E^\ast) = 0$, $\tH^2(E^\ast(-1)) \simeq 
\tH^1(E(-3))^\ast \simeq \tH^1(\scf(-1))^\ast = 0$ and $\tH^3(E^\ast(-2)) 
\simeq \tH^0(E(-2))^\ast = 0$. 

\vskip2mm

(b) As we saw in Remark~\ref{R:reflexive}, for a general plane $H \subset 
\piii$ one has $\tH^0(E_H(-2)) = 0$. According to Lemma~\ref{L:h0h1} one 
has $E_H \simeq G \oplus t\sco_H$, with $G$ defined by an exact 
sequence $0 \ra s\sco_H \ra F \ra G \ra 0$ where $F$ is a vector bundle 
on $H$ with $\tH^i(F^\ast) = 0$, $i = 0,\, 1$. Moreover, 
$s = \h^1(E_H^\ast) \leq \h^2(E^\ast(-1)) = \h^1(E(-3)) = \h^1(\scf(-1)) 
= 1$. 

\vskip2mm

\noindent
{\bf Claim.}\quad $\tH^1(E_H^\ast(l)) = 0$ \emph{for} $l \geq 1$. 

\vskip2mm 

\noindent 
\emph{Indeed}, since $\tH^2(F^\ast(-1)) \simeq \tH^0(F(-2))^\ast = 0$, 
$F^\ast$ is 1-regular. In particular, $\tH^1(F^\ast(l)) = 0$ for $l \geq 0$. 

If $s = 0$ then $G = F$, hence $\tH^1(E_H^\ast(l)) = 0$ for $l \geq 0$. 

If $s = 1$  then consider the dual exact sequence$\, :$ 
\[
0 \lra G^\ast \lra F^\ast \overset{\phi}{\lra} \sco_H \lra 0\, .
\]
$F^\ast$ being 1-regular, $F^\ast(1)$ is globally generated. But for any 
epimorphism $\e : m\sco_H \ra \sco_H(1)$ one has that $\tH^0(\e(l))$ is 
surjective, $\forall \, l \geq 0$. One deduces that $\tH^0(\phi(l))$ is 
surjective $\forall \, l \geq 1$ hence $\tH^1(G^\ast(l)) = 0$, $\forall \, 
l \geq 1$, whence the claim. 

\vskip2mm 

\noindent
Now, using the claim, the fact that $\tH^1(E^\ast) = 0$ and 
Lemma~\ref{L:hieh=0}(a) one gets that $\tH^1(E^\ast(l)) = 0$ for $l \geq 0$. 

\vskip2mm

(c) $\tH^2(E(-3)) \simeq \tH^2(\scf(-1)) = 0$ and $\tH^3(E(-4)) \simeq 
\tH^0(E^\ast)^\ast = 0$. One can apply, now, Lemma~\ref{L:cm}.  
\end{proof}

Let us recall now, from \cite[\S 7]{ha}, the definition and properties of the 
\emph{spectrum} of a stable rank 2 reflexive sheaf $\scf$ on $\piii$$\, :$  

\begin{thm}\label{T:spectrum}
Let $\scf$ be a stable rank $2$ reflexive sheaf on $\piii$ with Chern 
classes $c_1(\scf) = 0$ and $c_2(\scf) = c \geq 1$.  
Then there exists a unique nonincreasing 
sequence of integers $(k_i)_{1 \leq i \leq c}$, called the spectrum of $\scf$, 
with the following properties$\, :$  
\begin{enumerate}
\item[(i)] ${\fam0 h}^1(\piii ,\scf(l)) = {\fam0 h}^0(\p^1, 
\bigoplus_{i=1}^c\sco_{\p^1}(k_i+l+1))$, for $l \leq -1$$\, ;$  
\item[(ii)] ${\fam0 h}^2(\piii ,\scf(l)) = {\fam0 h}^1(\p^1, 
\bigoplus_{i=1}^c\sco_{\p^1}(k_i+l+1))$, for $l \geq -3$$\, ;$ 
\item[(iii)] $c_3 = - 2\sum_{i=1}^ck_i$$\, ;$ 
\item[(iv)] If $k > 0$ occurs in the spectrum then $0,1,\ldots ,k$ also 
occur$\, ;$  
\item[(v)] If $k < 0$ occurs in the spectrum then $-1,-2, \ldots , k$ also 
occur$\, ;$ 
\item[(vi)] If $0$ doesn't occur in the spectrum then $-1$ occurs twice$\, ;$  
\item[(vii)] If $\scf$ is locally free then $(-k_{c-i+1})_{1 \leq i \leq c} = 
(k_i)_{1 \leq i \leq c}$. 
\end{enumerate} 
\end{thm}

We also recall from \cite[Thm.~2.3]{ha}, for the reader's convenience, the 
Riemann-Roch formula on $\piii$. The reader is also advised to recall, from 
\cite[Lemma~2.1]{ha}, the formula for calculating the Chern classes of the 
twists of a coherent sheaf on $\p^n$ in terms of the Chern classes of the 
sheaf. 

\begin{thm}[Riemann-Roch]\label{T:rronp3}
If $\scf$ is a rank $r$ coherent sheaf on $\piii$, with Chern classes 
$c_1,\, c_2,\, c_3$ then, $\forall \, l \in \z$$\, :$ 
\[
\chi(\scf(l)) = \chi\left(((r - 1)\sco_\piii \oplus \sco_\piii(c_1))(l)\right) 
- (l+2)c_2 + \frac{1}{2}(c_3 - c_1c_2)\, .
\]
In particular, $c_3 \equiv c_1c_2 \pmod{2}$. 
\qed
\end{thm} 

\begin{prop}\label{P:c1=4c2=5n3}
Let $E$ be a globally generated vector bundle on $\piii$, with 
$c_1 = 4$ and $c_2 = 5$. Assume that ${\fam0 H}^i(E^\ast) = 0$, $i = 0, 1$, 
and that ${\fam0 H}^0(E(-2)) = 0$. Then $E \simeq N(2)$, where 
$N$ is a nullcorrelation bundle. 
\end{prop}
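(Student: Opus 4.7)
The plan is to study the normalized reflexive sheaf $\scf$ from Remark~\ref{R:reflexive}, show it is forced to be a nullcorrelation bundle, and then use the Riemann-Roch count in Remark~\ref{R:reflexive} to conclude $r = 2$.

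First, from the exact sequence $0 \to \sco_\piii(-2)^{\op r-2} \to E(-2) \to \scf \to 0$, the hypothesis $\tH^0(E(-2)) = 0$ gives $\tH^0(\scf) = 0$. Combined with $c_1(\scf) = 0$, this says $\scf$ is \emph{stable}. Since $c_2(\scf) = c_2 - 4 = 1$, Thm.~\ref{T:spectrum} associates to $\scf$ a spectrum of length one, $(k_1)$. By property (iv) of the spectrum, $k_1 > 0$ would force $0$ to also appear, impossible. By property (vi), $k_1 = -1$ would force $-1$ to appear twice, also impossible. Hence $k_1 = 0$, and property (iii) yields $c_3(\scf) = 0$. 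Since $c_3$ measures the length of the singular locus of a reflexive rank 2 sheaf on $\piii$, $\scf$ is locally free.

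Next, a stable rank 2 vector bundle on $\piii$ with $c_1 = 0$ and $c_2 = 1$ is (by the well-known classification, or by a quick argument using Beilinson's theorem applied to $\scf(-1)$) a nullcorrelation bundle $N$, i.e.\ fits in an exact sequence $0 \to \sco_\piii(-1) \to \Omega_\piii(1) \to N \to 0$. Thus $\scf \simeq N$.

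It remains to show $r = 2$. By Lemma~\ref{L:yconnected}, the curve $Y$ (of degree $5$) is connected, so the Euler-characteristic computation of Remark~\ref{R:reflexive} applies and gives $r = c_3/2 + 2 = 2$. The first of the two exact sequences of Remark~\ref{R:reflexive} then degenerates to $E \simeq \scf(2) \simeq N(2)$. The main (and only non-routine) step is the spectrum argument pinning down $c_3 = 0$; once that is in place, everything else is a direct consequence of the bookkeeping already set up in Remark~\ref{R:reflexive}.
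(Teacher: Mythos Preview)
Your proof is correct and reaches the same conclusion as the paper, but the two arguments differ in the tools used at each of the two steps.

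To identify $\scf$ as a nullcorrelation bundle, the paper does not invoke the spectrum at all. Instead it uses Barth's restriction theorem (recalled in Remark~\ref{R:reflexive}): a stable rank~2 bundle on $\pii$ with $c_1 = 0$ satisfies $c_2 \geq 2$ by Riemann--Roch, so since $c_2(\scf) = 1$, the restriction $\scf \vert H$ cannot be stable for general $H$, and the sole exception in Barth's theorem forces $\scf \simeq N$. Your spectrum argument is a perfectly good alternative and is arguably more self-contained here, since the spectrum machinery is set up anyway for the harder cases $c_2 = 6, 7, 8$. For the rank computation, the paper observes directly that $\tH^i(N^\ast(-2)) \simeq \tH^i(N(-2)) = 0$ for $i = 0, 1$, so the extension $0 \to \sco_\piii^{\op r-2} \to E \to N(2) \to 0$ splits and $\tH^0(E^\ast) = 0$ forces $r = 2$; you instead route through Lemma~\ref{L:yconnected} and the Euler-characteristic formula $r = \tfrac{1}{2}c_3 + 2$ from Remark~\ref{R:reflexive}. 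Both are short and valid; the paper's version has the mild advantage of not needing $Y$ connected, while yours reuses a formula already established for the later cases.
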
 

\begin{proof}
With the notation from Remark~\ref{R:reflexive}, $c_1(\scf) = 0$, 
$c_2(\scf) = 1$ and $\scf$ is stable. If $F$ is a stable rank 2 vector bundle 
on $\pii$ with $c_1(F) = 0$ then, from Riemann-Roch, $2 - c_2(F) = \chi(F) = 
-\h^1(F) \leq 0$, hence $c_2(F) \geq 2$. The restriction theorem of Barth 
(recalled in Remark~\ref{R:reflexive}) implies, now, that $\scf \simeq N$, 
for some nullcorrelation bundle $N$. 
Since $\tH^i(N^\ast(-2)) \simeq \tH^i(N(-2)) = 0$, 
$i = 0, 1$, one deduces that $r = 2$ and $E \simeq N(2)$. 
\end{proof} 

\begin{remark}\label{R:instantons}
Let $m \geq 1$ be an integer. A \emph{mathematical instanton bundle of charge} 
$m$ on $\piii$ ($m$-\emph{instanton}, for short) is a rank 2 vector bundle $F$ 
on $\piii$ with $c_1(F) = 0$, $c_2(F) = m$, $\tH^0(F) = 0$ and $\tH^1(F(-2)) 
= 0$. The 1-instantons are exactly the nullcorrelation bundles introduced and 
studied by Barth~\cite{ba}. Any stable rank 2 vector bundle $F$ on $\piii$ 
with $c_1(F) = 0$ and $c_2(F) = 2$ is a 2-instanton; they were studied by 
Hartshorne~\cite{ha0}. Any $m$-instanton is the cohomology of an 
anti-selfdual monad$\, :$ 
\[
m\sco_\piii(-1) \overset{\beta}{\lra} (2m + 2)\sco_\piii  
\overset{\alpha}{\lra} m\sco_\piii(1)\, , 
\] 
and the moduli space $\text{MI}(m)$ of $m$-instantons is smooth and 
irreducible, of dimension $8m-3$, at least for $m \leq 5$. 

It is well known that if $F$ is an $m$-instanton then $\tH^1(F(l)) = 0$ for 
$l \geq m-1$. Let us recall the argument: using the monad, one gets that the 
graded $S:= k[X_0,X_1,X_2,X_3]$-module $\tH^1_\ast(F)$ has a presentation of 
the form$\, ;$  
\[
(2m + 2)S \overset{A}{\lra} mS(1) \lra \tH^1_\ast(F) \lra 0\, ,
\] 
where $A = \tH^0_\ast(\alpha)$. The $m\times m$ minors of the matrix $A$ 
annihilate $\tH^1_\ast(F)$. Let $\scl_0 := (2m + 2)\sco_\piii$ and 
$\scl_1 := m\sco_\piii(1)$. Using the Eagon-Northcott complex 
associated to the epimorphism $\alpha$: 
\[
\overset{m+3}{\textstyle \bigwedge}\scl_0\otimes (S^3\scl_1)^\ast \lra 
\overset{m+2}{\textstyle \bigwedge}\scl_0\otimes (S^2\scl_1)^\ast \lra 
\overset{m+1}{\textstyle \bigwedge}\scl_0\otimes \scl_1^\ast \lra 
\overset{m}{\textstyle \bigwedge}\scl_0 \lra 
\overset{m}{\textstyle \bigwedge}\scl_1 \lra 0 
\] 
one derives that these minors generate $S_m$ as a $k$-vector space. 

Since $\tH^2(F(l)) = 0$ for $l \geq -2$ (the spectrum of $F$ is 
$(0,\ldots ,0)$) and $\tH^3(F(l)) = 0$ for $l \geq -4$ (by Serre duality), 
one deduces that $F$ is $m$-regular. In particular, $F(m)$ is globally 
generated. 

It might be true that $F(m-1)$ is globally generated unless $F$ has a 
jumping line of maximal order $m$, but we are not aware of an argument 
(or reference) for this assertion. It is, however, true for $m = 3$, by 
Gruson and Skiti~\cite[Prop.~1.1.1]{gs}.   
\end{remark} 

\begin{prop}\label{P:c1=4c2=6n3}
Let $E$ be a globally generated vector bundle on $\piii$, with 
$c_1 = 4$ and $c_2 = 6$. Assume that ${\fam0 H}^i(E^\ast) = 0$, $i = 0, 1$, 
and that ${\fam0 H}^0(E(-2)) = 0$. Then one of the following holds$\, :$  
\begin{enumerate} 
\item[(i)] $E \simeq F(2)$, where $F$ is a $2$-instanton$\, ;$ 
\item[(ii)] $c_3 = 2$ and, up to a linear change of coordinates, 
$E$ is the kernel of the epimorphism     
\[
(X_0,X_1,X_2,X_3^2) : 3\sco_\piii(2) \oplus \sco_\piii(1) 
\lra \sco_\piii(3)\, ;
\]
\item[(iii)] $c_3 = 4$ and $E \simeq 4\sco_\piii(1)$.
\end{enumerate}
\end{prop}

\begin{proof}
Let  $Y$ be as at the beginning of this section and let $\scf$ be as in 
Remark~\ref{R:reflexive}. 
One has $c_1(\scf) = 0$, $c_2(\scf) = 2$ and the possible 
spectra of $\scf$ are $(0,0)$, $(0,-1)$ and $(-1,-1)$. 

\vskip2mm 

\noindent
{\bf Case 1.}\quad \textit{The spectrum of $\scf$ is $(0,0)$}.  

\vskip2mm

\noindent
In this case $\scf$ is a 2-instanton.  
Since $\tH^i(\sce^{\prim \ast}) \simeq \tH^i(\scf(-2)) = 0$, $i = 0, 1$, 
it follows that $E \simeq \sce^\prim = \scf(2)$.  

\vskip2mm

\noindent
{\bf Case 2.}\quad \textit{The spectrum of $\scf$ is $(0,-1)$}.  

\vskip2mm

\noindent
In this case, $c_3(\scf) = 2$, $p_a(Y) = 2$ and $r = 3$.  

\vskip2mm

\noindent
{\bf Claim 2.1.}\quad $\tH^2_\ast(E) = 0$

\vskip2mm 

\noindent
\emph{Indeed}, $\h^1(\scf(-1)) = 1$ and Lemma~\ref{L:east1}(b) implies that 
$\tH^2(E(l)) = 0$ for $l \leq -4$. 
On the other hand, $\tH^2(E(l)) \simeq \tH^2(\scf(l+2)) = 0$ for $l \geq -3$. 

\vskip2mm

\noindent
Now, $\tH^1(E(l)) \simeq \tH^1(\scf(l+2)) = 0$ for $l \leq -4$, 
$\h^1(E(-3)) = \h^1(\scf(-1)) =1$ and $\h^1(E(-2)) = \h^1(\scf) = 
- \chi(\scf) = 1$ (we used the fact that $\tH^i(\scf) = 0$ for $i \neq 1$).  
Moreover, $\tH^2(\scf(-1)) = 0$ hence, by Lemma~\ref{L:east1}(c), 
the graded module $\tH^1_\ast(E)$ is generated in degrees 
$\leq -2$. A non-zero element of $\tH^1(E(-2))$ defines an extension$\, :$ 
\[
0 \lra E(-2) \lra E^\prim \lra \sco_\piii \lra 0\, .
\]
The intermediate cohomology modules of $E^\prim$ are $\tH^1_\ast(E^\prim) = 
k(1)$ and $\tH^2_\ast(E^\prim) = 0$. It follows that $E^\prim \simeq 
\Omega_\piii(1) \oplus \scl$, where $\scl$ is a direct sum of line bundles. 
Since $\text{rk}\, E^\prim = 4$ and $c_1(E^\prim) = -2$ it follows that 
$\scl \simeq \sco_\piii(-1)$. One thus gets an exact sequence$\, :$ 
\[
0 \lra E(-2) \lra \Omega_\piii(1) \oplus \sco_\piii(-1) 
\overset{\psi}{\lra} \sco_\piii \lra 0\, .
\] 
Recalling the exact sequence $0 \ra \Omega_\piii(1) \ra 4\sco_\piii  
\ra \sco_\piii(1) \ra 0$, the component $\psi_0 : \Omega_\piii(1) \ra \sco_\piii$ 
of $\psi$ factorizes as$\, :$ 
\[
\Omega_\piii(1) \lra 4\sco_\piii \overset{g_0}{\lra} \sco_\piii\, .
\]
Since the component $\psi_1 : \sco_\piii(-1) \ra \sco_\piii$ of $\psi$ cannot be 
an epimorphism it follows that $g_0 \neq 0$. Applying, now, the Snake Lemma 
to the diagram$\, :$
\[
\begin{CD}
0 @>>> \Omega_\piii(1) \oplus \sco_\piii(-1) @>>> 4\sco_\piii  
\oplus \sco_\piii(-1) @>>> \sco_\piii(1) @>>> 0\\
@. @V{\psi}VV @V{(g_0, \psi_1)}VV @VVV\\
0 @>>> \sco_\piii @>>> \sco_\piii @>>> 0 @>>> 0 
\end{CD}
\]
one derives an exact sequence$\, :$ 
\[
0 \lra E(-2) \lra 3\sco_\piii \oplus \sco_\piii(-1) 
\overset{\phi}{\lra} \sco_\piii(1) \lra 0\, .
\]
Since $\tH^0(E(-2)) = 0$ the component $\phi_0 : 3\sco_\piii \ra 
\sco_\piii(1)$ of $\phi$ is defined by three linearly independent linear forms 
$h_0,\, h_1,\, h_2$. Complete this sequence to a basis $h_0, \ldots , h_3$ of 
$\tH^0(\sco_\piii(1))$. Then, up to an automorphism of $3\sco_\piii  
\oplus \sco_\piii(-1)$ invariating $3\sco_\piii$, one can assume that 
$\phi$ is defined by $h_0,\, h_1,\, h_2,\, h_3^2$. 
 
\vskip2mm 

\noindent
Finally, using the Koszul complex associated to $X_0,\, X_1,\, X_2,\, X_3^2$, 
one sees easily that the bundle from item (ii) of the statement is globally 
generated. 

\vskip2mm

\noindent
{\bf Case 3.}\quad \textit{The spectrum of $\scf$ is $(-1,-1)$}.

\vskip2mm 

\noindent
In this case, $c_3(\scf) = 4$, $p_a(Y) = 3$ and $r = 4$. One has $\h^1(E(-2)) = 
\h^1(\scf) = - \chi(\scf) = 0$, $\tH^2(E(-3)) \simeq \tH^2(\scf(-1)) = 0$ and 
$\tH^3(E(-4)) \simeq \tH^0(E^\ast)^\ast = 0$ hence $E$ is $(-1)$-regular. In 
particular, $E(-1)$ is globally generated. Since $c_1(E(-1)) = 0$ it follows 
that $E(-1) \simeq 4\sco_\piii$. 
\end{proof}

\vskip1cm

\section{The case $c_1 = 4$, $c_2 = 7$ on $\piii$}
\label{S:c1=4c2=7n3} 
  
In this section we classify the globally generated vector bundles on $\piii$ 
with $c_1 = 4$ and $c_2 = 7$, using the strategy explained in the 
introductions to Sections~\ref{S:n=2c1=4} and~\ref{S:c1=4c2=5,6n3}. This is a 
case of medium difficulty. Among the bundles appearing in our 
classification one should mention the bundles of the form $F(2)$, where $F$ 
is a general 3-instanton. The meaning of the term ``general'' is explained 
in Remark~\ref{R:general}, using the results of Gruson and Skiti \cite{gs}.  

\vskip2mm

Let $E$ is a globally generated vector bundle on $\piii$ with 
$c_1 = 4$, $c_2 = 7$, $\tH^i(E^\ast) = 0$, $i = 0,\, 1$, and $\tH^0(E(-2)) = 
0$. We shall frequently use the following two exact sequences defined in 
Remark~\ref{R:reflexive}$\, :$  
\begin{gather*}
0 \lra (r - 2)\sco_\piii \lra E \lra \scf(2) \lra 0\, ,\\ 
0 \lra \sco_\piii(-2) \lra \scf \lra \sci_Y(2) \lra 0\, ,  
\end{gather*}
where $\scf$ a stable rank 2 reflexive sheaf with $c_1(\scf) = 0$, 
$c_2(\scf) = 3$, and $Y$ is a nonsingular curve of degree 7. According to 
Lemma~\ref{L:yconnected}, $Y$ is connected. 

\begin{prop}\label{P:c1=4c2=7n3} 
Let $E$ be a globally generated vector bundle on $\piii$ with $c_1 = 4$, 
$c_2 = 7$, ${\fam0 H}^i(E^\ast) = 0$, $i = 0, 1$, and with ${\fam0 H}^0(E(-2)) 
= 0$. Then one of the following holds$\, :$  
\begin{enumerate}
\item[(i)] $E \simeq F(2)$, where $F$ is a general $3$-instanton$\, ;$ 
\item[(ii)] $c_3 = 2$ and, up to a linear change of coordinates, 
$E$ is the kernel of the epimorphism   
\[
\begin{pmatrix}
X_0 & X_1 & X_2 & X_3 & 0\\
0 & X_0 & X_1 & X_2 & X_3
\end{pmatrix}
: 5\sco_\piii(2) \lra 2\sco_\piii(3)\, ;
\]
\item[(iii)] $c_3 = 4$ and, up to a linear change of coordinates, 
$E$ is the kernel of the epimorphism   
\[
(X_0,X_1,X_2^2,X_2X_3,X_3^2) : 2\sco_\piii(2) \oplus 
3\sco_\piii(1) \lra \sco_\piii(3)\, ; 
\]
\item[(iv)] $c_3 = 6$ and $E \simeq 2\sco_\piii(1) \oplus 
\Omega_\piii(2)\, ;$ 
\item[(v)] $c_3 = 8$ and $E \simeq 3\sco_\piii(1) \oplus 
{\fam0 T}_\piii(-1)$. 
\end{enumerate}
\end{prop} 

\begin{remark}\label{R:general}
The meaning of the term ``general'' in the statement of 
Prop.~\ref{P:c1=4c2=7n3}(i) can be explicitated as follows: according to a 
result of Gruson and Skiti \cite[Prop.~1.1.1]{gs}, if $F$ is a 3-instanton 
then $F(2)$ is globally generated iff F has no jumping line of maximal order 
3. The moduli space $\text{MI}(3)$ of 3-instantons is smooth and irreducible 
of dimension 21 and, according to results of Rao~\cite{r} and 
Skiti~\cite{sk}, the 3-instantons admitting a jumping line of order 3 form 
a subvariety of $\text{MI}(3)$ of dimension 20. 
\end{remark}

\begin{proof}[Proof of Proposition~\ref{P:c1=4c2=7n3}] 
We split the proof into several cases according to the spectrum of $\scf$. 
Firstly, we show that some spectra, allowed by the general theory, cannot 
occur in our context.  

\vskip2mm 

\noindent
$\bullet$ The case where $\scf$ has spectrum $(1,0,-1)$ cannot occur because, 
as we have seen at the beginning of this section, $Y$ is connected hence 
$\tH^1(\scf(-2)) \simeq \tH^1(\sci_Y) = 0$.

\vskip2mm 

\noindent
$\bullet$ If the spectrum of $\scf$ is $(0,-1,-2)$ then $c_3(\scf) = 6$ and 
$p_a(Y) = 4$. $\sco_Y(1)$ is a line bundle of degree $7 = 2\times 4 - 1$ on 
$Y$ hence $\tH^1(\sco_Y(1)) = 0$ hence $\tH^2(\sci_Y(1)) = 0$. But this 
contradicts the fact that, by the definition of the spectrum, 
$\text{h}^2(\scf(-1)) = 1$. Consequently, the case where the spectrum of 
$\scf$ is $(0,-1,-2)$ \emph{cannot occur}. 

\vskip2mm 

Next, we analyse, case by case, the remaining spectra. 

\vskip2mm

\noindent
{\bf Case 1.}\quad \textit{The spectrum of $\scf$ is $(0,0,0)$}.  

\vskip2mm

\noindent
In this case, $\scf$ is a 3-instanton bundle and $E \simeq \scf(2)$. 

\vskip2mm 

\noindent
{\bf Case 2.}\quad \textit{The spectrum of $\scf$ is $(0,0,-1)$}.  

\vskip2mm

\noindent
In this case, $c_3(\scf) = 2$ and $p_a(Y) = 2$ hence $r = 3$. 

\vskip2mm

\noindent
{\bf Claim 2.1.}\quad  $\tH^0(E^\ast(1)) = 0$. 

\vskip2mm

\noindent 
\emph{Indeed}, recall the exact sequence$\, :$ 
\[
0 \lra \sco_\piii(-4) \lra E^\ast \lra 2\sco_\piii \lra \omega_Y 
\lra 0 \, .
\]  
Choose a basis $s_1,s_2$ of $\tH^0(\omega_Y)$ such that each of the 
divisors $(s_i)_0$, $i = 1, 2$, consists of 2 simple points and such that 
$(s_1)_0 \cap (s_2)_0 = \emptyset$. If $\tH^0(E^\ast(1)) \neq 0$ then there 
exists a non-zero pair of linear forms $(h_1,h_2) \in 
\tH^0(2\sco_\piii(1))$ such that $h_1s_1 + h_2s_2 = 0$. Since $h_1$ and 
$h_2$ cannot be proportional, they define distinct planes $H_1,H_2 \subset 
\piii$. Let $L := H_1 \cap H_2$. We will show that $L$ is a 5-secant of 
$Y$ and this \emph{will contradict} the fact that $\sci_Y(4)$ is globally 
generated. Since $h_1s_1 = -h_2s_2$ one has the following equality of 
divisors on $Y$$\, :$ 
\[
(H_1 \cap Y) + (s_1)_0 = (H_2 \cap Y) + (s_2)_0 \, .
\] 
It follows that there exists an effective divisor $D$ on $Y$ of degree 5 
such that$\, :$ 
\[
H_1 \cap Y = D + (s_2)_0,\  H_2 \cap Y = D + (s_1)_0\, .
\]
One has $D \subseteq L \cap Y$ hence $L$ is a secant of $Y$ of order 
$\geq 5$. It remains that $\tH^0(E^\ast(1)) = 0$. 

\vskip2mm 

\noindent
{\bf Claim 2.2.}\quad $E$ \emph{is the kernel of an epimorphism} 
$\phi : 5\sco_\piii(2) \ra 2\sco_\piii(3)$. 

\vskip2mm

\noindent
\emph{Indeed}, by Claim 2.1, $E^\ast(1)$ is a stable rank 3 vector bundle 
with Chern classes $c_1(E^\ast(1)) = -1$, $c_2(E^\ast(1)) = 2$ and 
$c_3(E^\ast(1)) = 2$. The claim follows, now, from Okonek and Spindler 
\cite[Lemma~1.12]{oks}.

\vskip2mm 

\noindent
{\bf Claim 2.3.}\quad \emph{Up to a linear change of coordinates, the 
matrix defining} $\phi$ \emph{is that in item} (ii) \emph{of the statement}. 

\vskip2mm

\noindent
\emph{Indeed}, let $A$ be a 5-dimensional 
$k$-vector space, $B$ a 2-dimensional one, and $W := \tH^0(\sco_\piii(1))$. 
$\phi(-2)$ is defined by a $k$-linear map $f : A \ra B\otimes_kW$. The fact 
that $\phi(-2)$ is an epimorphism is equivalent to the fact that, $\forall \, 
v \in W^\ast$, $v \neq 0$, the composite map$\, :$  
\[
A \overset{f}{\lra} B\otimes_kW \overset{\text{id}\otimes v}{\lra} B 
\]
is surjective. This is equivalent to the fact that $f^\ast : B^\ast \otimes_k
W^\ast \ra A^\ast$ is \emph{nondegenerate} in the sense of the Bilinear map 
lemma from Hartshorne~\cite[Lemma~5.1]{ha}, i.e., $f^\ast(\beta \otimes v) 
\neq 0$, $\forall \, \beta \in B^\ast \setminus \{0\}$, $\forall \, v \in 
W^\ast \setminus \{0\}$. $f$ defines also a morphism of vector bundles 
on $\p(B) \simeq \p^1$, $\psi : A\otimes_k\sco_{\p(B)} \ra \sco_{\p(B)}(1) 
\otimes_kW$, which is an epimorphism, too, because $f^\ast$ is nondegenerate. 
The kernel of $\psi$ has rank 1, hence it must be $\sco_{\p(B)}(-4)$. 
Consider the exact sequence$\, :$ 
\[
0 \lra \sco_{\p(B)}(-4) \overset{\mu}{\lra} A\otimes_k\sco_{\p(B)} 
\overset{\psi}{\lra} \sco_{\p(B)}(1)\otimes_kW \lra 0\, .
\] 
Let $t_0,t_1$ be a $k$-basis of $B$. There exists a $k$-basis of $A$ such 
that $\mu$ is defined by $(t_1^4,-t_1^3t_0,t_1^2t_0^2,-t_1t_0^3,t_0^4)$. One 
deduces that there exists a $k$-basis $h_0,h_1,h_2,h_3$ of $W$ such that 
$\psi$ is defined by the matrix$\, :$ 
\[
\begin{pmatrix} 
t_0 & t_1 & 0 & 0 & 0\\
0 & t_0 & t_1 & 0 & 0\\
0 & 0 & t_0 & t_1 & 0\\
0 & 0 & 0 & t_0 & t_1
\end{pmatrix}
\]  
With respect to the above bases of $A$ and $B$, the matrix defining 
$\phi(-2)$ is$\, :$ 
\[
\begin{pmatrix}
h_0 & h_1 & h_2 & h_3 & 0\\
0 & h_0 & h_1 & h_2 & h_3
\end{pmatrix}
\]

\vskip2mm 

\noindent
{\bf Claim 2.4.}\quad \emph{The bundle} $E$ \emph{from item} (ii) 
\emph{of the statement is globally generated}. 

\vskip2mm

\noindent
We will show something more, namely that $E$ is 
0-regular. The only thing to verify is that $\tH^1(E(-1)) = 0$. But this 
follows by noticing that the graded $S := k[X_0,X_1,X_2,X_3]$-module 
$\tH^1_\ast(E)$ is generated by $\tH^1(E(-3))$ and is annihilated by the 
$2\times 2$ minors of the matrix from the statement and that these minors 
generate $S_2$ as a $k$-vector space. 

\vskip2mm

\noindent
{\bf Case 3.}\quad \textit{The spectrum of $\scf$ is $(0,-1,-1)$}.  

\vskip2mm

\noindent
In this case, $c_3(\scf) = 4$, $p_a(Y) = 3$, hence $r = 4$. 

\vskip2mm 

\noindent
{\bf Claim 3.1.}\quad $\tH^2_\ast(E) = 0$.  

\vskip2mm

\noindent 
\emph{Indeed}, $\tH^2(E(l)) \simeq \tH^2(\scf(l+2)) = 0$ for $l \geq -3$. 
On the other hand, since $\h^1(\scf(-1)) = 1$, Lemma~\ref{L:east1}(b) 
implies that $\tH^2(E(l)) = 0$ for $l \leq -4$.   

\vskip2mm

\noindent
{\bf Claim 3.2.}\quad $E$ \emph{is the kernel of an epimorphism} 
$\phi : 2\sco_\piii(2) \oplus 3\sco_\piii(1) \ra \sco_\piii(3)$. 

\vskip2mm

\noindent
\emph{Indeed}, $\tH^1(E(l)) \simeq \tH^1(\scf(l+2)) = 0$ for $l \leq -4$,  
$\h^1(E(-3)) = \h^1(\scf(-1)) = 1$ and $\h^1(E(-2)) = \h^1(\scf) = 
- \chi(\scf) = 2$. Moreover, $\tH^2(\scf(-1)) = 0$ hence, 
by Lemma~\ref{L:east1}(c), the graded module $\tH^1_\ast(E)$ is generated 
in degrees $\leq -2$. 
Choosing a $k$-basis of $\tH^1(E(-2))$ one gets an extension$\, :$ 
\[
0 \lra E(-2) \lra E^\prim \lra 2\sco_\piii \lra 0\, .
\] 
The intermediate cohomology modules of the vector bundle $E^\prim$ are 
$\tH^1_\ast(E^\prim) = k(1)$ and $\tH^2_\ast(E^\prim) = 0$ (by Claim 3.1). It 
follows that $E^\prim \simeq \Omega_\piii(1) \oplus \scl$, where $\scl$ is a 
direct sum of line bundles. Since $\text{rk}\, E^\prim = 6$, $c_1(E^\prim ) 
= -4$ and $\tH^0(E^\prim ) = 0$ it follows that $\scl \simeq 
3\sco_\piii(-1)$. One thus gets an exact sequence$\, :$ 
\[
0 \lra E(-2) \lra \Omega_\piii(1) \oplus 3\sco_\piii(-1)  
\overset{\psi}{\lra} 2\sco_\piii \lra 0\, .
\] 
Let $\psi_0 : \Omega_\piii(1) \ra 2\sco_\piii$ and 
$\psi_1 : 3\sco_\piii(-1) \ra 2\sco_\piii$ be the components of 
$\psi$. Recalling the exact sequence $0 \ra \Omega_\piii(1) \ra 
4\sco_\piii \ra \sco_\piii(1) \ra 0$, the morphism $\psi_0$ factorizes 
as$\, :$ 
\[
\Omega_\piii(1) \lra 4\sco_\piii \overset{g_0}{\lra} 2\sco_\piii\, .
\]
Since there is no epimorphism $3\sco_\piii(-1) \ra \sco_\piii$, the 
morphism $g_0$ must have rank 2. Applying, now, the Snake Lemma to the 
diagram$\, :$ 
\[
\begin{CD}
0 @>>> \Omega_\p(1) \oplus 3\sco_\p(-1) @>>> 4\sco_\p \oplus 
3\sco_\p(-1) @>>> \sco_\p(1) @>>> 0\\
@. @V{\psi}VV @V{(g_0,\psi_1)}VV @VVV\\
0 @>>> 2\sco_\p @>>> 2\sco_\p @>>> 0 @>>>0 
\end{CD}
\]
one gets an exact sequence$\, :$ 
\[
0 \lra E(-2) \lra 2\sco_\piii \oplus 3\sco_\piii(-1) \lra 
\sco_\piii(1) \lra 0\, .
\]

\noindent
{\bf Claim 3.3.}\quad \emph{Up to a linear change of coordinates, the matrix 
defining} $\phi$ \emph{is that in item} (iii) \emph{of the statement}. 

\vskip2mm 

\noindent
\emph{Indeed}, since $\tH^0(\phi(-2))$ is injective (because $\tH^0(E(-2)) 
= 0$) it follows that the component 
$\phi_0 : 2\sco_\piii(2) \ra \sco_\piii(3)$ of $\phi$ 
is defined by two linearly independent linear forms $h_0$ and $h_1$. One 
has $\Cok \phi_0 \simeq \sco_L(3)$, where $L$ is the line of equations 
$h_0 = h_1 = 0$. 

Let $\phi_1 : 3\sco_\piii(1) \ra \sco_\piii(3)$ be the other component of 
$\phi$. We assert that the composite morphism$\, :$ 
\[
\begin{CD}
3\sco_\piii @>{\phi_1(-1)}>> \sco_\piii(2) \lra \sco_L(2)
\end{CD}
\]   
induces an isomorphism $\tH^0(3\sco_\piii) \izo \tH^0(\sco_L(2))$. 

\emph{Indeed}, otherwise one would have $E \simeq K \oplus \sco_\piii(1)$ with 
$K$ the kernel of an epimorphism $2\sco_\piii(2) \oplus 
2\sco_\piii(1) \ra \sco_\piii(3)$. But $K_L \simeq 2\sco_L(2)  
\oplus \sco_L(-1)$ which would \emph{contradict} the fact that $E$ is globally 
generated. 

Completing $h_0,\, h_1$ to a basis $h_0, \ldots , h_3$ of $\tH^0(\sco_\piii(1))$ 
one deduces, now, that, up to an automorphism of $2\sco_\piii(2)  
\oplus 3\sco_\piii(1)$ invariating $2\sco_\piii(2)$, one can assume 
that $\phi_1$ is defined by $h_2^2,\, h_2h_3,\, h_3^2$. 
 
\vskip2mm

\noindent
{\bf Claim 3.4.}\quad \emph{The vector bundle from item} (iii)  
\emph{of the statement is globally generated}. 

\vskip2mm

\noindent
\emph{Indeed}, tensorizing the exact sequences$\, :$  
\begin{gather*}
0 \lra S(-2) \lra 2S(-1) \lra S \lra S/(X_0,X_1) \lra 0\, ,\\ 
0 \lra 2S(-3) \lra 3S(-2) \lra S \lra 
S/(X_2^2,X_2X_3,X_3^2) \lra 0\, , 
\end{gather*}
one gets a resolution of $S/(X_0,X_1,X_2^2,X_2X_3,X_3^2)$ from which one 
gets a resolution$\, :$ 
\[
0 \lra 2\sco_\piii(-2) \lra 7\sco_\piii(-1) \lra 
\sco_\piii(1) \oplus 8\sco_\piii \lra E \lra 0\, . 
\]

\vskip2mm 

\noindent
{\bf Case 4.}\quad \textit{The spectrum of $\scf$ is $(-1,-1,-1)$}.  

\vskip2mm

\noindent
In this case, $c_3(\scf) = 6$, 
$p_a(Y) = 4$ hence $r = 5$. Moreover, $\tH^1(\scf(-1)) = 0$ hence, 
by Lemma~\ref{L:east1}(a), $F := E^\ast(1)$ is a globally generated vector 
bundle with Chern classes $c_1(F) = 1$, $c_2(F) = 1$. 
Taking into account that $r = 5$ one deduces that 
$F \simeq 2\sco_\piii \oplus \text{T}_\piii(-1)$ hence $E \simeq 
2\sco_\piii(1) \oplus \Omega_\piii(2)$. 

\vskip2mm

\noindent
{\bf Case 5.}\quad \textit{The spectrum of $\scf$ is $(-1,-1,-2)$}.  

\vskip2mm 

\noindent 
In this case, $c_3(\scf) = 8$, 
$p_a(Y) = 5$ hence $r = 6$. Moreover, $\tH^1(\scf(-1)) = 0$ hence, by 
Lemma~\ref{L:east1}(a), $F := E^\ast(1)$ is a globally generated vector bundle 
with Chern classes $c_1(F) = 2$, $c_2(F) = 2$. Taking into account that 
$r = 6$ and that $\tH^0(F(-1)) = \tH^0(E^\ast) = 0$ one deduces, from 
Prop.~\ref{P:c2=c1}, that either $F \simeq 3\sco_\piii \oplus 
\Omega_\piii(2)$ or $F \simeq 4\sco_\piii \oplus N(1)$, where $N$ is 
a nullcorrelation bundle. In the former case $E \simeq 3\sco_\piii(1)  
\oplus \text{T}_\piii(-1)$. In the latter case, $E \simeq 
4\sco_\piii(1) \oplus N$. This case cannot, however, occur because 
$N$ is not globally generated.       
\end{proof}

\section{The case $c_1 = 4$, $c_2 = 8$ on $\piii$}
\label{S:c1=4c2=8n3} 
  
We classify, in this section, the globally generated vector bundles $E$ on 
$\piii$ with $c_1 = 4$ and $c_2 = 8$. The difficult cases of this 
classification are those in which the globally generated vector bundles we 
want to classify have small $c_3$. It is not hard to show that these bundles 
belong to certain families of vector bundles. It is, however, rather 
difficult to show that the general members of those families are globally 
generated and, even more difficult, to distinguish the globally generated 
members from those that are not. 

\vskip2mm

Actually, the characterization of 
the 4-instantons $F$ with $F(2)$ globally generated remains a, probably 
difficult, open problem. What we can show, in a rather tricky way (see 
Remark~\ref{R:generalc2=8}), is that if $F(2)$ is globally generated then 
$\tH^1(F(2)) = 0$. Another necessary condition is that $F$ should have no 
jumping line of order $\geq 3$. We do not know whether these conditions are 
also sufficient.  

\vskip2mm
  
Now, let $E$ is a globally generated vector bundle on $\piii$ with 
$c_1 = 4$, $c_2 = 8$, $\tH^i(E^\ast) = 0$, $i = 0,\, 1$, and 
$\tH^0(E(-2)) = 0$.  
Consider the exact sequences (defined in Remark~\ref{R:reflexive})$\, :$  
\begin{gather*}
0 \lra (r - 2)\sco_\piii \lra E \lra \scf(2) \lra 0\, ,\\ 
0 \lra \sco_\piii(-2) \lra \scf \lra \sci_Y(2) \lra 0\, ,  
\end{gather*}
where $\scf$ a stable rank 2 reflexive sheaf with $c_1(\scf) = 0$, 
$c_2(\scf) = 4$, and $Y$ is a smooth curve of degree 8. According to 
Lemma~\ref{L:yconnected}, $Y$ is connected or it is the disjoint union of two 
elliptic quartics.  

We shall need the following result of Hartshorne~\cite[Prop.~5.1]{ha2}, 
stating a finer property of the spectrum of a stable rank 2 reflexive 
sheaf$\, :$ 

\begin{prop}\label{P:spectrum2}
Let $\scf$ be a stable rank $2$ reflexive sheaf on $\piii$ with $c_1(\scf) = 
0$, $c_2(\scf) = c$, and let $(k_i)_{1\leq i \leq c}$ be its spectrum. If, 
for some $2\leq i \leq c-1$, $0 \geq k_{i-1} > k_i > k_{i+1}$ then 
$k_{i+1} > k_{i+2} > \cdots > k_c$. 
\qed
\end{prop}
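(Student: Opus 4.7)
The plan is to argue by contradiction. Suppose the conclusion fails and let $j$ be the smallest index with $i+1 \leq j \leq c-1$ for which $k_j = k_{j+1}$. Since the subsequence $k_{i-1} > k_i > \cdots > k_j$ is strictly decreasing in the nonincreasing spectrum and since, by property (v) of Theorem~\ref{T:spectrum}, the negative values present in the spectrum form an interval, the entries $k_{i-1}, k_i, \ldots, k_j$ must be consecutive integers (each one the immediate predecessor of the previous), followed by the repetition $k_{j+1} = k_j$. I will derive a contradiction by passing to a generic plane section of $\scf$ and showing that this multiplicity pattern is incompatible with the stability of the restriction.

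First, I would fix a general plane $H \subset \piii$ and set $\scg := \scf \vb H$. Since $c \geq j+1 \geq i+2 \geq 4$, $\scf$ is not a nullcorrelation bundle, and by Barth's restriction theorem $\scg$ is a stable rank $2$ reflexive sheaf on $H \simeq \pii$ with $c_1(\scg) = 0$ and $c_2(\scg) = c$. Combining the stability of $\scf$ and of $\scg$ (which ensures $\tH^0(\scf(l)) = \tH^0(\scg(l)) = 0$ for $l \leq -1$) with the restriction sequence $0 \to \scf(l-1) \to \scf(l) \to \scg(l) \to 0$ produces, for every $l \leq -1$, the exact sequence
\[
0 \lra \tH^1(\scf(l-1)) \xra{h} \tH^1(\scf(l)) \lra \tH^1(\scg(l)) \lra \tH^2(\scf(l-1)) \lra \tH^2(\scf(l)) \lra \tH^2(\scg(l)) \lra 0.
\]
In particular, multiplication by a general linear form is injective on $\tH^1_\ast(\scf)$ in degrees $\leq -1$.

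Using properties (i) and (ii) of the spectrum I would then evaluate
\[
h^1(\scf(l)) - h^1(\scf(l-1)) = \#\{r : k_r \geq -l-2\}
\]
for $l \leq -1$, together with an analogous formula for $h^2(\scf(l)) - h^2(\scf(l-1))$ in the range where property (ii) applies directly and its Serre-dual extension elsewhere. Reading off $h^1(\scg(l)) - h^2(\scg(l))$ from the displayed exact sequence, each value occurring exactly once in the spectrum contributes a single unit at a specific degree $l = -k_r - 2$, while the plateau $k_j = k_{j+1}$ contributes an extra unit at $l = -k_j - 2$.

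The main obstacle is to convert this local dimension count into a global contradiction with the stability of $\scg$ on $\pii$, where no spectrum of an analogous type is available. Here I would invoke a second generic restriction, to a line $L \subset H$, exploiting the defining property $\scf \vb L \simeq \bigoplus_r \sco_L(k_r)$ of the spectrum: the forced multiplicity pattern should produce a Hilbert function for $\tH^1_\ast(\scg)$ on which multiplication by the equation of a generic line in $H$ has a kernel in a degree incompatible with $\tH^0(\scg \vb L(m)) = 0$ (equivalently, with the splitting type of the generic restriction of $\scg$ to a line). Making this obstruction quantitative---in particular, using the hypothesis $k_{i-1} \leq 0$ to place the argument in the range where property (i) of Theorem~\ref{T:spectrum} applies with the correct sign of Serre duality---is the delicate step, and the one at which Hartshorne's original analysis in \cite{ha2} would have to be followed in detail to verify that the numerical obstruction identified above is indeed the operative one.
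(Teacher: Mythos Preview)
The paper does not prove this proposition: it is quoted from Hartshorne \cite[Prop.~5.1]{ha2} and closed with a \qed, so there is no argument in the paper to compare your proposal against.

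Your sketch is, by your own admission, incomplete---the final paragraph defers the ``delicate step'' back to Hartshorne's original analysis, so what you have written is not a self-contained proof but a plan that ultimately relies on the very reference the paper cites. Beyond that, there is a concrete error in the plan itself: the spectrum is \emph{not} defined by $\scf \vb L \simeq \bigoplus_r \sco_L(k_r)$ for a generic line $L$. Indeed, $\scf$ has rank~$2$, so its restriction to any line is a rank~$2$ bundle $\sco_L(a)\oplus\sco_L(b)$, not a direct sum of $c$ line bundles. The defining property of the spectrum (Theorem~\ref{T:spectrum}(i),(ii)) is that $\h^1(\piii,\scf(l))$ and $\h^2(\piii,\scf(l))$ match the cohomology of $\bigoplus_r \sco_{\p^1}(k_r+l+1)$ in the indicated ranges; this is a statement about cohomology on $\piii$, constructed via restriction to a general \emph{plane}, not about the splitting type on a line. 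Consequently the ``second generic restriction'' you propose does not carry the information you assign to it, and the contradiction you aim for cannot be extracted along that route.
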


For example, if $\scf$ is a stable rank 2 reflexive sheaf on $\piii$ with 
$c_1(\scf) = 0$, $c_2(\scf) = 4$ then $\scf$ cannot have spectrum 
$(0,-1,-2,-2)$. 

\begin{lemma}\label{L:h0fast} 
Let $\scg$ be a rank $3$ reflexive sheaf on $\piii$.  
\begin{enumerate}
\item[(a)] If $c_1(\scg) = 0$, ${\fam0 H}^0(\scg) = 0$ and 
${\fam0 H}^0(\scg^\ast(-1)) = 0$ then ${\fam0 h}^0(\scg^\ast) \leq 1$.  
\item[(b)] If $c_1(\scg) = -1$, ${\fam0 H}^0(\scg(-1)) = 0$ and 
${\fam0 H}^0(\scg^\ast(-2)) = 0$ then ${\fam0 h}^0(\scg^\ast(-1)) \leq 1$. 
\item[(c)] If $c_1(\scg) = -2$, ${\fam0 H}^0(\scg) = 0$ and 
${\fam0 H}^0(\scg^\ast(-2)) = 0$ then ${\fam0 h}^0(\scg^\ast(-1)) \leq 1$. 
\end{enumerate}
\end{lemma}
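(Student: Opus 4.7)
My plan is to prove all three parts of the lemma by a single argument that differs only in a bookkeeping of twists. The unifying observation is that for a rank $3$ reflexive sheaf $\scg$ on $\piii$ one has a canonical isomorphism $(\bigwedge^2 \scg)^\ast \simeq \scg \otimes (\det \scg)^{-1}$: since reflexive sheaves on $\piii$ are locally free outside a finite set, this is the standard Hodge-star identification on the complement of that set, and both sides are reflexive on $\piii$ (the dual of a coherent sheaf is always reflexive, and the tensor product of a reflexive sheaf with a line bundle is reflexive), so the isomorphism extends uniquely. In particular, for every integer $m$, $\text{Hom}_{\sco_\piii}(\bigwedge^2\scg,\,\sco_\piii(m)) \simeq \tH^0(\scg \otimes (\det \scg)^{-1}(m))$.

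In case (a) (respectively (b) or (c)), I would argue by contradiction, taking linearly independent $s_1,\,s_2 \in \tH^0(\scg^\ast(-k))$, where $k = 0$ in (a) and $k = 1$ in (b) and (c). The pair $(s_1, s_2)$ defines a morphism $\Phi : \scg \to \sco_\piii(-k)^{\oplus 2}$, and its exterior square $\bigwedge^2 \Phi : \bigwedge^2 \scg \to \sco_\piii(-2k)$ corresponds, by the previous paragraph, to an element of $\tH^0(\scg \otimes (\det \scg)^{-1}(-2k))$. Computing case by case, this group equals $\tH^0(\scg)$ in (a), $\tH^0(\scg(-1))$ in (b), and $\tH^0(\scg)$ in (c), and is zero by hypothesis in each case. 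Hence $\bigwedge^2 \Phi = 0$, every $2\times 2$ minor of $\Phi$ vanishes, and the image of $\Phi$ is a rank $1$ subsheaf of $\sco_\piii(-k)^{\oplus 2}$.

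The saturation of this image is a line bundle $\sco_\piii(d) \hookrightarrow \sco_\piii(-k)^{\oplus 2}$ with $d \leq -k$, the inclusion being given by a pair $(\alpha,\beta) \in \tH^0(\sco_\piii(-k-d))^{\oplus 2}$ not both zero. Thus $\Phi$ factors through $\sco_\piii(d)$ via some $t \in \tH^0(\scg^\ast(d))$, so $s_1 = \alpha t$ and $s_2 = \beta t$. If $d = -k$, then $\alpha,\,\beta \in k$ and $s_1,\, s_2$ are proportional, contradicting linear independence. If $d \leq -k - 1$, then since $\scg^\ast$ is torsion-free, multiplication by a nonzero form of degree $-k-1-d$ yields an injection $\tH^0(\scg^\ast(d)) \hookrightarrow \tH^0(\scg^\ast(-k-1))$; the latter vanishes by hypothesis in each of the three cases (it is $\tH^0(\scg^\ast(-1))$ in (a) and $\tH^0(\scg^\ast(-2))$ in (b) and (c)). Hence $t = 0$ and $s_1 = s_2 = 0$, again a contradiction.

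The only real subtlety is the reflexive identification $(\bigwedge^2 \scg)^\ast \simeq \scg \otimes (\det \scg)^{-1}$: since $\scg$ is not assumed to be locally free, one must invoke the fact that reflexive sheaves on $\piii$ are locally free in codimension $\leq 2$ and that the isomorphism, which holds there, extends uniquely across the finite non-locally-free locus. With that in hand, everything else reduces to a short, uniform analysis of a morphism whose image has rank $1$ inside the rank $2$ free sheaf $\sco_\piii(-k)^{\oplus 2}$.
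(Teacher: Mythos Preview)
Your argument is correct and takes a genuinely different route from the paper's. The paper picks a \emph{single} nonzero section $\phi \in \tH^0(\scg^\ast)$ (in case (a)), uses $\tH^0(\scg^\ast(-1)) = 0$ to conclude that $\text{Im}\,\phi = \sci_Z$ with $\text{codim}\, Z \geq 2$, and then observes that the kernel $\scf^\prim := \Ker(\scg \to \sci_Z)$ is a rank~$2$ reflexive sheaf with $c_1(\scf^\prim) = 0$; dualizing gives $0 \to \sco_\piii \to \scg^\ast \to \scf^{\prim\ast}$, and since $\scf^{\prim\ast} \simeq \scf^\prim \hookrightarrow \scg$ has no sections, one gets $\h^0(\scg^\ast) \leq 1$. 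Your approach instead starts with \emph{two} hypothetical independent sections, kills $\bigwedge^2\Phi$ via the reflexive Hodge-star identity $(\bigwedge^2\scg)^\ast \simeq \scg \otimes (\det\scg)^{-1}$, and then analyses the resulting rank~$1$ image directly. The paper's version is slightly more elementary in that it only invokes the standard self-duality of rank~$2$ reflexive sheaves, whereas yours needs the rank~$3$ identity extended across the non-locally-free locus; on the other hand, your argument is pleasantly uniform (the three cases become literally the same computation with a twist parameter $k$) and avoids dualizing a short exact sequence with a non-locally-free cokernel.
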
 

\begin{proof}
(a)  
A non-zero element of $\tH^0(\scg^\ast)$ defines a morphism $\phi : \scg \ra 
\sco_\piii$. Since $\tH^0(\scg^\ast(-1)) = 0$, the image of $\phi$ is the ideal 
sheaf $\sci_Z$ of a closed subscheme $Z$ of $\piii$, of codimension $\geq 2$. 
One gets an exact sequence$\, :$  
\[
0 \lra \scf^\prim \lra \scg \lra \sci_Z \lra 0 
\] 
where $\scf^\prim$ is a rank 2 reflexive sheaf on $\piii$ with 
$c_1(\scf^\prim) = 0$. Dualizing this exact sequence one obtains an exact 
sequence$\, :$ 
\[
0 \lra \sco_\piii \lra \scg^\ast \lra \scf^{\prim \ast}\, .
\]
Since $\scf^{\prim \ast} \simeq \scf^\prim$ and since $\tH^0(\scf^\prim) 
\subseteq \tH^0(\scg) = 0$, one deduces that $\text{h}^0(\scg^\ast) = 1$. 

(b) and (c) are analogous.  
\end{proof} 

\begin{prop}\label{P:c1=4c2=8n3} 
Let $E$ be a globally generated vector bundle on $\piii$ with $c_1 = 4$, 
$c_2 = 8$, ${\fam0 H}^i(E^\ast) = 0$, $i = 0, 1$, and with ${\fam0 H}^0(E(-2)) 
= 0$. Then one of the following holds$\, :$  
\begin{enumerate}
\item[(i)] $E \simeq F(2)$, where $F$ is a general $4$-instanton$\, ;$  
\item[(ii)] $c_3 = 0$ and $E(-2)$ is the kernel of an arbitrary 
epimorphism$\, :$  
\[
4\sco_\piii \lra \sco_\piii(2)\, ;
\]
\item[(iii)] $c_3 = 2$ and $E(-2)$ is the kernel of a general 
epimorphism$\, :$  
\[
3\sco_\piii(1) \oplus 3\sco_\piii \lra \Omega_\piii(3)\, ; 
\]
\item[(iv)] $c_3 = 4$ and $E(-2)$ is the kernel of a general 
epimorphism$\, :$  
\[
4\sco_\piii \oplus 2\sco_\piii(-1) \lra 2\sco_\piii(1)\, ;  
\]
\item[(v)] $c_3 = 6$ and $E(-2)$ is the kernel of a general 
epimorphism$\, :$  
\[
\sco_\piii \oplus 5\sco_\piii(-1) \lra \sco_\piii(1)\, ; 
\]
\item[(vi)] $c_3 = 8$ and, up to a linear change of coordinates, 
$E$ is the cohomology of the monad$\, :$ 
\[ 
\sco_\piii(-1) \xra{\binom{s}{u}} 2\sco_\piii(2) \oplus 
2\sco_\piii(1) \oplus 4\sco_\piii \xra{(p\, ,\, 0)} \sco_\piii(3) 
\]
where $\sco_\piii(-1) \overset{s}{\lra} 2\sco_\piii(2) \oplus 
2\sco_\piii(1) \overset{p}{\lra} \sco_\piii(3)$ is a subcomplex of the 
Koszul complex associated to $X_0, X_1, X_2^2, X_3^2$ and $u : \sco_\piii(-1) 
\ra 4\sco_\piii$ is defined by $X_0, X_1, X_2, X_3$$\, ;$ 
\item[(vii)] $c_3 = 8$ and $E \simeq 2\Omega_\piii(2)\, ;$ 
\item[(viii)] $c_3 = 10$ and $E \simeq \sco_\piii(1) \oplus {\fam0 T}_\piii(-1) 
\oplus \Omega_\piii(2)\, ;$
\item[(ix)] $c_3 = 12$ and $E \simeq 2\sco_\piii(1) \oplus 
2{\fam0 T}_\piii(-1)\, .$
\end{enumerate}
\end{prop} 

\begin{remark}\label{R:generalc2=8} 
We want to explain the meaning of the term "general" in the statement of 
Prop.~\ref{P:c1=4c2=8n3}. The vector bundles appearing in 
Prop.~\ref{P:c1=4c2=8n3}(i),(iii),(iv), and (v) belong to some  
families. In case (i) the elements of the family are "parametrized" by the 
moduli space MI(4) of 4-instantons, while in the other three cases they are 
parametrized by spaces of epimorphisms between two fixed vector bundles. 
It is known that the property of being "globally generated" is not open in 
flat families (as the example of the family $(\sco_C(P-P_0))_{P\in C}$ of line 
bundles of degree 0 on an elliptic curve $C$ shows; the problem is that 
$\text{h}^0(E)$ is not necessarily locally constant in flat families). We 
will show in this remark that, in our cases, the points of the parametrizing 
spaces corresponding to globally generated vector bundles form, however,  
open subsets of these spaces. To be more precise, it is easy to see that 
if $E = F(2)$ where $F$ is a 4-instanton or if $E(-2)$ is the kernel of any 
epimorphism of the kind appearing in Prop.~\ref{P:c1=4c2=8n3}(iii)-(v) then 
$\tH^2(E) = 0$ and $\tH^3(E) = 0$. We will show that if $E$ is globally 
generated then $\tH^1(E) = 0$ (and $\tH^0(E(-1)) = 0$). 

We start by noticing that if $E$ is a globally generated vector bundle on 
$\piii$, with Chern classes $c_1,\, c_2,\, c_3$, then $c_1(P(E)) = c_1$, 
$c_2(P(E)) = c_1^2 - c_2$, $c_3(P(E)) = c_3 + c_1(c_1^2 - 2c_2)$. Assume, from 
now on, that $c_1 = 4$, $c_2 = 8$, $\tH^i(E^\ast) = 0$, $i = 0, 1$, and that 
$\tH^0(E(-2)) = 0$. One has $c_1(P(E)) = 4$, $c_2(P(E)) = 8$ and 
$c_3(P(E)) = c_3$. Moreover, using the notation from the beginning of this 
section$\, :$ 
\[
\tH^0(P(E)(-2)) \simeq \tH^1(E^\ast(-2)) \simeq \tH^2(E(-2))^\ast 
\simeq \tH^2(\scf)^\ast = 0
\]  
because, as we shall see at the beginning of the proof of 
Prop.~\ref{P:c1=4c2=8n3}, $\scf$ can have neither the spectrum 
$(0,-1,-2,-3)$ nor the spectrum $(-1,-1,-2,-3)$. Consequently, if $E$ is one 
of the bundles listed in the statement of Prop.~\ref{P:c1=4c2=8n3} then 
$P(E)$ belongs to the same list. 

Now, $c_3 = 0$ in the cases (i) and (ii), $c_3 = 2$ in case (iii), 
$c_3 = 4$ in case (iv), $c_3 = 6$ in case (v), and $c_3 \geq 8$ in the rest of 
the cases. Moreover, if $E$ is one of the bundles occuring in case (ii) then, 
using the Koszul complex associated to the epimorphism $4\sco_\piii \ra 
\sco_\piii(2)$, one sees easily that $P(E) \simeq E$. Consequently, if $E$ is 
one of the bundles occuring in case (i) (resp., (iii), (iv), (v)) then $P(E)$ 
is one of the bundles occuring in the same case. 

Finally, looking at the beginning of each of the cases occuring in the proof 
of Prop.~\ref{P:c1=4c2=8n3}, one sees that$\, :$ 
\[
\tH^2(E^\ast) \simeq \tH^1(E(-4))^\ast \simeq \tH^1(\scf(-2))^\ast = 0
\]  
in all cases except (ii), and that$\, :$ 
\[
\tH^1(E^\ast(-1)) \simeq \tH^2(E(-3))^\ast \simeq \tH^2(\scf(-1))^\ast = 0
\]
in all cases except (vi), (viii) and (ix). One deduces that if $E$ is one of 
the bundles occuring in one of the cases (i), (iii), (iv), or (v) then 
$\tH^1(E) \simeq \tH^2(P(E)^\ast) = 0$ and $\tH^0(E(-1)) \simeq 
\tH^1(P(E)^\ast(-1)) = 0$. 

We would also like to mention that we will explicitate geometrically the 
meaning of the term ``general'' for the cases (iii)-(v) during the proof of 
Prop.~\ref{P:c1=4c2=8n3}. We were not able to do the same thing for the case 
(i) of 4-instantons (compare with Remark~\ref{R:general}). 
\end{remark} 

\begin{proof}[Proof of Proposition~\ref{P:c1=4c2=8n3}] 
We split the proof into several cases according to the spectrum of $\scf$. 
Firstly, we show that some spectra, allowed by the general theory, cannot 
occur in our context.  

\vskip2mm

\noindent 
$\bullet$ If the spectrum of $\scf$ is $(1,0,-1,-1)$ then $c_3(\scf) = 2$ 
and $p_a(Y) = 2$. According to Lemma~\ref{L:yconnected}, $Y$ is connected, 
hence $\tH^1(\sci_Y) = 0$, hence $\tH^1(\scf(-2)) = 0$. On the other hand, 
by the definition of the spectrum, $\text{h}^1(\scf(-2)) = 1$, a 
\emph{contradiction}. Consequently, the spectrum $(1,0,-1,-1)$ \emph{cannot 
occur}. 

\vskip2mm

\noindent 
$\bullet$ The spectrum $(1,0,-1,-2)$ can be eliminated analogously. 

\vskip2mm

\noindent 
$\bullet$ If the spectrum of $\scf$ is $(0,0,-1,-2)$ then $c_3(\scf) = 6$ and 
$p_a(Y) = 4$. One has $\text{deg}\, \sco_Y(1) = 8 \geq 2p_a(Y) - 1$ hence 
$\tH^1(\sco_Y(1)) = 0$ hence $\tH^2(\sci_Y(1)) = 0$ and $\tH^2(\scf(-1)) = 
0$. On the other hand, by the definition of the spectrum, 
$\text{h}^2(\scf(-1)) = 1$, a \emph{contradiction}. Consequently, the 
spectrum $(0,0,-1,-2)$ \emph{cannot occur}. 

\vskip2mm 

\noindent 
$\bullet$ If the spectrum of $\scf$ is $(0,-1,-2,-3)$ then $c_3(\scf) = 12$ 
and $p_a(Y) = 7$. By the definition of the spectrum, $\text{h}^2(\scf) = 1$ 
hence $\text{h}^2(\sci_Y(2)) = 1$ hence $\text{h}^1(\sco_Y(2)) = 1$. On 
the other hand, $\text{deg}\, \sco_Y(2) = 16 \geq 2p_a(Y) -1$ which implies 
that $\tH^1(\sco_Y(2)) = 0$, a \emph{contradiction}. Consequently, 
the spectrum $(0,-1,-2,-3)$ \emph{cannot occur}. 

\vskip2mm 

\noindent 
$\bullet$ If the spectrum of $\scf$ is $(-1,-1,-2,-3)$ then $c_3(\scf) = 14$ 
and $p_a(Y) = 8$. By the definition of the spectrum, $\text{h}^2(\scf) = 1$ 
hence $\text{h}^2(\sci_Y(2)) = 1$ hence $\text{h}^1(\sco_Y(2)) = 1$. On 
the other hand, $\text{deg}\, \sco_Y(2) = 16 \geq 2p_a(Y) -1$ which implies 
that $\tH^1(\sco_Y(2)) = 0$, a \emph{contradiction}. Consequently, 
the spectrum $(-1,-1,-2,-3)$ \emph{cannot occur}. 

\vskip2mm 

Next, we analyse, case by case, the remaining spectra. Before starting, we 
notice that if $E$ is a vector bundle on $\piii$ satisfying the hypothesis 
of the proposition then $P(E)$ satisfies this hypothesis, too. 

\emph{Indeed}, $P(E)$ is a globally generated vector bundle with 
$\tH^i(P(E)^\ast) = 0$, $i = 0,\, 1$, and has the same Chern classes as $E$. 
Moreover, $\tH^0(P(E)(-2)) \simeq \tH^1(E^\ast(-2)) \simeq \tH^2(E(-2))^\ast 
\simeq \tH^2(\scf)^\ast$. But, as we saw above, the entries of the spectrum of 
$\scf$ must be all $\geq -2$, hence $\tH^2(\scf) = 0$.  

\vskip2mm 

\noindent
{\bf Case 1.}\quad  \textit{The spectrum of $\scf$ is $(0,0,0,0)$}. 

\vskip2mm 

\noindent
In this case $c_3(\scf) = 0$ hence 
$\scf$ is a stable rank 2 \emph{vector bundle} on $\piii$. Actually, it is 
a 4-instanton. Since $\text{h}^i(\scf^\ast(-2)) = \text{h}^i(\scf(-2)) = 0$, 
$i = 0, 1$, it follows that $E \simeq \scf(2)$. It is known, conversely, 
that there exist 4-instantons $F$ such that $E := F(2)$ is globally 
generated (see Chiodera and Ellia~\cite[Lemma~1.10]{ce}\footnote{another 
argument for the existence of such bundles can be found in a recent paper of 
the authors ``Four generated 4-instantons'', \texttt{arXiv:1604.01970}.}; 
this result is also 
stated in D'Almeida~\cite{da}, however the proof there seems to be incomplete, 
according to Chiodera and Ellia). By the way$\, :$ the idea of the 
``duality'' $E \mapsto P(E)$ stated above, which will be very useful in the 
proof below, comes from the paper of D'Almeida. 

\vskip2mm  

\noindent
{\bf Case 2.}\quad  \textit{The spectrum of $\scf$ is $(1,0,0,-1)$}.  

\vskip2mm 

\noindent
In this case $c_3(\scf) = 0$ hence 
$\scf$ is a stable rank 2 \emph{vector bundle} on $\piii$. Since 
$\text{h}^1(\scf^\ast(-2)) = \text{h}^1(\scf(-2)) = 1$, $E$ must be realised 
as a non-trivial extension$\, :$ 
\[
0 \lra \sco_\piii \lra E \lra \scf(2) \lra 0\, .
\]   
It follows, from Corollary~\ref{C:h0e-c1+1}, that $\tH^0(\scf_H(-1)) = 0$, for 
every plane $H \subset \piii$, hence $\scf$ has no unstable plane. Now, a 
result of Chang~\cite[Prop.~1.5]{ch1} implies that $\scf$ is the cohomology 
of a selfdual monad$\, :$ 
\[
0 \lra \sco_\piii(-2) \lra 4\sco_\piii \lra \sco_\piii(2) \lra 0\, . 
\]     
One deduces that $E(-2)$ must be isomorphic to the kernel of the second 
morphism of the monad. 
Conversely, if $K$ is the kernel of an epimorphism 
$4\sco_\piii \ra \sco_\piii(2)$ then, using the Koszul complex associated 
to the epimorphism, it follows that $E := K(2)$ is globally generated. 

\vskip2mm 

\noindent
{\bf Case 3.}\quad  \textit{The spectrum of $\scf$ is $(0,0,0,-1)$}.  

\vskip2mm

\noindent
In this case $c_3(\scf) = 2$, $p_a(Y) = 2$, hence $r = 3$. 

\vskip2mm 

\noindent
{\bf Claim 3.1.}\quad $E(-2)$ \emph{is the cohomology of a monad} 
$\sco_\piii(-1) \ra 7\sco_\piii \ra 3\sco_\piii(1)$. 

\vskip2mm  

\noindent
\emph{Indeed}, we shall apply Beilinson's theorem~\ref{T:beilinson} to 
$E(-2)$. We have to compute $\h^i(E(l))$ for $0 \leq i \leq 3$ and 
$-5 \leq l \leq -2$. Since $\tH^0(E(-2)) = 0$, it follows that $\tH^0(E(l)) 
= 0$ for $l \leq -2$. One shows, as in Case 2 of the proof of 
Prop.~\ref{P:c1=4c2=7n3}, that $\tH^0(E^\ast(1)) = 0$. One deduces, using Serre 
duality, that $\tH^3(E(l)) = 0$ for $l \geq -5$. 

Using the exact sequence $0 \ra \sco_\piii(-2) \ra E(-2) \ra \scf \ra 0$ and 
the fact that $\scf$ has spectrum $(0,0,0,-1)$, one derives that$\, :$ 
\begin{gather*}
\tH^1(E(l)) \simeq \tH^1(\scf(l+2)) = 0\  \text{for} \  l \leq -4,\\  
\h^1(E(-3)) = \h^1(\scf(-1)) = 3,\  \h^1(E(-2)) = \h^1(\scf) = -\chi(\scf) = 
5,\\
\tH^2(E(l)) \simeq \tH^2(\scf(l+2)) = 0\  \text{for}\  l\geq -3,\\  
\h^2(E(-4)) = \h^2(\scf(-2)) - 1 = 0,\  \h^2(E(-5)) = \h^2(\scf(-3)) - 4 = 
1\, .
\end{gather*} 
(We used the fact that $\tH^i(\scf) = 0$ for $i \neq 1$).  
Now, the Theorem of Beilinson (Thm.~\ref{T:beilinson}) implies that $E(-2)$ 
is the middle cohomology of a monad$\, :$ 
\[
\Omega_\piii^3(3) \lra 3\Omega_\piii^1(1) \lra 5\sco_\piii\, .
\] 
Applying the Snake Lemma to the commutative diagram$\, :$  
\[
\begin{CD}
0 @>>> 3\Omega_\piii^1(1) @>>> 12\sco_\piii @>>> 3\sco_\piii(1) @>>> 0\\
@. @VVV @VVV @VVV\\
0 @>>> 5\sco_\piii @>>> 5\sco_\piii @>>> 0 @>>> 0  
\end{CD}
\] 
one derives that $E(-2)$ is the middle cohomology of a monad$\, :$  
\[
\sco_\piii(-1) \lra 7\sco_\piii \lra 3\sco_\piii(1)\, .
\]

\vskip2mm 

\noindent
{\bf Claim 3.2.}\quad $E(-2)$ \emph{is the kernel of an epimorphism} 
$3\sco_\piii(1) \oplus 3\sco_\piii \ra \Omega_\piii(3)$. 

\vskip2mm

\noindent
\emph{Indeed}, 
since the morphism $\sco_\piii(-1) \ra 7\sco_\piii$ is the dual of an 
epimorphism $7\sco_\piii \ra \sco_\piii(1)$ one derives that, actually, 
one has an exact sequence$\, :$  
\[
0 \lra E(-2) \lra \text{T}_\piii(-1) \oplus 3\sco_\piii   
\overset{\eta}{\lra} 3\sco_\piii(1) \lra 0\, .
\]
We assert that the component $\eta_0 : \text{T}_\piii(-1) \lra 
3\sco_\piii(1)$ of $\eta$ is a monomorphism. \emph{Indeed}, if $\eta_0$ 
has, generically, rank 2 then $\Ker \eta_0 \simeq \sco_\piii(a)$ with $a \leq 
0$. If $a = 0$ then $\sco_\piii \subset E(-2)$ which contradicts the fact that 
$\tH^0(E(-2)) = 0$. If $a < 0$ then $c_1(\text{Im}\, \eta_0) = 1 - a$ hence 
$\Cok \eta_0/(\Cok \eta_0)_{\text{tors}} \simeq \sci_T(b)$ with $T$ a subscheme 
of codimension $\geq 2$ of $\piii$ and $b \leq 3 - (1-a) = a+2 \leq 1$. Since 
there is an epimorphism $3\sco_\piii(1) \ra \sci_T(b)$ one deduces that 
$T = \emptyset$ and $b = 1$. As there is no epimorphism $3\sco_\piii  
\ra \sco_\piii(1)$, one gets, again, a contradiction. 

If $\eta_0$ has, generically, rank 1 then $\text{Im}\, \eta_0 \simeq 
\sco_\piii(1)$ or $\text{Im}\, \eta_0 \simeq \sci_L(1)$ for some line $L 
\subset \piii$. In both cases $\Cok \eta_0$ has a quotient of the form 
$2\sco_\piii(1)$ and one gets, again, a contradiction. 

It thus remains that $\eta_0$ is a monomorphism. Let $S_1 := 
\tH^0(\sco_\piii(1))$. Recall that $\text{T}_\piii(-1) \simeq \Omega_\piii^2(3)$. 
Applying the Snake Lemma to the diagram$\, :$  
\[
\begin{CD} 
0 @>>> \Omega_\piii^2(3) @>>> \overset{2}{\bigwedge}S_1\otimes_k\sco_\piii(1) 
@>>> \Omega_\piii(3) @>>> 0\\
@. @V{\eta_0}VV @VVV @VVV\\
0 @>>> 3\sco_\piii(1) @>>> 3\sco_\piii(1) @>>> 0 @>>> 0
\end{CD}
\]
one derives that there exists a monomorphism $\phi_0 : 3\sco_\piii(1) \ra 
\Omega_\piii(3)$ such that one has $\Cok \eta_0 \simeq \Cok \phi_0$. 
One deduces an exact sequence$\, :$ 
\begin{equation}
\label{E:e-2kerphi}
0 \lra E(-2) \lra 3\sco_\piii(1) \oplus 3\sco_\piii   
\overset{\phi}{\lra} \Omega_\piii(3) \lra 0\, .
\end{equation} 

\noindent
{\bf Characterization of the epimorphisms} $\phi : 3\sco_\piii(1) \oplus 
3\sco_\piii \ra \Omega_\piii(3)$ {\bf with} $(\Ker \phi)(2)$ 
{\bf globally generated}. 

\vskip2mm

\noindent 
Let us denote $(\Ker \phi)(2)$ by $E$. 
As we saw in the proof of Claim 3.2 above, the component $\phi_0 : 
3\sco_\piii(1) \ra \Omega_\piii(3)$ of $\phi$ must be a monomorphism. Let 
$\psi := \phi_0(-1) : 3\sco_\piii \ra \Omega_\piii(2)$.  
Using the resolution$\, :$ 
\begin{equation}\label{E:resOmega(2)} 
0 \lra \sco_\piii(-2) \lra \overset{3}{\textstyle \bigwedge}S_1 
\otimes \sco_\piii(-1) 
\lra \overset{2}{\textstyle \bigwedge}S_1 \otimes \sco_\piii \lra 
\Omega_\piii(2) \lra 0 
\end{equation}
one sees that $\psi$ is defined by a 3-dimensional vector subspace $W$ of 
$\overset{2}{\bigwedge}S_1$, that is, $\psi$ is the composite morphism 
$W \otimes \sco_\piii \hookrightarrow \overset{2}{\bigwedge}S_1 \otimes 
\sco_\piii \ra \Omega_\piii(2)$. Moreover, one has an exact sequence$\, :$ 
\[
0 \lra E(-2) \lra 3\sco_\piii \lra (\Cok \psi)(1) \lra 0\, . 
\] 
Let us recall, now, the classification of $3$-dimensional vector subspaces $W$ 
of $\overset{2}{\bigwedge}S_1$. 
Let $\Delta \subset \overset{2}{\bigwedge}S_1$ be the image of the wedge 
product $S_1 \times S_1 \ra \overset{2}{\bigwedge}S_1$, i.e., the set of 
decomposable elements of $\overset{2}{\bigwedge}S_1$. $\Delta$ is the affine 
cone over the Pl\"{u}cker hyperquadric $\mathbb{G} \subset 
\p(\overset{2}{\bigwedge}S_1) \simeq \pv$.  
If $W \subset \Delta$ then one of the following holds$\, :$
\begin{enumerate}
\item[(I)] $W = \overset{2}{\bigwedge}V^\prim$ for some 1-codimensional 
subspace $V^\prim$ of $S_1$$\, ;$ 
\item[(II)] $W = h\wedge S_1$ for some non-zero $h \in S_1$. 
\end{enumerate}

\noindent
If $W$ is not contained in $\Delta$ then three more other cases can 
occur$\, :$ 
\begin{enumerate} 
\item[(III)] $\p(W) \cap \mathbb{G}$ is a nonsingular conic$\, ;$ 
\item[(IV)] $\p(W) \cap \mathbb{G}$ is the union of two intersecting 
lines$\, ;$ 
\item[(V)] $\p(W) \cap \mathbb{G}$ is a double line. 
\end{enumerate}

\noindent
We describe, now, the cokernel of $\psi : W \otimes \sco_\piii \ra 
\Omega_\piii(2)$ in each of these cases. 

\vskip2mm 

\noindent 
$\bullet$\quad In case (I), $\psi$ drops rank at every point of $\piii$. 
\emph{Indeed}, if $x$ is the point of $\piii$ with $\tH^0(\sci_{\{x\}}(1)) = 
V^\prim$ then one has an exact sequence$\, :$ 
\[
0 \lra \overset{3}{\textstyle \bigwedge}V^\prim \otimes \sco_\piii(-1) 
\lra \overset{2}{\textstyle \bigwedge}V^\prim \otimes \sco_\piii 
\overset{\psi}{\lra} \Omega_\piii(2) \overset{\theta}{\lra} 
\sci_{\{x\}}(1) \lra 0  
\] 
whose first part is a subcomplex of the resolution \eqref{E:resOmega(2)} and 
with $\theta$ the composite morphism $\Omega_\piii(2) \ra S_1 \otimes 
\sco_\piii(1) \overset{\lambda}{\lra} \sco_\piii(1)$, where $\lambda : S_1 \ra k$ 
is the linear function with $\Ker \lambda = V^\prim$. 
Consequently, this case \emph{cannot occur} in our context.   

\vskip2mm 

\noindent 
$\bullet$\quad In case (II), let $H\subset \piii$ be the plane of equation 
$h = 0$. Then one has an exact sequence: 
\[
0 \lra 3\sco_\piii \overset{\psi}{\lra} \Omega_\piii(2) \lra 
\Omega_H(2) \lra 0 
\] 
because $\psi$ drops rank by 2 at every point of $H$. This case \emph{cannot 
occur} in our context, either. 

\emph{Indeed}, we know that, in our context, there exists an epimorphism 
$3\sco_\piii \ra (\Cok \psi)(1)$. One would deduce the existence of an 
epimorphism $3\sco_H \ra \Omega_H(3)$. But such an epimorphism \emph{cannot 
exist} because its kernel would be $\sco_H(-3)$ and $\tH^1(\Omega_H(3)) = 0$. 

\vskip2mm 

\noindent 
$\bullet$\quad In case (III), there exists, according to 
Lemma~\ref{L:wsubsetwedge2v}(a) from Appendix~\ref{A:case3}, a $k$-basis 
$h_0, \ldots ,h_3$ of $S_1$ such that $W$ is generated by $h_0 \wedge h_1$, 
$h_2 \wedge h_3$, $(h_0 + h_2) \wedge (h_1 + h_3)$. Let $W^\prim$ be the 
2-dimensional subspace of $W$ generated by $h_0 \wedge h_1$ and 
$h_2 \wedge h_3$, let $\alpha : S_1 \ra S_1$ be the linear automorphism 
defined by $\alpha(h_i) = -h_i$, $i = 0,\, 1$, $\alpha(h_i) = h_i$, $i = 2,\, 
3$, and let $\pi$ be the composite morphism$\, :$ 
\[
\Omega_\piii(1) \lra S_1 \otimes \sco_\piii \overset{\alpha}{\lra} 
S_1 \otimes \sco_\piii \overset{\text{ev}}{\lra} \sco_\piii(1)\, . 
\]      
Then the image of $\pi(1)$ is $\sci_{L_1 \cup L_2}(2)$, where $L_1$ (resp., 
$L_2$) is the line of equations $h_0 = h_1 = 0$ (resp., $h_2 = h_3 = 0$) 
(consider the composite morphism $\overset{2}{\bigwedge} S_1 \otimes \sco_\piii 
\ra \Omega_\piii(2) \overset{\pi(1)}{\lra} \sco_\piii(2)$). Moreover, one has 
an exact sequence$\, :$ 
\[
0 \lra W^\prim \otimes \sco_\piii \xra{\psi \vb W^\prim} \Omega_\piii(2) 
\overset{\pi(1)}{\lra} \sci_{L_1 \cup L_2}(2) \lra 0\, . 
\]  
Since $\pi(1)((h_0 + h_2) \wedge (h_1 + h_3)) = 2h_0h_3 - 2h_1h_2$ it follows 
that 
\[
\Cok \psi \simeq \sci_{L_1 \cup L_2 , Q}(2) 
\]
where $Q$ is the nonsingular quadric surface of equation $h_0h_3 - h_1h_2 = 0$. 

Consequently, in this case one has an exact sequence$\, :$ 
\[
0 \lra E(-2) \lra 3\sco_\piii \lra \sci_{L_1 \cup L_2,Q}(3) \lra 0 
\]
which decomposes into two exact sequences$\, :$  
\begin{gather*} 
0 \lra \scg \lra 3\sco_Q \overset{\e}{\lra}  
\sci_{L_1 \cup L_2,Q}(3) \lra 0\, ,\\
0 \lra 3\sco_\piii(-2) \lra E(-2) \lra \scg \lra 0\, , 
\end{gather*}
where $\scg$ is the kernel of the right morphism in the first exact 
sequence. It follows that $E$ is globally generated if and only if $\scg(2)$  
is globally generated.  

\vskip2mm 

\noindent
$\bullet$\quad In case (IV), there exists, according to 
Lemma~\ref{L:wsubsetwedge2v}(b) from Appendix~\ref{A:case3}, a $k$-basis 
$h_0, \ldots ,h_3$ of $S_1$ such that $W$ is generated by $h_0 \wedge h_1$, 
 $h_1 \wedge h_2$, $h_2 \wedge h_3$. Let $W^\prim \subset W$, $\alpha : S_1 
\ra S_1$ and $\pi : \Omega_\piii(1) \ra \sco_\piii(1)$ be as in case (III) 
above. Since $\pi(1)(h_1 \wedge h_2) = 2h_1h_2$ it follows that, in this 
case, 
\[
\Cok \psi \simeq \sci_{L_1 \cup L_2, H_1 \cup H_2}(2) 
\]
where $L_1$ (resp., $L_2$) is the line of equations $h_0 = h_1 = 0$ (resp., 
$h_2 = h_3 = 0$) and $H_i \supset L_i$ is the plane of equation $h_i = 0$, 
$i = 1,\, 2$. 

Consequently, in this case one has an exact sequence$\, :$ 
\[
0 \lra E(-2) \lra 3\sco_\piii \lra \sci_{L_1 \cup L_2,H_1 \cup H_2}(3) \lra 0 
\]
which decomposes into two exact sequences$\, :$  
\begin{gather*} 
0 \lra \scg \lra 3\sco_{H_1 \cup H_2} \overset{\e}{\lra}  
\sci_{L_1 \cup L_2,H_1 \cup H_2}(3) \lra 0\, ,\\
0 \lra 3\sco_\piii(-2) \lra E(-2) \lra \scg \lra 0\, . 
\end{gather*}
It follows that $E$ is globally generated if and only if $\scg(2)$ is globally 
generated. 

\vskip2mm 

\noindent 
$\bullet$\quad In case (V), there exists, according to 
Lemma~\ref{L:wsubsetwedge2v}(c) from Appendix~\ref{A:case3}, a $k$-basis 
$h_0, \ldots ,h_3$ of $S_1$ such that $W$ is generated by $h_0 \wedge h_1$, 
$h_0 \wedge h_2$, $h_0 \wedge h_3 - h_1 \wedge h_2$. Let $W^\secund$ be the 
subspace of $W$ generated by $h_0 \wedge h_1$ and $h_0 \wedge h_3 - 
h_1 \wedge h_2$, let $\beta :S_1 \ra S_1$ be the linear endomorphism defined 
by $\beta(h_i) = 0$, $i = 0,\, 1$, $\beta(h_i) = h_{i-2}$, $i = 2,\, 3$, and 
let $\rho$ be the composite morphism$\, :$   
\[
\Omega_\piii(1) \lra S_1 \otimes \sco_\piii \overset{\beta}{\lra} 
S_1 \otimes \sco_\piii \overset{\text{ev}}{\lra} \sco_\piii(1)\, . 
\]
Denoting by $L$ the line of equations $h_0 = h_1 = 0$ one checks easily, using 
the composite morphism $\overset{2}{\bigwedge} S_1 \otimes \sco_\piii 
\ra \Omega_\piii(2) \overset{\rho(1)}{\lra} \sco_\piii(2)$, that the image of 
$\rho(1)$ is $\sci_Z(2)$, where $Z$ is the divisor $2L$ on the nonsingular 
quadric surface $Q$ of equation $h_0h_3 - h_1h_2 = 0$. Moreover, one has 
an exact sequence$\, :$ 
\[
0 \lra W^\secund \otimes \sco_\piii \xra{\psi \vb W^\secund} \Omega_\piii(2) 
\overset{\rho(1)}{\lra} \sci_Z(2) \lra 0\, . 
\]   
Since $\rho(1)(h_0 \wedge h_2) = h_0^2$ it follows that, in this case$\, :$ 
\[
\Cok \psi \simeq \sci_{Z,H^{(1)}}(2) 
\]
where $H$ is the plane of equation $h_0 = 0$ and $H^{(1)}$ is its first 
infinitesimal neighbourhood in $\piii$, of equation $h_0^2 = 0$. 

Consequently one has, in case (V), an exact sequence$\, :$ 
\[
0 \lra E(-2) \lra 3\sco_\piii \lra \sci_{Z,H^{(1)}}(3) \lra 0 
\]
which decomposes into two exact sequences$\, :$  
\begin{gather*} 
0 \lra \scg \lra 3\sco_{H^{(1)}} \overset{\e}{\lra}  
\sci_{Z,H^{(1)}}(3) \lra 0\, ,\\
0 \lra 3\sco_\piii(-2) \lra E(-2) \lra \scg \lra 0\, . 
\end{gather*}
It follows that $E$ is globally generated if and only if $\scg(2)$ is globally 
generated. 

\vskip2mm 

\noindent 
{\bf Claim 3.3.}\quad \emph{Let $L_1$, $L_2$ be two disjoint lines contained 
in a nonsingular quadric surface $Q \subset \piii$ and consider an epimorphism 
$\e : 3\sco_Q \ra \sci_{L_1 \cup L_2,Q}(3)$ defined by a $3$-dimensional vector 
subspace $\Lambda$ of} $\tH^0(\sci_{L_1 \cup L_2,Q}(3))$. 
\emph{Then} $\Ker \e(2)$ \emph{is 
globally generated if and only if the linear subsystem $\vb \Lambda \vb$ of} 
$\vb \sco_Q(3) \vb$ \emph{contains no divisor supported on a union of 
lines}. 
   
\vskip2mm

\noindent
\emph{Indeed}, let us fix an isomorphism $\p^1\times \p^1 \izo Q$ such that 
$L_1$ and $L_2$ belong to the linear system $\vb \sco_Q(1,0) \vb$. 
Choose $\lambda_i \in \tH^0(\sco_Q(1,0))$ vanishing on $L_i$, $i = 1, 2$.  
Multiplication by $\lambda_1\lambda_2$ induces an isomorphism  
$\sco_Q(1,3) \izo \sci_{L_1 \cup L_2,Q}(3)$ and there exists a 3-dimensional 
vector subspace $\Lambda^\prim$ of $\tH^0(\sco_Q(1,3))$ such that $\Lambda = 
\lambda_1\lambda_2\Lambda^\prim$.  
The kernel $\scg$ of $\e$ coincides with the kernel of the epimorphism 
$\e^\prim : 3\sco_Q \ra \sco_Q(1,3)$ defined by $\Lambda^\prim$ hence it 
is a locally free $\sco_Q$-module of rank 2. 
We have to show $\scg(2,2)$ is globally generated if and only if the linear 
system $\vb \Lambda^\prim \vb \subset \vb \sco_Q(1,3) \vb$ contains no 
divisor supported on a union of lines.  

Since $\scg$ has rank 2 and $\overset{2}{\bigwedge}\scg \simeq \sco_Q(-1,-3)$ 
one deduces that $\scg^\ast \simeq \scg(1,3)$. Dualizing the exact 
sequence$\, :$  
\begin{equation}\label{E:g00} 
0 \lra \scg \lra 3\sco_Q \lra \sco_Q(1,3) \lra 0 
\end{equation}
one thus gets an exact sequence$\, :$  
\begin{equation}
\label{E:g13}
0 \lra \sco_Q(-1,-3) \lra 3\sco_Q \lra \scg(1,3) \lra 0 
\end{equation} 
which tensorized by $\sco_Q(1,-1)$ produces an exact sequence$\, :$  
\begin{equation}
\label{E:g22} 
0 \lra \sco_Q(0,-4) \lra 3\sco_Q(1,-1) \lra \scg(2,2) \lra 0\, .
\end{equation}
It follows that $\text{h}^0(\scg(2,2)) = \text{h}^1(\sco_Q(0,-4)) = 3$. If 
$L \subset Q$ is a line belonging to the linear system $\vb \sco_Q(1,0) \vb$ 
then restricting \eqref{E:g00} to $L$ one gets an exact sequence$\, :$  
\[
0 \lra \scg_L \lra 3\sco_L \lra \sco_L(3) \lra 0
\]
from which one deduces that $\scg_L \simeq \sco_L(-1) \oplus \sco_L(-2)$ or 
$\scg_L \simeq \sco_L \oplus \sco_L(-3)$. Notice that the latter case occurs 
iff there is a nonzero element $t \in \Lambda^\prim \subset \tH^0(\sco_Q(1,3))$ 
whose associated divisor $(t)_0$ is of the form $L + D$ with 
$D \in \vb \sco_Q(0,3) \vb$. 

Now, $\scg(2,2)$ globally generated implies that $\scg_L(2)$ is globally 
generated for every line $L \in \vb \sco_Q(1,0) \vb$ hence 
$\scg_L \simeq \sco_L(-1) \oplus \sco_L(-2)$ 
for any such line. We will show that, conversely, if $\scg_L \simeq 
\sco_L(-1) \oplus \sco_L(-2)$ for every line $L \in \vb \sco_Q(1,0) \vb$ 
then $\scg(2,2)$ is globally generated. \emph{Indeed}, let $L$ be an 
arbitrary line in the linear system $\vb \sco_Q(1,0) \vb$. 
Tensorizing \eqref{E:g13} by 
$\sco_Q(0,-1)$ one gets an exact sequence$\, :$  
\[
0 \lra \sco_Q(-1,-4) \lra 3\sco_Q(0,-1) \lra \scg(1,2) \lra 0 
\] 
from which one deduces that $\tH^0(\scg(1,2)) = 0$. From the exact 
sequence$\, :$  
\[
0 \lra \tH^0(\scg(1,2)) \lra \tH^0(\scg(2,2)) \lra \tH^0(\scg_L(2)) 
\]
and from the fact that $\text{h}^0(\scg(2,2)) = 3 = \text{h}^0(\scg_L(2))$ one 
deduces that $\tH^0(\scg(2,2)) \izo \tH^0(\scg_L(2))$. Since we assumed that 
$\scg_L(2)$ is globally generated for every line $L$ in the linear system  
$\vb \sco_Q(1,0) \vb$ it follows that $\scg(2,2)$ is globally generated. 
Claim 3.3 is proven. 

\vskip2mm  

In order to show that there exists epimorphisms $\e : 3\sco_Q \ra 
\sci_{L_1 \cup L_2,Q}(3)$ with $\Ker \e(2)$ globally generated,   
we notice that, using the notation from the above proof of Claim 3.3, 
the linear system $\vb \Lambda^\prim \vb \subset \vb \sco_Q(1,3) \vb$ contains 
no divisor supported on a union of lines iff the 3-dimensional vector subspace 
$\Lambda^\prim$ of $\tH^0(\sco_Q(1,3))$ intersects the image of the bilinear map 
$\tH^0(\sco_Q(1,0)) \times \tH^0(\sco_Q(0,3)) \ra \tH^0(\sco_Q(1,3))$ only 
in 0.     

\vskip2mm 

For the characterization of the epimorphisms $\e : 3\sco_{H_1 \cup H_2} \ra 
\sci_{L_1 \cup L_2,H_1 \cup H_2}(3)$ (resp., $\e : 3\sco_{H^{(1)}} \ra 
\sci_{Z,H^{(1)}}(3)$) with $\Ker \e(2)$ globally generated we refer the reader to 
Prop.~\ref{P:case3IV} (resp., Prop.~\ref{P:case3V}) in Appendix~\ref{A:case3}.  

\vskip2mm 

\noindent
{\bf Case 4.}\quad  \textit{The spectrum of $\scf$ is $(0,0,-1,-1)$}.  

\vskip2mm 

\noindent
In this case $c_3(\scf) = 4$, $p_a(Y) = 3$ hence $r = 4$. Using the exact 
sequence $0 \ra \sco_\piii(-2) \ra E(-2) \ra \scf \ra 0$ and the fact that 
$\scf$ has spectrum $(0,0,-1,-1)$ one gets that$\, :$
\begin{gather*} 
\tH^1(E(l)) \simeq \tH^1(\scf(l+2)) = 0\  \text{for}\  l \leq -4,\\ 
\h^1(E(-3)) = \h^1(\scf(-1)) = 2,\  \h^1(E(-2)) = \h^1(\scf) = -\chi(\scf) 
= 4\, .  
\end{gather*}
(We used the fact that $\tH^i(\scf) = 0$ for $i \neq 1$). 
Moreover, $\tH^2(\scf(l)) = 0$ for $l \geq -1$ hence, by Serre duality$\, :$ 
\[
\tH^1(E^\ast(l)) \simeq \tH^2(E(-l-4))^\ast \simeq \tH^2(\scf(-l-2))^\ast = 0   
\  \text{for}\   l \leq -1. 
\]    

\noindent
{\bf Claim 4.1.}\quad $\tH^0(E(-1)) = 0$. 

\vskip2mm 

\noindent 
\emph{Indeed}, as we saw at the beginning of the proof of the proposition, 
$P(E)$ satisfies the hypothesis of the proposition and has the same Chern 
classes as $E$.  Consequently, one has an exact sequence$\, :$ 
\[
0 \lra (\rho - 2)\sco_\piii \lra P(E) \lra \scf^\prim \lra 0
\]  
with $\rho = \text{rk}\, P(E)$ and where $\scf^\prim$ is a stable rank 2 
reflexive sheaf on $\piii$ with the same Chern classes as $\scf$. It follows   
$\scf^\prim$ has spectrum $(0,0,-1,-1)$ (there is no other spectrum allowed by 
these Chern classes). One deduces, from what has been proven at the beginning 
of this case, that $\tH^1(P(E)^\ast(-1)) = 0$. But $\tH^0(E(-1)) \simeq 
\tH^1(P(E)^\ast(-1))$.  

\vskip2mm

\noindent
{\bf Claim 4.2.}\quad \emph{Any non-zero global section of} $E^\ast(1)$ 
\emph{vanishes in at most one point}. 

\vskip2mm 

\noindent
\emph{Indeed}, consider such a section and let $\phi : E \ra \sco_\piii(1)$ 
be the corresponding morphism. Since $\tH^0(E^\ast) = 0$, it follows that 
$\text{Im}\, \phi = \sci_Z(1)$, where $Z$ is a closed subscheme of $\piii$ of 
codimension $\geq 2$. Since $E$ is globally generated, $\sci_Z(1)$ is 
globally generated hence $Z$ is a line, a point or the empty set. Assume that 
$Z$ is a line and put $F := \Ker \phi$. $F$ is a rank 3 vector bundle.  
It follows, from Claim 4.1 and from the fact that $P(E)$ 
shares with $E$ the same cohomological properties, that $\tH^0(P(E)(-1)) = 0$. 
Applying, now, the Snake Lemma to the diagram$\, :$ 
\[
\begin{CD} 
0 @>>> \tH^0(F)\otimes \sco_\p @>>> \tH^0(E)\otimes \sco_\p @>>> 
2\sco_\p @>>> 0\\
@. @VVV @VVV @VVV\\
0 @>>> F @>>> E @>>> \sci_Z(1) @>>> 0
\end{CD}
\] 
one deduces an exact sequence$\, :$ 
\[
0 \lra P(E)^\ast \lra \tH^0(F)\otimes \sco_\p \lra F \lra \sco_\p(-1) \lra 0
\]
which decomposes into two short exact sequences$\, :$ 
\begin{gather*}
0 \lra K \lra F \lra \sco_\p(-1) \lra 0\, ,\\
0 \lra P(E)^\ast \lra \tH^0(F) \otimes \sco_\p \lra K \lra 0\, .
\end{gather*} 
One deduces, from the first short exact sequence, that $K$ is a rank 2 vector 
bundle, and from the second one that $K$ has the same Chern classes as $E$. 
But \emph{this is not possible} because $c_3(E) = 4 \neq 0$. 

\vskip2mm

\noindent
{\bf Claim 4.3.}\quad $\tH^1_\ast(E^\ast) = 0$ \emph{hence} $\tH^2_\ast(E) = 0$. 

\vskip2mm 

\noindent
\emph{Indeed}, we saw at the beginning of this case, that $\tH^1(E^\ast(l)) 
= 0$ for $l \leq -1$. By hypothesis, $\tH^1(E^\ast) = 0$. Since 
$Y$ is connected, $\tH^2(E^\ast) = 0$. Moreover, $\tH^3(E^\ast(-1)) \simeq 
\tH^0(E(-3)) = 0$ hence, by Lemma~\ref{L:cm}, the graded module 
$\tH^1_\ast(E^\ast)$ is generated in degrees $\leq 1$. Consequently, it remains 
to show that $\tH^1(E^\ast(1)) = 0$.   

It follows, from Riemann-Roch, that $\h^0(E^\ast(1)) - \h^1(E^\ast(1)) = 
\chi(E^\ast(1)) = 2$. Take a non-zero global section of $E^\ast(1)$ and let 
$\phi : E \ra \sco_\p(1)$ be the corresponding morphism. As we saw in the 
proof Claim 4.2, $\text{Im}\, \phi = \sci_Z(1)$, where $Z$ consists of a 
point or is the empty set. $\Ker \phi$ is a rank 3 reflexive sheaf with 
$c_1(\Ker \phi) = 3$. Put $\scg := (\Ker \phi)(-1)$. It has $c_1(\scg) = 0$. 
Dualizing the exact sequence$\, :$ 
\[
0 \lra \scg(1) \lra E \lra \sci_Z(1) \lra 0
\] 
one gets an exact sequence $0 \ra \sco_\p(-1) \ra E^\ast \ra \scg^\ast(-1) 
\ra 0$. From Claim 4.1, $\tH^0(\scg) = 0$. Moreover, $\tH^0(\scg^\ast(-1)) = 0$ 
because $\tH^0(E^\ast) = 0$. Lemma~\ref{L:h0fast}(a) implies, now that 
$\h^0(\scg^\ast) \leq 1$, hence $\h^0(E^\ast(1)) \leq 2$ hence $\h^1(E^\ast(1)) 
= 0$, and Claim 4.3 is proven.  

\vskip2mm

\noindent
{\bf Claim 4.4.}\quad $E(-2)$ \emph{is the kernel of an epimorphism} 
$4\sco_\piii \oplus 2\sco_\piii(-1) \ra 2\sco_\piii(1)$. 

\vskip2mm

\noindent
\emph{Indeed}, we showed, at the beginning of Case 4, 
that $\tH^1(E(l)) = 0$ for $l \leq -4$, $\text{h}^1(E(-3)) = 2$ and  
$\text{h}^1(E(-2)) = 4$. Moreover, $\tH^2(E(-3)) \simeq \tH^2(\scf(-1)) = 0$ 
and $\tH^3(E(-4)) \simeq \tH^0(E^\ast)^\ast = 0$ hence, by 
Lemma~\ref{L:east1}(c), 
the graded module $\tH^1_\ast(E)$ is generated in degrees $\leq -2$. 
Considering a basis of $\tH^1(E(-2))$ one gets an extension$\, :$ 
\[
0 \lra E(-2) \lra E^\prim \lra 4\sco_\piii \lra 0\, .
\]    
One has $\tH^1_\ast(E^\prim) \simeq 2k(1)$ and $\tH^2_\ast(E^\prim) \simeq 
\tH^2_\ast(E)(-2) = 0$ (by Claim 4.3). One deduces, using Horrocks 
correspondence, that $E^\prim \simeq 2\Omega_\piii(1) \oplus \sco_\piii(a) 
\oplus \sco_\piii(b)$. Since $c_1(E^\prim) = c_1(E(-2)) = -4$ and since 
$\tH^0(E^\prim) = 0$ it follows that $ a = b = -1$. 
Consequently, one has an exact sequence$\, :$  
\[
0 \lra E(-2) \lra 2\Omega_\piii(1) \oplus 2\sco_\piii(-1)   
\overset{\e}{\lra} 4\sco_\piii \lra 0\, .
\]
Let $\eta : 2\sco_\piii(-1) \ra 4\sco_\piii$, 
$\e_i : \Omega_\piii(1) \ra 4\sco_\piii$, $i = 1, 2$, be the 
components of $\e$. For $i = 1, 2$, $\e_i$ factorizes as$\, :$  
\[
\Omega_\piii(1) \overset{u}{\lra} 4\sco_\piii  
\overset{g_i}{\lra} 4\sco_\piii\, .
\]
Since there is no epimorphism $2\sco_\piii(-1) \ra \sco_\piii$, it 
follows that $\text{Im}\, g_1 + \text{Im}\, g_2 = 4\sco_\piii$. 
Applying the Snake Lemma to the diagram$\, :$ 
\[
\begin{CD} 
0 @>>> 2\Omega_\p(1) \oplus 2\sco_\p(-1)   
@>{2u \, \oplus \, \text{id}}>>  
8\sco_\p \oplus 2\sco_\p(-1)   
@>>> 2\sco_\p(1) @>>> 0\\
@. @V{\e}VV @VV{(g_1\, ,\, g_2\, ,\, \eta)}V @VVV\\
0 @>>> 4\sco_\p @>>> 4\sco_\p @>>> 0 @>>> 0 
\end{CD}
\] 
one derives an exact sequence$\, :$  
\[
0 \lra E(-2) \lra 4\sco_\piii \oplus 2\sco_\piii(-1)   
\overset{\rho}{\lra} 2\sco_\piii(1) \lra 0 
\]
and Claim 4.4 is proven. 

\vskip2mm 

\noindent
{\bf Characterization of the epimorphisms} $\rho : 
4\sco_\piii \oplus 2\sco_\piii(-1) \ra 2\sco_\piii(1)$ {\bf for which} 
$(\Ker \rho)(2)$ {\bf is globally generated}. 

\vskip2mm 

\noindent
Put $E := (\Ker \rho)(2)$. 
Let us denote by $\phi$ (resp., $\psi$) 
the component $: 4\sco_\piii \ra 2\sco_\piii(1)$ 
(resp., $2\sco_\piii(-1) \ra 2\sco_\piii(1)$) of $\rho$. 
One deduces an exact sequence$\, :$ 
\begin{equation}
\label{E:phie(-2)}
0 \lra \Ker \phi \lra E(-2) \lra 2\sco_\piii(-1)  
\overset{\overline{\psi}}{\lra} \Cok \phi \lra 0 
\end{equation}
where $\overline{\psi}$ is the composite morphism$\, :$  
\[
2\sco_\piii(-1) \overset{\psi}{\lra} 2\sco_\piii(1) \lra \Cok \phi \, .
\]

\vskip2mm 

\noindent
{\bf Claim 4.5.}\quad \emph{The degeneracy scheme of $\phi$ is 
$0$-dimensional}. 

\vskip2mm

\noindent 
\emph{Indeed}, $\phi$ corresponds to a $k$-linear map $A : k^4 \ra k^2\otimes 
S_1$ ($S = k[X_0,\ldots ,X_3]$). Since $\tH^0(E(-2)) = 0$ it follows that 
$\tH^0(\Ker \phi) = 0$ hence $A$ is injective. If $A$ is not stable then, 
as we notice in the first part of Remark~\ref{R:2x4} 
from Appendix~\ref{A:case4}, there exists a line 
$L \subset \piii$ such that $\phi$ degenerates along $L$. One cannot have 
$\phi_L = 0$ because there is no epimorphism $2\sco_L(-1) \ra 
2\sco_L(1)$. Consequently, $\Cok \phi_L \simeq \sco_L(a) \oplus 
\scq$ with $a \geq 1$ and $\scq$ a torsion sheaf on $L$. One gets, from the 
exact sequence$\, :$  
\[
E_L(-2) \lra 2\sco_L(-1) \lra \Cok \phi_L \lra 0\, , 
\] 
an epimorphism $E_L(-2) \twoheadrightarrow \sco_L(b)$ with $b \leq -3$, 
which contradicts the fact that $E$ is globally generated. It thus remains 
that $A$ \emph{is stable}. Now, according to Remark~\ref{R:2x4}, there are 
only two cases in which the degeneracy locus of $\phi$ has positive 
dimension: in one of the cases there exists a line $L^\prim \subset \piii$ 
such that $\Cok \phi_{L^\prim} \simeq \sco_{L^\prim}(2)$ and this contradicts, as 
we have just seen, the fact that $E$ is globally generated. In the other case, 
there exists a (smooth) conic $C \subset \piii$ such that, identifying $C$ 
with $\p^1$, $\Cok \phi_C \simeq \sco_{\p^1}(3)$. Using the exact 
sequence$\, :$  
\[
E_C(-2) \lra 2\sco_C(-2) \lra \Cok \phi_C \lra 0
\]  
one contradicts, again, the fact that $E$ is globally generated. It remains 
that the degeneracy scheme of $\phi$ is $0$-dimensional and Claim 4.5 is 
proven. 

\vskip2mm 

\noindent
Let us denote by $Z$ the degeneracy scheme of $\phi$. Since $Z$ has dimension 
0, the sheaf $\scg := \Cok \phi^\ast(-1)$ is a rank 2 \emph{reflexive} sheaf 
with Chern classes $c_1(\scg) = 0$, $c_2(\scg) = 2$, $c_3(\scg) = 4$. 
Dualizing the exact sequence$\, :$  
\begin{equation}
\label{E:cokphiast-1} 
0 \lra 2\sco_\piii(-2) \xra{\phi^\ast(-1)} 
4\sco_\piii(-1) \lra \scg \lra 0
\end{equation}  
and taking into account that $\scg^\ast \simeq \scg$ one gets an exact 
sequence$\, :$ 
\begin{equation}
\label{E:kerphi}
0 \lra \scg(-1) \lra 4\sco_\piii \overset{\phi}{\lra} 
2\sco_\piii(1) \lra \Cok \phi \lra 0\, .
\end{equation} 
Using the fact that $\tH^1(\scg) = 0$ one deduces 
that for any morphism $\tau : 2\sco_\piii(-1) \ra 
2\sco_\piii(1)$ such that the composite morphism$\, :$  
\[
2\sco_\piii(-1) \overset{\tau}{\lra} 2\sco_\piii(1) \lra \Cok \phi 
\]
is 0 there exists a morphism $\tau^\prim : 2\sco_\piii(-1) \ra 
4\sco_\piii$ such that $\tau = \phi \circ \tau^\prim$. It follows 
that if $\rho^\prim : 2\sco_\piii(-1) \oplus 4\sco_\piii  
\ra 2\sco_\piii(1)$ is another morphism with $\phi^\prim = \phi$ and 
${\overline{\psi}}^{\, \prim} = \overline{\psi}$ then $\rho^\prim$ differs 
from $\rho$ by an automorphism of $2\sco_\piii(-1) \oplus 
4\sco_\piii$ invariating $4\sco_\piii$. 

Now, fix a linear form $\lambda \in S_1$ vanishing at no point of $Z$. Since 
the multiplication by $\lambda : \Cok \phi(-1) \ra \Cok \phi$ is an 
isomorphism and since any morphism $2\sco_\piii(-1) \ra \Cok \phi(-1)$ 
can be lifted to a morphism $2\sco_\piii(-1) \ra 2\sco_\piii$ 
(because $\tH^2(\scg(-1)) = 0$),   
\emph{we can assume}, using the above observation, 
that $\psi$ is a composite morphism of the form$\, :$ 
\[
2\sco_\piii(-1) \overset{{\widehat \psi}}{\lra} 2\sco_\piii  
\overset{\lambda}{\lra} 2\sco_\piii(1)\, . 
\]   
Denoting by $\widetilde \psi$ the composite morphism$\, :$  
\[
2\sco_\piii(-1) \overset{{\widehat \psi}}{\lra} 2\sco_\piii  
\lra \Cok \phi(-1)  
\]
one can derive from the exact sequence \eqref{E:phie(-2)} an exact 
sequence$\, :$  
\begin{equation} 
\label{E:g(-1)e(-2)o(-1)} 
0 \lra \scg(-1) \lra E(-2) \lra 2\sco_\piii(-1)  
\overset{{\widetilde \psi}}{\lra} \Cok \phi(-1) \lra 0\, .
\end{equation}

Now, according to Remark~\ref{R:2x4} from Appendix~\ref{A:case4}, 
$Z$ is either a locally complete intersection of length 4 or a fat point. 

\vskip2mm

\noindent 
{\bf Subcase 4.6.}\quad $Z$ \emph{is a locally complete intersection}. 

\vskip2mm

\noindent
In this case, by Remark~\ref{R:2x4} and the beginning of the proof of 
Lemma~\ref{L:ilcupz2}, $\phi$ is defined, up to a linear change of coordinates 
in $\piii$, by a matrix of the form$\, :$ 
\[
\begin{pmatrix} 
X_0 & X_1 & X_2 & X_3\\
h_0 & h_1 & h_2 & h_3
\end{pmatrix}
\] 
and if one denotes by $\eta$ the composite morphism$\, :$  
\[
\begin{CD}
\sco_\piii @>{\binom{0}{\text{id}}}>> 2\sco_\piii \lra \Cok \phi(-1) 
\end{CD}
\]
then one has an exact sequence$\, :$ 
\[
\Omega_\piii \overset{t^\ast}{\lra} \sco_\piii \overset{\eta}{\lra} 
\Cok \phi(-1) \lra 0 
\]
for some section $t \in \tH^0(\text{T}_\piii)$ whose scheme of zeroes is 
exactly $Z$. In particular, it follows that $\Cok \phi(-1) \simeq \sco_Z$. 
Using the Koszul 
complex associated to $t^\ast : \Omega_\piii \ra \sco_\piii$, one derives easily 
that $\tH^i(\sci_Z(1)) = 0$, $i = 0, 1$. It follows that any morphism 
$2\sco_\piii(-1) \ra \Cok \phi(-1)$ lifts to a morphism 
$2\sco_\piii(-1) \ra \sco_\piii$. This morphism is defined by two linear 
forms $\lambda_0,\,  \lambda_1 \in S_1$. Thus, we may assume that the matrix 
of $\rho$ has the form$\, :$  
\[
\begin{pmatrix}
X_0 & X_1 & X_2 & X_3 & 0 & 0\\
h_0 & h_1 & h_2 & h_3 & \lambda \lambda_0 & \lambda \lambda_1
\end{pmatrix}
\]
where $\lambda \in S_1$ is a fixed linear form vanishing at no point of $Z$. 
The exact sequence \eqref{E:g(-1)e(-2)o(-1)} becomes, in this case$\, :$  
\begin{equation} 
\label{E:g(-1)e(-2)o(-1)zlci}
0 \lra \scg(-1) \lra E(-2) \lra 2\sco_\piii(-1)  
\overset{{\widetilde \psi}}{\lra} \sco_Z \lra 0
\end{equation}  
where the morphism $\widetilde \psi$ is equal to the composite morphism$\, :$  
\[ 
2\sco_\piii(-1) \xra{(\lambda_0\, ,\, \lambda_1)} \sco_\piii \lra \sco_Z\, . 
\]
Let us put $\sck := \Ker {\widetilde \psi}$. 
We notice, at this point, that $\lambda_0$ and $\lambda_1$ must be linearly 
independent. \emph{Indeed}, otherwise $\sck$ would be isomorphic to 
$\sco_\piii(-1) \oplus \sci_Z(-1)$ and this would contradict the fact that $E$ 
is globally generated. Let, now, $L \subset \piii$ be the line of equations 
$\lambda_0 = \lambda_1 = 0$. Since $\widetilde \psi$ is an epimorphism, one 
must have $L \cap Z = \emptyset$. Applying the Snake Lemma to the 
diagram$\, :$  
\[
\begin{CD}
0 @>>> \sco_\piii(-2) @>>> 2\sco_\piii(-1) @>>> \sci_L @>>> 0\\
@. @VVV @VVV @VVV\\
0 @>>> \sck @>>> 2\sco_\piii(-1) @>{{\widetilde \psi}}>> \sco_Z @>>> 0
\end{CD}
\]  
one gets an exact sequence$\, :$ 
\begin{equation}
\label{E:o(-2)kilcupz} 
0 \lra \sco_\piii(-2) \lra \sck \lra \sci_{L \cup Z} \lra 0\, .
\end{equation}
One derives that $E$ is globally generated iff $\sci_{L \cup Z}(2)$ is globally 
generated. Now, if $L \cap Z = \emptyset$ then $\tH^1(\sci_{L \cup Z}(2)) = 0$. 
\emph{Indeed}, consider the exact sequence$\, :$  
\[
0 \lra \sci_{L \cup Z} \lra \sci_Z \lra \sco_L \lra 0 
\]
and a plane $H \supset L$ such that $H \cap Z = \emptyset$. Since 
$\tH^i(\sci_Z(1)) = 0$, $i = 0, 1$, one deduces, from the exact sequence$\, :$  
\[
0 \lra \sci_Z(-1) \lra \sci_Z \lra \sco_H \lra 0\, ,
\]
that $\tH^0(\sci_Z(2)) \izo \tH^0(\sco_H(2))$. Since $\tH^0(\sco_H(2)) \ra 
\tH^0(\sco_L(2))$ is surjective it follows that $\tH^0(\sci_Z(2)) \ra 
\tH^0(\sco_L(2))$ is surjective hence $\tH^1(\sci_{L \cup Z}(2)) = 0$ and 
$\text{h}^0(\sci_{L \cup Z}(2)) = 3$. 

\vskip2mm 

If $\sci_{L \cup Z}(2)$ is globally generated then, for every plane $H \supset 
L$, $\text{length}(H \cap Z) \leq 1$. \emph{Indeed}, one has$\, :$ 
\[
\sci_{L \cup Z}\cdot \sco_H = \sci_{L \cup (H \cap Z),H} \simeq \sci_{H \cap Z,H}(-1) 
\]
and $\sci_{H \cap Z,H}(1)$ is globally generated iff $\text{length}(H \cap Z) 
\leq 1$. 

\vskip2mm

\emph{Conversely}, assume that for every plane $H \supset L$, 
$\text{length}(H \cap Z) \leq 1$. Then $\sci_{L \cup Z}(2)$ is globally 
generated. This is shown in Lemma~\ref{L:ilcupz2} from Appendix~\ref{A:case4}, 
but in the case where $Z$ consists of 4 simple points $P_0, \ldots , P_3$  
(not contained in a plane) there is a simple argument. \emph{Indeed}, 
in this case the above condition is equivalent to the fact that the line 
$L$ intersects none of the six edges of the tetrahedron with vertices 
$P_0, \ldots , P_3$. Let $h_i$ be an equation of the plane determined by 
$\{P_0, \ldots ,P_3\} \setminus \{P_i\}$ and let $h_i^\prim$ be an equation 
of the plane determined by $L$ and $P_i$, $i = 0, \ldots ,3$. The degenerate  
quadratic forms $h_ih_i^\prim$, $i = 0, \ldots , 3$, vanish on $L\cup Z$. 
They are not linearly independent (because $\text{h}^0(\sci_{L\cup Z}(2)) = 3$) 
but it is easy to see that they generate $\sci_{L\cup Z}(2)$. 

\vskip2mm 

\noindent
{\bf Subcase 4.7.}\quad $Z$ \emph{is a fat point}. 

\vskip2mm

\noindent 
In this case, $E$ is always globally generated. We defer the proof of this 
fact to Lemma~\ref{L:z=fatpoint} in Appendix~\ref{A:case4}. 
   
\vskip2mm

\noindent
{\bf Case 5.}\quad  \textit{The spectrum of $\scf$ is $(0,-1,-1,-1)$}.  

\vskip2mm

\noindent
In this case $c_3(\scf) = 6$, $p_a(Y) = 4$ hence $r = 5$. 

\vskip2mm 

\noindent
{\bf Claim 5.1.}\quad $\tH^2_\ast(E) = 0$. 

\vskip2mm

\noindent
\emph{Indeed}, $\tH^2(E(l)) \simeq \tH^2(\scf(l+2)) = 0$ for $l \geq -3$. 
On the other hand, since $\h^1(\scf(-1)) = 1$, Lemma~\ref{L:east1}(b) 
implies that $\tH^2(E(l)) = 0$ for $l \leq -4$.   

\vskip2mm 

\noindent
{\bf Claim 5.2.}\quad $\tH^0(E(-1)) = 0$. 

\vskip2mm

\noindent
\emph{Indeed}, as in the proof of Claim 4.1 above, $P(E)$ and $E$ have the 
same Chern classes and the same cohomological properties. In particular, 
by Claim 5.1 above, $\tH^1_\ast(P(E)^\ast) = 0$. But $\tH^0(E(-1)) \simeq 
\tH^1(P(E)^\ast(-1)) = 0$. 

\vskip2mm 

\noindent
{\bf Claim 5.3.}\quad $E(-2)$ \emph{is the kernel of an epimorphism} 
$5\sco_\piii(-1) \oplus \Omega_\piii(1) \ra 3\sco_\piii$.  

\vskip2mm

\noindent
\emph{Indeed}, we have to deal with the graded module $\tH^1_\ast(E)$. 
Using the exact sequence $0 \ra \sco_\piii(-2) \ra E(-2) \ra \scf \ra 0$ and 
the fact that $\scf$ has spectrum $(0,-1,-1,-1)$ one gets that$\, :$ 
\begin{gather*}
\tH^1(E(l)) \simeq \tH^1(\scf(l+2)) = 0\  \text{for}\  l \leq -4,\\
\h^1(E(-3)) = \h^1(\scf(-1)) = 1,\  \h^1(E(-2)) = \h^1(\scf) = -\chi(\scf) 
= 3\, . 
\end{gather*} 
(We used the fact that $\tH^i(\scf) = 0$ for $i \neq 1$). Moreover, 
$\tH^2(E(-3)) \simeq \tH^2(\scf(-1)) = 0$ and $\tH^3(E(-4)) \simeq 
\tH^0(E^\ast)^\ast = 0$  hence, by Lemma~\ref{L:east1}(c), the graded module 
$\tH^1_\ast(E)$ is generated in degrees $\leq -2$.   
Consider the extension$\, :$  
\[
0 \lra E(-2) \lra E^\prim \lra 3\sco_\piii \lra 0
\] 
defined by choosing a basis of the 3-dimensional vector space $\tH^1(E(-2))$. 
One has $\tH^2_\ast(E^\prim) = 0$ and $\tH^1_\ast(E^\prim) = k(1)$. 
It follows that $E^\prim \simeq \scl \oplus \Omega_\piii(1)$, where 
$\scl$ is a direct sum of line bundles. One has $\text{rk}\, E^\prim = 8$, 
$c_1(E^\prim) = -6$, hence $\text{rk}\, \scl = 5$ and $c_1(\scl) = -5$. 
Since $\tH^0(E^\prim) = 0$ one gets that $\scl \simeq 5\sco_\piii(-1)$. 
One thus obtains an exact sequence$\, :$ 
\[
0 \lra E(-2) \lra 5\sco_\piii(-1) \oplus \Omega_\piii(1) 
\overset{\alpha}{\lra} 3\sco_\piii \lra 0
\]
and Claim 5.3 is proven. 

\vskip2mm 

\noindent
Recalling the exact sequence  
$0 \ra \Omega_\piii(1) \ra 4\sco_\piii \ra \sco_\piii(1) \ra 0$, 
the component $\alpha_2 : \Omega_\piii(1) \ra 3\sco_\piii$ of the 
epimorphism $\alpha$ above factorizes as$\, :$ 
\[
\Omega_\piii(1) \lra 4\sco_\piii \overset{\phi}{\lra} 
3\sco_\piii\, .
\]

\noindent
{\bf Claim 5.4.}\quad $\text{rk}\, \phi = 3$. 

\vskip2mm 

\noindent
\emph{Indeed}, if $\text{rk}\, \phi = 2$ then applying the Snake Lemma 
to the diagram$\, :$  
\[
\begin{CD}
0 @>>> \Omega_\piii(1) @>>> 4\sco_\piii @>>> \sco_\piii(1) @>>> 0\\
@. @VV{\alpha_2}V @VV{\phi}V @VVV\\
0 @>>> 2\sco_\piii @>>> 2\sco_\piii @>>> 0 @>>> 0
\end{CD}
\]
one gets an exact sequence$\, :$ 
\[
0 \lra \sco_\piii(-1) \lra \Omega_\piii(1) \overset{\alpha_2}{\lra} 
2\sco_\piii \lra \sco_L(1) \lra 0
\]
for some line $L \subset \piii$. It follows that $\sco_\piii(-1) \subset 
\Ker \alpha = E(-2)$ which \emph{contradicts} the fact that $\tH^0(E(-1)) = 
0$ (see Claim 5.2). 

If $\text{rk}\, \phi = 1$ then one has an exact sequence$\, :$  
\[
0 \lra \sck \lra \Omega_\piii(1) \overset{\alpha_2}{\lra}  
\sco_\piii \lra \sco_{\{x\}} \lra 0
\] 
for some point $x \in \piii$. It follows that $\sck \subset E(-2)$. 
But $\h^0(\sck(1)) \geq \h^0(\Omega_\piii(2)) - 
\h^0(\sci_{\{x\}}(1)) = 6 - 3 = 3$ and this \emph{contradicts} the fact that 
$\tH^0(E(-1)) = 0$. 

Finally, if $\phi = 0$ then $\Omega_\piii(1) \subset E(-2)$ and this 
\emph{contradicts} again the fact that $\tH^0(E(-1)) = 0$. Claim 5.4 is 
proven.  

\vskip2mm 

\noindent 
{\bf Claim 5.5.}\quad $E(-2)$ \emph{is the kernel of an epimorphism} 
$\sco_\piii \oplus 5\sco_\piii(-1) \ra \sco_\piii(1)$.  

\vskip2mm 

\noindent 
\emph{Indeed}, applying the Snake Lemma to the diagram$\, :$
\[
\begin{CD}
0 @>>> \Omega_\piii(1) @>>> 4\sco_\piii @>>> \sco_\piii(1) @>>> 0\\
@. @VV{\alpha_2}V @VV{\phi}V @VVV\\
0 @>>> 3\sco_\piii @>>> 3\sco_\piii @>>> 0 @>>> 0
\end{CD}
\]
one gets an exact sequence$\, :$ 
\[
0 \lra \Omega_\piii(1) \overset{\alpha_2}{\lra}  
3\sco_\piii \lra \sco_H(1) \lra 0
\]
for some plane $H \subset \piii$. Applying the Snake Lemma to the 
diagram$\, :$  
\[
\begin{CD}
0 @>>> 0 @>>> \Omega_\piii(1) @>>> \Omega_\piii(1) @>>> 0\\
@. @VVV @VVV @VVV\\
0 @>>> E(-2) @>>> 5\sco_\piii(-1) \oplus \Omega_\piii(1) @>>> 
3\sco_\piii @>>> 0
\end{CD}
\]
one gets an exact sequence$\, :$  
\[
0 \lra E(-2) \lra 5\sco_\piii(-1) \lra \sco_H(1) \lra 0 
\]
from which one gets an exact sequence$\, :$ 
\[
0 \lra E(-2) \lra \sco_\piii \oplus 5\sco_\piii(-1) \lra \sco_\piii(1) \lra 0 
\]
and Claim 5.5 is proven. 

\vskip2mm 

\noindent
{\bf Characterization of the epimorphisms} $\rho : \sco_\piii \oplus 
5\sco_\piii(-1) \ra \sco_\piii(1)$ {\bf with} $(\Ker \rho)(2)$ 
{\bf globally generated}. 

\vskip2mm 

\noindent
The cokernel of the component $\sco_\piii \ra \sco_\piii(1)$ of such an 
epimorphism $\rho$ is of the form $\sco_H(1)$ for some plane $H \subset 
\piii$. One deduces an exact sequence$\, :$ 
\[
0 \lra \Ker \rho \lra 5\sco_\piii(-1)  
\overset{\overline \rho}{\lra} \sco_H(1) \lra 0\, . 
\]
${\overline \rho} : 5\sco_\piii(-1) \ra \sco_H(1)$ is defined by an 
epimorphism $\e : 5\sco_H(-1) \ra \sco_H(1)$. Let $M$ be 
the rank 4 vector bundle on $H \simeq \pii$ defined by the exact 
sequence$\, :$ 
\begin{equation}
\label{E:mohoh}
0 \lra M \lra 5\sco_H \overset{\e (1)}{\lra} \sco_H(2) \lra 0\, .
\end{equation} 

\vskip2mm 

\noindent
{\bf Claim 5.6.}\quad $(\Ker \rho)(2)$ \emph{is globally generated if and 
only if} $M(1)$ \emph{is globally generated}. 

\vskip2mm

\noindent
\emph{Indeed}, applying the Snake Lemma to the diagram$\, :$  
\[
\begin{CD}
0 @>>> \Ker \rho @>>> 5\sco_\piii(-1) @>>> \sco_H(1) @>>> 0\\
@. @VV{\beta}V @VVV @\vert\\
0 @>>> M(-1) @>>> 5\sco_H(-1) @>>> \sco_H(1) @>>> 0
\end{CD}
\]
one gets an exact sequence$\, :$ 
\[
0 \lra 5\sco_\piii(-2) \lra \Ker \rho \overset{\beta}{\lra} M(-1) \lra 0
\]
from which Claim 5.6 follows.  

\vskip2mm 

\noindent
{\bf Claim 5.7.}\quad \emph{If $M(1)$ is globally generated then} 
$\tH^0(M) = 0$ \emph{which means that the epimorphism}  
$\e (1) : 5\sco_H \ra \sco_H(2)$  
\emph{is defined by some base point free, 
$5$-dimensional, vector subspace} $V \subset \tH^0(\sco_H(2))$.  

\vskip2mm

\noindent
\emph{Indeed}, if $\text{h}^0(M) = 1$ then $M \simeq \sco_H \oplus G$, where 
$G$ is defined by an exact sequence$\, :$  
\[
0 \lra G \lra 4\sco_H \lra \sco_H(2) \lra 0\, .
\] 
In this case, $G(1)$ is a globally generated rank 3 vector bundle with  
$c_1(G(1)) = 1$ and $c_2(G(1)) = 3$, which is not possible because 
of the inequality $c_2 \leq c_1^2$ for the Chern classes of a globally 
generated vector bundle. 

If $\text{h}^0(M) = 2$ then $M \simeq 2\sco_H \oplus G^\prim$, 
where $G^\prim$ is defined by an exact sequence$\, :$  
\[
0 \lra G^\prim \lra 3\sco_H \lra \sco_H(2) \lra 0\, .
\]  
In this case, $G^\prim(1)$ is a globally generated rank 2 vector bundle with 
$c_1(G^\prim(1)) = 0$ and $c_2(G^\prim(1)) = 3$ which, again, is not possible. 
Consequently, $\tH^0(M) = 0$.  

\vskip2mm

\noindent
{\bf Claim 5.8.}\quad $M(1)$ \emph{is globally generated if and only if 
$V$ contains no subspace of the form} $\tH^0(\sco_H(1))\cdot \ell$, 
\emph{with} $0 \neq \ell \in \tH^0(\sco_H(1))$. 

\vskip2mm 

\noindent
\emph{Indeed}, if there exists $0 \neq \ell \in \tH^0(\sco_H(1))$ such that 
$\tH^0(\sco_H(1))\cdot \ell \subset V$ then, restricting the exact sequence 
\eqref{E:mohoh} to the line $L \subset H$ of equation $\ell = 0$ one gets 
$M_L \simeq 3\sco_L \oplus \sco_L(-2)$, hence $M(1)$ is not globally 
generated. 

\emph{Conversely}, assume that $V$ contains no subspace of the above form.  
Then, for every line $L \subset H$, $M_L \simeq 
2\sco_L \oplus 2\sco_L(-1)$. One has $\text{h}^1(M(-2)) = 
1$, $\text{h}^1(M(-1)) = 3$, $\text{h}^1(M) = 1$, and $\tH^1(M(l)) = 0$ for 
$l \notin \{-2, -1, 0\}$ (for $l \geq 1$ this follows from 
Lemma~\ref{L:LePotier}, taking into account that $\text{h}^1(M) = 1$ and that 
there are lines $L \subset H$ such that $M_L \simeq 2\sco_L \oplus 
2\sco_L(-1)$ hence $\tH^1(M_L) = 0$). 
Moreover, for every $0 \neq \ell \in \tH^0(\sco_H(1))$, the 
multiplication by $\ell : \tH^1(M(-2)) \ra \tH^1(M(-1))$ is injective and 
the multiplication by $\ell : \tH^1(M(-1)) \ra \tH^1(M)$ is surjective. 
One deduces that the graded $S$-module $\tH^1_\ast(M^\ast) \simeq 
\tH^1_\ast(M(-3))^\ast$ is generated by $\tH^1(M^\ast(-3)) \simeq 
\tH^1(M)^\ast$, which is 1-dimensional. A non-zero element of 
$\tH^1(M^\ast(-3))$ defines an extension$\, :$  
\[
0 \lra M^\ast(-3) \lra A \lra \sco_\pii \lra 0
\]
with $\tH^1_\ast(A) = 0$, i.e., with $A$ a direct sum of line bundles. As 
$\text{rk}\, A = 5$, $c_1(A) = -10$, and $\tH^0(A(1)) = 0$ one deduces that 
$A \simeq 5\sco_\pii(-2)$. It follows that $M(1)$ is globally 
generated. 

We notice that a general 5-dimensional subspace $V$ of $\tH^0(\sco_H(2))$ 
satisfies the condition from Claim 5.8 (see \cite[Example~2.1(c)]{c2}).  

\vskip2mm 

\noindent
{\bf Case 6.}\quad  \textit{The spectrum of $\scf$ is $(0,-1,-1,-2)$}.  

\vskip2mm 

\noindent
In this case $c_3(\scf) = 8$, 
$p_a(Y) = 5$ hence $r = 6$. One has $\h^2(\scf(-1)) = 1$, hence 
$\h^2(\sci_Y(1)) = 1$, hence $\h^1(\sco_Y(1)) = 1$. Since 
$\text{deg}\, \sco_Y(1) = 8 = 2p_a(Y) - 2$ it follows that $\sco_Y(1) \simeq 
\omega_Y$, i.e., $Y$ is a \emph{canonical curve} of genus 5 (and degree 8). 
A nonzero global section of $\omega_Y(-1) \simeq \sco_Y$ defines a Serre 
extension$\, :$ 
\begin{equation}
\label{E:f3}
0 \lra \sco_\piii \lra F(3) \lra \sci_Y(5) \lra 0 
\end{equation}
where $F$ is a rank 2 vector bundle with Chern classes $c_1(F) = -1$, 
$c_2(F) = 2$. $F$ is stable because $\tH^0(F) \simeq \tH^0(\sci_Y(2)) \simeq 
\tH^0(E(-2)) = 0$. These bundles were studied by Hartshorne and 
Sols~\cite{hs} and, independently, by Manolache~\cite{ma}. 
The zero scheme of a general global section of $F(2)$ is the disjoint union of 
two (nonsingular) conics $C_1$ and $C_2$ (see, for example, 
\cite[Prop.~2.3]{hs}). In other words, $F(2)$ can be realized as an 
extension$\, :$ 
\[
0 \lra \sco_\piii \lra F(2) \lra \sci_{C_1\cup C_2}(3) \lra 0\, .
\]    

\vskip2mm 

\noindent
{\bf Claim 6.1.}\quad $\h^0(E(-1)) = 1$. 

\vskip2mm

\noindent
\emph{Indeed}, $\h^0(E(-1)) = \h^0(\sci_Y(3)) = \h^0(F(1)) = 
\h^0(\sci_{C_1\cup C_2}(2)) = 1$. 

\vskip2mm

\noindent
$\bullet$\quad \emph{The monad of} $\sci_{C_1\cup C_2}(3)$. 

\vskip2mm

\noindent
Let $S := k[X_0,X_1,X_2,X_3]$ be the projective coordinate ring of $\piii$ and 
$S(C_i) = S/I(C_i)$ the projective coordinate ring of $C_i$, $i = 1, 2$. 
$I(C_i)$ is generated by a linear form $h_i$ and by a quadratic form $q_i$, 
$i = 0, 1$. Using the exact sequences$\, :$  
\begin{gather*} 
0 \lra S(-3) \xra{\binom{-q_i}{h_i}} S(-1)\oplus S(-2) 
\xra{(h_i\, ,\, q_i)} S \lra S(C_i) \lra 0,\  i = 1,\, 2,\\
0 \lra \tH^0_\ast(\sci_{C_1\cup C_2}) \lra S \lra S(C_1)\oplus S(C_2) \lra 
\tH^1_\ast(\sci_{C_1\cup C_2}) \lra 0\, ,\\ 
0 \lra S \xra{\binom{\text{id}}{\text{id}}} S\oplus S 
\xra{(-\text{id}\, ,\, \text{id})} S \lra 0\, ,  
\end{gather*} 
one deduces a complex$\, :$ 
\[
0 \lra 2S(-3) \lra  2S(-1) \oplus 2S(-2) \lra S \lra 0\, , 
\]
with the differentials given by the matrices$\, :$  
\[
A^{\text{t}} = 
\begin{pmatrix}
-q_1 & 0 & h_1 & 0\\
0 & -q_2 & 0 & h_2
\end{pmatrix}
\, , \  B = (-h_1,\, h_2,\, -q_1,\, q_2)\, , 
\] 
whose cohomology is $\tH^i_\ast(\sci_{C_1 \cup C_2})$, $i = 0, 1$. 
It follows that $\sci_{C_1\cup C_2}(3)$ is the cohomology of the monad  
$0 \ra 2\sco_\piii \overset{\alpha}{\lra} 2\sco_\piii(2) \oplus 
2\sco_\piii(1) \overset{\beta}{\lra} \sco_\piii(3) \ra 0$, 
with $\alpha$ and $\beta$ defined by the matrices $A$ and $B$, respectively. 

\vskip2mm

\noindent
$\bullet$\quad \emph{A presentation of} $\sci_Y(4)$. 

\vskip2mm 

\noindent
Consider the bundle $K$ defined by the exact sequence$\, :$  
\[
0 \lra K \lra 2\sco_\piii(2) \oplus 2\sco_\piii(1)  
\overset{\beta}{\lra} \sco_\piii(3) \lra 0\, .
\] 
From the above complex one gets an exact sequence$\, :$  
\[
0 \lra 2\sco_\piii \overset{\alpha}{\lra} K \lra \sci_{C_1 \cup C_2}(3) 
\lra 0\, .
\] 
Since $\tH^1_\ast(K^\ast) = 0$, one gets a commutative diagram$\, :$  
\[
\begin{CD}
0 @>>> 2\sco_\piii @>>> K @>>> \sci_{C_1 \cup C_2}(3) @>>> 0\\
@. @VVV @VVV @\vert\\
0 @>>> \sco_\piii @>>> F(2) @>>> \sci_{C_1 \cup C_2}(3) @>>> 0
\end{CD}
\]
Since the bottom line of this diagram does not split, the left vertical 
morphism must be non-zero. One deduces an exact sequence$\, :$  
\[
0 \lra \sco_\piii \overset{t}{\lra} K \lra F(2) \lra 0\, .
\] 
(Since the cokernel of $t$ is the locally free sheaf $F(2)$, the  
morphism $t$  must be defined by $(-a_1q_1,\, -a_2q_2,\, a_1h_1,\, a_2h_2)$, 
with $a_1,\, a_2 \in k\setminus \{0\}$). From the extension 
\eqref{E:f3} one gets, now, an exact sequence$\, :$ 
\[
0 \lra \sco_\piii \oplus \sco_\piii(-1) \lra K \lra \sci_Y(4) \lra 0\, . 
\]  

\noindent
$\bullet$\quad \emph{A presentation of} $E$. 

\vskip2mm 

\noindent
Using the fact that $\tH^1_\ast(K^\ast) = 0$ one deduces the existence of a 
commutative diagram$\, :$ 
\[
\begin{CD}
0 @>>> \sco_\piii \oplus \sco_\piii(-1) @>>> K @>>> \sci_Y(4) @>>> 0\\
@. @V{\phi}VV @VVV @\vert\\
0 @>>> 5\sco_\piii @>>> E @>>> \sci_Y(4) @>>> 0
\end{CD}
\]
which, by dualization, gives a commutative diagram$\, :$  
\[
\begin{CD}
0 @>>> \sco_\piii(-4) @>>> E^\ast @>>> \sco_\piii \oplus 4\sco_\piii   
@>>> \omega_Y @>>> 0\\
@. @\vert @VVV @VV{\phi^\ast}V @\vert\\
0 @>>> \sco_\piii(-4) @>>> K^\ast @>>> \sco_\piii \oplus \sco_\piii(1) 
@>>> \omega_Y @>>> 0
\end{CD}
\]
One deduces that $\tH^0(\phi^\ast)$ is an isomorphism. Now, one gets an 
exact sequence$\, :$  
\[ 
0 \lra \sco_\piii \oplus \sco_\piii(-1) 
\xra{\binom{\sigma}{\phi}} K \oplus 5\sco_\piii \lra 
E \lra 0\, . 
\] 
But, since $\tH^0(\phi^\ast)$ is an isomorphism, one can cancel a direct 
summand $\sco_\piii$ from the first and the second term of the above exact 
sequence and one gets a new exact sequence of the form$\, :$  
\[
0 \lra \sco_\piii(-1) \xra{\binom{s^\prim}{u}} K \oplus 4\sco_\piii \lra 
E \lra 0\, . 
\]

\noindent
{\bf Claim 6.2.}\quad $E$ \emph{is as in item} (vi) \emph{of the statement}. 

\vskip2mm

\noindent
\emph{Indeed}, up to a linear change of coordinates and up to an automorphism 
of $2\sco_\piii(2) \oplus 2\sco_\piii(1)$, one can assume that the 
morphism $\beta$ whose kernel is $K$ coincides with the morphism 
$p : 2\sco_\piii(2) \oplus 2\sco_\piii(1) \ra \sco_\piii(3)$ defined 
by $X_0,\, X_1,\, X_2^2,\, X_3^2$. 

The condition $\tH^i(E^\ast) = 0$, $i = 0,\, 1$, is equivalent to the fact that 
$\tH^0(u^\ast) : \tH^0(4\sco_\piii) \ra \tH^0(\sco_\piii(1))$ is an 
isomorphism hence, up to an automorphism of $4\sco_\piii$, one can 
assume that $u$ is defined by $X_0, \dots ,X_3$. 

Let $s : \sco_\piii(-1) \ra K$ be the morphism induced by 
$(0,0,-X_3^2,X_2^2) : \sco_\pii(-1) \ra 2\sco_\piii(2) \oplus 
2\sco_\piii(1)$. Using the resolution of $K$ deduced from the Koszul 
complex associated to $X_0,\, X_1,\, X_2^2,\, X_3^2$, one sees easily that 
there exist a constant $c \in k$ and a morphism $\tau : 4\sco_\piii  
\ra K$ such that $s^\prim = cs + \tau \circ u$. It follows that, up to the  
automorphism of $K \oplus 4\sco_\piii$  defined by the matrix 
$\left(\begin{smallmatrix} \text{id} & -\tau\\ 0 & \text{id} 
\end{smallmatrix}\right)$, one can assume that $s^\prim = cs$. 

If $c = 0$ then one would get $E \simeq K \oplus \text{T}_\piii(-1)$. But this 
is not possible because $K$ is not globally generated (because 
$\sci_{C_1 \cup C_2}(3)$ is not globally generated because $C_1 \cup C_2$ has a 
4-secant). It remains that $c \neq 0$ and one can, actually, assume that 
$c = 1$. 

\vskip2mm

\noindent
Although Case 6 is concluded, we make one more observation. 

\vskip2mm

\noindent
{\bf Claim 6.3.}\quad $P(E) \simeq E$. 

\vskip2mm

\noindent 
\emph{Indeed}, by definition, $P(E)^\ast$ is the kernel of the evaluation 
morphism $\tH^0(E)\otimes\sco_\piii \ra E$ hence it is isomorphic to the 
kernel of the morphism$\, :$ 
\[
\begin{pmatrix} \text{ev}_K & 0 & s\\ 0 & \text{id} & u \end{pmatrix} : 
(\tH^0(K)\otimes \sco_\piii)\oplus 4\sco_\piii \oplus \sco_\piii(-1) 
\lra K \oplus 4\sco_\piii  
\]   
and this is isomorphic to the kernel of the morphism$\, :$ 
\[
(\text{ev}_K,\, s) : (\tH^0(K)\otimes \sco_\piii) \oplus \sco_\piii(-1) 
\lra K\, . 
\] 
But from the Koszul resolution of $K$ one deduces an exact sequence$\, :$ 
\[
0 \lra K^\ast \lra \sco_\piii(1) \oplus 4\sco_\piii \oplus \sco_\piii(-1) 
\lra K \lra 0\, , 
\]
from which one gets, now, an exact sequence$\, :$ 
\[
0 \lra P(E)^\ast \lra K^\ast \oplus (\tH^0(\sco_\piii(1))\otimes \sco_\piii) 
\lra \sco_\piii(1) \lra 0\, .
\]
Dualizing this exact sequence and using the arguments from the proof of 
Claim 6.2 one obtains that $P(E) \simeq E$.  

\vskip2mm 

\noindent
{\bf Case 7.}\quad  \textit{The spectrum of $\scf$ is $(-1,-1,-1,-1)$}.

\vskip2mm 

\noindent
In this case $c_3(\scf) = 8$, $p_a(Y) = 5$ hence $r = 6$. 

\vskip2mm

\noindent
{\bf Claim 7.1.}\quad $\tH^2_\ast(E) = 0$. 

\vskip2mm

\noindent
\emph{Indeed}, $\tH^1(\scf(-1)) = 0$ hence, by 
Lemma~\ref{L:east1}(a), $E^\ast$ is 1-regular. In particular, one has 
$\tH^1(E^\ast(l)) = 0$ for $l \geq 0$ hence, by Serre duality, 
$\tH^2(E(l)) = 0$ for $l \leq -4$. 
On the other hand, $\tH^2(E(l)) \simeq \tH^2(\scf(l+2)) = 0$ for $l \geq -3$. 

\vskip2mm

\noindent
{\bf Claim 7.2.}\quad $E$ \emph{is $0$-regular}. 

\vskip2mm 

\noindent
\emph{Indeed}, $\tH^2(E(-2)) \simeq \tH^2(\scf) = 0$ and $\tH^3(E(-3)) \simeq 
\tH^0(E^\ast(-1))^\ast = 0$. On the other hand, $\tH^1(E(-1)) \simeq 
\tH^2(P(E)^\ast(-1)) \simeq \tH^1(P(E)(-3))^\ast$. As we saw at the beginning 
of the proof of the proposition, $P(E)$ satisfies the hypothesis of the 
proposition and has the same Chern classes as $E$. In particular, one has an 
exact sequence$\, :$ 
\[
0 \lra (\rho - 2)\sco_\piii \lra P(E) \lra \scf^\prim \lra 0
\] 
where $\rho = \text{rk}\, P(E)$ and $\scf^\prim$ is a stable rank 2 reflexive 
sheaf with the same Chern classes as $\scf$. It follows that the spectrum 
of $\scf^\prim$ is either $(0,-1,-1,-2)$ or $(-1,-1,-1,-1)$. But 
$\tH^0(P(E)(-1)) \simeq \tH^1(E^\ast(-1)) \simeq \tH^2(E(-3))^\ast = 0$ and 
Claim 6.1 above implies that the spectrum of $\scf^\prim$ must be the latter 
one. One deduces, now, that $\tH^1(P(E)(-3)) \simeq \tH^1(\scf^\prim (-1)) = 
0$ hence $\tH^1(E(-1)) = 0$ hence $E$ is 0-regular.   

\vskip2mm 

\noindent
{\bf Claim 7.3.}\quad $\tH^1_\ast(E) \simeq 2k(2)$. 

\vskip2mm

\noindent
\emph{Indeed}, $\tH^1(E(l)) \simeq \tH^1(\scf(l+2)) = 0$ for $l \leq -3$ and 
$\h^1(E(-2)) \simeq \h^1(\scf) = - \chi(\scf) = 2$. 
On the other hand, since $E$ is 0-regular, one has $\tH^1(E(l)) = 0$ for 
$l \geq -1$. 

\vskip2mm

\noindent
Since $\text{rk}\, E = 6$, Claim 7.1 and Claim 7.3 above imply that 
$E \simeq 2\Omega_\piii(2)$.     

\vskip2mm

\noindent
{\bf Case 8.}\quad  \textit{The spectrum of $\scf$ is $(-1,-1,-1,-2)$}.  

\vskip2mm 

\noindent
In this case $c_3(\scf) = 10$, $p_a(Y) = 6$ hence $r = 7$. 

\vskip2mm

\noindent
{\bf Claim 8.1.}\quad $\tH^2_\ast(E) \simeq k(3)$. 

\vskip2mm

\noindent
One uses an argument similar to that used in the proof of Claim 7.1 above. 

\vskip2mm

\noindent
{\bf Claim 8.2.}\quad $E$ \emph{is $0$-regular}. 

\vskip2mm 

\noindent
\emph{Indeed}, $\tH^2(E(-2)) \simeq \tH^2(\scf) = 0$ and $\tH^3(E(-3)) 
\simeq \tH^0(E^\ast(-1))^\ast = 0$. On the other hand, $\tH^1(E(-1)) \simeq 
\tH^2(P(E)^\ast(-1)) \simeq \tH^1(P(E)(-3))^\ast$. As in the proof of 
Claim 7.2 above, one has an exact sequence$\, :$
\[
0 \lra (\rho - 2)\sco_\piii \lra P(E) \lra \scf^\prim \lra 0
\]
with $\scf^\prim$ a stable rank 2 reflexive sheaf having the same Chern 
classes as $\scf$. It follows that $\scf^\prim$ has spectrum 
$(-1,-1,-1,-2)$ (this is the only spectrum allowed by these Chern classes). 
In particular, $\tH^1(P(E)(-3)) \simeq \tH^1(\scf^\prim(-1)) = 0$ hence 
$\tH^1(E(-1)) = 0$ hence $E$ is 0-regular. 

\vskip2mm

\noindent
{\bf Claim 8.3.}\quad $\tH^1_\ast(E) \simeq k(2)$. 

\vskip2mm 

\noindent
One uses the same kind of argument as in the proof of Claim 7.3 above. 

\vskip2mm

\noindent
Now, taking into account Claim 8.1 and Claim 8.3, Beilinson's 
theorem~\ref{T:beilinson} implies that$\, :$ 
\[
E(-1) \simeq (\tH^0(E(-1))\otimes_k \sco_\piii) \oplus 
(\tH^1(E(-2)) \otimes_k \Omega_\piii^1(1)) \oplus 
(\tH^2(E(-3)) \otimes_k \Omega_\piii^2(2))\, .
\]
Since $\text{rk}\, E = 7$ and since $\Omega_\piii^2(3) \simeq 
\text{T}_\piii(-1)$ it follows that $E \simeq \sco_\piii(1) \oplus 
\text{T}_\piii(-1) \oplus \Omega_\piii(2)$.   

\vskip2mm

\noindent
{\bf Case 9.}\quad  \textit{The spectrum of $\scf$ is $(-1,-1,-2,-2)$}. 

\vskip2mm 

\noindent
In this case $c_3(\scf) = 12$, $p_a(Y) = 7$ hence $r = 8$. 
One proves that $\tH^2_\ast(E) \simeq 2k(3)$ (as in Claim 7.1), that 
$E$ is 0-regular (as in Claim 8.2) and that $\tH^1_\ast(E) = 0$ (as in 
Claim 7.3). Since $\text{rk}\, E = 8$ and since $c_1(E) = 4$ it follows 
that $E \simeq 2\sco_\piii(1) \oplus 2\text{T}_\piii(-1)$. 
\end{proof}

\newpage

\section{The case $c_1 = 4$, $5 \leq c_2 \leq 8$ on 
$\p^n$, $n \geq 4$} 
\label{S:c1=4n4}

In this section we look at the list of globally generated vector bundles 
$E$ on $\piii$ with $c_1 = 4$, $5 \leq c_2 \leq 8$ and $\tH^0(E(-2)) = 0$ 
established in the previous sections and try to decide which of them extend 
to higher dimensional projective spaces. Our main tools are the results 
\eqref{L:h1=0}--\eqref{R:lift2} from Section~\ref{S:preliminaries}. We also 
use a natural correspondence, provided by Serre's construction, between 
a class of vector bundles $E$ on $\piv$ with $c_1 = 4$ and $\tH^i(E^\ast) = 0$, 
$i = 0,\, 1$, (including all g.~g. vector bundles satisfying these 
conditions) and nonsingular surfaces $Y$ in $\piv$. A very useful fact 
furnished by this correspondence, allowing one to show that many g.~g. vector 
bundles with $c_1 = 4$ on $\piii$ do not extend to $\piv$, is a result of 
Lanteri \cite{l} and Okonek \cite{ok3} providing a lower bound for the 
sectional genus of $Y$.  

The most interesting bundle appearing in this classification is a rank 5 
vector bundle on $\piv$ having a trivial subbundle of rank 2. The quotient 
bundle is the Sasakura rank 3 vector bundle on $\piv$ (conveniently twisted) 
(cf. Abo, Decker and Sasakura \cite{ads}). We investigate thoroughly this 
bundle in Case 6 of the proof of Prop.~\ref{P:c1=4c2=8n4}. We also show, in 
Case 8 of the same proof, that it does not extend to $\pv$.  

\vskip2mm

We need to recall some facts concerning the geometry of surfaces in $\piv$. 
Most of these facts are extracted from the papers of Okonek \cite{ok1}, 
\cite{ok2}, \cite{ok3}. Let $Y \subset \piv$ be a nonsingular surface of 
degree $d$, \emph{geometric genus} $p_g := \h^0(\omega_Y) = \h^2(\sco_Y)$ and 
\emph{irregularity} $q := \h^1(\sco_Y) = \h^1(\omega_Y)$. We assume that 
$Y$ is nondegenerate (i.e., contained in no hyperplane) and that 
$\omega_Y(1)$ is \emph{globally generated}. [The latter condition is not 
really restrictive$\, :$ by Adjunction Theory (cf. Sommese \cite{so} and 
Van de Ven \cite{vdv}), there are only three types of nondegenerate surfaces 
in $\piv$ not satisfying this condition, namely the rational normal scrolls 
(with $d = 3$), the Veronese surfaces (with $d = 4$), and the elliptic 
scrolls with $d = 5$. These elliptic scrolls are described, for example, 
in Hartshorne's book \cite[V,~Example~2.11.6~and~Ex.~2.12,~2.13]{hag}.] 

\vskip2mm 

Let $C := H\cap Y$ be a nonsingular hyperplane section of $Y$. The genus 
$\pi$ of $C$ is called the \emph{sectional genus} of $Y$. 
By the \emph{Adjunction Formula}, $\omega_Y \vb C \simeq \omega_C(-1)$. One 
has $\tH^1(\sco_Y(l)) = 0$ for $l < 0$ (hence $\tH^1(\omega_Y(l)) = 0$ for 
$l > 0$) by \emph{Kodaira Vanishing}. Moreover, since $Y$ is not a Veronese 
surface, \emph{Severi's Theorem} implies that $Y$ is \emph{linearly normal}, 
i.e., $\h^0(\sco_Y(1)) = 5$ (hence $\tH^1(\sci_Y(1)) = 0$). 

\vskip2mm 

Now, let $r := \h^0(\omega_Y(1)) + 1$. By Serre's method of extensions, 
there exists an exact sequence$\, :$ 
\begin{equation}\label{E:oeiy(4)} 
0 \lra (r - 1)\sco_\piv \lra E \lra \sci_Y(4) \lra 0
\end{equation} 
with $E$ locally free of rank $r$, with $c_1(E) = 4$, and such that, 
in the dual sequence$\, :$ 
\begin{equation}\label{E:oeiy(4)dual}
0 \lra \sco_\piv(-4) \lra E^\ast \lra (r - 1)\sco_\piv  
\overset{\delta}{\lra} \omega_Y(1) \lra 0\, ,
\end{equation}
the connecting map $\delta$ is (identified with) the evaluation 
map $\tH^0(\omega_Y(1))\otimes_k\sco_Y \ra \omega_Y(1)$. 

\newpage

\noindent 
It follows 
that $\tH^i(E^\ast) = 0$, $i = 0,\, 1$. Morever, using Kodaira Vanishing$\, :$ 
\[
\tH^2(E^\ast) \simeq \tH^2(E(-5))^\ast \simeq \tH^2(\sci_Y(-1))^\ast 
\simeq \tH^1(\sco_Y(-1))^\ast = 0\, .
\] 

\begin{remark}\label{R:s=qt=pg} 
Of course, the vector bundle $E$ associated above to a surface $Y \subset 
\piv$ is globally generated iff $\sci_Y(4)$ is.  
\emph{Conversely}, by Lemma~\ref{L:h0h1delta}, any globally 
generated vector bundle $E$ on $\piv$ with $c_1(E) = 4$ and such that 
$\tH^i(E^\ast) = 0$, $i = 0,\, 1$, can be obtained in this way. It is amusing 
to notice that, in this case, for any hyperplane $H \subset \piv$, 
one has$\, :$
\begin{gather*}
\tH^0(E_H^\ast) \simeq \tH^1(E^\ast(-1)) \simeq \tH^3(E(-4))^\ast \simeq 
\tH^3(\sci_Y)^\ast \simeq \tH^2(\sco_Y)^\ast \simeq \tH^0(\omega_Y)\, ,\\
\tH^1(E_H^\ast) \simeq \tH^2(E^\ast(-1)) \simeq \tH^2(E(-4))^\ast \simeq 
\tH^2(\sci_Y)^\ast \simeq \tH^1(\sco_Y)^\ast \simeq \tH^1(\omega_Y)\, .
\end{gather*}
It follows, from Lemma~\ref{L:h0h1}, that $E_H \simeq G \oplus t\sco_H$ 
with $G$ defined by an exact sequence $0 \ra s\sco_H \ra F \ra G \ra 0$, 
where $F$ is a globally generated vector bundle on $H$ such that 
$\tH^i(F^\ast) = 0$, $i = 0,\, 1$, $t = \h^0(E_H^\ast) = p_g$, and 
$s = \h^1(E_H^\ast) = q$.     
\end{remark} 

\begin{lemma}\label{L:rciofe}
The rank $r$ and the Chern classes $c_i = c_i(E)$, $i = 1,\ldots ,4$, of the 
vector bundle $E$ defined by the extension \eqref{E:oeiy(4)} are related to the 
invariants of the surface $Y$ by the following formulae$\, :$ 
\[
r = 1 + \pi - q + p_g\, ,\  c_1 = 4\, ,\  c_2 = d\, ,\  c_3 = 2\pi - 2\, ,\  
c_4 = (C+K)^2\, ,
\]
where $K$ is a canonical divisor on $Y$. 
\end{lemma}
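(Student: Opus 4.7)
The plan is a direct computation, treating the rank and each Chern class separately.

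The rank formula follows from Lemma~\ref{L:h0h1delta} (as spelled out in Remark~\ref{R:s=qt=pg}), which identifies $r-1 = \h^0(\omega_Y(1))$. To compute this cohomology, I would use the short exact sequence $0 \to \omega_Y \to \omega_Y(1) \to \omega_C \to 0$, whose quotient is $\omega_Y(1)\vb C$, identified with $\omega_C$ via the adjunction formula $\omega_Y(1)\vb C \simeq \omega_C$. Kodaira vanishing (applied via Serre duality to the vanishing $\tH^1(\sco_Y(-1)) = 0$ already noted in the introduction to this section) gives $\tH^1(\omega_Y(1)) = 0$, so the long exact sequence yields $\h^0(\omega_Y(1)) = \h^0(\omega_Y) + \h^0(\omega_C) - \h^1(\omega_Y) = p_g + \pi - q$, and hence $r = 1 + \pi - q + p_g$.

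The formulas $c_1 = 4$ and $c_2 = d$ are immediate from $c(E) = c(\sci_Y(4))$ (obtained from \eqref{E:oeiy(4)}) together with $c_1(\sci_Y) = 0$ and $c_2(\sci_Y(4)) = [Y]$ of degree $d$ in $\piv$. For $c_3$ and $c_4$, the strategy is to compute $\chi(E(l))$ in two ways and match coefficients. On one hand, Hirzebruch--Riemann--Roch on $\piv$ (using the explicit $\text{td}(\piv) = 1 + \tfrac{5}{2}h + \tfrac{35}{12}h^2 + \tfrac{25}{12}h^3 + h^4$) expresses $\chi(E(l))$ as a polynomial of degree $4$ in $l$ whose coefficients are linear in $r, c_1, c_2, c_3, c_4$. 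On the other hand, \eqref{E:oeiy(4)} gives
\[
\chi(E(l)) = (r-1)\binom{l+4}{4} + \binom{l+8}{4} - \chi(\sco_Y(l+4)),
\]
and Riemann--Roch on the surface $Y$ provides
\[
\chi(\sco_Y(m)) = (1 - q + p_g) + \tfrac{1}{2}\bigl(m^2 d - m(2\pi - 2 - d)\bigr),
\]
using $\chi(\sco_Y) = 1 - q + p_g$, $H_Y^2 = d$, and the adjunction relation $H_Y \cdot K_Y = 2\pi - 2 - d$ on $Y$.

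Equating the two polynomial expressions in $l$, the top three coefficients serve as consistency checks for $r, c_1, c_2$, while the remaining two coefficients yield $c_3 = 2\pi - 2$ and $c_4 = K_Y^2 + 4\pi - 4 - d$. The latter coincides with $(C + K)^2 = H_Y^2 + 2 H_Y \cdot K_Y + K_Y^2 = d + 2(2\pi - 2 - d) + K_Y^2$. The computation is routine but bookkeeping-heavy; the only real obstacle is tracking signs and factorials carefully when expanding both sides, which can be sanity-checked on an example such as the degree-$4$ del Pezzo surface in $\piv$ (where $r = 2$ and $c_3 = c_4 = 0$).
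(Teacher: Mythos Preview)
Your proposal is correct. The treatment of $r$, $c_1$, and $c_2$ matches the paper exactly. For $c_3$ and $c_4$, however, you take a genuinely different route.

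The paper argues geometrically. For $c_3$ it restricts \eqref{E:oeiy(4)} to a general hyperplane $H$ and invokes the computation $c_3(\sci_{C,H}(4)) = 2\pi - 2$ already carried out in Remark~\ref{R:reflexive}. For $c_4$ it builds an auxiliary rank~$4$ bundle $E^\prim$ from three generic sections $\delta_1,\delta_2,\delta_3$ of $\omega_Y(1)$, and then reads off $c_4(E) = c_4(E^\prim)$ as the length of the degeneracy locus $(\delta_2)_0 \cap (\delta_3)_0$, which is $(C+K)^2$ by construction. Your approach instead compares two expressions for the polynomial $\chi(E(l))$---one via Theorem~\ref{T:rronp4} on $\piv$, the other via \eqref{E:oeiy(4)} combined with surface Riemann--Roch on $Y$---and extracts $c_3$ and $c_4$ from the low-order coefficients.

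The paper's argument is more transparent about the geometric meaning of $c_3$ and $c_4$ as degeneracy data, and it feeds cleanly into the subsequent derivation of the Double Point Formula (Corollary~\ref{C:dpf}), which the paper obtains by applying Riemann--Roch to $E^\ast$ \emph{after} Lemma~\ref{L:rciofe} is in hand. Your method is more mechanical but entirely self-contained: it avoids constructing $E^\prim$ and the careful choice of sections, at the cost of a coefficient-matching computation. Both are valid; your route would in fact let one reach Corollary~\ref{C:dpf} in a single stroke by also imposing $\chi(E^\ast)=0$.
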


\begin{proof}
One deduces, from the Adjunction Formula, an exact sequence$\, :$ 
\[
0 \lra \omega_Y \lra \omega_Y(1) \lra \omega_C \lra 0\, . 
\]
Since $\tH^1(\omega_Y(1)) = 0$ (by Kodaira Vanishing), it follows that  
$\h^0(\omega_Y(1)) = \pi - q + p_g$ whence the formula for $r$. 

The formulae for $c_1$ and $c_2$ are clear. Using the exact sequence$\, :$
\[
0 \lra (r - 1)\sco_H \lra E_H \lra \sci_{C,H}(4) \lra 0\, ,
\]
one gets $c_3(E) = c_3(E_H) = c_3(\sci_{C,H}(4)) = 2\pi - 2$ (see 
Remark~\ref{R:reflexive}). 

As for $c_4$, choosing a convenient basis of $(r - 1)\sco_\piv$, one can 
assume that the components $\delta_1,\ldots ,\delta_{r-1} \in 
\tH^0(\omega_Y(1))$ of $\delta$ 
have the property that the scheme $(\delta_2)_0 \cap (\delta_3)_0$ 
is 0-dimensional and that $(\delta_1)_0 \cap (\delta_2)_0 \cap 
(\delta_3)_0 = \emptyset$ (here $(\delta_i)_0$ denotes the divisor of zeroes 
of $\delta_i$). To the epimorphism $\delta^\prim : 3\sco_\piv \ra 
\omega_Y(1)$ defined by $\delta_1,\, \delta_2,\, \delta_3$ it corresponds a 
Serre extension$\, :$ 
\[
0 \lra 3\sco_\piv \overset{\sigma}{\lra} E^\prim \lra \sci_Y(4) \lra 0
\] 
with $E^\prim$ locally free. Dualizing, one gets an exact sequence$\, :$ 
\[
0 \lra \sco_\piv(-4) \lra E^{\prim \ast} \overset{\sigma^\ast}{\lra} 
3\sco_\piv \overset{\delta^\prim}{\lra} \omega_Y(1) \lra 0\, .
\]
Let $\sigma_1,\, \sigma_2,\, \sigma_3 \in \tH^0(E^\prim)$ be the components of 
$\sigma$. Using the short exact sequence of (vertical) complexes$\, :$ 
\[
\begin{CD}
\omega_Y(1) @>{\text{id}}>> \omega_Y(1) @>>> 0\\
@A{(\delta_2\, ,\, \delta_3)}AA @AA{\delta^\prim}A @AAA\\
2\sco_\piv @>>> 3\sco_\piv @>{\text{pr}_1}>> \sco_\piv\\
@AAA @AA{\sigma^\ast}A @AA{\sigma_1^\ast}A\\
0 @>>> E^{\prim \ast} @>{\text{id}}>> E^{\prim \ast}
\end{CD}
\]
one sees that $\Cok \sigma_1^\ast \simeq \Cok (\delta_2\, ,\, \delta_3)$. It 
follows that $c_4(E) = c_4(\sci_Y(4)) = c_4(E^\prim) = (C+K)^2$. 
\end{proof} 

\begin{thm}[Riemann-Roch]\label{T:rronp4}
Let $E$ be a rank $r$ vector bundle on $\piv$ with Chern classes $c_1,\, c_2,
\, c_3,\, c_4$. Then, $\forall \, l \in \z$$\, :$ 
\begin{gather*}
\chi(E(l)) = \chi(((r-1)\sco_\piv \oplus \sco_\piv(c_1))(l)) - 
\frac{1}{2}(l+2)(l+3)c_2 + \frac{1}{2}(l+2)(c_3 - c_1c_2) \, +\\
+ \, \frac{(2c_1 + 3)(c_3 - c_1c_2) + c_2^2 + c_2 - 2c_4}{12}\, .
\end{gather*}
\end{thm}

\begin{proof}
One can use the elementary approach suggested in the proof of 
\cite[Thm.~2.3]{ha}. 
\end{proof}

\begin{cor}[Schwarzenberger]\label{C:schw} 
The Chern classes of a vector bundle on $\piv$ satisfy the relation   
$(2c_1 + 3)(c_3 - c_1c_2) + c_2^2 + c_2 \equiv 2c_4\  ({\fam0 mod}\  12)$.
\qed 
\end{cor}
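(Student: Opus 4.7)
The plan is to deduce this corollary directly from the Riemann--Roch formula of Theorem~\ref{T:rronp4}. The key observation is that $\chi(E(l))$ is an integer for every $l \in \z$, while the right-hand side of the Riemann--Roch formula is split into four summands, three of which are manifestly integers. The remaining summand is the one that has a denominator of $12$, and forcing it to be an integer will give exactly the congruence.

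More concretely, I would specialize the formula to $l = 0$, which yields
\[
\chi(E) \;=\; \chi\!\left(\sco_\piv^{\op r-1} \oplus \sco_\piv(c_1)\right) \;-\; 3c_2 \;+\; (c_3 - c_1c_2) \;+\; \frac{(2c_1+3)(c_3-c_1c_2) + c_2^2 + c_2 - 2c_4}{12}\, .
\]
The first term on the right is an integer since $\chi(\sco_\piv(a)) = \binom{a+4}{4}$ is a binomial coefficient, and the middle two terms $-3c_2$ and $c_3 - c_1c_2$ are obviously integers. Rearranging,
\[
\frac{(2c_1+3)(c_3-c_1c_2) + c_2^2 + c_2 - 2c_4}{12} \;=\; \chi(E) \;-\; \chi\!\left(\sco_\piv^{\op r-1} \oplus \sco_\piv(c_1)\right) \;+\; 3c_2 \;-\; (c_3 - c_1c_2)
\]
lies in $\z$. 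Multiplying through by $12$ gives the congruence
\[
(2c_1 + 3)(c_3 - c_1c_2) + c_2^2 + c_2 \;\equiv\; 2c_4 \pmod{12}\, .
\]

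There is essentially no obstacle here: the entire content of Schwarzenberger's congruence is already packaged into the statement of Theorem~\ref{T:rronp4}, together with the trivial integrality of $\chi(E)$. The only minor care needed is to make sure one picks a specialization of $l$ at which all the other summands are visibly integers; $l = 0$ suffices and avoids any half-integer contribution from the $\frac{1}{2}(l+2)(l+3)c_2$ and $\frac{1}{2}(l+2)(c_3 - c_1c_2)$ pieces (in fact $(l+2)(l+3)$ is always even, but taking $l = 0$ keeps the verification wholly elementary).
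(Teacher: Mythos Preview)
Your proof is correct and is exactly the intended argument: the paper marks the corollary with \qed, indicating it follows immediately from the Riemann--Roch formula of Theorem~\ref{T:rronp4} by the integrality of $\chi(E)$, which is precisely what you have written out.
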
 

\begin{cor}[Double Point Formula]\label{C:dpf}
Let $Y$ be a nondegenerate, nonsingular surface in $\piv$ with $\omega_Y(1)$ 
globally generated. Then, using the notation introduced at the beginning of 
this section,  
\[
(C + K)^2 = \frac{(d-3)(d-4)}{2} + 1 - \pi -6q + 6p_g\, .
\] 
\end{cor}

\begin{proof}
Let $E$ be the vector bundle defined by the extension \eqref{E:oeiy(4)}. As 
we have already noticed, one has $\tH^i(E^\ast) = 0$, $i = 0,\, 1,\, 2$. 
Moreover$\, :$ 
\begin{gather*}
\tH^3(E^\ast) \simeq \tH^1(E(-5))^\ast \simeq \tH^1(\sci_Y(-1))^\ast = 0\, ,\\
\tH^4(E^\ast) \simeq \tH^0(E(-5))^\ast \simeq \tH^0(\sci_Y(-1))^\ast = 0\, .
\end{gather*}
Consequently, $\chi(E^\ast) = 0$. One applies, now, the Riemann-Roch formula 
to $E^\ast$ and one takes into account Lemma~\ref{L:rciofe}. 
\end{proof} 

\begin{note} The Double Point Formula, as stated in Hartshorne's book 
\cite[Appendix~A,~Example~4.1.3]{hag}, asserts that$\, :$ 
\[
d^2 - 10d - 5\, C\cdot K - 2K^2 + 12\chi(\sco_Y) = 0\, .
\] 
Since $C^2 = d$ and $C\cdot K = 2\pi - 2 - d$ (by the Adjunction Formula), 
it turns out that this formula is equivalent to the one from Cor.~\ref{C:dpf}. 
\end{note}

The following lemma is a weak version of results due to Lanteri~\cite{l}, in 
the case $d = 7$, and to Okonek~\cite[Lemma~2.1]{ok3}, in the case $d = 8$.  
It will be very 
helpful in showing that many globally generated vector bundles on $\piii$ do 
not extend to $\piv$ as globally generated vector bundles. We include, for 
completeness, an argument extracted from the paper of Okonek. 

\begin{lemma}\label{L:lanok}
Let $Y$ be a nonsingular surface of degree $d$ in $\piv$ and let 
$C := H \cap Y$ be a nonsingular hyperplane section of $Y$. If $5 \leq d 
\leq 8$ and if $\omega_Y(1)$ is globally generated then the genus $\pi$ of $C$ 
satisfies the inequality $\pi \geq d-3$. 
\end{lemma}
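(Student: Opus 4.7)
The plan is to assume $\pi \leq d - 4$ for contradiction. Set $L := \omega_Y(1) = K + C$ on $Y$; by adjunction $L \cdot C = (2\pi - 2 - d) + d = 2\pi - 2$, and since $L$ is globally generated it is nef, whence $L^2 \geq 0$. The case $\pi = 0$ is immediate: $L \vb C \simeq \omega_C$ would be globally generated on a smooth rational curve of degree $d \geq 5$ in $H \simeq \piii$, contradicting $\omega_{\p^1} \simeq \sco_{\p^1}(-2)$. So I may assume $1 \leq \pi \leq d - 4$.

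I then combine two tools. The Hodge Index theorem (applicable since $C^2 = d > 0$) gives $L^2 \leq (L\cdot C)^2/C^2 = 4(\pi-1)^2/d$, while the Double Point Formula (Cor.~\ref{C:dpf}) gives
\[
L^2 = (C+K)^2 = \tfrac{1}{2}(d-3)(d-4) + 1 - \pi + 6(p_g - q).
\]
Together with $L^2 \in \z_{\geq 0}$ and the integrality of $p_g - q$, a direct tabulation over the finitely many pairs $(d,\pi)$ with $5 \leq d \leq 8$ and $1 \leq \pi \leq d - 4$ shows that almost every admissible value of $L^2$ produces a non-integer $p_g - q$, and is thereby excluded.

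The remaining cases require considering the morphism $\phi := \phi_{\vert L \vert} \colon Y \to \p^{h^0(L)-1}$, where $h^0(L) = \pi + p_g - q$ (as computed in the proof of Lemma~\ref{L:rciofe}). When $\pi = 1$ one has $L\cdot C = 0$, so by the equality case of Hodge Index $L$ is numerically trivial; since $L$ is globally generated and any effective numerically trivial divisor on a polarized surface is zero, this forces $L \simeq \sco_Y$. Then $K = -C$, making $Y$ a del Pezzo surface of degree $d$ with $h^0(-K) = d+1 \geq 6$, contradicting the linear normality $h^0(\sco_Y(1)) = 5$ given by Severi's theorem for $d \geq 5$. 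The other $L^2 = 0$ cases correspond to $\phi$ having image a curve, and the integrality test has already disposed of them.

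Only two pairs survive the integrality test with $L^2 \geq 1$: namely $(d,\pi,L^2) = (8,3,2)$, for which $p_g - q = -1$ forces $h^0(L) = 2$ and so $\phi$ maps to $\p^1$, giving $L^2 = 0$ and contradicting $L^2 = 2$; and $(d,\pi,L^2) = (8,4,1)$, for which $p_g - q = -1$ gives $h^0(L) = 3$ and the identity $L^2 = \deg(\phi)\cdot \deg(\phi(Y)) = 1$ forces $\phi$ to be birational onto $\phi(Y) = \p^2$, making $Y$ rational with $p_g = q = 0$, contradicting $p_g - q = -1$. This last pair $(8,4)$ is the main obstacle: the numerical data alone are inconclusive there, and one must invoke the birational geometry of the adjunction morphism $\phi$ to rule it out.
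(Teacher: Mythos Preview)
Your proof is correct. Both your argument and the paper's use the Double Point Formula and terminate in the same way on the case $(d,\pi)=(8,4)$: the adjunction map $\phi_{|L|}:Y\to\pii$ is birational since $L^2=1$, contradicting $q>0$ (equivalently, $p_g-q=-1$). The filtering of the remaining cases is genuinely different, however. The paper first shows $p_g=0$ directly (since $\pi<d-3$ gives $\deg\omega_C(-1)<0$, whence $\tH^0(\omega_Y)=0$ via the adjunction sequence), then obtains $q>0$ from the relation $\pi-d+3=\text{h}^1(\sco_Y(1))-q$ coming from Severi's theorem and the Euler characteristic of $0\to\sco_Y\to\sco_Y(1)\to\sco_C(1)\to 0$; the nonnegativity of $\text{h}^1(\sco_Y(1))$ then forces $\pi=4$ immediately when $d=8$, and $\text{h}^0(\omega_Y(1))=\pi-q\geq 1$ gives $\pi\geq 2$ uniformly. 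You instead bound $L^2$ from above via the Hodge Index theorem and sieve on the integrality of $p_g-q$ in the Double Point Formula; this is more intersection-theoretic and avoids separating $p_g$ from $q$, at the cost of an extra surviving case $(8,3)$ and a separate del Pezzo argument for $\pi=1$. Both routes are clean; the paper's extracts finer cohomological data along the way, while yours is perhaps more elementary and portable.
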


\begin{proof}
Recall, from the proof of Lemma~\ref{L:rciofe}, the formula$\, :$ 
\begin{equation}
\label{E:h0omegay1}
\text{h}^0(\omega_Y(1)) = \pi - q + p_g\, .
\end{equation} 
Assume, now, that $d \geq 5$ and that $\tH^0(\omega_Y(-1)) = 0$. By 
Serre duality, $\tH^2(\sco_Y(1)) = 0$. By Severi's theorem, 
$\text{h}^0(\sco_Y(1)) = 5$. Applying $\chi(-)$ to the exact sequence$\, :$  
\[
0 \lra \sco_Y \lra \sco_Y(1) \lra \sco_C(1) \lra 0
\]
one derives that$\, :$ 
\begin{equation}
\label{E:pi-d+3}
\pi - d + 3 = \text{h}^1(\sco_Y(1)) - q + p_g\, .
\end{equation} 
Using these relations, the lemma can be proven as it follows. Assume that 
$5 \leq d \leq 8$ and that $\pi < d - 3$. It follows that $d > 2\pi - 2$ 
hence $\tH^1(\sco_C(1)) = 0$, hence $\tH^0(\omega_C(-1)) = 0$. Using the 
exact sequence deduced from the Adjunction Formula$\, :$
\begin{equation}\label{E:adjunction}
0 \lra \omega_Y(-1) \lra \omega_Y \lra \omega_C(-1) \lra 0
\end{equation}
(tensorized by $\sco_Y(l)$, for 
$l \leq 0$) one derives that $\tH^0(\omega_Y) = 0$, i.e., $p_g = 0$. Relation 
\eqref{E:pi-d+3} implies that $q > 0$. Since $\omega_Y(1)$ is globally 
generated, relation \eqref{E:h0omegay1} implies, now, that $\pi \geq 2$. 
This eliminates, already, the case $d = 5$. $\omega_Y(1)$ being globally 
generated, $(C + K)^2 \geq 0$. One deduces, now, from Cor.~\ref{C:dpf}, 
that $6q \leq 4 - \pi$ for $d = 6$, that $6q \leq 7 - 
\pi$ for $d = 7$, and that $6q \leq 11 - \pi$ for $d = 8$. In the cases 
$d = 6$ and $d = 7$ one gets that $q = 0$ (recall that $\pi \geq 2$), a 
\emph{contradiction}. It remains that $d = 8$  and $q = 1$ hence, from 
\eqref{E:pi-d+3}, $\pi = 4$. \eqref{E:h0omegay1} implies, now, that 
$\text{h}^0(\omega_Y(1)) = 3$.  
The base point free linear system $\vert \, C + K\, \vert$
defines a morphism $\varphi : Y \ra \pii$ which must be birational 
because, by Cor.~\ref{C:dpf}, $(C + K)^2 = 1$. But this \emph{contradicts} 
the fact that $\text{h}^1(\sco_Y) = q > 0$.    
\end{proof} 

\begin{cor}\label{C:lanok}
Let $E$ be a globally generated vector bundle on $\piv$ with $c_1 = 4$ and 
$5 \leq c_2 \leq 8$. Then $c_3 \geq 2c_2 - 8$. 
\qed
\end{cor}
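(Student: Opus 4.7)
The plan is to reduce an arbitrary globally generated $E$ with $c_1=4$ to one satisfying $\tH^i(E^\ast)=0$, $i=0,1$, then translate the inequality into the language of surfaces in $\piv$ via the Serre construction and invoke Lemma \ref{L:lanok}.

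First, by Lemma \ref{L:h0h1} we may write $E \simeq G \oplus \sco_\piv^{\op t}$ with $0 \to \sco_\piv^{\op s} \to F \to G \to 0$ and $\tH^i(F^\ast)=0$, $i=0,1$. Since all three sheaves $E$, $G$, $F$ share the same total Chern class, it suffices to prove the inequality for $F$; so from now on assume $\tH^i(E^\ast) = 0$, $i=0,1$. By the general discussion following \eqref{E:oeiy}, taking $r-1$ general global sections gives an exact sequence
\[
0 \lra \sco_\piv^{\op r-1} \lra E \lra \sci_Y(4) \lra 0
\]
with $Y \subset \piv$ a locally Cohen--Macaulay surface which, for $n = 4$, is automatically nonsingular, reduced and irreducible (hence connected). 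The dual sequence, together with Lemma \ref{L:h0h1delta}, shows that $\omega_Y(1)$ is globally generated, and Lemma \ref{L:rciofe} yields $c_2(E) = d := \deg Y$ and $c_3(E) = 2\pi - 2$, where $\pi$ is the sectional genus of $Y$.

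Next I would verify that $Y$ is nondegenerate in $\piv$, which is the hypothesis needed to apply Lemma \ref{L:lanok}. If $Y$ were contained in a hyperplane $H$ of equation $h$, then $Y$ would be a surface of degree $d$ in $H \simeq \piii$, so $\sci_Y = (h, f)$ locally on $\piv$ with $\deg f = d \geq 5$. At any point $x \in H \setminus Y$ the stalk $\sci_{Y,x}$ equals $\sco_{\piv,x}$, but every global section of $\sci_Y(4)$ lies in $h\cdot \tH^0(\sco_\piv(3))$ (since $f$ has degree $d \geq 5 > 4$), and all such sections vanish at $x$. This contradicts the global generation of $\sci_Y(4)$, so $Y$ must be nondegenerate.

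Finally, Lemma \ref{L:lanok} applies (we have $5 \leq d \leq 8$, $Y$ nondegenerate nonsingular, and $\omega_Y(1)$ globally generated), giving $\pi \geq d-3$. Plugging into $c_3 = 2\pi - 2$ yields
\[
c_3 \;\geq\; 2(d-3) - 2 \;=\; 2d - 8 \;=\; 2c_2 - 8,
\]
as claimed. The only non-routine step is the nondegeneracy verification, but this is short once one uses that $d \geq 5$ forces every degree-$4$ section of $\sci_Y$ to be a multiple of the hyperplane equation.
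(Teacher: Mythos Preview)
Your proof is correct and follows exactly the route the paper intends: the corollary is marked with \qed\ precisely because it is the immediate translation of Lemma~\ref{L:lanok} via Lemma~\ref{L:rciofe} ($c_2=d$, $c_3=2\pi-2$), and you have simply made the implicit steps explicit. The nondegeneracy check you supply is the one non-automatic point, and your argument for it is fine (indeed, from $0\to\sco_\piv(3)\xrightarrow{h}\sci_Y(4)\to\sco_H(4-d)\to 0$ with $d\geq 5$ one sees every section of $\sci_Y(4)$ is a multiple of $h$, contradicting global generation along $H\setminus Y$); the reduction to $\tH^i(E^\ast)=0$ via Lemma~\ref{L:h0h1} is harmless but not strictly needed, since $\omega_Y(1)$ is globally generated directly from the dual sequence~\eqref{E:dual} without invoking Lemma~\ref{L:h0h1delta}.
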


The following lemma is also due to Okonek~\cite{ok3}. 

\begin{lemma}\label{L:d=8pi=5} 
Let $Y \subset \piv$ be a nonsingular surface of degree $d = 8$, with 
$\omega_Y(1)$ globally generated, and let $C = H \cap Y$ be a nonsingular 
hyperplane section of $Y$. Assume that $C$ has genus $\pi = 5$. Then 
$p_g = 0$ and $q \leq 1$. 
\end{lemma}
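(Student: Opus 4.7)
The plan is to combine the Double Point Formula (Corollary \ref{C:dpf}), the Hodge Index Theorem, and the fact that a globally generated line bundle on a surface is nef, all assembled around the single computation of $K\cdot C$. By the Adjunction Formula recalled at the beginning of the section, $\omega_C\simeq\omega_Y(1)|_C$ has degree $2\pi-2=8$, and since $C^2=d=8$, one deduces $K\cdot C=(2\pi-2)-C^2=0$.

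Next I would apply the Hodge Index Theorem to $K$ against the ample class $C$ (with $C^2=8>0$): the inequality $C^2\cdot K^2\leq (K\cdot C)^2=0$ forces $K^2\leq 0$. On the other hand, $\omega_Y(1)=\sco_Y(C+K)$ is globally generated, hence nef, so $(C+K)^2\geq 0$; expanding $(C+K)^2=C^2+2K\cdot C+K^2=8+K^2$ gives $K^2\geq -8$. Substituting $d=8$, $\pi=5$ in Corollary \ref{C:dpf} yields
\[
(C+K)^2 \;=\; \tfrac{(d-3)(d-4)}{2}+1-\pi-6q+6p_g \;=\; 6-6q+6p_g \;=\; 6\chi(\sco_Y).
\]
The bounds $0\leq (C+K)^2\leq 8$ together with divisibility by $6$ pin down $(C+K)^2\in\{0,6\}$, so $\chi(\sco_Y)\in\{0,1\}$, i.e.\ $p_g\in\{q-1,q\}$, with $K^2=-8$ in the first case and $K^2=-2$ in the second.

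The last step is to rule out $p_g\geq 1$. Any nonzero section of $\omega_Y$ cuts out an effective divisor $D\sim K$, and the intersection $D\cdot C=K\cdot C=0$, combined with ampleness of $C$, forces $D=0$; hence $K\sim 0$ and $K^2=0$, contradicting $K^2\in\{-2,-8\}$. Therefore $p_g=0$, and the inequality $p_g\geq q-1$ extracted above immediately gives $q\leq 1$. There is no single ``hard'' step in this argument---the whole thing is a short intersection-theoretic computation---and the only point that might require a moment of care is the verification that every inequality used ($K^2\leq 0$ from Hodge Index, $(C+K)^2\geq 0$ from nefness, and $D=0$ from ampleness of $C$) applies unconditionally to our surface.
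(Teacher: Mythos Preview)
Your proof is correct but proceeds differently from the paper's. Both arguments share the observation that $K\cdot C=0$ and that any effective divisor $D\sim K$ satisfying $D\cdot C=0$ against the ample class $C$ must be zero, so $p_g\geq 1$ forces $K\sim 0$. The difference lies in how the case $K\sim 0$ is eliminated. The paper invokes Schwarzenberger's congruence (Cor.~\ref{C:schw}): if $\omega_Y\simeq\sco_Y$, a Serre extension produces a rank~$2$ bundle on $\piv$ with $c_1=5$, $c_2=8$, which violates the congruence. You instead use the Hodge Index Theorem to obtain $K^2\leq 0$, hence $(C+K)^2\leq 8$; combined with nefness and the Double Point Formula this pins down $K^2\in\{-8,-2\}$, contradicting $K^2=0$. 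Your route is more elementary and entirely intrinsic to the surface (no Serre construction, no bundle-theoretic input), while the paper's argument ties the lemma to the ambient theme of Chern-class constraints on bundles on $\piv$. Note, incidentally, that the Hodge Index step is not strictly necessary: once $K\sim 0$, the Double Point Formula reads $8=(C+K)^2=6\chi(\sco_Y)$, which is already absurd.
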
 

\begin{proof}
We show, firstly, that one cannot have $\omega_Y \simeq \sco_Y$. 
\emph{Indeed}, if $\omega_Y \simeq \sco_Y$ then there would exist a rank 2 
vector bundle $F$ on $\piv$ given by an extension$\, :$  
\[
0 \lra \sco_\piv \lra F \lra \sci_Y(5) \lra 0\, .
\]
This bundle would have Chern classes $c_1(F) = 5$ and $c_2(F) = 8$ which would 
\emph{contradict} Schwarzenberger's condition Cor.~\ref{C:schw}. 

Since $\omega_Y$ is not isomorphic to $\sco_Y$ and since, by the 
Adjunction Formula, $C \cdot K = 2\pi - 2 - d = 0$, it follows that 
$\tH^0(\omega_Y) = 0$, i.e., $p_g = 0$. Cor.~\ref{C:dpf} and 
the fact that $\omega_Y(1)$ is globally generated implies, now$\, :$ 
\[
6 - 6q = (C+K)^2 \geq 0
\] 
hence $q \leq 1$. Notice, also, that relation \eqref{E:pi-d+3} implies 
that $\text{h}^1(\sco_Y(1)) = q$. 
\end{proof} 

It is often quite difficult to decide whether a concrete vector bundle 
(or coherent sheaf) is globally generated or not. The next lemmata, which 
illustrate this fact, will be used in the proof of Prop.~\ref{P:c1=4c2=8n4} 
below.  

\begin{lemma}\label{L:omegaquasibdl}
Let $\Pi \subset \p^n$ be a linear subspace with $0 \leq \dim \Pi < n$ and 
let $\scf$ be the kernel of an epimorphism $\e : \Omega_\p(1) \ra 
\sco_\Pi(1)$. Then $\scf(1)$ is not globally generated. 
\end{lemma}

\begin{proof}
We use induction on $n$. If $n = 1$ then one must have $\Pi = \{x\}$ for 
some point $x \in \p^1$. As $\Omega_{\p^1}(1) \simeq \sco_{\p^1}(-1)$ it follows 
that $\scf \simeq \sco_{\p^1}(-2)$ hence $\scf(1)$ is not globally generated. 

Assume, now, that $n \geq 2$ and that the assertion has been proven for all 
the lower dimensional projective spaces. Choose a hyperplane $H \supseteq 
\Pi$. If $\scf(1)$ is globally generated then so is $\scg(1)$, where $\scg$ 
is the kernel of $\e_H : \Omega_\p(1) \vb H \ra \sco_\Pi(1)$. One 
has $\Omega_\p(1) \vb H \simeq \sco_H \oplus \Omega_H(1)$. The cokernel of 
the component $\e_H^\prim : \sco_H \ra \sco_\Pi(1)$ of $\e_H$ 
is of the form $\sco_{\Pi^\prim}(1)$, where $\Pi^\prim = \Pi$ if 
$\e_H^\prim = 0$ and where $\Pi^\prim$ is a 1-codimensional linear 
subspace of $\Pi$ if $\e_H^\prim \neq 0$. Let $\e_H^\secund$ 
denote the composite morphism $\Omega_H(1) \ra \sco_\Pi(1) \ra 
\sco_{\Pi^\prim}(1)$. $\e_H^\secund$ is an epimorphism and there is an 
epimorphism  $\scg \ra \Ker \e_H^\secund$. 
We split the proof of the \emph{induction step} into three cases, 
according to the codimension of $\Pi$ in $\p^n$. 

\vskip2mm 

\noindent
{\bf Case 1.}\quad $\Pi$ \emph{is a hyperplane}. 

\vskip2mm

\noindent 
In this case, $H = \Pi$. Since there is no 
epimorphism $\Omega_H(1) \ra \sco_H(1)$ (any global section of 
$\text{T}_H$ vanishes in at least one point of $H$) it follows that  
$\e_H^\prim \neq 0$ hence $\Pi^\prim$ is a hyperplane in $H \simeq 
\p^{n-1}$. By the induction hypothesis, $(\Ker \e_H^\secund)(1)$ 
is not globally generated hence $\scg(1)$ is not globally generated.  

\vskip2mm

\noindent
{\bf Case 2.}\quad  $\Pi$ \emph{is a point} $\{x\}$. 

\vskip2mm 

\noindent
In this case, Corollary~\ref{C:ggquasib} implies that $\scf(1)$ is not globally 
generated. Indeed, recall that $P(\Omega_\p(2)) \simeq \Omega_\p^{n-2}(n-1)$.  
Using the coKoszul resolution of $\Omega_\p^{n-2}(n-1)$$\, :$ 
\[
0 \lra \sco_\p(-2) \lra W\otimes_k\sco_\p(-1) \lra 
\overset{2}{\textstyle \bigwedge}W\otimes_k\sco_\p 
\lra \Omega_\p^{n-2}(n-1) \lra 0\, , 
\] 
with $W = \tH^0(\sco_\p(1))^\ast$, one derives easily that the zero locus of 
any non-zero global section of $\Omega_\p^{n-2}(n-1)$ is either empty or a 
line.   

\vskip2mm

\noindent
{\bf Case 3.}\quad $0 < \dim \Pi < n-1$. 

\vskip2mm

\noindent
In this case, $0 \leq \dim \Pi^\prim < n-1$ and, by the induction hypothesis, 
$(\Ker \e_H^\secund)(1)$ is not globally generated hence $\scg(1)$ is 
not globally generated. 
\end{proof} 

\begin{lemma}\label{L:2x2n+1}
If $F$ is the kernel of an epimorphism $(2n + 1)\sco_{\p^n} \ra 2\sco_{\p^n}(1)$ 
then $F(1)$ is not globally generated. 
\end{lemma}

\begin{proof}
The epimorphism is defined by a $2 \times (2n+1)$ matrix $A$ of linear forms 
on $\p^n$. Since the composite map $(2n + 1)\sco_\p \ra 2\sco_\p(1)  
\overset{\text{pr}_1}{\lra} \sco_\p(1)$ is an epimorphism it follows that 
the linear forms on the first line of $A$ generate $\tH^0(\sco_\p(1))$. 
Modulo an automorphism of $(2n + 1)\sco_\p$, we can assume that the matrix 
$A$ is of the form$\, :$ 
\[
A = 
\begin{pmatrix}
X_0 & \ldots & X_n & 0 & \ldots & 0\\
* & \ldots & * & h_0 & \ldots & h_{n-1}
\end{pmatrix}
\]
Since $A$ defines an epimorphism, at least one of the linear forms 
$h_0, \ldots , h_{n-1}$ must be non-zero. Let $\Pi$ be the linear subspace of 
$\p^n$ of equations $h_0 = \cdots = h_{n-1} = 0$. Applying the Snake Lemma to 
the diagram$\, :$ 
\[
\begin{CD}
0 @>>> F @>>> (2n + 1)\sco_\p @>>> 2\sco_\p(1) @>>> 0\\
@. @VVV @VV{\text{pr}_1}V @VV{\text{pr}_1}V\\
0 @>>> \Omega_\p(1) @>>> (n + 1)\sco_\p @>>> \sco_\p(1) @>>> 0
\end{CD}
\] 
one gets an exact sequence $F \ra \Omega_\p(1) \ra \sco_\Pi(1) \ra 0$.  
Lemma~\ref{L:omegaquasibdl} implies, now, that $F(1)$ is not globally 
generated. 
\end{proof} 

\begin{prop}\label{P:c1=4c2=8n4}
Let $E$ be a globally generated vector bundle on $\p^n$, $n \geq 4$,  
with $c_1 = 4$, $5 \leq c_2 \leq 8$, ${\fam0 H}^i(E^\ast) = 0$, $i = 0, 1$, 
and with ${\fam0 H}^0(E_\Pi(-2)) = 0$ for every $3$-plane $\Pi \subset \p^n$. 
Then one of the following holds$\, :$ 
\begin{enumerate}
\item[(i)] $c_2 = 6$ and $E \simeq 4\sco_\p(1)\, ;$
\item[(ii)] $c_2 = 7$ and $E \simeq 3\sco_\p(1) \oplus 
{\fam0 T}_\p(-1)\, ;$
\item[(iii)] $c_2 = 7$, $n = 4$, and $E \simeq \sco_\piv(1) \oplus 
\Omega_\piv(2)\, ;$ 
\item[(iv)] $c_2 = 7$, $n = 5$, and $E \simeq \Omega_\pv(2)\, ;$
\item[(v)] $c_2 = 8$ and $E \simeq 2\sco_\p(1) \oplus 
2{\fam0 T}_\p(-1)\, ;$ 
\item[(vi)] $c_2 = 8$, $n = 4$, and $E \simeq {\fam0 T}_\piv(-1) \oplus 
\Omega_\piv(2)\, ;$
\item[(vii)] $c_2 = 8$, $n = 4$, and $E \simeq \sco_\piv(1) \oplus 
\Omega_\piv^2(3)\, ;$ 
\item[(viii)] $c_2 = 8$, $n = 4$, and, up to a linear change of coordinates, 
denoting by $(C_p,\, \delta_p)_{p \geq 0}$ the Koszul complex associated to the 
epimorphism $\delta_1 : 4\sco_\piv(-1) \oplus \sco_\piv(-2) \ra \sco_\piv$ 
defined by $X_0, \ldots ,X_3,X_4^2$, one has exact sequences$\, :$ 
\begin{gather*}
0 \lra \sco_\piv(-2) \xra{\delta_5(4)}  
\sco_\piv \oplus 4\sco_\piv(-1) \xra{\delta_4(4)}  
4\sco_\piv(1) \oplus 6\sco_\piv \lra E^\prim \lra 0\, ,\\
0 \lra E \lra E^\prim \overset{\phi}{\lra} \sco_\piv(2) \lra 0\, , 
\end{gather*}   
where $\phi : E^\prim \ra \sco_\piv(2)$ is any morphism with the property that 
${\fam0 H}^0(\phi(-1)) : {\fam0 H}^0(E^\prim(-1)) \ra {\fam0 H}^0(\sco_\piv(1))$ 
is injective $($such morphisms exist and are automatically epimorphisms$)$. 
\end{enumerate}
\end{prop} 

\begin{proof}  
If $\Pi \subset \p^n$ is a 3-plane then, by Lemma~\ref{L:h0h1},  
one has $E_\Pi \simeq G \oplus t\sco_\Pi$ for some bundle $G$ on $\Pi$ 
defined by an exact sequence 
$0 \ra s\sco_\Pi \ra F \lra G \ra 0$,   
where $F$ is a globally generated vector bundle on $\Pi$ with $\tH^i(F^\ast) 
= 0$, $i = 0,\, 1$. Notice that $\tH^0(F(-2)) = 0$ (because $\tH^0(E_\Pi(-2)) 
= 0$) hence $F$ is one of the bundles described in Prop.~\ref{P:c1=4c2=5n3}, 
Prop.~\ref{P:c1=4c2=6n3}, Prop.~\ref{P:c1=4c2=7n3} and 
Prop.~\ref{P:c1=4c2=8n3}. 

Now, Cor.~\ref{C:lanok} implies that $c_3 \geq 2c_2 - 8$.  
On the other hand, $c_3 = 0$ if $c_2 = 5$, by Prop.~\ref{P:c1=4c2=5n3} 
(hence the case $c_2 = 5$ cannot occur), 
$c_3 \leq 4$ if $c_2 = 6$, by Prop.~\ref{P:c1=4c2=6n3}, 
$c_3 \leq 8$ if $c_2 = 7$, by Prop.~\ref{P:c1=4c2=7n3}, 
and $c_3 \leq 12$ if $c_2 = 8$, by Prop.~\ref{P:c1=4c2=8n3}. 
Recall, also, that $c_3$ is even (see the last part of 
Remark~\ref{R:reflexive}). We split the proof into several cases according 
to the 
values of $c_2$ and $c_3$. 

\vskip2mm 

\noindent
{\bf Case 1.}\quad $c_2 = 6$ \emph{and} $c_3 = 4$. 

\vskip2mm

\noindent
In this case, $F$ is as in Prop.~\ref{P:c1=4c2=6n3}(iii), i.e., 
$F \simeq 4\sco_\Pi(1)$. Lemma~\ref{L:lift1} implies that 
$E \simeq 4\sco_\p(1)$. 

\vskip2mm

\noindent
{\bf Case 2.}\quad $c_2 = 7$ \emph{and} $c_3 = 8$. 

\vskip2mm

\noindent
In this case, $F$ is as in Prop.~\ref{P:c1=4c2=7n3}(v), i.e., 
$F \simeq 3\sco_\Pi(1) \oplus \text{T}_\Pi(-1)$. Lemma~\ref{L:lift1} 
implies that $E \simeq 3\sco_\p(1) \oplus \text{T}_\p(-1)$. 

\vskip2mm

\noindent
{\bf Case 3.}\quad $c_2 = 7$ \emph{and} $c_3 = 6$.

\vskip2mm 

\noindent
In this case, $F$ is as in Prop.~\ref{P:c1=4c2=7n3}(iv), i.e., 
$F \simeq 2\sco_\Pi(1) \oplus \Omega_\Pi(2)$. Lemma~\ref{L:lift2} (and 
Remark~\ref{R:lift2}) implies that either $n = 4$ and  
$E \simeq \sco_\piv(1) \oplus \Omega_\piv(2)$, or $n = 5$ and $E \simeq 
\Omega_\pv(2)$. 

\vskip2mm

\noindent
{\bf Case 4.}\quad $c_2 = 8$ \emph{and} $c_3 = 12$.  

\vskip2mm

\noindent
In this case, $F$ is as in Prop.~\ref{P:c1=4c2=8n3}(ix), i.e., 
$F \simeq 2\sco_\Pi(1) \oplus 2\text{T}_\Pi(-1)$.  
Lemma~\ref{L:lift1} implies that $E \simeq 2\sco_\p(1)  
\oplus 2\text{T}_\p(-1)$. 

\vskip2mm

\noindent
{\bf Case 5.}\quad $c_2 = 8$ \emph{and} $c_3 = 10$.  

\vskip2mm 

\noindent
In this case, $F$ is as in Prop.~\ref{P:c1=4c2=8n3}(viii), i.e., 
$F \simeq \sco_\Pi(1) \oplus \text{T}_\Pi(-1) \oplus \Omega_\Pi(2)$. 
Lemma~\ref{L:lift2} (and Remark~\ref{R:lift2}) implies that $n = 4$ and 
either $E \simeq \text{T}_\piv(-1) \oplus \Omega_\piv(2)$ or 
$E \simeq \sco_\piv(1) \oplus \Omega_\piv^2(3)$.      

\vskip2mm

\noindent
{\bf Case 6.}\quad $c_2 = 8$, $c_3 = 8$, $n = 4$ \emph{and} $F\simeq 
2\Omega_\Pi(2)$.   

\vskip2mm

\noindent
We will show that, in this case,  $E$ is as in item (viii) of the statement.  
Let $h = 0$ be the equation of the hyperplane $\Pi \subset \piv$.  
Lemma~\ref{L:hieh=0}(a) implies that $\tH^1(E(l)) = 0$ for $l \leq -3$ and 
Lemma~\ref{L:h1ehast=0} implies that $t = 0$ and that  
$s \leq \dim \Cok(\tH^1(E(-2)) \ra \tH^1(E_\Pi(-2))) = 2 - \h^1(E(-2))$.  

\vskip2mm

\noindent
{\bf Claim 6.1.}\quad $s \neq 0$. 

\vskip2mm 

\noindent 
\emph{Indeed}, if $s = 0$ then $\tH^1_\ast(E_\Pi^\ast) = 0$ hence, by 
Lemma~\ref{L:hieh=0}(a), $\tH^1_\ast(E^\ast) = 0$. Lemma~\ref{L:h1=0} implies, 
now, that $E^\ast$ admits a resolution of the form  
$
0 \ra 2\sco_\piv(-2) \ra 8\sco_\piv(-1) \ra E^\ast \ra 0
$
whence an exact sequence $0 \ra E \ra 8\sco_\piv(1) \ra 
2\sco_\piv(2) \ra 0$. It follows, from Lemma \ref{L:2x2n+1}, that $E$ is 
not globally generated, a \emph{contradiction}. 

\vskip2mm

\noindent
{\bf Claim 6.2.}\quad $\tH^1(E(-2)) \neq 0$. 

\vskip2mm 

\noindent
\emph{Indeed}, if $\tH^1(E(-2)) = 0$ then, by Lemma~\ref{L:hieh=0}(a), 
$\tH^1_\ast(E) = 0$ hence, by Lemma~\ref{L:h1=0}, $E$ admits a resolution of 
the form$\, :$ 
\[
0 \lra 2\sco_\piv(-2) \lra 8\sco_\piv(-1) \lra 
(12 - s)\sco_\piv \lra E \lra 0\, .
\]  
Dualizing this resolution and applying, again, Lemma~\ref{L:2x2n+1} one gets 
a \emph{contradiction}. 

\vskip2mm

The above two claims imply that $s = 1$ and $\h^1(E(-2)) = 1$. One thus has 
an exact sequence$\, :$ 
\[
0 \lra \sco_\Pi \lra 2\Omega_\Pi(2) \lra E_\Pi \lra 0\, .
\]

\noindent
{\bf Claim 6.3.}\quad $\tH^3_\ast(E) = 0$. 

\vskip2mm 

\noindent 
\emph{Indeed}, dualizing the above exact sequence, one gets an exact 
sequence$\, :$ 
\[
0 \lra E_\Pi^\ast \lra 2\text{T}_\Pi(-2) \overset{\e}{\lra} 
\sco_\Pi \lra 0\, .
\]
Composing $\e(1)$ with the epimorphism $8\sco_\Pi \ra 
2\text{T}_\Pi(-1)$ one gets an epimorphism $\e^\prim : 8\sco_\Pi \ra 
\sco_\Pi(1)$. For such an epimorphism, $\tH^0(\e^\prim)$ is surjective, 
hence $\tH^0(\e(1))$ is surjective, too. One deduces that 
$\tH^1_\ast(E_\Pi^\ast) \simeq k$. Since, by hypothesis, $\tH^1(E^\ast) = 0$, 
Lemma~\ref{L:hieh=0}(a) implies that $\tH^1_\ast(E^\ast) = 0$ hence, by Serre 
duality, $\tH^3_\ast(E) = 0$. 

\vskip2mm   

\noindent 
{\bf The intermediate cohomology of} $E$.\quad  
One has $\tH^1(E(l)) = 0$ for $l \leq -3$ by Lemma \ref{L:hieh=0}(a). 
Using the exact sequence$\, :$ 
\[
0 = \tH^0(E_\Pi(-1)) \lra \tH^1(E(-2)) \overset{h}{\lra} \tH^1(E(-1)) 
\lra \tH^1(E_\Pi(-1)) = 0
\] 
one gets that $\h^1(E(-1)) = \h^1(E(-2)) = 1$. Since $E_\Pi$ is 0-regular, 
Lemma~\ref{L:LePotier} implies that $\tH^1(E(l)) = 0$ for $l \geq 0$. 

On the other hand, we saw in the proof of Claim 6.3 that 
$\tH^1_\ast(E_\Pi^\ast) \simeq k$ hence, by Serre duality, $\tH^2_\ast(E_\Pi) 
\simeq k(4)$. Lemma~\ref{L:hieh=0}(a) implies that $\tH^2(E(l)) = 0$ for 
$l \leq -5$, while Lemma~\ref{L:hieh=0}(b) implies that $\tH^2(E(l)) = 0$ for 
$l \geq -2$. Using the exact sequences$\, :$ 
\begin{gather*}
0 = \tH^2(E(-5)) \lra \tH^2(E(-4)) \lra \tH^2(E_\Pi(-4)) \lra 
\tH^3(E(-5)) = 0\, ,\\
0 = \tH^1(E_\Pi(-3)) \lra \tH^2(E(-4)) \overset{h}{\lra} \tH^2(E(-3)) \lra 
\tH^2(E_\Pi(-3)) = 0\, ,
\end{gather*}
one derives that $\h^2(E(-3)) = \h^2(E(-4)) = 1$. 

One can also notice that $\tH^0(E(l)) = 0$ for $l\leq -1$ because 
$\tH^0(E_\Pi(-1)) = 0$ and that $\tH^4(E(l)) \simeq \tH^0(E^\ast(-l-5))^\ast = 0$ 
for $l \geq -5$. 

\vskip2mm 

\noindent
{\bf Claim 6.4.}\quad $E$ \emph{is as in item} (viii) \emph{of the statement}.  

\vskip2mm

\noindent
\emph{Indeed}, a non-zero element of $\tH^1(E(-2))$ defines an extension$\, :$  
\[
0 \lra E \lra E^\prim \lra \sco_\piv(2) \lra 0\, .
\]
Since the graded $S$-module $\tH^1_\ast(E)$ is generated by $\tH^1(E(-2)) 
\simeq k$, it follows that $\tH^1_\ast(E^\prim) = 0$. Moreover, 
$\tH^3_\ast(E^\prim) \simeq \tH^3_\ast(E) = 0$ and 
$\tH^2_\ast(E^\prim) \simeq \tH^2_\ast(E)$. The subspace $W$ of 
$\tH^0(\sco_\piv(1))$ consisting of the linear forms $h^\prim$ for which 
the multiplication map $h^\prim \cdot - : \tH^2(E(-4)) \ra 
\tH^2(E(-3))$ is 0 has dimension 4. If $h_0,\ldots ,\, h_3$ is a $k$-basis of 
$W$ then $h_0,\ldots ,\, h_3,\, h$ is a $k$-basis of $\tH^0(\sco_\piv(1))$. 
$h_0,\ldots ,\, h_3$ vanish simultaneously in only one point $x \in \piv 
\setminus \Pi$. One has$\, :$ 
\[
\tH^2_\ast(E^\prim) \simeq \tH^2_\ast(E) \simeq 
(S/(h_0,\ldots, h_3, h^2))(4)\, .
\]  
Consequently, $E^\prim$ is what Horrocks \cite{ho} calls an Eilenberg-Maclane 
bundle (see, for example, \cite[Example~6.3]{ct}).  
It follows that, denoting by $(C_p,\, \delta_p)_{p \geq 0}$ the Koszul complex 
associated to the epimorphism 
$\delta_1 : 4\sco_\piv(-1) \oplus \sco_\piv(-2) \ra \sco_\piv$ 
defined by $h_0, \ldots ,h_3,h^2$, one has exact sequences$\, :$ 
\begin{gather*}
0 \lra \sco_\piv(-2) \xra{\delta_5(4)} \sco_\piv \oplus 
4\sco_\piv(-1) \xra{\delta_4(4)} 4\sco_\piv(1) \oplus 
6\sco_\piv \lra E^\prim \lra 0\, ,\\ 
0 \lra E^\prim \lra 6\sco_\piv(2) \oplus 4\sco_\piv(1)  
\xra{\delta_2(4)} 4\sco_\piv(3) \oplus \sco_\piv(2) 
\xra{\delta_1(4)} \sco_\piv(4) \lra 0\, .
\end{gather*}
Since $\tH^0(E(-1)) = 0$, the map $\tH^0(E^\prim(-1)) \ra \tH^0(\sco_\piv(1))$ 
is \emph{injective}. 

\vskip2mm 

\noindent
{\bf Claim 6.5.}\quad 
\emph{Conversely, let} $E^\prim$ 
\emph{be the vector bundle on} $\piv$ \emph{associated to 
a Koszul complex as above and let} $\phi : E^\prim \ra \sco_\piv(2)$ 
\emph{be a morphism with the property that}  
$\tH^0(\phi(-1)) : \tH^0(E^\prim(-1)) \ra \tH^0(\sco_\piv(1))$ 
\emph{is injective. Then such morphisms exist and are, 
automatically, epimorphisms}.  

\vskip2mm

\noindent 
\emph{Indeed}, let $(K_p,\kappa_p)_{p \geq 0}$ denote the Koszul complex 
associated to the morphism $\kappa_1 : 4\sco_\piv(-1) \ra \sco_\piv$ 
defined by $h_0, \ldots ,h_3$. The differentials of the Koszul complex 
$C_\bullet$ considered above are described by the following matrices$\, :$  
\[
\delta_p = 
\begin{pmatrix}
\kappa_p & (-1)^{p-1}(h^2\cdot -)\\
0 & \kappa_{p-1}(-2)
\end{pmatrix}
\, .
\] Let $\phi : E^\prim \ra \sco_\piv(2)$ be any morphism.  
The composite morphism $4\sco_\piv(1) \oplus 6\sco_\piv \ra E^\prim 
\overset{\phi}{\lra} \sco_\piv(2)$ has two components: $f : 
4\sco_\piv(1) \ra \sco_\piv(2)$ and $g : 6\sco_\piv \ra \sco_\piv(2)$. 
They satisfy the relations$\, :$ 
\[
f \circ \kappa_4(4) = 0 \  \text{and} \  g \circ \kappa_3(2) + 
f \circ (h^2\cdot -) = 0\, . 
\]
On the other hand, $\phi$ can be also represented as a composite 
morphism  
$E^\prim \ra 6\sco_\piv(2) \oplus 4\sco_\piv(1)  
\overset{\widehat \phi}{\lra} \sco_\piv(2)$. The components 
${\widehat f} : 6\sco_\piv(2) \ra \sco_\piv(2)$ and ${\widehat g} : 
4\sco_\piv(1) \ra \sco_\piv(2)$ of $\widehat \phi$ are related to 
$f$ and $g$ by the relations$\, :$  
\[
f = {\widehat f}\circ \kappa_3(4)\  \text{and} \  g = {\widehat g} \circ 
\kappa_2(2) - {\widehat f} \circ (h^2\cdot -)\, .
\] 

Now, for any morphism $f : 4\sco_\piv(1) \ra \sco_\piv(2)$ such 
that $f \circ \kappa_4(4) = 0$ there exists a unique morphism ${\widehat f} : 
6\sco_\piv(2) \ra \sco_\piv(2)$ such that $f = {\widehat f} \circ 
\kappa_3(4)$. The last relation also shows that the linear forms  
$h_0^\prim,\ldots ,h_3^\prim$ defining $f$ belong to the subspace of 
$\tH^0(\sco_\piv(1))$ generated by $h_0,\ldots ,h_3$. 
The condition $\tH^0(\phi(-1)) : \tH^0(E^\prim(-1)) \ra 
\tH^0(\sco_\piv(1))$ injective is equivalent to the fact that $\tH^0(f(-1)) : 
\tH^0(4\sco_\piv) \ra \tH^0(\sco_\piv(1))$ is injective hence to the fact 
that $h_0^\prim,\ldots ,h_3^\prim$ are linearly independent. Since $\kappa_4(4)$ 
is defined by $h_0,\, -h_1,\, h_2,\, -h_3$, such morphisms $f$ obviously exist: 
one can take, for example, the morphism defined by 
$h_0^\prim = h_1$, $h_1^\prim = h_0$, $h_2^\prim = h_3$, $h_3^\prim = h_2$. 

Assume, from now on, that $f : 4\sco_\piv(1) \ra \sco_\piv(2)$ is a 
morphism defined by four linearly independent linear forms 
$h_0^\prim,\ldots ,h_3^\prim$ such that $f \circ \kappa_4(4) = 0$ and let 
$\widehat f$ be as above. Consider any morphism ${\widehat g} : 
4\sco_\piv(1) \ra \sco_\piv(2)$ and define a morphism $g : 
6\sco_\piv \ra \sco_\piv(2)$ by the formula$\, :$ 
\[
g = {\widehat g} \circ \kappa_2(2) - {\widehat f} \circ (h^2\cdot -)\, .
\]
The morphism $(f,g) : 4\sco_\piv(1) \oplus 6\sco_\piv \ra 
\sco_\piv(2)$ has the property that $(f,g) \circ \delta_4(4) = 0$ hence it 
induces a morphism $\phi : E^\prim \ra \sco_\piv(2)$. Let $x$ be the point 
considered above where $h_0,\ldots ,h_3$ vanish simultaneously. Since 
$\kappa_2(2)$ vanish at $x$ and since $h$ doesn't, the defining formula of 
$g$ shows that $g(x) \neq 0$. Since the image of $\tH^0(f)$ is 
$S_1h_0^\prim + \cdots + S_1h_3^\prim = S_1h_0 + \cdots + S_1h_3 = 
\tH^0(\sci_{\{x\}}(2))$, it follows that $\tH^0(\phi) : \tH^0(E^\prim) 
\ra \tH^0(\sco_\piv(2))$ is surjective which implies that $\phi$  
\emph{is an epimorphism}. 

\vskip2mm 

\noindent
{\bf Claim 6.6.}\quad \emph{Under the hypothesis of Claim 6.5}, $\Ker \phi$ 
\emph{is globally generated}. 

\vskip2mm

\noindent
\emph{Indeed}, 
let $(C_p^\prim,\, \delta_p^\prim)_{p\geq 0}$ be the Koszul complex associated to 
the morphism $\delta_1^\prim : 4\sco_\piv(-1) \oplus \sco_\piv(-2) \ra 
\sco_\piv$ defined by $h_0^\prim,\ldots ,h_3^\prim,h^2$. As in the case of the  
Koszul complex $C_\bullet$ considered above, 
if $(K_p^\prim,\kappa_p^\prim)_{p \geq 0}$ denotes the Koszul complex 
associated to the morphism $\kappa_1^\prim : 4\sco_\piv(-1) \ra \sco_\piv$ 
defined by $h_0^\prim, \ldots ,h_3^\prim$ then the differentials of the Koszul 
complex $C_\bullet^\prim$ are described by the following matrices$\, :$  
\[
\delta_p^\prim = 
\begin{pmatrix}
\kappa_p^\prim & (-1)^{p-1}(h^2\cdot -)\\
0 & \kappa_{p-1}^\prim(-2)
\end{pmatrix}
\, .
\]  
The components of ${\widehat g}\circ \kappa_2(2)$ belong to 
$S_1h_0 + \cdots + S_1h_3 = S_1h_0^\prim + \cdots + S_1h_3^\prim$ hence there is a 
morphism ${\widetilde g} : 6\sco_\piv \ra 4\sco_\piv(1)$ such that 
the diagram$\, :$ 
\[
\begin{CD}
6\sco_\piv @>{\widetilde g}>> 4\sco_\piv(1)\\
@V{\kappa_2(2)}VV @VVfV\\
4\sco_\piv(1) @>{\widehat g}>> \sco_\piv(2)
\end{CD}
\]   
commutes. Now, the commutative diagram$\, :$ 
\[
\begin{CD}
4\sco_\piv(1) \oplus 6\sco_\piv @>>> E^\prim\\
@V{{\left(\begin{smallmatrix} \text{id} & {\widetilde g}\\ 
0 & -{\widehat f}(-2) \end{smallmatrix}\right)} =:\, \phi_1}VV @VV{\phi}V\\
4\sco_\piv(1) \oplus \sco_\piv @>{(f\, ,\, h^2)}>> \sco_\piv(2)
\end{CD}
\]
can be extended to a morphism of complexes$\, :$ 
\[
\begin{CD}
\sco(-2) @>{\delta_5(4)}>> \sco \oplus 4\sco(-1) 
@>{\delta_4(4)}>> 4\sco(1) \oplus 6\sco @>>> E^\prim\\
@VV{\phi_3}V @VV{\phi_2}V @VV{\phi_1}V @VV{\phi}V\\
4\sco(-1) \oplus 6\sco(-2) @>{\delta_3^\prim(2)}>> 
6\sco \oplus 4\sco(-1) @>{\delta_2^\prim(2)}>> 
4\sco(1) \oplus \sco @>{\delta_1^\prim(2)}>> \sco(2)  
\end{CD}
\] 
 
\noindent
{\bf Claim 6.6.1.}\quad \emph{The component} $\alpha : \sco \ra 6\sco$ 
\emph{of} $\phi_2$ \emph{is nonzero}. 

\vskip2mm 

\noindent
\emph{Indeed}, one has$\, :$ 
\[
\phi_1 \circ \delta_4(4) = 
\begin{pmatrix}
\kappa_4(4) & -(h^2\cdot -) + {\widetilde g}\circ \kappa_3(2)\\
0 & -{\widehat f}(-2) \circ \kappa_3(2)
\end{pmatrix}\, .
\]  
It follows that the component $\sco \ra 4\sco(1)$ of $\phi_1 \circ 
\delta_4(4)$ is $\kappa_4(4)$, hence $\tH^0(\phi_1 \circ \delta_4(4))$ is 
injective. One deduces that $\tH^0(\phi_2)$ is injective, whence the claim. 

\vskip2mm 

\noindent
{\bf Claim 6.6.2.}\quad \emph{The component} $\beta : 4\sco(-1) \ra 
4\sco(-1)$ \emph{of} $\phi_2$ \emph{is the identity}. 

\vskip2mm

\noindent
\emph{Indeed}, let $x\in \piv \setminus \Pi$ be the point (considered above) 
where $h_0, \ldots , h_3$ (hence, also, $h_0^\prim, \ldots , h_3^\prim$) vanish 
simultaneously. One has$\, :$ 
\[
(\phi_1 \circ \delta_4(4))(x) = 
\begin{pmatrix} 0 & -(h(x)^2 \cdot -)\\ 0 & 0 \end{pmatrix}\, ,\  \  
\delta_2^\prim(2)(x) = 
\begin{pmatrix} 0 & -(h(x)^2 \cdot -)\\ 0 & 0 \end{pmatrix}\, .  
\]  
One deduces that $\beta(x)$ is the identity, whence the claim. 

\vskip2mm 

\noindent
{\bf Claim 6.6.3.}\quad \emph{The component} $\gamma : \sco(-2) \ra 
6\sco(-2)$ \emph{of} $\phi_3$ \emph{equals} $\alpha(-2)$. 

\vskip2mm

\noindent
\emph{Indeed}, one has$\, :$ 
\[
(\phi_2 \circ \delta_5(4))(x) = 
\begin{pmatrix} \alpha(x)h(x)^2\\ 0\end{pmatrix}\, ,\  \  
(\delta_3^\prim(2) \circ \phi_3)(x) = 
\begin{pmatrix} h(x)^2\gamma(x)\\ 0 \end{pmatrix}\, .
\]
One deduces that $\gamma(x) = \alpha(-2)(x)$, whence the claim. 

\vskip2mm 

Now, the mapping cone of the morphism of complexes$\, :$ 
\[
\begin{CD}
0 @>>> \sco(\text{--} 2) @>{\delta_5(4)}>> \sco \oplus 4\sco(\text{--} 1)  
@>{\delta_4(4)}>> 4\sco(1) \oplus 6\sco @>>> 0\\
@.  @VV{\phi_3}V @VV{\phi_2}V @VV{\phi_1}V\\
\cdots  @>>> 4\sco(\text{--} 1) \oplus 6\sco(\text{--} 2) 
@>{\delta_3^\prim(2)}>> 
6\sco \oplus 4\sco(\text{--} 1) @>{\delta_2^\prim(2)}>> 
4\sco(1) \oplus \sco @>>> 0  
\end{CD}
\]
has the homology concentrated at the term$\, :$ 
\[
(4\sco(1) \oplus 6\sco) \oplus 
(6\sco \oplus 4\sco(-1)) 
\] 
and there it is isomorphic to $E$. Using the last three claims and the 
concrete form of the matrix of $\phi_1$, one can cancel a number of direct 
summands of some terms of this complex and one gets a resolution of $E$ of 
the form$\, :$ 
\[
0 \ra \sco_\piv(-4) \ra \sco_\piv(-2) \oplus 4\sco_\piv(-3)  
\ra 4\sco_\piv(-1) \oplus 5\sco_\piv(-2)  
\ra 10\sco_\piv \ra E \ra 0\, .
\] 
In particular, $E$ \emph{is globally generated} and Claim 6.6 is proven. 

\vskip2mm

Finally, since one knows 
the cohomology groups $\tH^i(E(l))$, $0 \leq i \leq 4$, $-5 \leq l \leq -1$, 
one gets that the Beilinson monad of $E(-1)$ has the form$\, :$ 
\[
0 \lra \Omega_\piv^3(3) \lra \Omega_\piv^2(2) \oplus \Omega_\piv^1(1) 
\lra \sco_\piv \lra 0 
\]
hence $E(-1)$ is isomorphic to the bundle $\mathcal G$ from the statement of 
the main result of Abo, Decker and Sasakura \cite{ads}.  

\vskip2mm

\noindent
{\bf Case 7.}\quad $c_2 = 8$, $c_3 = 8$, $n = 4$ \emph{and} $F$ 
\emph{as in Prop.}~\ref{P:c1=4c2=8n3}(vi). 

\vskip2mm 

\noindent
We will show that, in this case too, $E$ is as in item (viii) of the 
statement. Let, again, $h = 0$ be an equation of the hyperplane $\Pi \subset 
\piv$. From Prop.~\ref{P:c1=4c2=8n3}(vi), one has exact sequences$\, :$  
\begin{gather*}
0 \lra K \lra 2\sco_\Pi(2) \oplus 2\sco_\Pi(1)  
\overset{p}{\lra} \sco_\Pi(3) \lra 0\, ,\\
0 \lra \sco_\Pi(-1) \overset{\sigma}{\lra} K \oplus 4\sco_\Pi  
\lra F \lra 0\, ,
\end{gather*}
where $p$ is defined by $h_0,\, h_1,\, h_2^2,\, h_3^2$ for some $k$-basis 
$h_0, \ldots , h_3$ of $\tH^0(\sco_\Pi(1))$ and where the component 
$u : \sco_\Pi(-1) \ra 4\sco_\Pi$ of $\sigma$ is defined by $h_0, \ldots , 
h_3$. It follows that$\, :$ 
\[
\tH^1_\ast(F) \simeq \left((S/hS)/(h_0,\, h_1,\, h_2^2,\, h_3^2)\right)(3)\  
\text{and}\  
\tH^1_\ast(F^\ast) \simeq k(1)\  \text{hence}\  \tH^2_\ast(F)\simeq k(3)\, .
\]
Our idea for the treatment of Case 7 is to reduce it to Case 6. More 
precisely, one has an exact sequence$\, :$ 
\[
0 \lra (r - 1)\sco_\piv \lra E \lra \sci_Y(4) \lra 0
\]
where $Y \subset \piv$ is a nonsingular surface of degree 8, sectional 
genus $\pi = 5$ (because $c_3 = 2\pi - 2$) and with $\omega_Y(1)$ globally 
generated. By Lemma~\ref{L:d=8pi=5}, the geometric genus and the irregularity 
of $Y$ satisfy $p_g = 0$ and $q\leq 1$. 
For any hyperplane $\Pi^\prim \subset \piv$, $E_{\Pi^\prim}$ is a globally 
generated vector bundle on $\Pi^\prim \simeq \piii$, with $c_1(E_{\Pi^\prim}) 
= 4$, $c_2(E_{\Pi^\prim}) = 8$ and $c_3(E_{\Pi^\prim}) = 8$. It follows from 
Prop.~\ref{P:c1=4c2=8n3} and from Remark~\ref{R:s=qt=pg} that one has an exact 
sequence$\, :$
\[
0 \lra q\sco_{\Pi^\prim} \lra F^\prim \lra E_{\Pi^\prim} \lra 0
\] 
where $F^\prim$ is as in Prop.~\ref{P:c1=4c2=8n3}(vi) or $F^\prim \simeq 
2\Omega_{\Pi^\prim}(2)$. In the former case one has $\h^0(F^\prim(-1)) = 1$ 
(hence $\h^0(E_{\Pi^\prim}(-1)) = 1$), while in the latter case  
$\h^0(F^\prim(-1)) = 0$ (hence $\h^0(E_{\Pi^\prim}(-1)) = 0$). Our aim is to show 
that for a general hyperplane $\Pi^\prim \subset \piv$ one has 
$\tH^0(E_{\Pi^\prim}(-1)) = 0$. This will imply, according to Case 6, that $E$ is 
as in item (viii) of the statement of our proposition. 

\vskip2mm 

\noindent
{\bf Claim 7.1.}\quad $\tH^1(E(-3)) = 0$. 

\vskip2mm

\noindent
\emph{Indeed}, this follows from Severi's theorem because $\tH^1(E(-3)) 
\simeq \tH^1(\sci_Y(1)) = 0$. We shall give, however, a direct argument. 
By Lemma~\ref{L:hieh=0}(a), $\tH^1(E(l)) = 0$ for $l \leq -4$, hence 
$\tH^1(E(-3))$ injects into $\tH^1(E_\Pi(-3)) \simeq k$. If $\tH^1(E(-3)) 
\neq 0$ then $\h^1(E(-3)) = 1$ and, using the exact sequence$\, :$ 
\[
\tH^1(E(-3)) \overset{h}{\lra} \tH^1(E(-2)) \lra \tH^1(E_\Pi(-2))\, ,
\] 
one gets that $\h^1(E(-2)) \leq 3$. It follows that there exists $0 \neq 
h^\prim \in S_1$ such that the multiplication map $h^\prim \cdot - : 
\tH^1(E(-3)) \ra \tH^1(E(-2))$ is 0. If $\Pi^\prim \subset \piv$ is the 
hyperplane 
of equation $h^\prim = 0$ then $\tH^0(E_{\Pi^\prim}(-2)) \neq 0$, which 
\emph{contradicts} the hypothesis of the proposition. 

\vskip2mm 

\noindent
{\bf Claim 7.2.}\quad $q \neq 0$ (\emph{hence} $q = 1$). 

\vskip2mm

\noindent
\emph{Indeed}, by relation \eqref{E:pi-d+3} (from the proof of 
Lemma~\ref{L:lanok}), one has $\h^2(E(-3)) = \h^2(\sci_Y(1)) = 
\h^1(\sco_Y(1)) = q$. 
Moreover, $\h^3(E(-4)) = \h^3(\sci_Y) = \h^2(\sco_Y) = p_g = 0$. Using the 
exact sequence$\, :$ 
\[
\tH^2(E(-3)) \lra \tH^2(E_\Pi(-3)) \lra \tH^3(E(-4)) = 0
\] 
and the fact that $\h^2(E_\Pi(-3)) = 1$ one deduces that $\tH^2(E(-3)) \neq 0$, 
hence $q \neq 0$. 

\vskip2mm

\noindent
{\bf Claim 7.3.}\quad $\h^1(E(-2)) = 1$. 

\vskip2mm 

\noindent
\emph{Indeed}, consider the exact sequence$\, :$ 
\[
0 = \tH^1(E(-3)) \lra \tH^1(E(-2)) \lra \tH^1(E_\Pi(-2)) \lra \tH^2(E(-3))\, .
\]
Since, as we saw in the proof of Claim 7.2, $\h^2(E(-3)) = 1$ and since 
$\h^1(E_\Pi(-2)) = 2$ it follows that $\h^1(E(-2)) \in \{1,\, 2\}$. If 
$\h^1(E(-2)) = 2$ then the morphism $\tH^1(E(-2)) \ra \tH^1(E_\Pi(-2))$ is 
surjective. Since the $S/hS$-module $\tH^1_\ast(E_\Pi)$ is generated by 
$\tH^1(E_\Pi(-3))$, it follows that $\tH^1(E(l)) \ra \tH^1(E_\Pi(l))$ is 
surjective for $l \geq -2$. But, by Lemma~\ref{L:hieh=0}(b), this would imply 
that $\tH^2(E(-3)) = 0$, a \emph{contradiction}. 

\vskip2mm 

It follows, from the above claims, that the image of  
$\tH^1(E(-2)) \ra \tH^1(E_\Pi(-2))$ is generated (as a $k$-vector space) 
by a non-zero element $\xi \in \tH^1(E_\Pi(-2))$. Since$\, :$ 
\[
\tH^0(\sco_\Pi(l)) \cdot \xi = \tH^1(E_\Pi(-2+l)), \  \forall \, l \geq 1\, ,
\] 
one deduces that the map $\tH^1(E(l)) \ra \tH^1(E_\Pi(l))$ \emph{is surjective 
for} $l \geq -1$ and that $\tH^1(E(-2))$ \emph{generates the graded $S$-module} 
$\tH^1_\ast(E)$. Moreover, one gets, from Lemma \ref{L:hieh=0}(b), that 
$\tH^2(E(l)) = 0$ for $l \geq -2$.      

\vskip2mm

\noindent
{\bf Claim 7.4.}\quad $\tH^0(E(-1)) = 0$. 

\vskip2mm 

\noindent
\emph{Indeed}, since $\h^0(E_\Pi(-1)) = 1$ it follows that $\h^0(E(-1)) \leq 1$. 
If $\h^0(E(-1)) = 1$ then, using the exact sequence$\, :$
\begin{equation}\label{E:heepi}  
0 \ra \tH^0(E(-1)) \ra \tH^0(E_\Pi(-1)) \ra \tH^1(E(-2)) \ra \tH^1(E(-1)) 
\ra \tH^1(E_\Pi(-1)) \ra 0 
\end{equation}
one deduces that $\h^1(E(-1)) = 2$. It follows that there exists $0 \neq 
h^\prim \in S_1$ such that the multiplication map $h^\prim \cdot - : 
\tH^1(E(-2)) \ra \tH^1(E(-1))$ is 0. Let $\Pi^\prim \subset \piv$ be the 
hyperplane of equation $h^\prim = 0$. Using the exact sequence$\, :$ 
\[
0 \lra \tH^0(E(-1)) \lra \tH^0(E_{\Pi^\prim}(-1)) \lra \tH^1(E(-2)) 
\overset{h^\prim}{\lra} \tH^1(E(-1))
\]
one gets that $\h^0(E_{\Pi^\prim}(-1)) = 2$ which, as we noticed at the beginning 
of the proof of Case 7, \emph{is not possible}. 

\vskip2mm 

\noindent
{\bf Claim 7.5.}\quad $\tH^0(E_{\Pi^\prim}(-1)) = 0$ \emph{for a general 
hyperplane} $\Pi^\prim \subset \piv$. 

\vskip2mm

\noindent
\emph{Indeed}, the exact sequence \eqref{E:heepi} implies that 
$\h^1(E(-1)) = 1$. Since $\tH^1(E(-2))$ generates the graded $S$-module 
$\tH^1_\ast(E)$, it follows that, for a general $0 \neq h^\prim \in S_1$, 
the multiplication map $h^\prim \cdot - : \tH^1(E(-2)) \ra \tH^1(E(-1))$ is 
non-zero, hence injective. If $\Pi^\prim \subset \piv$ is the hyperplane of 
equation $h^\prim = 0$ then, using the exact sequence$\, :$ 
\[
0 = \tH^0(E(-1)) \lra \tH^0(E_{\Pi^\prim}(-1)) \lra \tH^1(E(-2)) 
\overset{h^\prim}{\lra} \tH^1(E(-1))\, ,
\]
one gets that $\tH^0(E_{\Pi^\prim}(-1)) = 0$.    

\vskip2mm 

\noindent
{\bf Case 8.} $c_2 = 8$, $c_3 = 8$ \emph{and} $n \geq 5$. 

\vskip2mm 

\noindent
We will show that this case \emph{cannot occur}. 
Indeed, for this purpose, one can assume that $n = 5$. Let $H \subset \pv$ be 
a fixed hyperplane. Lemma~\ref{L:h0h1} implies that $E_H \simeq 
G \oplus t\sco_H$, with $G$ defined by an exact sequence$\, :$ 
\[
0 \lra s\sco_H \lra F \lra G \lra 0\, ,
\] 
where $F$ is a globally generated vector bundle on $H \simeq \piv$ such that 
$\tH^i(F^\ast) = 0$, $i = 0,\, 1$, $t = \h^0(E_H^\ast)$ and $s = \h^1(E_H^\ast)$. 
It follows, from the Cases 6 and 7 above, that $F$ is one of the bundles 
described in item (viii) of the statement of the proposition, that is, one has 
exact sequences$\, :$ 
\begin{gather*}
0 \lra \sco_H(-2) \lra \sco_H \oplus 4\sco_H(-1) \lra 4\sco_H(1) \oplus 
6\sco_H \lra F^\prim \lra 0\, ,\\
0 \lra F \lra F^\prim \overset{\phi}{\lra} \sco_H(2) \lra 0\, . 
\end{gather*}    
Since the resolution of $F^\prim$ above is part of a Koszul complex, one also 
has an exact sequence$\, :$ 
\[
0 \lra F^\prim \lra 6\sco_H(2) \oplus 4\sco_H(1) \lra 4\sco_H(3) \oplus 
\sco_H(2) \lra \sco_H(4) \lra 0 
\] 
and the epimorphism $\phi$ above can be represented as a composite 
morphism $F^\prim \ra 6\sco_H(2) \oplus 4\sco_H(1) 
\overset{\widehat \phi}{\lra} \sco_H(2)$. As we saw in the proof of 
Claim 6.5 above, the component $6\sco_H(2) \ra \sco_H(2)$ of 
$\widehat \phi$ is non-zero. 

The intermediate cohomology of $F$ has been determined in Case 6 above. 
In particular, $\tH^1_\ast(F^\ast) = 0$ hence, by Lemma~\ref{L:h1ehast=0}(a), 
$t = 0$. Moreover, by Lemma~\ref{L:h1ehast=0}(b)$\, :$ 
\begin{gather*}
s \leq \dim \Cok(\tH^2(E(-4)) \ra \tH^2(E_H(-4))) +\\
+ \dim \Cok(\tH^2(E(-3)) \ra \tH^2(E_H(-3))) \leq 2\, .
\end{gather*} 

\noindent
{\bf Claim 8.1.}\quad $s \neq 0$. 

\vskip2mm 

\noindent  
\emph{Indeed}, assume that $s = 0$. Then $\tH^1_\ast(E_H^\ast) \simeq 
\tH^1_\ast(F^\ast) = 0$ hence, by Lemma~\ref{L:hieh=0}(a), $\tH^1_\ast(E^\ast) 
= 0$. One deduces, from the discussion before Claim 8.1, that $E_H^\ast$ has a 
resolution of the form$\, :$ 
\[
0 \ra \sco_H(-4) \ra \sco_H(-2) \oplus 4\sco_H(-3) \ra 
4\sco_H(-1) \oplus 5\sco_H(-2) \ra E_H^\ast \ra 0\, . 
\] 
It follows, from Lemma~\ref{L:h1=0}, that $E^\ast$ has a resolution of the 
form$\, :$
\[
0 \ra \sco_\pv(-4) \ra \sco_\pv(-2) \oplus 4\sco_\pv(-3) \ra 
4\sco_\pv(-1) \oplus 5\sco_\pv(-2) \ra E^\ast \ra 0\, . 
\]
Since there is no epimorphism $4\sco_\pv(3) \oplus \sco_\pv(2) \ra 
\sco_\pv(4)$, there is no vector bundle on $\pv$ having such a resolution. 
Consequently, one must have $s \neq 0$. 

\vskip2mm 

At this point one can get a \emph{contradiction} using Kodaira Vanishing. 
\emph{Indeed}, one has an exact sequence$\, :$ 
\[
0 \lra (r - 1)\sco_\pv \lra E \lra \sci_Y(4) \lra 0
\]
where $Y$ is a nonsingular 3-fold in $\pv$. By Serre duality  
and Kodaira Vanishing, 
\[
\tH^2(E^\ast(-1)) \simeq \tH^3(E(-5))^\ast \simeq \tH^3(\sci_Y(-1))^\ast \simeq 
\tH^2(\sco_Y(-1))^\ast = 0\, .
\]
Using the exact sequence $\tH^1(E^\ast) \ra \tH^1(E_H^\ast) \ra 
\tH^2(E^\ast(-1))$ one deduces that $\tH^1(E_H^\ast) = 0$. Since $s = 
\h^1(E_H^\ast)$ we have got the desired \emph{contradiction}. 

We want, however, to include an \emph{elementary argument}. It is lengthy but 
instructive. 

\vskip2mm 

\noindent 
{\bf Claim 8.2.}\quad  $\tH^1_\ast(E) \simeq k(1)$. 

\vskip2mm 

\noindent
\emph{Indeed}, $\tH^1(E(l)) = 0$ for $l \leq -3$ by Lemma~\ref{L:hieh=0}(a) 
hence $\tH^1(E(-2))$ injects into $\tH^1(E_H(-2)) \simeq k$. 

\vskip2mm 

\noindent
$\bullet$\quad If $\tH^1(E(-2)) \neq 0$ then a non-zero element of 
$\tH^1(E(-2))$ defines an extension$\, :$ 
\[
0 \lra E \lra E^\prim \lra \sco_\pv(2) \lra 0 
\]      
with $E_H^\prim$ given by an exact sequence $0 \ra s\sco_H \ra F^\prim 
\ra E_H^\prim \ra 0$ where $F^\prim$ is defined by a Koszul resolution$\, :$ 
\[
0 \lra \sco_H(-2) \lra \sco_H \oplus 4\sco_H(-1) \lra 
4\sco_H(1) \oplus 6\sco_H \lra F^\prim \lra 0\, .
\]
Since there is no locally split monomorphism $\sco_H \ra 4\sco_H(1)$, 
$E_H^\prim$ must have a resolution of the form$\, :$ 
\[
0 \lra \sco_H(-2) \lra \sco_H \oplus 4\sco_H(-1) \lra 
4\sco_H(1) \oplus (6 - s)\sco_H \lra E_H^\prim \lra 0\, .
\] 
Since $\tH^1_\ast(E_H^\prim) = 0$, Lemma~\ref{L:hieh=0}(a) implies that 
$\tH^1_\ast(E^\prim) = 0$ and Lemma~\ref{L:h1=0} implies that $E^\prim$ has a 
resolution of the form$\, :$
\[
0 \lra \sco_\pv(-2) \lra \sco_\pv \oplus 4\sco_\pv(-1) \lra 
4\sco_\pv(1) \oplus (6 - s)\sco_\pv \lra E^\prim \lra 0\, .
\]
Since there is no epimorphism $4\sco_\pv(1) \oplus \sco_\pv \ra 
\sco_\pv(2)$, \emph{this is not possible}. 

It remains that $\tH^1(E(-2)) = 0$ hence $\tH^1(E(-1))$ injects into 
$\tH^1(E_H(-1)) \simeq k$.  

\vskip2mm 

\noindent
$\bullet$\quad If $\tH^1(E(-1)) = 0$ then Lemma~\ref{L:hieh=0}(a) would imply 
that $\tH^1_\ast(E) = 0$. One would deduce, from Lemma~\ref{L:h1=0} (and taking 
into account the resolution of $F$ from Case 6 above), 
that $E$ admits a resolution of the form$\, :$  
\begin{gather*}
0 \ra \sco_\pv(-4) \ra \sco_\pv(-2) \oplus 4\sco_\pv(-3)  
\ra 4\sco_\pv(-1) \oplus 5\sco_\pv(-2) \ra\\  
\ra (10 - s)\sco_\pv \ra E \ra 0\, .
\end{gather*}
Since there is no epimorphism $4\sco_\pv(3) \oplus \sco_\pv(2) \ra 
\sco_\pv(4)$, \emph{this is not possible}. 

It thus remains that $\h^1(E(-1)) = 1$. 

\vskip2mm 

\noindent
$\bullet$\quad Since $\tH^1(E_H) = 0$, $\tH^1(E)$ is a quotient of 
$\tH^1(E(-1))$. If $\tH^1(E) \neq 0$ then, using the exact sequence$\, :$ 
\[
\tH^0(E) \lra \tH^0(E_H) \lra \tH^1(E(-1)) \lra \tH^1(E) \lra \tH^1(E_H) = 0\, ,
\]
one deduces that $\tH^0(E) \ra \tH^0(E_H)$ is surjective. Since the graded 
module $\tH^0_\ast(E_H)$ is generated by $\tH^0(E_H)$ (over the projective 
coordinate ring of $H$) it follows that $\tH^0_\ast(E) \ra \tH^0_\ast(E_H)$ is 
surjective which, according to Lemma~\ref{L:hieh=0}(b), implies that 
$\tH^1_\ast(E) = 0$. But this \emph{contradicts} the fact that $\tH^1(E(-1)) 
\neq 0$. 

It thus remains that $\tH^1(E) = 0$. Lemma~\ref{L:hieh=0}(a) implies, now, 
that $\tH^1(E(l)) = 0$ for $l \geq 0$ and this concludes the proof of 
Claim 8.2. 

\vskip2mm

\noindent
{\bf Claim 8.3.}\quad $\tH^2_\ast(E) \simeq k(3)$ \emph{and} $s = 1$.  

\vskip2mm 

\noindent
\emph{Indeed}, $\tH^2(E(l)) = 0$ for $l \leq -5$ by Lemma~\ref{L:hieh=0}(a). 
One deduces that $\tH^2(E(-4))$ injects into $\tH^2(E_H(-4)) \simeq k$. 
If $\tH^2(E(-4)) \neq 0$ then $\tH^2_\ast(E) \ra \tH^2_\ast(E_H)$ would be 
surjective, which would imply that $s = 0$. But this \emph{contradicts} 
Claim 8.1. 

It remains that $\tH^2(E(-4)) = 0$ hence $\tH^2(E(-3))$ injects into 
$\tH^2(E_H(-3)) \simeq k$. Using the exact sequence$\, :$ 
\[
0 = \tH^1(E(-2)) \lra \tH^1(E_H(-2)) \lra \tH^2(E(-3))
\]
and the fact that $\tH^1(E_H(-2)) \simeq k$ one gets that $\tH^2(E(-3)) \neq 
0$, hence $\h^2(E(-3)) = 1$. 
Since, as we saw in the proof of Claim 8.2, $\tH^1(E(-1)) \ra \tH^1(E_H(-1))$ 
is surjective, Lemma~\ref{L:hieh=0}(b) implies that $\tH^2(E(l)) = 0$ for 
$l \geq -2$, proving the first part of Claim 8.3. The second part follows from 
Lemma~\ref{L:h1ehast=0}(b) and from Claim 8.1.   

\vskip2mm 

\noindent
{\bf Claim 8.4.}\quad $\tH^3_\ast(E) \simeq k(5)$. 

\vskip2mm

\noindent
\emph{Indeed}, as we saw in the proof of Claim 8.3, the map $\tH^2(E(l)) 
\ra \tH^2(E_H(l))$ is surjective for $l \geq -3$, hence, by 
Lemma~\ref{L:hieh=0}(b), $\tH^3(E(l)) = 0$ for $l \geq -4$. Using the exact 
sequence$\, :$ 
\[
0 = \tH^2(E(-4)) \lra \tH^2(E_H(-4)) \lra \tH^3(E(-5)) \lra \tH^3(E(-4)) = 0 
\, ,
\] 
one derives that $\h^3(E(-5)) = 1$. This implies, by Serre duality, that 
$\h^2(E^\ast(-1)) = 1$. Taking into account that $\h^1(E_H^\ast) = s = 1$, one 
deduces, from the exact sequence$\, :$ 
\[
0 = \tH^1(E^\ast) \lra \tH^1(E_H^\ast) \lra \tH^2(E^\ast(-1)) \lra \tH^2(E^\ast) 
\lra \tH^2(E_H^\ast) = 0\, , 
\] 
that $\tH^2(E^\ast) = 0$. One also has$\, :$ 
\begin{gather*}
\tH^3(E^\ast(-1)) \simeq \tH^2(E(-5))^\ast = 0\, ,\\
\tH^4(E^\ast(-2)) \simeq \tH^1(E(-4))^\ast = 0\, ,\\
\tH^5(E^\ast(-3)) \simeq \tH^0(E(-3))^\ast = 0\, .
\end{gather*}
Lemma \ref{L:cm} implies, now, 
that $\tH^2(E^\ast(l)) = 0$ for $l \geq 0$ hence, by Serre duality, 
$\tH^3(E(l)) = 0$ for $l \leq -6$, and this concludes the proof of 
Claim 8.4. 

\vskip2mm 

\noindent
{\bf Claim 8.5.}\quad $\tH^4(E(l)) = 0$ \emph{for} $l \geq -6$. 

\vskip2mm 

\noindent
\emph{Indeed}, since $\tH^1(E_H^\ast(l)) = 0$ for $l \leq -1$, 
Lemma~\ref{L:hieh=0}(a) implies that $\tH^1(E^\ast(l)) = 0$ for $l \leq -1$. 
Moreover, by our hypothesis, $\tH^1(E^\ast) = 0$. The claim follows, now, 
from Serre duality. 

\vskip2mm  

Notice, also, that $\tH^0(E(l)) = 0$ for $l \leq -1$ (because 
$\tH^0(E_H(-1)) = 0$) and that $\tH^5(E(l)) = 0$ for $l \geq -6$ (by Serre 
duality, because, by hypothesis, $\tH^0(E^\ast(l)) = 0$ for $l \leq 0$). 
Applying, now,  Beilinson's theorem~\ref{T:beilinson} to $E(-1)$ one derives 
that $E(-1)$ is the cohomology of a monad of the form$\, :$ 
\[
0 \lra \Omega_\pv^4(4) \overset{\alpha}{\lra} \Omega_\pv^2(2) 
\overset{\beta}{\lra} \sco_\pv \lra 0\, .
\]

\noindent
{\bf Claim 8.6.}\quad \emph{Such a monad cannot exist}. 

\vskip2mm 

\noindent
\emph{Indeed}, according to Remark~\ref{R:contraction}, the morphism 
$\alpha$ is 
defined by an element $\omega \in \overset{2}{\bigwedge} V$, where $V = 
k^6$. Let $\psi : V^\ast \ra V$ be the skew-symmetric map associated to 
$\omega$. We show, firstly, that $\alpha$ is a locally split monomorphism 
if and only if $\psi$ is an isomorphism. 

\vskip2mm

\noindent
$\bullet$\quad If $\text{rk}\, \psi = 2$ then $\omega = v_0 \wedge v_1$, for 
some linearly independent vectors $v_0,\, v_1 \in V$. Consider any vector 
$v \in V\setminus (kv_0 + kv_1)$, put $V^\prim := kv_0 + kv_1 + kv$ and consider 
a decomposition $V = V^\prim \oplus V^\secund$. By Remark~\ref{R:contraction}(c)
one has$\, :$
\begin{gather*}
\Omega_\pv^4(4) \vb \p(V^\prim) \simeq \Omega_{\p(V^\prim)}^2(2)\otimes 
\overset{2}{\textstyle \bigwedge} V^{\secund \ast} \oplus \Omega_{\p(V^\prim)}^1(1)
\otimes \overset{3}{\textstyle \bigwedge} V^{\secund \ast}\, ,\\
\Omega_\pv^2(2) \vb \p(V^\prim) \simeq \Omega_{\p(V^\prim)}^2(2) \oplus 
\Omega_{\p(V^\prim)}^1(1)\otimes V^{\secund \ast} \oplus 
\sco_{\p(V^\prim)}\otimes \overset{2}{\textstyle \bigwedge} V^{\secund \ast}\, .
\end{gather*} 
Since $\omega \in \overset{2}{\bigwedge} V^\prim$, $\alpha \vb \p(V^\prim)$ maps 
$\Omega_{\p(V^\prim)}^1(1)\otimes \overset{3}{\bigwedge} V^{\secund \ast}$ into 0, 
hence it drops rank by at least 2 at every point of $\p(V^\prim)$. 
Since $v$ was an arbitrary vector in $V\setminus (kv_0 + kv_1)$,  
$\alpha$ drops rank by at least 2 at every point of $\pv$. 

\vskip2mm

\noindent
$\bullet$\quad If $\text{rk}\, \psi = 4$ then $\omega = v_0 \wedge v_1 + 
v_2 \wedge v_3$ for some linearly independent vectors $v_0, \ldots ,v_3 \in 
V$. Put $V^\prim := kv_0 + \cdots + kv_3$ and consider a decomposition 
$V = V^\prim \oplus V^\secund$. By Remark~\ref{R:contraction}(c) one has$\, :$ 
\begin{gather*}
\Omega_\pv^4(4) \vb \p(V^\prim) \simeq 
\Omega_{\p(V^\prim)}^3(3)\otimes V^{\secund \ast} \oplus 
\Omega_{\p(V^\prim)}^2(2)\otimes \overset{2}{\textstyle \bigwedge} 
V^{\secund \ast}\, ,\\
\Omega_\pv^2(2) \vb \p(V^\prim) \simeq \Omega_{\p(V^\prim)}^2(2) \oplus 
\Omega_{\p(V^\prim)}^1(1)\otimes V^{\secund \ast} \oplus 
\sco_{\p(V^\prim)}\otimes \overset{2}{\textstyle \bigwedge} V^{\secund \ast}\, .
\end{gather*} 
Since $\omega \in \overset{2}{\bigwedge} V^\prim$, $\alpha \vb \p(V^\prim)$ maps 
$\Omega_{\p(V^\prim)}^2(2) \otimes \overset{2}{\bigwedge} V^{\secund \ast}$ into 
$\sco_{\p(V^\prim)} \otimes \overset{2}{\bigwedge} V^{\secund \ast}$. It follows 
that $\alpha$ drops rank by 2 at every point of $\p(V^\prim)$. 

\vskip2mm

\noindent 
$\bullet$\quad Assume, now, that $\text{rk}\, \psi = 6$. Then there exists a 
$k$-basis $v_0, \ldots ,v_5$ of $V$ such that $\omega = v_0 \wedge v_1 + 
v_2 \wedge v_3 + v_4 \wedge v_5$. By Remark~\ref{R:contraction}, the map 
$\tH^0(\alpha^\ast) : \tH^0(\Omega^2(2)^\ast) \ra \tH^0(\Omega^4(4)^\ast)$ can be 
identified with the map $\omega \wedge - : \overset{2}{\bigwedge} V \ra 
\overset{4}{\bigwedge} V$. Using the bases of $\overset{2}{\bigwedge} V$ and 
of $\overset{4}{\bigwedge} V$ deduced from the basis $v_0, \ldots , v_5$ of 
$V$ one shows easily that $\tH^0(\alpha^\ast)$ is an isomorphism. Denoting 
the cokernel of $\alpha$ by $E^\prim$, one deduces that $\tH^0(E^{\prim \ast}) 
= 0$, hence there is no non-zero morphism $\beta : \Omega_\pv^2(2) \ra 
\sco_\pv$ such that $\beta \circ \alpha = 0$.  
Consequently, \emph{the above monad cannot exist}. 

The bundle $E^\prim$ appears in Horrocks' construction \cite{ho2} of a 
rank 3 vector bundle on $\pv$ (see also, for example,  
\cite[Example~6.10]{ct} where this bundle is denoted by $\mathcal{E}_2$). One 
might have hoped to show that there is no epimorphism 
$E^\prim \ra \sco_\pv$ by showing that $c_5(E^\prim) \neq 0$. Unfortunately, 
$c_t(E^\prim) = 1 + 2t^2 - 3t^4$.  
\end{proof}

\newpage

\appendix 
\section{The case $c_1 = 4$, $c_2 = 8$, $c_3 = 2$ on 
$\piii$}\label{A:case3}

In this appendix we complete the argument for the Case 3 of the proof of 
Prop.~\ref{P:c1=4c2=8n3}. Actually, we have to complete the characterization 
of the epimorphisms $\phi : 3\sco_\piii(1) \oplus 3\sco_\piii \ra 
\Omega_\piii(3)$ with $\Ker \phi(2)$ globally generated. 
This characterization was reduced there to the characterization of the 
epimorphisms 
\[
\e : 3\sco_\Sigma \lra \sci_{Z,\Sigma}(3) 
\]   
with $\Ker \e(2)$ is globally generated, in one of the following three 
cases$\, :$ (III) $\Sigma$ is a nonsigular quadric 
surface in $\piii$ and $Z$ is the union of two disjoint lines contained in 
$\Sigma$$\, ;$ (IV) $\Sigma = H_1 \cup H_2$ and $Z = L_1 \cup L_2$, where 
$L_1$ and $L_2$ are disjoint lines contained in the planes $H_1$ and $H_2$, 
respectively$\, ;$ (V) $Z$ is the divisor $2L$ on a nonsigular quadric surface 
$Q \subset \piii$, $L$ being a line, and $\Sigma$ is the first infinitesimal 
neighbourhood $H^{(1)}$ of a plane $H \supset L$. Case (III) was settled in 
Claim 3.3 of the proof of Prop.~\ref{P:c1=4c2=8n3}. In this appendix we settle 
the remaining two cases. 

\vskip2mm 

Before doing that, we need a number of lemmata. The first one, 
which is a matter of elementary linear algebra, was used in Case 3 of the 
proof of Prop.~\ref{P:c1=4c2=8n3} in order to get the reduction recalled 
above.     

\begin{lemma}\label{L:wsubsetwedge2v} 
Let $V$ be a $4$-dimensional $k$-vector space and $W$ a $3$-dimensional 
$k$-vector subspace of $\overset{2}{\bigwedge} V$. Let $\Delta$ be the image of 
the wedge product $V \times V \ra \overset{2}{\bigwedge} V$. $\Delta$ is the 
affine cone over the Pl\"{u}cker hyperquadric $\mathbb{G} \subset 
\p(\overset{2}{\bigwedge} V) \simeq \pv$. 

\emph{(a)} If $\p(W) \cap \mathbb{G}$ is a nonsingular conic then there exists 
a basis $v_0, \ldots , v_3$ of $V$ such that $v_0 \wedge v_1$, $v_2 \wedge 
v_3$, $(v_0 + v_2) \wedge (v_1 + v_3)$ is a basis of $W$. 

\emph{(b)} If $\p(W) \cap \mathbb{G}$ is the union of two intersecting lines 
then there exists a basis $v_0, \ldots , v_3$ of $V$ such that $v_0 \wedge 
v_1$, $v_1 \wedge v_2$, $v_2 \wedge v_3$ is a basis of $W$. 

\emph{(c)} If $\p(W) \cap \mathbb{G}$ is a double line then there exists a  
basis $v_0, \ldots , v_3$ of $V$ such that $v_0 \wedge v_1$, $v_0 \wedge v_2$, 
$v_0 \wedge v_3 + v_1 \wedge v_2$ is a basis of $W$.  
\end{lemma} 

\begin{proof} 
(a) Choose three distinct points $P$, $P^\prim$, $P^\secund$ on $\p(W) \cap 
\mathbb{G}$. They correspond to three $2$-dimensional vector subspaces 
$U$, $U^\prim$, $U^\secund$ of $V$. Since none of the lines 
$\overline{PP^\prim}$, $\overline{P^\prim P^\secund}$, $\overline{PP^\secund}$ is 
contained in $\mathbb{G}$ it follows that $U \cap U^\prim = (0)$, 
$U^\prim \cap U^\secund = (0)$, $U \cap U^\secund = (0)$. In particular, 
$V = U \oplus U^\prim$. $U^\secund$ has a basis of the form $v_0 + v_2$, 
$v_1 + v_3$ with $v_0,\, v_1 \in U$ and $v_2,\, v_3 \in U^\prim$. Now, 
$U^\secund \cap U = (0)$ implies that $v_2$ and $v_3$ are linearly independent 
and $U^\secund \cap U^\prim = (0)$ implies that $v_0$ and $v_1$ are linearly 
independent. 

\vskip2mm 

(b) Let $P^\secund$ the intersection point of the two lines, $P$ another point 
on one of the lines and $P^\prim$ another point on the other line. Using the 
notation from the proof of (a) one has $V = U \oplus U^\prim$ and $U^\secund \cap 
U$ and $U^\secund \cap U^\prim$ are 1-dimensional. The assertion follows easily. 

\vskip2mm 

(c) Choose two different points $P$ and $P^\prim$ on the double line and let 
$U$ and $U^\prim$ be the corresponding 2-dimensional subspaces of $V$. There 
exists three linearly independent vectors $v_0$, $v_1$, $v_2$ such that 
$v_0$, $v_1$ is a basis of $U$ and $v_0$, $v_2$ is a basis of $U^\prim$. $\p(W)$ 
is contained in the intersection of the tangent spaces to $\mathbb{G}$ at 
$P$ and $P^\prim$. But $\text{T}_P\mathbb{G} = \p(U \wedge V)$, 
$\text{T}_{P^\prim}\mathbb{G} = \p(U^\prim \wedge V)$ and 
\[
(U \wedge V) \cap (U^\prim \wedge V) = (v_0 \wedge V) + k\cdot v_1 \wedge v_2 
\, .
\]  
It follows and one can find $v_3 \in V$ such that $v_0, \ldots , v_3$ is a 
basis of $V$ and $v_0 \wedge v_1$, $v_0 \wedge v_2$, $v_0 \wedge v_3 + 
v_1 \wedge v_2$ is a basis of $W$. 
\end{proof}

\begin{lemma}\label{L:quasibdl} 
Let $A$ be a vector bundle on $\pii$ admitting a linear presentation$\, :$  
\[
0 \lra s\sco_\pii(-1) \lra t\sco_\pii \lra A \lra 0\, .
\]
Let $x$ be a point of $\pii$ and let $\sca$ be the kernel of an epimorphism 
$\pi : A \ra \sco_{\{x\}}$. Then $\sca$ fails to be globally generated 
precisely when there exists a line $L \ni x$ such that $\pi$ factorizes 
through $\sco_L$.   
\end{lemma}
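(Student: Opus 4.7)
The plan is to prove both implications of the lemma by a direct analysis of $\sca$ built on the given presentation $0 \lra \sco_\pii(-1)^{\op s} \xra{M} \sco_\pii^{\op t} \lra A \lra 0$. The ``only if'' direction is essentially formal; the converse is the substantive step and reduces to a cokernel computation for the evaluation map of $\sca$.

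For the ``only if'' direction, suppose $\pi = \pi_1 \circ \rho$ with $\rho : A \lra \sco_L$ and $L \ni x$. The image of $\rho$ is a nonzero subsheaf of $\sco_L$, hence of the form $\sco_L(-D)$ for some effective divisor $D$ on $L$ with $x \notin D$ (otherwise $\pi_1 \circ \rho$ would fail to be surjective at $x$). A snake-lemma argument applied to $0 \to \Ker \rho \to A \to \sco_L(-D) \to 0$ and $\pi = \pi_1 \circ \rho$ produces a surjection $\sca \twoheadrightarrow \Ker(\sco_L(-D) \lra \sco_{\{x\}}) \simeq \sco_{\p^1}(-\deg D - 1)$; since this is a negative line bundle on $L \simeq \p^1$, it is not globally generated, hence neither is $\sca$.

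The bulk of the work is the converse. Pulling back $\sca \hookrightarrow A$ along $\sco_\pii^{\op t} \twoheadrightarrow A$ yields an exact sequence $0 \lra \sco_\pii(-1)^{\op s} \xra{M} B \lra \sca \lra 0$ with $B := \Ker(\sco_\pii^{\op t} \lra \sco_{\{x\}})$. Choosing coordinates so that $\sco_\pii^{\op t} \lra \sco_{\{x\}}$ is ``extract the first coordinate and evaluate at $x$'' gives the canonical splitting $B \simeq \sci_{\{x\}} \oplus \sco_\pii^{\op t-1}$, and the fact that $M$ lands in $B$ forces the $s$ linear forms $m_{1,1},\ldots,m_{s,1} \in H^0(\sco_\pii(1))$ constituting the ``first row'' of $M$ to vanish at $x$. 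Using $\tH^i(\sco_\pii(-1)^{\op s}) = 0$ for $i = 0,1$, the cohomology sequence gives $\tH^0(\sca) \simeq \tH^0(B) \simeq k^{t-1}$, with generators supplied by the second summand of $B$, and the evaluation map $\tH^0(\sca) \otimes \sco_\pii \lra \sca$ factors as $\sco_\pii^{\op t-1} \hookrightarrow B \twoheadrightarrow \sca$. A short diagram chase then identifies its cokernel with $\sci_{\{x\}}/\text{Im}(M_1)$, where $M_1 : \sco_\pii(-1)^{\op s} \lra \sci_{\{x\}}$ is the map defined by $m_{1,1},\ldots,m_{s,1}$.

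The conclusion follows from a simple observation in $V^\ast := H^0(\sco_\pii(1))$: $M_1$ is surjective exactly when the $m_{i,1}$ generate $\sci_{\{x\}}$, equivalently when they span the $2$-dimensional subspace $\{f \in V^\ast : f(x) = 0\}$, equivalently when they are not all scalar multiples of a single linear form $\ell$ (with the convention that $\ell = 0$ is allowed). So $\sca$ fails to be globally generated precisely in the exceptional case; setting $L := \{\ell = 0\}$ (or any line through $x$ if $\ell = 0$), the composite $\sco_\pii^{\op t} \lra \sco_\pii \twoheadrightarrow \sco_L$ (first-coordinate projection followed by restriction to $L$) annihilates $\text{Im}(M)$ by construction, hence factors through $A$ as $\rho : A \lra \sco_L$; composing with the residue map $\sco_L \twoheadrightarrow \sco_{\{x\}}$ recovers $\pi$. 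The main technical obstacle is the cokernel identification $\sci_{\{x\}}/\text{Im}(M_1)$; once the splitting $B \simeq \sci_{\{x\}} \oplus \sco_\pii^{\op t-1}$ is in hand the rest of the argument is purely formal.
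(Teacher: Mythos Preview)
Your proof is correct and follows essentially the same strategy as the paper's: both identify the cokernel of the evaluation map of $\sca$ with $\sci_{\{x\}}/\text{Im}(M_1)$ for a map $M_1 : \sco_\pii(-1)^{\op s} \to \sci_{\{x\}}$ built from one row of the presentation matrix, and then observe that this cokernel vanishes exactly when the defining linear forms span the $2$-dimensional space of forms vanishing at $x$. The paper obtains this via a direct Snake Lemma on the diagram of evaluation maps for $\sca$ and $A$, whereas you pull back to the explicit sheaf $B \simeq \sci_{\{x\}} \oplus \sco_\pii^{\op t-1}$; the content is the same. Two cosmetic points: you have swapped the labels ``if'' and ``only if'' (your first paragraph proves $Q \Rightarrow P$, which is the ``if'' direction), and the indices should presumably read $m_{1,1},\ldots,m_{1,s}$ rather than $m_{1,1},\ldots,m_{s,1}$.
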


\begin{proof}
Since $P(A) \simeq s\sco_\pii(1)$, Lemma~\ref{L:ggquasib}(a) implies that if 
$\sca$ is not globally generated then there exists a line $L \ni x$ such that 
$\pi$ factorizes through $\sco_L$. 

Conversely, assume that there exists a line $L \ni x$ such that $\pi$ 
factorizes through $\sco_L$. Since $\tH^0(\sco_L) \simeq k$, $A \ra \sco_L$ 
must be an epimorphism. 
Let $\sca^\prim$ be the kernel of this epimorphism. Then $\sca^\prim \subseteq 
\sca$ and $\sca/\sca^\prim \simeq \Ker(\sco_L \ra \sco_{\{x\}}) = 
\sci_{\{x\},L} \simeq \sco_L(-1)$ hence $\sca$ is not globally generated.   
\end{proof} 

The next lemma is classically known and easy. 

\begin{lemma}\label{L:singpencil}
Let $q_0,\, q_1 \in {\fam0 H}^0(\sco_\pii(2))$ be two linearly independent 
quadratic forms on $\pii$. If the linear system $\vert \, kq_0 + kq_1\, \vert$ 
contains only singular divisors then either $q_0$ and $q_1$ have a common 
linear factor or the divisors $q_0 = 0$ and $q_1 = 0$ have a common singular 
point. 
\end{lemma}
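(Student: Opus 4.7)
The plan is to translate the hypothesis into the identical vanishing of the binary cubic discriminant of the pencil, and then unpack the vanishing of each coefficient in terms of the ranks, singular points, and factorizations of the two conics. Let $M_i$ denote the symmetric $3\times 3$ matrix of $q_i$; the assumption is that $\det(\lambda M_0+\mu M_1)\equiv 0$, which expands as
\[
\lambda^3\det M_0+\lambda^2\mu\,\mathrm{tr}(\mathrm{adj}(M_0)M_1)+\lambda\mu^2\,\mathrm{tr}(M_0\,\mathrm{adj}(M_1))+\mu^3\det M_1=0.
\]
The vanishing of the outer coefficients says that both $q_0,q_1$ are themselves singular conics. The crux is the elementary fact that for any rank-$2$ symmetric $3\times 3$ matrix $M$ with $\ker M=k\cdot v$, one has $\mathrm{adj}(M)=\kappa\,vv^T$ for a nonzero scalar $\kappa$; consequently $\mathrm{tr}(\mathrm{adj}(M)M')=\kappa\,q'(v)$ for any other symmetric $M'$ with form $q'$. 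This turns the two middle-coefficient conditions into the geometric statement that each singular point $p_i$ of $\{q_i=0\}$ lies on the other conic.

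I would then run a short case analysis on the ranks. When both $M_i$ have rank $2$ with singular points $p_0\neq p_1$, writing $q_0=L_0L_0'$ shows that $p_1$, being on $\{q_0=0\}$ but distinct from $p_0$, lies on exactly one component, say $L_0$; since $L_0$ already passes through $p_0$, it is the unique line through $p_0$ and $p_1$. The symmetric argument applied to $q_1=L_1L_1'$ and $p_0\in\{q_1=0\}$ produces the very same line as a factor of $q_1$, giving the common linear factor. If $p_0=p_1$ (still both rank $2$), the common singular point is immediate. When $q_0=\ell^2$ has rank $1$ and $q_1$ has rank $2$ with singular point $p_1$, the $\lambda^2\mu$-coefficient vanishes automatically (since $\mathrm{adj}(M_0)=0$), while the $\lambda\mu^2$-coefficient reads $q_0(p_1)=\ell(p_1)^2=0$, so $p_1$ lies on the singular line $\{\ell=0\}$ of $\{q_0=0\}$, yielding a common singular point. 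Finally, if both $q_0=\ell^2$ and $q_1=m^2$ have rank $1$, then $\ell$ and $m$ are linearly independent (else $q_0,q_1$ would be) and their intersection is a common singular point.

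The only nonroutine ingredient is the factorization $\mathrm{adj}(M)=\kappa\,vv^T$ for rank-$2$ symmetric $3\times 3$ matrices, which I would justify by observing that $M\cdot\mathrm{adj}(M)=\det(M)I=0$ forces the columns of $\mathrm{adj}(M)$ to lie in $\ker M=k\cdot v$, that the symmetry of $\mathrm{adj}(M)$ then pins down its shape as $\kappa\,vv^T$, and that $\kappa\neq 0$ since a rank-$2$ symmetric matrix has at least one nonvanishing $2\times 2$ minor. With this lemma in hand the remaining steps are purely bookkeeping, so I do not expect any serious obstacle.
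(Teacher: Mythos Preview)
Your argument is correct and complete; the expansion of $\det(\lambda M_0+\mu M_1)$ together with the identity $\mathrm{adj}(M)=\kappa\,vv^T$ for rank-$2$ symmetric $3\times 3$ matrices does exactly what you say, and the case analysis on ranks is clean. The paper, however, proceeds quite differently: it assumes there is no common linear factor, so that the base locus $X=\{q_0=q_1=0\}$ is a $0$-dimensional complete intersection of type $(2,2)$, invokes Bertini to conclude that the general member of the pencil is smooth away from $X$, writes each $q_i=0$ as a sum of two lines $L_i+L_i'$, and argues that if $L_0\cap L_0'\cap L_1\cap L_1'=\emptyset$ then at every base point at least one of the two generators is nonsingular, forcing the general member to be smooth everywhere---a contradiction. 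The trade-off is that the paper's proof is short and uniform (the decomposition $q_i=L_iL_i'$ covers both the rank-$1$ and rank-$2$ cases at once, with no explicit case split), at the cost of invoking Bertini; your approach is entirely elementary and self-contained, gives a transparent interpretation of each coefficient of the discriminant cubic, and would work over any algebraically closed field of characteristic $\neq 2$, but requires the small linear-algebra lemma on $\mathrm{adj}(M)$ and a three-way case distinction.
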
 

\begin{proof}
Assume that $q_0$ and $q_1$ have no common linear factor. In this case, the 
subscheme $X$ of $\pii$ of equations $q_0 = q_1 = 0$ is a complete intersection 
of type $(2,2)$. By Bertini, a general member of the linear system 
$\vert \, kq_0 + kq_1 \, \vert$ is nonsingular outside $X$. The divisor 
$q_i = 0$ must have the form $L_i + L_i^\prim$, with $L_i$ and $L_i^\prim$ lines 
in $\pii$, $i = 0, 1$. We want to show that $L_0 \cap L_0^\prim \cap L_1 \cap 
L_1^\prim \neq \emptyset$. Assume the contrary. Then, for every point $x \in X$, 
at least one of the divisors $L_i + L_i^\prim$, $i = 0, 1$, is nonsingular at 
$x$. It follows that, for every point $x \in X$, the general member of the 
linear system $\vert \, kq_0 + kq_1 \, \vert$ is nonsingular at $x$. 
Consequently, a general member of this linear system is nonsingular, a 
contradiction.  
\end{proof}

\begin{lemma}\label{L:ponceletbdl} 
Let $x$ be a point of $\pii$, consider an epimorphism $3\sco_\pii \ra 
\sci_{\{x\}}(2)$ and let $G$ be the kernel of this epimorphism. The epimorphism 
is defined by a $3$-dimensional vector subspace $\Lambda$ of 
${\fam0 H}^0(\sci_{\{x\}}(2)) \subset {\fam0 H}^0(\sco_\pii(2))$. Then$\, :$  

\emph{(a)} There exists a unique element of the linear system $\vert \, 
\Lambda \, \vert$ having a singular point at $x$. 

\emph{(b)} Let $L$ be a line in $\pii$. If $x \in L$ then there exists a 
unique line $L^\prim$ such that the divisor $L + L^\prim$ belongs to 
$\vert \, \Lambda \, \vert$. If $x \notin L$ then there exists at most one 
such line $($which, of course, must contain $x$$)$.    

\emph{(c)} $\vert \, \Lambda \, \vert$ contains no pencil consisting 
entirely of singular divisors. 

\emph{(d)} $G(2)$ admits a linear presentation of the form$\, :$  
\[
0 \lra 2\sco_\pii(-1) \lra 4\sco_\pii \lra G(2) \lra 0\, .
\]

\emph{(e)} Let $y \in \pii \setminus \{x\}$, let $q$ be a non-zero element of 
$G(y) \simeq \{f \in \Lambda \vb f(y) = 0\}$, and let $\scg$ be the kernel 
of the epimorphism $G \ra (G(y)/kq)\otimes_k\sco_{\{y\}}$. Then $\scg(2)$ 
fails to be globally generated precisely when the divisor $q = 0$ is of the 
form $L + L^\prim$ with $L$ and $L^\prim$ lines such that $y \in L$ and 
$x \in L^\prim$. 
\end{lemma}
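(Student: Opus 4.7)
The plan is to dispatch parts (a)--(c) quickly from the surjectivity of the evaluation of $\Lambda$ on reduced stalks, then establish (d) by comparing two resolutions of $\sci_{\{x\}}(2)$, and finally reduce (e) to Lemma~\ref{L:quasibdl} via (d).

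For (a), the evaluation $\Lambda \to \sci_{\{x\}}(2)(x) \simeq \fm_x/\fm_x^2$ is surjective (since $\Lambda$ generates $\sci_{\{x\}}(2)$), and its kernel is precisely the set of $f\in\Lambda$ vanishing to order $\geq 2$ at $x$, so is $1$-dimensional. For (b), let $\ell=0$ be an equation of $L$ and consider restriction $\Lambda \to H^0(\sco_L(2))$, whose kernel is $\Lambda\cap \ell\cdot S_1$. If $x\in L$, the composition $\Lambda \to H^0(\sci_{\{x\},L}(2)) \simeq H^0(\sco_L(1))\simeq k^2$ must be surjective (otherwise $\sci_{\{x\},L}(2)$ fails to be globally generated on $L$), so this kernel has dimension exactly $1$. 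If $x\notin L$, any $\ell m\in\Lambda$ forces $m\in H^0(\sci_{\{x\}}(1))$, so the kernel lies in the $2$-dimensional subspace $\ell\cdot H^0(\sci_{\{x\}}(1))$; if this intersection had dimension $\geq 2$, the image of $\Lambda$ in $H^0(\sco_L(2))=k^3$ would be at most $1$-dimensional, contradicting global generation of $\sci_{\{x\}}(2)|_L=\sco_L(2)$.

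For (c), assume a pencil $P\subset |\Lambda|$ of singular divisors. By Lemma~\ref{L:singpencil}, the members either share a common linear factor $\ell$ or a common singular point $z$. The first case forces $P\subseteq \Lambda\cap \ell\cdot S_1$, of dimension $\leq 1$ by (b), contradicting $\dim P=2$. If $z=x$, part (a) gives the same contradiction. If $z\neq x$, each member factors as $\ell_1\ell_2$ with $\ell_i(z)=0$ and vanishes at $x$; since $x\neq z$ this forces one factor to be the unique line $\ell_{xz}$, hence $P\subseteq \ell_{xz}\cdot H^0(\sci_{\{z\}}(1))$, reducing to the first case. For (d), compare the given resolution $0\to G\to\sco_\pii^{\oplus 3}\to \sci_{\{x\}}(2)\to 0$ with the Koszul resolution $0\to\sco_\pii\to\sco_\pii(1)^{\oplus 2}\to\sci_{\{x\}}(2)\to 0$ by forming the fibre product $P=\sco_\pii^{\oplus 3}\times_{\sci_{\{x\}}(2)}\sco_\pii(1)^{\oplus 2}$. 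It fits in $0\to \sco_\pii\to P\to\sco_\pii^{\oplus 3}\to 0$, which splits since $\text{Ext}^1(\sco_\pii^{\oplus 3},\sco_\pii)=0$, so $P\simeq \sco_\pii^{\oplus 4}$; the other projection gives $0\to G\to\sco_\pii^{\oplus 4}\to\sco_\pii(1)^{\oplus 2}\to 0$. Since $\sci_{\{x\}}(2)$ has projective dimension $1$, $G$ is locally free of rank $2$ with $\det G\simeq \sco_\pii(-2)$, so $G^\vee\simeq G(2)$ and dualizing yields the required linear presentation.

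For (e), by (d) the hypothesis of Lemma~\ref{L:quasibdl} is met for $A=G(2)$ at $y$, so $\scg(2)$ fails to be globally generated iff the epimorphism $\pi:G(2)\to\sco_{\{y\}}$ factorizes through $\sco_L$ for some line $L\ni y$. Such a factorization is given by $\rho\in\text{Hom}(G(2),\sco_L)\simeq H^0(G(2)^\vee|_L)=H^0(G|_L)$, i.e., by a section $s$ of $G|_L$; the condition that $\pi_{L,y}\circ\rho=\pi$ translates (using the identification $G(y)=\{f\in\Lambda : f(y)=0\}$) to $s(y)\in kq$. A section $s=(f_0,f_1,f_2)\in H^0(G|_L)\subset k^3$ is exactly a constant relation $\sum f_iq_i|_L=0$, i.e., a conic $f_s:=\sum f_iq_i\in\Lambda$ divisible by $\ell_L$; under the identification above, $s(y)$ is $f_s$. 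Hence failure is equivalent to the existence of a line $L\ni y$ such that $q$ itself lies in $\ell_L\cdot S_1$, combined with the constraint extracted from (b) that in the factorization $q=\ell_L\cdot m$ the linear form $m$ must vanish at $x$ — precisely the statement that $q=0$ is of the form $L+L'$ with $y\in L$ and $x\in L'$. The principal obstacle is the bookkeeping in the subcase $L=L_{xy}$, where $\sci_{\{x\}}(2)|_L$ has torsion at $x$: one must verify that the torsion contribution forces the constraint $m(x)=0$ on the unique $m$ provided by part (b), matching the answer coming from lines $L\not\ni x$ and thereby confirming that the two factorization possibilities coalesce into the single geometric condition stated in (e).
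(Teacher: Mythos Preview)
Your treatment of (a)--(d) is correct and follows the paper's argument (the fibre-product construction in (d) is exactly how the paper ``derives'' the sequence $0\to G\to\sco_\pii^{\oplus 4}\to\sco_\pii(1)^{\oplus 2}\to 0$ before dualizing). For (e), your reduction to Lemma~\ref{L:quasibdl} and the identification of the factorization data with sections $s\in H^0(G|_L)$ satisfying $s(y)\in kq$ is also the paper's route.

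The gap is in the case $x\in L$, which you explicitly defer. Your equation ``$H^0(G|_L)=\{f\in\Lambda:\ell_L\mid f\}$'' is correct only when $x\notin L$; when $x\in L$ the restriction $\sci_{\{x\}}(2)|_L$ acquires a torsion summand $\sco_{\{x\}}(1)$ in addition to $\sci_{\{x\},L}(2)\simeq\sco_L(1)$, and vanishing of a constant section in that torsion piece is the extra condition $m(x)=0$ in $f=\ell_L m$. You assert this is what ``one must verify'' but do not verify it, and this is precisely the substantive computation: without it, the case $q=\ell_{xy}\cdot m$ with $m(x)\neq 0$ and $m(y)\neq 0$ remains open (you need to show that no factorization through $\sco_{L_{xy}}$ exists here, which is exactly what the torsion condition gives). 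The paper carries this out by applying the Snake Lemma to the multiplication-by-$\ell$ diagram to get $\sci_{\{x\}}\otimes_{\sco_\pii}\sco_L\simeq\sci_{\{x\},L}\oplus\sco_{\{x\}}(-1)$, and then observing that a section $s\in H^0(\sci_{\{x\}}(2)\otimes\sco_L)$ vanishes iff it vanishes at every point of $L$, the fibre at $x$ being $\sci_{\{x\}}/\sci_{\{x\}}^2$; hence $H^0(G_L)=\{f\in\Lambda:\{f=0\}=L+L'\text{ with }x\in L'\}$ in that case. Filling in this step completes your proof.
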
 

\begin{proof}
(a) Since the image of the evaluation morphism $\Lambda \otimes_k
\sco_\pii \ra \sco_\pii(2)$ is $\sci_{\{x\}}(2)$, the map $\Lambda \ra 
(\sci_{\{x\}}/\sci_{\{x\}}^2)(2)$ must be surjective, hence its kernel has 
dimension 1. 

\vskip2mm 

(b) If $x \in L$ then one has an epimorphism $\Lambda \otimes_k\sco_L \ra 
\sci_{\{x\},L}(2) \simeq \sco_L(1)$. It follows that the map $\Lambda \ra 
\tH^0(\sci_{\{x\},L}(2))$ must be surjective, hence its kernel has dimension 1. 

If $x \notin L$ then one has an epimorphism $ \Lambda \otimes_k\sco_L \ra 
\sco_L(2)$. It follows that the image of the map $\Lambda \ra 
\tH^0(\sco_L(2))$ has dimension at least 2, hence its kernel has dimension 
$\leq 1$.  

\vskip2mm 

(c) One uses Lemma~\ref{L:singpencil} and the assertions (a) and (b), just 
proven. 

\vskip2mm 

(d) From the exact sequences$\, :$ 
\begin{gather*}
0 \lra G \lra 3\sco_\pii \lra \sci_{\{x\}}(2) \lra 0\, ,\\
0 \lra \sco_\pii \lra 2\sco_\pii(1) \lra \sci_{\{x\}}(2) \lra 0\, , 
\end{gather*} 
one derives an exact sequence 
$0 \ra G \ra 4\sco_\pii \ra 2\sco_\pii(1) \ra 0$. 
Since $\rk G = 2$ and $c_1(G) = -2$ it follows that $G^\ast \simeq G(2)$. The 
exact sequence from the statement can be now obtained by dualizing the last 
exact sequence. 

\vskip2mm 

(e) Applying Lemma~\ref{L:quasibdl} to $A = G(2)$ one gets that $\scg(2)$ 
fails to be globally generated precisely when there exists a line $L \ni y$ 
such that $G(2) \ra (G(y)/kq)\otimes_k\sco_{\{y\}}(2)$ factorizes through 
$\sco_L$. 

Since $G(2)$ is globally generated and $c_1(G) = -2$, it follows that, for 
every line $L \subset \pii$, $G_L \simeq 2\sco_L(-1)$ or $G_L \simeq 
\sco_L \oplus \sco_L(-2)$. (In the latter case, one says that $L$ is a 
\emph{jumping line} for $G$.) 
One deduces that, for every line $L \ni y$, 
$G(2) \ra (G(y)/kq)\otimes_k\sco_{\{y\}}(2)$ factorizes through $\sco_L$ if and 
only if there exists a non-zero morphism $\sco_L \ra G_L$ such that the 
composite morphism $\sco_L \ra G_L \ra (G(y)/kq)\otimes_k\sco_{\{y\}}$ is 0, 
i.e., if and only if $q \in \text{Im}(\tH^0(G_L) \ra G(y))$.

Let, now, $L \subset \pii$ be an arbitrary line, of equation $\ell = 0$, 
$\ell \in \tH^0(\sco_\pii(1))$.  If $x \notin L$ then one has an exact 
sequence$\, :$ 
\[
0 \lra G_L \lra 3\sco_L \lra \sco_L(2) \lra 0
\]  
hence $\tH^0(G_L) \simeq \{f \in \Lambda \vb f \  \text{vanishes on}\  L\}$. 

On the other hand, if $x \in L$ then one has an exact sequence$\, :$  
\[
0 \lra G_L \lra 3\sco_L \lra \sci_{\{x\}}(2)\otimes_{\sco_\p}\sco_L \lra 
0\, .
\]
Applying the Snake Lemma to the commutative diagram$\, :$  
\[
\begin{CD}
0 @>>> \sci_{\{x\}}(-1) @>>> \sco_\pii(-1) @>>> \sco_{\{x\}}(-1) @>>> 0\\
@. @VV{\ell}V @VV{\ell}V @VV{\ell}V\\
0 @>>> \sci_{\{x\}} @>>> \sco_\pii @>>> \sco_{\{x\}} @>>> 0
\end{CD}
\]
one gets an exact sequence$\, :$ 
\[
0 \lra \sco_{\{x\}}(-1) \lra \sci_{\{x\}}\otimes_{\sco_\p}\sco_L \lra 
\sci_{\{x\},L} \lra 0
\]
hence $\sci_{\{x\}}\otimes_{\sco_\p}\sco_L \simeq \sci_{\{x\},L} \oplus 
\sco_{\{x\}}(-1) \simeq \sco_L(-1) \oplus \sco_{\{x\}}(-1)$. One deduces that if 
$s \in \tH^0(\sci_{\{x\}}(2)\otimes_{\sco_\p}\sco_L)$ then $s = 0$ if and only if 
$s(y) = 0$, $\forall \, y \in L$. But$\, :$ 
\[
(\sci_{\{x\}}\otimes_{\sco_\p}\sco_L)\otimes_{\sco_L}\sco_{\{x\}} \simeq 
\sci_{\{x\}}\otimes_{\sco_\p}\sco_{\{x\}} \simeq \sci_{\{x\}}/\sci_{\{x\}}^2 
\]
hence if $x \in L$ then$\, :$ 
\begin{gather*}
\tH^0(G_L) \simeq \{f \in \Lambda \vb f \  \text{vanishes on}\  L \ 
\text{and} \  \{f = 0\} \  \text{is singular at}\  x\} =\\
\{f \in \Lambda \vb \{f = 0\} = L + L^\prim \  \text{with} \  x \in L^\prim\}
\, .  
\qedhere
\end{gather*} 
\end{proof} 

\begin{prop}\label{P:case3IV} 
Let $L_1$, $L_2$ be two disjoint lines in $\piii$ and let $H_1$, $H_2$ be two 
planes such that $L_i \subset H_i$, $i = 1,\, 2$. Let $L$ be the line 
$H_1 \cap H_2$ and $x_i$ the point where $L_i$ intersects $L$, $i = 1,\, 2$. 
Consider an epimorphism$\, :$ 
\[
\e : 3\sco_{H_1 \cup H_2} \ra \sci_{L_1 \cup L_2, H_1 \cup H_2}(3)
\]
defined by a $3$-dimensional subspace 
$\Lambda \subset {\fam0 H}^0(\sci_{L_1 \cup L_2, H_1 \cup H_2}(3))$. Then 
${\fam0 Ker}\, \e(2)$ is globally generated if and only if the linear system 
$\vb \Lambda \vb \subset \vb \sco_{H_1 \cup H_2}(3) \vb$ contains no divisor 
supported on a union of lines passing, each of them, through $x_1$ or $x_2$.  
\end{prop}

\begin{proof}
Let $h_i = 0$ be an equation of $H_i$ in $\piii$. 
Put $\Sigma = H_1 \cup H_2$, $Z = L_1 \cup L_2$ and $Z_i := Z \cap H_i = L_i 
\cup \{x_{3-i}\}$, $i = 1, 2$. 
Since $Z$, $Z_0$, $Z_1$ are reduced schemes, the exact sequence$\, :$  
\[
0 \lra \sco_\Sigma \lra \sco_{H_1}\oplus \sco_{H_2} \lra \sco_L \lra 0 
\] 
induces an exact sequence$\, :$ 
\[
0 \lra \sci_{Z,\Sigma} \lra \sci_{Z_1,H_1} \oplus \sci_{Z_2,H_2} \lra 
\sci_{\{x_1,x_2\},L} \lra 0\, ,
\]
and, for $i \in \{1,\, 2\}$, the exact sequence$\, :$ 
\[
0 \lra \sco_{H_{3-i}}(-1) \overset{h_i}{\lra} \sco_\Sigma \lra \sco_{H_i} 
\lra 0 
\]
induces an exact sequence$\, :$ 
\[
0 \lra \sci_{L_{3-i},H_{3-i}}(-1) \overset{h_i}{\lra} \sci_{Z,\Sigma} \lra 
\sci_{Z_i,H_i} \lra 0\, . 
\]
Let $\ell_i \in \tH^0(\sco_{H_i}(1))$ be a linear form vanishing on $L_i$, 
$i = 1,\, 2$. Multiplication by $\ell_i$ induces an isomorphism$\, :$ 
\[
\sci_{\{x_{3-i}\},H_i}(-1) \Izo \sci_{Z_i,H_i}\, ,\  i= 1,\, 2\, . 
\]
$\e \vb H_i$ can be, consequently, written as a composite morphism$\, :$ 
\[
3\sco_{H_i} \overset{\e_i}{\lra} \sci_{\{x_{3-i}\},H_i}(2) \overset{\ell_i}{\lra} 
\sci_{Z_i,H_i}(3) 
\]
with $\e_i$ epimorphism. Put $\Lambda := \text{Im}\, \tH^0(\e) \subset 
\tH^0(\sci_{Z,\Sigma}(3)) \subset \tH^0(\sco_\Sigma(3))$  and$\, :$  
\[
\Lambda_i := \text{Im}\, \tH^0(\e_i) 
\subset \tH^0(\sci_{\{x_{3-i}\},H_i}(2)) \subset \tH^0(\sco_{H_i}(2))\, ,\  
i = 1,\, 2\, . 
\] 
Then $\Lambda \vb H_i = \ell_i\Lambda_i$, $i = 1,\, 2$. Since there is no 
epimorphism $2\sco_{H_i} \ra \sci_{\{x_{3-i}\},H_i}(2)$, it follows that 
$\dim_k\Lambda_i = 3$, $i = 1,\, 2$. Put, also$\, :$ 
\[
\scg := \Ker \e \  \  \text{and}\  \  G_i:= \Ker \e_i\, ,\  i = 1,\, 2\, . 
\]
Applying the Snake Lemma to the diagram$\, :$
\[
\begin{CD}
0 @>>> 3\sco_{H_{3-i}}(-1) @>{h_i}>> 3\sco_\Sigma @>>> 3\sco_{H_i}  
@>>> 0\\
@. @V{\ell_{3-i}\e_{3-i}(-1)}VV @VV{\e}V @VV{\ell_i\e_i}V\\ 
0 @>>> \sci_{L_{3-i},H_{3-i}}(2) @>{h_i}>> \sci_{Z,\Sigma}(3) @>>> 
\sci_{Z_i,H_i}(3) @>>> 0
\end{CD}
\]
one gets an exact sequence$\, :$  
\begin{equation}\label{E:g3-iscggi} 
0 \lra G_{3-i}(-1) \lra \scg \lra G_i \lra \sco_{\{x_i\}}(1) \lra 0\, . 
\end{equation}
We describe, now, the (1-dimensional) image of the map $\scg(x_i) \ra 
G_i(x_i)$. Using the exact sequences$\, :$ 
\begin{gather*} 
0 \lra \scg \lra \Lambda \otimes_k \sco_\Sigma \overset{\e}{\lra} 
\sci_{Z,\Sigma}(3) \lra 0\, ,\\ 
0 \lra G_i \lra \Lambda_i \otimes_k \sco_{H_i} \overset{\e_i}{\lra} 
\sci_{\{x_{3-i}\},H_i}(2) \lra 0\, ,  
\end{gather*}
one deduces that$\, :$ 
\begin{gather*}
\text{Im}(\scg(x_i) \ra \Lambda) = \Ker(\Lambda \xra{\e(x_i)} 
\sci_{Z,\Sigma}(3)\otimes_{\sco_\Sigma}\sco_{\{x_i\}})\\ 
G_i(x_i) = \Ker(\Lambda_i \xra{\e_i(x_i)} 
\sci_{\{x_{3-i}\},H_i}(2) \otimes_{\sco_{H_i}} \sco_{\{x_i\}}) = 
\{f \in \Lambda_i \vb f(x_i) = 0\}\, . 
\end{gather*}
Consider the diagram$\, :$ 
\[  
\begin{CD}
@. \sci_{Z,\Sigma}(3)\otimes_{\sco_\Sigma}\sco_{\{x_i\}}\\
@. @VVV\\
\sci_{\{x_i\},H_{3-i}}(2)\otimes_{\sco_{H_{3-i}}}\sco_{\{x_i\}} @>\sim>> 
\sci_{Z_{3-i},H_{3-i}}(3)\otimes_{\sco_{H_{3-i}}}\sco_{\{x_i\}} 
\end{CD}
\] 
where the horizontal isomorphism is defined by multiplication by 
$\ell_{3-i}(x_i)$. The vertical map is surjective. Since $Z$ is a locally 
complete intersection of codimension 2 in $\piii$ it follows that 
$\sci_{Z,\Sigma}(3)\otimes_{\sco_\Sigma}\sco_{\{x_i\}}$ is a $k$-vector space of 
dimension $\leq 2$. One deduces that the vertical map is an isomorphism. 

It follows, from what has been said, that $\text{Im}(\scg(x_i) \ra \Lambda)$ 
is generated by the unique (up to multiplication by a scalar) non-zero 
element $f_i$ of $\Lambda$ such that $f_i \vb H_{3-i} = \ell_{3-i}f_{i,3-i}$, 
where $f_{i,3-i}$ is the unique non-zero element of $\Lambda_{3-i}$ such that the 
divisor $\{ f_{i,3-i} = 0\}$ on $H_{3-i}$ has a singular point at $x_i$ 
(see Lemma~\ref{L:ponceletbdl}(a)). Then $f_i \vb H_i = \ell_if_{ii}$, with 
$f_{ii} \in \Lambda_i$ vanishing at $x_i$ (and at $x_{3-i}$ because it belongs 
to $\Lambda_i$) and$\, :$ 
\[
\text{Im}(\scg(x_i) \ra G_i(x_i)) = kf_{ii}\, . 
\]  

Consider, now, the quasi-bundle $\scg_i := \Ker(G_i \ra (G_i(x_i)/kf_{ii}) 
\otimes_k \sco_{\{x_i\}})$ on $H_i$, $i = 1,\, 2$. One gets, from the exact 
sequence \eqref{E:g3-iscggi} an exact sequence$\, :$ 
\begin{equation}\label{E:g3-iscgscgi} 
0 \lra G_{3-i}(-1) \lra \scg \lra \scg_i \lra 0\, . 
\end{equation}

\noindent
{\bf Claim.}\quad $\scg(2)$ \emph{is globally generated if and only if} 
$\scg_1(2)$ \emph{and} $\scg_2(2)$ \emph{are globally generated}. 

\vskip2mm 

\noindent 
\emph{Indeed}, only the ``if'' part needs an argument. Assume that 
$\scg_1(2)$ and $\scg_2(2)$ are globally generated. Since $\tH^p(G_j(1)) = 0$, 
$p = 0,\, 1$, $j = 1,\, 2$, (see Lemma~\ref{L:ponceletbdl}(d)) one gets, from 
the exact sequence \eqref{E:g3-iscgscgi}, that $\tH^0(\scg(2)) \izo 
\tH^0(\scg_i(2))$, $i = 1,\, 2$. Since $Z$ is an effective Cartier divisor on 
$(H_1 \cup H_2)\setminus \{x_1,x_2\}$ it follows that $\dim_k \scg(x) = 2$, 
$\forall \, x \in (H_1 \cup H_2)\setminus \{x_1,x_2\}$. Moreover, since $\scg$ 
is the quotient of a rank 3 vector bundle on $\piii$, one has 
$\dim_k \scg(x_i) \leq 3$, $i = 1,\, 2$. On the other hand, $\dim_k\scg_i(x) 
= 2$, $\forall \, x \in H_i \setminus \{x_i\}$, and $\dim_k\scg_i(x_i) = 3$ 
($\scg_i$ is isomorphic to $\sco_{H_i} \oplus \sci_{\{x_i\},H_i}$ in a 
neighbourhood of $x_i$ on $H_i$). One deduces that $\scg(x) \izo \scg_i(x)$, 
$\forall \, x \in H_i \setminus \{x_{3-i}\}$, $i = 1,\, 2$. Since 
$H_1 \cup H_2 = (H_1 \setminus \{x_2\}) \cup (H_2 \setminus \{x_1\})$ it follows 
that $\scg$ is globally generated and the claim is proven. 

\vskip2mm 

Finally, in order to conclude the proof of the proposition, it suffices to 
notice that, for $i = 1,\, 2$, Lemma~\ref{L:ponceletbdl}(e) implies that 
$\scg_i(2)$ fails to be globally generated 
precisely when the divisor $f_{ii} = 0$ on $H_i$ is of the form 
$L_i^\prim + L_i^\secund$ with $x_1 \in L_i^\prim$ and $x_2 \in L_i^\secund$. 
\end{proof}

\begin{remark}\label{R:case3IV}
When none of the distinguished elements $f_1$ and $f_2$ of $\Lambda$,  
appearing in the above proof of Prop.~\ref{P:case3IV}, vanishes 
on $L$ then it turns out that $\Ker \e(2)$ is 
globally generated if and only if $\{f_{ii} = 0\} \subset H_i$ is a nonsingular  
conic, $i = 1, 2$. One constructs such epimorphisms $\e : 
3\sco_{H_1 \cup H_2} \ra \sci_{L_1 \cup L_2,H_1 \cup H_2}(3)$ in the following way: 
choose $f_{1i}$, $f_{2i}$, $f_{3i} \in \tH^0(\sco_{H_i}(2))$, $i = 1, 2$, 
such that  
\begin{itemize}
\item $\{f_{12} = 0\} = L_{12} + L_{12}^\prim$ with $L_{12} \neq L \neq 
L_{12}^\prim$, $x_1 \in L_{12} \cap L_{12}^\prim$; 
\item $\{f_{11} = 0\} = C_1$ nonsingular conic containing $x_1$ and $x_2$; 
\item $\{f_{21} = 0\} = L_{21} + L_{21}^\prim$ with $L_{21} \neq L \neq 
L_{21}^\prim$, $x_2 \in L_{21} \cap L_{21}^\prim$; 
\item $\{f_{22} = 0\} = C_2$ nonsingular conic containing $x_1$ and $x_2$; 
\item $\{f_{31} = 0\} = L + L_{31}$, $L_{31} \cap C_1 \cap (L_{21}\cup L_{21}^\prim) 
= \emptyset$; 
\item $\{f_{32} = 0\} = L + L_{32}$, $L_{32} \cap C_2 \cap (L_{12} \cup 
L_{12}^\prim) = \emptyset$.
\end{itemize}
Multiplying $f_{11}$, $f_{12}$, $f_{21}$, $f_{22}$ with convenient non-zero 
scalars one may assume that $(\ell_1f_{11}) \vb L = (\ell_2f_{12}) \vb L$ and 
$(\ell_1f_{21}) \vb L = (\ell_2f_{22}) \vb L$. There exists unique elements 
$f_1$, $f_2$, $f_3 \in \tH^0(\sci_{L_1 \cup L_2,H_1 \cup H_2}(3))$ such that 
$f_j \vb H_i = \ell_if_{ji}$, $j = 1, 2, 3$, $i = 1, 2$. They define an 
epimorphism $\e : 3\sco_{H_1 \cup H_2} \ra \sci_{L_1 \cup L_2,H_1 \cup H_2}(3)$ for 
which $\Ker \e(2)$ is globally generated. 

The case where one of the distinguished elements $f_1$ and $f_2$ of $\Lambda$ 
vanishes on $L$ can be analysed similarly. 
\end{remark}

Before proving the final result of this appendix, namely 
Prop.~\ref{P:case3V} below, we need one more lemma. 

\begin{lemma}\label{L:ponceletbdl2} 
Under the hypothesis of Lemma~\ref{L:ponceletbdl}, $\Lambda$ contains, by 
assertion (a) of that lemma, a distinguished element $f_0$ such that 
$f_0 = \ell_0\ell_0^\prim$ with $\ell_0$, $\ell_0^\prim$ linear forms vanishing 
at $x$. Choose another two linear forms $\ell_1^\prim$ and $\ell_2^\prim$ 
vanishing at $x$ such that they are linearly independent and none of them 
belongs to $k\ell_0$ or to $k\ell_0^\prim$. By Lemma~\ref{L:ponceletbdl}(b), 
there exist elements $f_1$, $f_2$ of $\Lambda$ such that 
$f_1 = \ell_1\ell_1^\prim$, $f_2 = \ell_2\ell_2^\prim$, with $\ell_1$ and $\ell_2$ 
linear forms. Let $L_i$ $($resp., $L_i^\prim$$)$ be the line of equation 
$\ell_i = 0$ $($resp., $\ell_i^\prim = 0$$)$, $i = 0, 1 ,2$. Then$\, :$  

\emph{(a)} $f_0$, $f_1$, $f_2$ is a $k$-basis of $\Lambda$. 

\emph{(b)} The subscheme $\{f_1 = f_2 = 0\}$ of $\pii$ is a complete 
intersection of type $(2,2)$ consisting of either four simple points or of two 
simple points $($one of them being $x$$)$ and of a point of length $2$. If the 
point of length $2$ occurs, then it is a subscheme of $L_1$ or of $L_2$.   

\emph{(c)} According to the last part of the proof of 
Lemma~\ref{L:ponceletbdl}(e), there exist canonical epimorphisms 
$G(2) \ra \sco_{L_0}$ and $G(2) \ra \sco_{L_0^\prim}$. By the assertion (b) above, 
there exist linear forms $h_1^\prim$ and $h_2^\prim$ uniquely determined by the 
condition$\, :$ 
\[
h_1^\prim f_1 + h_2^\prim f_2 = \ell_1\ell_2\ell_0^\prim\, .
\]
Then the following three elements of ${\fam0 H}^0(3\sco_\pii(2))$$\, :$  
\[
(f_1,-f_0,0),\  (f_2,0,-f_0),\  (\ell_1\ell_2,-\ell_0h_1^\prim ,-\ell_0h_2^\prim ) 
\]
form a $k$-basis of the kernel of ${\fam0 H}^0(G(2)) \ra 
{\fam0 H}^0(\sco_{L_0})$. The kernel of ${\fam0 H}^0(G(2)) \ra 
{\fam0 H}^0(\sco_{L_0^\prim})$ has an analogous description.  
\end{lemma}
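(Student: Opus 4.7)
My plan handles the three assertions in sequence. For (a), the key observation is that $f_0$ is, up to scalar, the unique singular-at-$x$ element of $\vert\, \Lambda\, \vert$ by Lemma~\ref{L:ponceletbdl}(a), so the hypothesis that $\ell_i^\prime$ is not proportional to $\ell_0$ or $\ell_0^\prime$ rules out $f_i \in k f_0$, forcing $f_1, f_2$ to be nonsingular at $x$ (in particular $\ell_1(x), \ell_2(x) \neq 0$). Differentiating the relation $a f_0 + b f_1 + c f_2 = 0$ at $x$, where $\ell_0, \ell_0^\prime, \ell_1^\prime, \ell_2^\prime$ all vanish, collapses the $f_0$-contribution and leaves $b\ell_1(x)\, d\ell_1^\prime + c\ell_2(x)\, d\ell_2^\prime = 0$; since $\ell_1^\prime, \ell_2^\prime$ are linearly independent, this gives $b = c = 0$, hence $a = 0$.

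For (b), I would show that the four lines $L_1, L_1^\prime, L_2, L_2^\prime$ are pairwise distinct, strongly enough to make $f_1, f_2$ coprime. The conditions $L_i^\prime \ni x$ versus $L_i \not\ni x$ eliminate $L_i = L_j^\prime$, and $L_1^\prime \neq L_2^\prime$ is given. To exclude $L_1 = L_2$, I would argue by contradiction: then $kf_1 + kf_2$ would be a pencil of forms vanishing on $L_1$, and since $L_1$ meets $L_0 \cup L_0^\prime$ at some point $y \neq x$, all of $\Lambda$ would vanish at $y$, contradicting surjectivity of $\Lambda \otimes \sco_\pii \to \sci_{\{x\}}(2)$. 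Hence $\{f_1 = f_2 = 0\}$ is a length-$4$ complete intersection of type $(2,2)$ supported on the four points $p_{ij} := L_i \cap L_j$, $(i,j) \in \{1, 1^\prime\} \times \{2, 2^\prime\}$; the point $p_{1^\prime 2^\prime} = x$ is always simple. A direct case analysis of when two of the four support points can coincide shows the only possibility is a triple concurrence of $\{L_1, L_2, L_2^\prime\}$ or of $\{L_1, L_1^\prime, L_2\}$ at a point $q \neq x$; a local computation at $q$ in coordinates with $\ell_1$ (resp.\ $\ell_2$) as a local parameter yields $(f_1, f_2)_q = (\ell_1, \ell_2\ell_2^\prime)_q$, where $\ell_2 \ell_2^\prime$ reduces mod $\ell_1$ to a form vanishing to order $2$ at $q$, giving a length-$2$ subscheme contained in $L_1$ (resp.\ $L_2$).

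For (c), the existence of $h_1^\prime, h_2^\prime$ reduces to $\ell_1 \ell_2 \ell_0^\prime \in (f_1, f_2)$, verified by checking vanishing with the correct scheme structure on the support of $\{f_1 = f_2 = 0\}$; uniqueness follows from the absence of linear syzygies among the coprime quadrics $f_1, f_2$. That the three proposed triples lie in $\tH^0(G(2))$ is immediate from $\sum g_i f_i = 0$. For linear independence, if a vanishing combination has coefficient $c \neq 0$ on the third triple, the middle and last components force $h_i^\prime$ to be scalar multiples of $\ell_0^\prime$; substituting back yields a relation $\alpha_1 f_1 + \alpha_2 f_2 = \ell_1 \ell_2$, and restriction to $L_1$ forces $\alpha_2 = 0$ (since $f_2\vert_{L_1} \neq 0$) and symmetrically $\alpha_1 = 0$, contradicting $\ell_1 \ell_2 \neq 0$; so $c = 0$, and then $a = b = 0$. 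To check kernel membership, I would identify the canonical map $G(2) \to \sco_{L_0}$ (whose existence comes from the proof of Lemma~\ref{L:ponceletbdl}(e), via the nowhere-vanishing section of $G_{L_0}$ furnished by $(1,0,0)\vert_{L_0}$, which lies in $G_{L_0}$ because $f_0\vert_{L_0} = 0$) with the composite of restriction to $L_0$ and quotient by the $\sco_{L_0}(2)$-subsheaf generated by this section; the kernel of $\tH^0(G(2)) \to \tH^0(\sco_{L_0}) = k$ then consists of triples $(g_0, g_1, g_2)$ with both $g_1$ and $g_2$ divisible by $\ell_0$, a condition each proposed triple satisfies by inspection. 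Since $\dim \tH^0(G(2)) = 4$ from the presentation in Lemma~\ref{L:ponceletbdl}(d), and the map to $\tH^0(\sco_{L_0})$ is surjective (using $\tH^1(G(1)) = 0$, which follows by twisting the same presentation and using $\tH^i(\sco_\pii(-j)) = 0$ for $(i,j) \in \{(1,1),(2,2)\}$), the kernel is $3$-dimensional and is spanned by the three independent triples; the $L_0^\prime$ case follows by the symmetry $\ell_0 \leftrightarrow \ell_0^\prime$.

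The main obstacle will be the clean identification of the quotient $G(2) \to \sco_{L_0}$ and its action on global sections, particularly in degenerate configurations where the splitting type of $G_{L_0}$ is more extreme than $\sco_{L_0} \oplus \sco_{L_0}(-2)$; this requires revisiting the construction of the canonical section from $f_0\vert_{L_0} = 0$ in the proof of Lemma~\ref{L:ponceletbdl}(e) so that the characterization ``$g_1, g_2$ divisible by $\ell_0$'' of the kernel remains correct in all cases.
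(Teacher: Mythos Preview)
Your proposal is correct and takes a genuinely different route from the paper's proof, particularly in parts (a) and (c).

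The paper begins by invoking the classical fact that the jumping lines of a stable rank-$2$ bundle on $\pii$ with $c_1=0$, $c_2=2$ form a smooth conic in the dual plane; since $L_0, L_0^\prim, L_1, L_2$ are all jumping lines for $G(1)$, no three of them are concurrent. This non-concurrence drives both (a) and (c): for (a), the paper restricts $f_1, f_2$ to $L_0$ and observes that their zero divisors $x + x_{01}$ and $x + x_{02}$ are distinct (because $x_{01} \neq x_{02}$, i.e., $L_0 \cap L_1 \cap L_2 = \emptyset$); for (c), it shows each $\rho_i \vert_{L_0}$ vanishes at two distinct points of $L_0$, hence lies in the $\sco_{L_0}(2)$ summand of $G_{L_0}(2)$, and the three zero divisors $x+x_{01}$, $x+x_{02}$, $x_{01}+x_{02}$ are pairwise distinct.

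Your approach bypasses jumping-line theory entirely. For (a), differentiating at $x$ is a clean local substitute. For (c), your identification of the $\sco_{L_0}(2)$ subsheaf of $G_{L_0}(2)$ as $\sco_{L_0}(2)\cdot(1,0,0)$ makes kernel membership transparent (it is exactly the condition $\ell_0 \mid g_1$ and $\ell_0 \mid g_2$), after which a dimension count finishes the argument. This is more elementary and arguably more direct than the paper's zero-divisor bookkeeping. The paper's route, on the other hand, gives additional geometric information (the non-concurrence of the four jumping lines) that you do not obtain but also do not need.

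One remark on your closing concern: the splitting type of $G_{L_0}$ can never be more extreme than $\sco_{L_0} \oplus \sco_{L_0}(-2)$. Indeed, the section $(1,0,0)\vert_{L_0}$ you construct is nowhere-vanishing in $\sco_{L_0}^{\op 3}$, and since $\text{Tor}_1(\sci_{\{x\}}, \sco_{L_0}) = 0$ the inclusion $G_{L_0} \hookrightarrow \sco_{L_0}^{\op 3}$ is honest, so the section is nowhere-vanishing in $G_{L_0}$ as well; this forces the splitting type and makes your characterization of the kernel valid in all cases.
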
 

\begin{proof}
By the uniqueness assertion of Lemma~\ref{L:ponceletbdl}(a), $L_1$ and $L_2$ do 
not contain $x$. Since $L_1^\prim \neq L_2^\prim$, the second part of 
Lemma~\ref{L:ponceletbdl}(b) implies that $L_1 \neq L_2$. By the last part of 
the proof of Lemma~\ref{L:ponceletbdl}(e), $L_0$, $L_0^\prim$, $L_1$ and $L_2$ 
are jumping lines for $G$. It follows, from Lemma~\ref{L:ponceletbdl}(d), 
that $G(1)$ is a stable rank 2 vector bundle on $\pii$ with Chern classes 
$c_1(G(1)) = 0$ and $c_2(G(1)) = 2$. It is well known that, in this case, 
the set of jumping lines for $G(1)$ form a nonsingular conic in $\p^{2\vee}$. 
In particular, any point of $\pii$ is contained in at most two jumping lines 
for $G$. One deduces that $L_0 \cap L_1 \cap L_2 = \emptyset$ and that 
$L_0^\prim \cap L_1 \cap L_2 = \emptyset$. 

\vskip2mm 

(a) Consider the points $x_{01}$ and $x_{02}$ defined by $\{x_{0i}\} = L_0 \cap 
L_i$, $i = 1, 2$. By what has been said, the points $x$, $x_{01}$ and $x_{02}$ 
are distinct. The zero divisor of $f_i \vb L_0$ is $x + x_{0i}$, $i = 1, 2$, 
hence $f_1 \vb L_0$ and $f_2 \vb L_0$ are linearly independent, hence $f_0$, 
$f_1$, $f_2$ are linearly independent. 

\vskip2mm

(b) The set $\{f_1 = f_2 = 0\}$ consists of the following (at most) four  
points$\, :$ $\{x\} = L_1^\prim \cap L_2^\prim$, $\{x_1\} := L_1 \cap L_2^\prim$, 
$\{x_2\} := L_1^\prim \cap L_2$, and $\{x_{12}\} :=L_1 \cap L_2$. One has 
$x \notin \{x_1,x_2,x_{12}\}$ and $x_1 \neq x_2$. If $x_1 = x_{12}$ then the 
scheme $\{f_1 = f_2 = 0\}$ coincides, in a neighbourhood of $x_1$, with the 
scheme $L_1 \cap \{f_2 = 0\}$ (because $x_1 \notin L_1^\prim$ since $L_1^\prim 
\cap L_2^\prim = \{x\}$ and $x \notin L_1$). Similarly, if $x_2 = x_{12}$ then 
the scheme $\{ f_1 = f_2 = 0\}$ coincides, in a neighbourhood of $x_2$, with 
the scheme $\{f_1 = 0\} \cap L_2$. 

\vskip2mm 

(c) The three elements from the statement are obviously relations among 
$f_0$, $f_1$, and $f_2$ hence they belong to $\tH^0(G(2))$. Let us denote 
them by $\rho_1$, $\rho_2$, and $\rho_3$. One has $G_{L_0}(2) \simeq 
\sco_{L_0}(2) \oplus \sco_{L_0}$. $\rho_1 \vb L_0$ vanishes at $x$ and $x_{01}$, 
$\rho_2 \vb L_0$ vanishes at $x$ and $x_{02}$ and $\rho_3 \vb L_0$ vanishes at 
$x_{01}$ and $x_{02}$. Since, as we noticed in the proof of assertion (a), the 
points $x$, $x_{01}$, and $x_{02}$ are distinct, it follows that $\rho_i \vb 
L_0$, $i = 1, 2, 3$, is a $k$-basis of $\tH^0(\sco_{L_0}(2)) \subset 
\tH^0(G_{L_0}(2))$. 
\end{proof} 

Finally, let us investigate the problem of the global generation of 
$\Ker \e(2)$ for epimorphisms  
$
\e : 3\sco_{H^{(1)}} \lra \sci_{Z,H^{(1)}}(3) 
$,  
where $H^{(1)}$ is the first infinitesimal neighbourhood of a plane $H \subset 
\piii$ and $Z$ is the divisor $2L$ on a nonsingular quadric surface $Q \subset 
\piii$, $L$ being a line contained in $H \cap Q$.     

After a linear change of coordinates one may assume that 
$H = \{X_3 = 0\}$ (hence $H^{(1)} = \{X_3^2 = 0\}$), $L = \{X_2 = X_3 = 0\}$ and 
$Q = \{X_1X_2 - X_0X_3 = 0\}$. 
The homogeneous ideal $I(Z)$ of $Z$ in the coordinate ring $S = k[X_0,\ldots 
,X_3]$ of $\piii$ is $I(Z) = (X_2^2,X_2X_3,X_3^2,X_1X_2 - X_0X_3)$. Let 
${\widehat X}_3$ be the image of $X_3 \in \tH^0(\sco_\piii(1))$ into 
$\tH^0(\sco_{H^{(1)}}(1))$. Consider the scheme $Z_0 := Z \cap H$. One has 
$\sci_{Z_0,H} = X_2\sci_{\{x_0\},H}(-1)$, where $x_0 = (1:0:0:0)$. In other words, 
the subscheme $Z_0$ of $H$ is the line $L$ with an embedded point in $x_0$. 
The exact sequence$\, :$ 
\[
0 \lra \sco_H(-1) \overset{{\widehat X}_3}{\lra} \sco_{H^{(1)}} \lra \sco_H 
\lra 0
\] 
induces an exact sequence$\, :$  
\[
0 \lra \sci_{L,H}(-1) \overset{{\widehat X}_3}{\lra} \sci_{Z,H^{(1)}} \lra 
\sci_{Z_0,H} \lra 0 
\]
which can be written in the form$\, :$  
\begin{equation}
\label{E:izh(1)}
0 \lra \sco_H(-2) \xra{X_2{\widehat X}_3} \sci_{Z,H^{(1)}} 
\lra X_2\sci_{\{x_0\},H}(-1) \lra 0\, .
\end{equation}
The restriction to $H$ of the epimorphism $\e : 3\sco_{H^{(1)}} \lra 
\sci_{Z,H^{(1)}}(3)$ can be, consequently, written as a composite morphism$\, :$ 
\[
3\sco_H \overset{\e_0}{\lra} \sci_{\{x_0\},H}(2) \overset{X_2}{\lra} 
\sci_{Z_0,H}(3) 
\] 
with $\e_0$ an epimorphism. Put $\Lambda := \text{Im}\, \tH^0(\e) \subset 
\tH^0(\sci_{Z,H^{(1)}}(3)) \subset \tH^0(\sco_{H^{(1)}}(3))$  and   
$\Lambda_0 := \text{Im}\, \tH^0(\e_0) 
\subset \tH^0(\sci_{\{x_0\},H}(2)) \subset \tH^0(\sco_H(2))$. One has, of 
course, $\Lambda \vb H = X_2\Lambda_0$. 

\begin{prop}\label{P:case3V} 
Under the above hypotheses and notation, the linear system $\vb \Lambda_0 
\vb$ contains, according to Lemma~\ref{L:ponceletbdl}(a), a unique divisor  
having a singular point at $x_0$. In other words, there exists $f_0 \in 
\Lambda_0$ such that $f_0 = \ell_0\ell^\prim_0$ with $\ell_0,\, \ell_0^\prim \in 
k[X_1,X_2]_1$. $X_2f_0$ can be lifted to a unique element ${\widetilde f}_0$ 
of $\Lambda$. Taking into account the exact sequence \eqref{E:izh(1)}, it 
follows that if $\ell_0^\prim = a_0^\prim X_1 + b_0^\prim X_2$ then$\, :$ 
\[
{\widetilde f}_0 = a_0^\prim\ell_0(X_1X_2 - X_0{\widehat X}_3) + 
b_0^\prim \ell_0X_2^2 + \ell_0^\secund X_2{\widehat X}_3 
\]  
with $\ell_0^\secund \in {\fam0 H}^0(\sco_H(1))$ uniquely determined. Then 
${\fam0 Ker}\, \e(2)$ is globally generated if and only if $\ell_0^\secund(x_0) 
\neq 0$.   
\end{prop} 

\begin{proof} 
Put $\scg = \Ker \e$ and $G_0 = \Ker \e_0$. Applying the Snake Lemma to the 
diagram$\, :$ 
\[
\begin{CD}
0 @>>> 3\sco_H(-1) @>{{\widehat X}_3}>> 3\sco_{H^{(1)}} @>>> 
3\sco_H @>>> 0\\
@. @V{\e_0(-1)}VV @VV{\e}V @VV{X_2\e_0}V\\
0 @>>> \sco_H(1) @>{X_2{\widehat X}_3}>> \sci_{Z,H^{(1)}}(3) @>>> 
\sci_{Z_0,H}(3) @>>> 0  
\end{CD}
\] 
one gets an exact sequence$\, :$  
\begin{equation} 
\label{E:g0scgg0} 
0 \lra G_0(-1) \lra \scg \lra G_0 \lra \sco_{\{x_0\}}(1) \lra 0\, .
\end{equation}
Let $\scg_0$ be the kernel of $G_0 \ra \sco_{\{x_0\}}(1)$. One can show, as in 
proof of Prop.~\ref{P:case3IV}, that $\tH^0(\scg(2)) \izo \tH^0(\scg_0(2))$ 
and that $\scg(x) \izo \scg_0(x)$, $\forall \, x \in H$.  
It follows that $\scg(2)$ \emph{is globally generated if and only if}  
$\scg_0(2)$ \emph{is globally generated}. 

Now, according to Lemma~\ref{L:quasibdl}, $\scg_0(2)$ fails to be globally 
generated precisely when there exists a line $L \ni x_0$ such that 
$G_0(2) \lra \sco_{\{x_0\}}(3)$ factorizes through $\sco_L$. Since $G_0(2)$ is 
globally generated, this is equivalent to the fact that there exists a line 
$L \ni x$ and an epimorphism $G_0(2) \ra \sco_L$ such that the composite 
map $\tH^0(\scg_0(2)) \ra \tH^0(G_0(2)) \ra \tH^0(\sco_L)$ is 0 (because for 
any epimorphism $\sco_L \ra \sco_{\{x_0\}}(3)$ one has $\tH^0(\sco_L) \izo 
\tH^0(\sco_{\{x_0\}}(3))$). 

But, by the proof of Lemma~\ref{L:ponceletbdl}(e), there are only two lines 
$L \ni x$ for which there exists an epimorphism $G_0(2) \ra \sco_L$, namely 
the line $L_0$ of equation $\ell_0 = 0$ and the line $L_0^\prim$ of equation 
$\ell_0^\prim = 0$. Since $\tH^0(\scg(2)) \izo \tH^0(\scg_0(2))$, one deduces 
that $\scg(2)$ is globally generated if and only if none of the composite 
maps$\, :$ 
\[
\tH^0(\scg(2)) \lra \tH^0(G_0(2)) \lra \tH^0(\sco_{L_0}),\  
\tH^0(\scg(2)) \lra \tH^0(G_0(2)) \lra \tH^0(\sco_{L_0^\prim})
\] 
is 0. Now, complete $f_0$ to a $k$-basis $f_0$, $f_1 = \ell_1\ell_1^\prim$, 
$f_2 = \ell_2\ell_2^\prim$ of $\Lambda_0$ as in Lemma~\ref{L:ponceletbdl2}. 
Let's say that $\ell_i^\prim = a_i^\prim X_1 + b_i^\prim X_2$, $i = 1, 2$. 
$f_1$ and $f_2$ lift to elements ${\widetilde f}_1$ and ${\widetilde f}_2$ of 
$\Lambda$ which must have the form$\, :$  
\[
{\widetilde f}_i = a_i^\prim\ell_i(X_1X_2 - X_0{\widehat X}_3) + 
b_i^\prim \ell_iX_2^2 + \ell_i^\secund X_2{\widehat X}_3 
\]  
with $\ell_i^\secund \in \tH^0(\sco_H(1))$ uniquely determined, $i = 1, 2$. 
By Lemma~\ref{L:ponceletbdl2}(b), there exist linear forms $h_1$, $h_2$, 
$h_1^\prim$, $h_2^\prim \in \tH^0(\sco_H(1))$ uniquely determined by the 
relations$\, :$ 
\[
h_1f_1 + h_2f_2 = \ell_1\ell_2\ell_0,\  h_1^\prim f_1 + h_2^\prim f_2 = 
\ell_1\ell_2\ell_0^\prim\, . 
\]
Lemma~\ref{L:ponceletbdl2}(c) implies that the kernel of $\tH^0(G_0(2)) \ra 
\tH^0(\sco_{L_0})$ is generated by the following elements$\, :$  
\[
(f_1,-f_0,0),\  (f_2,0,-f_0),\  (\ell_1\ell_2,-\ell_0h_1^\prim ,  
-\ell_0h_2^\prim) 
\] 
and the kernel of $\tH^0(G_0(2)) \ra \tH^0(\sco_{L_0^\prim})$ is generated 
by$\, :$  
\[
(f_1,-f_0,0),\  (f_2,0,-f_0),\  (\ell_1\ell_2,-\ell_0^\prim h_1,
-\ell_0^\prim h_2)\, . 
\]
Since the elements $(f_1,-f_0,0)$ and $(f_2,0,-f_0)$ of $\tH^0(G_0(2))$ lift 
to the elements $({\widetilde f}_1,-{\widetilde f}_0,0)$ and 
$({\widetilde f}_2,0,-{\widetilde f}_0)$ of $\tH^0(\scg(2))$, it follows that 
$\scg(2)$ is globally generated if and only if none of the elements$\, :$  
\[
(\ell_1\ell_2,-\ell_0h_1^\prim ,-\ell_0h_2^\prim),\  
(\ell_1\ell_2,-\ell_0^\prim h_1,-\ell_0^\prim h_2)
\]
can be lifted to $\tH^0(\scg(2))$. Let us investigate under what conditions 
an element $(g_0,g_1,g_2)$ of $\tH^0(G_0(2))$ can be lifted to an element 
$({\widetilde g}_0,{\widetilde g}_1,{\widetilde g}_2)$ of $\tH^0(\scg(2))$. 
One must have ${\widetilde g}_i = g_i + \gamma_i{\widehat X}_3$, with 
$\gamma_i \in \tH^0(\sco_H(1))$, $i = 0, 1, 2$. Writing ${\widetilde f}_i$ 
in the form$\, :$ 
\[
{\widetilde f}_i = f_iX_2 - a_i^\prim \ell_iX_0{\widehat X}_3 + 
\ell_i^\secund X_2{\widehat X}_3 
\]
one sees that the condition $\sum_{i=0}^2{\widetilde g}_i{\widetilde f}_i = 0$ 
is equivalent to the condition$\, :$  
\[
{\textstyle \sum_{i=0}^2}a_i^\prim g_i\ell_iX_0 - 
{\textstyle \sum_{i=0}^2}g_i\ell_i^\secund X_2 = 
{\textstyle \sum_{i=0}^2}\gamma_if_iX_2\, .
\] 
Assume, now, that $(g_0,g_1,g_2) = (\ell_1\ell_2,-\ell_0h_1^\prim ,
-\ell_0h_2^\prim)$. In this case$\, :$ 
\[
{\textstyle \sum_{i=0}^2}a_i^\prim g_i\ell_i = 
\ell_0(a_0^\prim \ell_1\ell_2 - a_1^\prim h_1^\prim \ell_1 - a_2^\prim h_2^\prim 
\ell_2)\, .
\]
The relation which defines $h_1^\prim$ and $h_2^\prim$, namely$\, :$  
\[
\ell_1\ell_2\ell_0^\prim - h_1^\prim f_1 - h_2^\prim f_2 = 0 
\]
can be written in the form$\, :$ 
\[
(a_0^\prim \ell_1\ell_2 - a_1^\prim h_1^\prim \ell_1 - a_2^\prim h_2^\prim 
\ell_2)X_1 + (b_0^\prim \ell_1\ell_2 - b_1^\prim h_1^\prim \ell_1 - 
b_2^\prim h_2^\prim \ell_2)X_2 = 0
\] 
hence $a_0^\prim \ell_1\ell_2 - a_1^\prim h_1^\prim \ell_1 - a_2^\prim h_2^\prim 
\ell_2$ is divisible by $X_2$. One gets that, when $(g_0,g_1,g_2) = 
(\ell_1\ell_2,-\ell_0h_1^\prim ,-\ell_0h_2^\prim)$, the lifting condition 
$\sum_{i=0}^2{\widetilde g}_i{\widetilde f}_i = 0$ is equivalent to$\, :$  
\[
\frac{a_0^\prim \ell_1\ell_2 - a_1^\prim h_1^\prim \ell_1 - a_2^\prim h_2^\prim 
\ell_2}{X_2}\ell_0X_0 - {\textstyle \sum_{i=0}^2}g_i\ell_i^\secund  = 
{\textstyle \sum_{i=0}^2}\gamma_if_i\, .
\]
Since $f_0$, $f_1$, $f_2$ generate $\sci_{\{x_0\}}(2)$ and since $\tH^1(G_0(1)) 
= 0$ it follows that the existence of elements $\gamma_0$, $\gamma_1$, 
$\gamma_2 \in \tH^0(\sco_H(1))$ such that the above relation is valid is 
equivalent to the fact that the left hand side of the relation vanishes 
at $x_0$. Since $\ell_0(x_0) = 0$, $g_1(x_0) = 0$, $g_2(x_0) = 0$, but 
$g_0(x_0) \neq 0$ one concludes that $(\ell_1\ell_2,-\ell_0h_1^\prim ,
-\ell_0h_2^\prim)$ can be lifted to $\tH^0(\scg(2))$ if and only if 
$\ell_0^\secund (x_0) = 0$. 

Similarly, $(\ell_1\ell_2,-\ell_0^\prim h_1,-\ell_0^\prim h_2)$ can be lifted 
to $\tH^0(\scg(2))$ if and only if $\ell_0^\secund (x_0) = 0$.  
\end{proof}

\section{The case $c_1 = 4$, $c_2 = 8$, $c_3 = 4$ on 
$\piii$}\label{A:case4}

In this appendix we complete the argument for the Case 4 of the proof of 
Prop.~\ref{P:c1=4c2=8n3}. Recall that we considered in that case a vector 
bundle $E$ on $\piii$ defined by an exact sequence$\, :$  
\[
0 \lra E(-2) \lra 4\sco_\piii \oplus 2\sco_\piii(-1)  
\overset{\rho}{\lra} 2\sco_\piii(1) \lra 0
\]
such that the degeneracy scheme $Z$ of the component $\phi : 
4\sco_\piii \ra 2\sco_\piii(1)$ of $\rho$ is 0-dimensional. We  
recall in the next remark, in a form which is more convenient for our 
purposes, a classification, due to Mir\'{o}-Roig and 
Trautmann~\cite{mrt}, of the $2\times 4$ matrices of linear forms in 
four indeterminates. The result of Mir\'{o}-Roig and Trautmann refines 
Lemma~2.14 in the paper of Chang~\cite{ch3}. 

\begin{remark}\label{R:2x4} 
Let $S = k[X_0,X_1,X_2,X_3]$ be the projective coordinate ring of $\piii$. 
Consider on $\piii$ a morphism   
$\phi : 4\sco_\piii \ra 2\sco_\piii(1)$. 
$\phi$ is defined by a $k$-linear map $A : k^4 \ra k^2\otimes S_1$ or, 
equivalently, by a $2\times 4$ matrix of linear forms$\, :$  
\[
\begin{pmatrix}
h_{00} & h_{01} & h_{02} & h_{03}\\
h_{10} & h_{11} & h_{12} & h_{13} 
\end{pmatrix}
\] 
The linear map $A$ is called \emph{stable} if it is injective and if, for 
every non-zero $k$-linear function $k^2 \ra k$, the composite map 
$k^4 \overset{A}{\lra} k^2\otimes S_1 \lra S_1$ has corank $\leq 1$ (see 
\cite[Lemma~1.1.1]{mrt} for a justification of this definition). Notice that 
if $A$ is injective but not stable then, up to the action of 
$\text{GL}(2) \times \text{GL}(4)$, $A$ is represented by a matrix of the 
form$\, :$ 
\[
\begin{pmatrix}
h_{00} & h_{01} & h_{02} & h_{03}\\
h_{10} & h_{11} & 0 & 0 
\end{pmatrix}
\]  
hence $\phi$ degenerates along the line $L$ of equations $h_{10} = h_{11} = 0$ 
(if $h_{10}$ and $h_{11}$ are linearly independent) or along a plane or 
everywhere. 

\vskip2mm 

We want, now, to classify the stable maps $A$ up to the action of 
$\text{GL}(2) \times \text{GL}(S_1) \times \text{GL}(4)$. For this purpose, 
we use the fact that one can also associate to $A$ a morphism on 
$\p^1$$\, :$  
\[
\psi : 4\sco_{\p^1} \lra \sco_{\p^1}(1)\otimes_kS_1\, .
\] 
The stability of $A$ is equivalent to the fact that $\psi$ has rank $\geq 3$ 
at every point of $\p^1$. We consider, firstly, the case where $\psi$ has, 
generically, rank 4. In this case, the degeneracy locus $D(\psi)$ of 
$\psi$ is an effective divisor of degree 4 on $\p^1$. One can assume, up to 
the action of $\text{GL}(2)$ (or $\text{PGL}(1)$), that the components of 
$D(\psi)$ are some special points of $\p^1$. 

\vskip2mm

\noindent 
{\bf Case 1.}\quad $\psi$ \emph{has, generically, rank $4$ and $D(\psi)$ 
consists of $4$ simple points}. 

\vskip2mm 

\noindent 
Let's say, that the four points are  
$(-a_0,1)$, $(-a_1,1)$, $(-1,1)$, $(0,1)$. Then 
$\psi$ is defined (up to the action of $\text{GL}(S_1)\times \text{GL}(4)$)  
by the matrix$\, :$ 
\[
\begin{pmatrix}
T_0+a_0T_1 & 0 & 0 & 0\\
0 & T_0+a_1T_1 & 0 & 0\\
0 & 0 & T_0+T_1 & 0\\
0 & 0 & 0 & T_0
\end{pmatrix}
\]
hence $\phi$ is defined by the matrix$\, :$  
\[
\begin{pmatrix}
X_0 & X_1 & X_2 & X_3\\
a_0X_0 & a_1X_1 & X_2 & 0
\end{pmatrix}
\] 
The $2\times 2$ minors of this matrix generate the ideal $(X_iX_j \vb 
0 \leq i < j \leq 3)$ of $S$ hence the degeneracy scheme $D(\phi)$ of $\phi$ 
consists of the four simple points $(1,0,0,0),\ldots , (0,0,0,1)$. 
Applying the Snake Lemma to the diagram$\, :$ 
\[
\begin{CD} 
0 @>>> 0 @>>> 4\sco_\piii @>>> 4\sco_\piii @>>> 0\\
@. @VVV @VV{\phi}V @VVV\\
0 @>>> \sco_\piii(1) @>>> 2\sco_\piii(1) @>{\text{pr}_1}>> 
\sco_\piii(1) @>>> 0
\end{CD}
\] 
one derives an exact sequence$\, :$ 
\[
\Omega_\piii(1) \lra \sco_\piii(1) \lra \Cok \phi \lra 0\, . 
\]
It follows, in particular, that $\Cok \phi \simeq \sco_{D(\phi)}(1)$ (by the 
defining property of Fitting ideals).   

\vskip2mm 

\noindent 
{\bf Case 2.}\quad 
$\psi$ \emph{has, generically, rank $4$ and}  
$D(\psi) = (-a_0,1) + (-1,1) + 2(0,1)$.   

\vskip2mm

\noindent
In this case, $\psi$ is defined by the matrix$\, :$  
\[
\begin{pmatrix}
T_0+a_0T_1 & 0 & 0 & 0\\
0 & T_0+T_1 & 0 & 0\\
0 & 0 & T_0 & 0\\
0 & 0 & T_1 & T_0
\end{pmatrix}
\] 
hence $\phi$ is defined by the matrix$\, :$ 
\[
\begin{pmatrix}
X_0 & X_1 & X_2 & X_3\\
a_0X_0 & X_1 & X_3 & 0
\end{pmatrix}
\, .
\] 
The $2\times 2$ minors of this matrix generate the ideal$\, :$ 
\[
(X_0X_3,\, X_1X_3,\, X_3^2,\, X_0X_1,\, X_1X_2,\, X_0X_2)
\]
of $S$. Notice that$\, :$ 
\[
(X_0X_2,\, X_0X_3,\, X_1X_2,\, X_1X_3) = (X_0,\, X_1) \cap (X_2,\, X_3)
\]
is the homogeneous ideal of $L \cup L^\prim$, where $L$ (resp., $L^\prim$) is 
the line of equations $X_2 = X_3 = 0$ (resp., $X_0 = X_1 = 0$). One deduces 
that the degeneracy scheme $D(\phi)$ of $\phi$ consists of two simple points 
$(1,0,0,0)$ and $(0,1,0,0)$ situated on $L$ and of the double point 
$2(0,0,1,0)$ on the line $L^\prim$. Moreover, as above, $\Cok \phi \simeq 
\sco_{D(\phi)}(1)$. 

\vskip2mm 

\noindent
{\bf Case 3.}\quad  $\psi$ \emph{has, generically, rank $4$ and}  
$D(\psi) = 2(-1,1) + 2(0,1)$. 

\vskip2mm 

\noindent 
In this case, $\psi$ is defined by the matrix$\, :$  
\[
\begin{pmatrix}
T_0+T_1 & 0 & 0 & 0\\
T_1 & T_0+T_1 & 0 & 0\\
0 & 0 & T_0 & 0\\
0 & 0 & T_1 & T_0
\end{pmatrix}
\] 
hence $\phi$ is defined by the matrix$\, :$  
\[
\begin{pmatrix}
X_0 & X_1 & X_2 & X_3\\
X_0+X_1 & X_1 & X_3 & 0
\end{pmatrix} 
\, .
\]  
The $2\times 2$ minors of this matrix generate the ideal$\, :$  
\[
(X_1X_3,\, X_0X_3,\, X_3^2,\, X_1X_2,\, X_0X_2,\, X_1^2)
\]
of $S$. One deduces that $D(\phi)$ consists of the 
double point $2(1,0,0,0)$ on the line $L$ of equations $X_2 = X_3 = 0$ and of 
the double point $2(0,0,1,0)$ on the line $L^\prim$ of equations 
$X_0 = X_1 = 0$. Again, $\Cok \phi \simeq \sco_{D(\phi)}(1)$. 

\vskip2mm 

\noindent
{\bf Case 4.}\quad  
$\psi$ \emph{has, generically, rank $4$ and}  
$D(\psi) = (-1,1) + 3(0,1)$ 

\vskip2mm

\noindent 
In this case, $\psi$ is defined by the matrix$\, :$ 
\[
\begin{pmatrix}
T_0+T_1 & 0 & 0 & 0\\
0 & T_0 & 0 & 0\\
0 & T_1 & T_0 & 0\\
0 & 0 & T_1 & T_0
\end{pmatrix}
\]   
hence $\phi$ is defined by the matrix$\, :$ 
\[
\begin{pmatrix}
X_0 & X_1 & X_2 & X_3\\
X_0 & X_2 & X_3 & 0
\end{pmatrix}
\, .
\] 
The $2\times 2$ minors of this matrix generate the ideal$\, :$  
\[
(X_0X_3,\, X_2X_3,\, X_3^2,\, X_0X_2,\, X_0X_1,\, X_1X_3 - X_2^2) 
\]
of $S$. The ideal$\, :$ 
\[
(X_0X_1,\, X_0X_2,\, X_0X_3) = (X_1,\, X_2,\, X_3) \cap (X_0)
\]
is the homogeneous ideal of the union of the point $(1,0,0,0)$ and of the 
plane $X_0 = 0$, hence the ideal$\, :$  
\[
(X_0X_1,\, X_0X_2,\, X_0X_3,\, X_1X_3 - X_2^2)
\]
defines the subscheme of $\piii$ 
consisting of the (simple) point $(1,0,0,0)$ and of the 
conic $C$ of equations $X_0 = 0$, $X_1X_3 - X_2^2 = 0$. The conic $C$ has a 
parametrization $\alpha : \p^1 \ra \piii$ given by $\alpha(u_0,u_1) = 
(0,\, u_0^2,\, u_0u_1,\, u_1^2)$. Since $\alpha^\ast(X_2X_3) = u_0u_1^3$ and 
$\alpha^\ast(X_3^2) = u_1^4$ one deduces that $D(\phi)$ consists of the point 
$(1,0,0,0)$ and of the triple point $3(0,1,0,0)$ on the conic $C$. Again, 
$\Cok \phi \simeq \sco_{D(\phi)}(1)$. 

\vskip2mm 

\noindent 
{\bf Case 5.}\quad $\psi$ \emph{has, generically, rank $4$ and} 
$D(\psi) = 4(0,1)$.  

\vskip2mm

\noindent
In this case, $\psi$ is defined by the matrix$\, :$  
\[
\begin{pmatrix}
T_0 & 0 & 0 & 0\\
T_1 & T_0 & 0 & 0\\
0 & T_1 & T_0 & 0\\
0 & 0 & T_1 & T_0
\end{pmatrix}
\]
hence $\phi$ is defined by the matrix$\, :$ 
\[
\begin{pmatrix}
X_0 & X_1 & X_2 & X_3\\
X_1 & X_2 & X_3 & 0
\end{pmatrix}
\, .
\] 
The $2\times 2$ minors of this matrix generate the ideal$\, :$  
\[
(X_1X_3,\, X_2X_3,\, X_3^2,\, X_2^2,\, X_0X_3 - X_1X_2,\, X_0X_2 - X_1^2)
\]
of $S$. The complete intersection of equations $X_0X_3 - X_1X_2 = 0$, 
$X_0X_2 - X_1^2 = 0$ consists of the line $L^\prim$ of equations $X_0 = X_1 = 0$ 
and of the twisted cubic curve $\Gamma \subset \piii$ parametrized by the 
map$\, :$ 
\[
\beta : \p^1 \lra \piii,\  (u_0,u_1) \mapsto 
(u_0^3,\, u_0^2u_1,\, u_0u_1^2,\, u_1^3)\, .
\]
The line $L$ of equations $X_2 = X_3 = 0$ cuts this complete intersection 
in the point $(1,0,0,0)$ which is situated on $\Gamma \setminus L^\prim$. 
Since$\, :$ 
\[
\beta^\ast(X_1X_3) = u_0^2u_1^4,\  \beta^\ast(X_2X_3) = u_0u_1^5,\  
\beta^\ast(X_3^2) = u_1^6,\  \beta^\ast(X_2^2) = u_0^2u_1^4, 
\]
one gets that the degeneracy scheme $D(\phi)$ consists of the 
quadruple point $4(1,0,0,0)$ on the twisted cubic curve $\Gamma$. Again, 
$\Cok \phi \simeq \sco_{D(\phi)}(1)$. 

\vskip2mm 

We consider, now, the case where $\psi$ has rank 3 everywhere. In this case 
$\Cok \psi \simeq \sco_{\p^1}(m)$, for some $m \geq 1$. It follows that 
$\Ker \psi \simeq \sco_{\p^1}(m-4)$. Since $A$ is injective, one deduces that 
$m \leq 3$. 

\vskip2mm 

\noindent 
{\bf Case 6.}\quad $\psi$ \emph{has rank $3$ and} $m = 1$.   

\vskip2mm

\noindent
In this case, up to the action of $\text{GL}(S_1)\times \text{GL}(4)$, 
$\psi$ is defined by the matrix$\, :$  
\[
\begin{pmatrix}
T_0 & T_1 & 0 & 0\\
0 & T_0 & T_1 & 0\\
0 & 0 & T_0 & T_1\\
0 & 0 & 0 & 0
\end{pmatrix}
\] 
hence $\phi$ is defined by the matrix$\, :$  
\[
\begin{pmatrix}
X_0 & X_1 & X_2 & 0\\
0 & X_0 & X_1 & X_2
\end{pmatrix}
\, .
\]
One deduces that the degeneracy scheme $D(\phi)$ is the ``fat point'' 
defined by the ideal $\sci_{\{x\}}^2$, where $x = (0,0,0,1)$. 

\vskip2mm 

\noindent
{\bf Case 7.}\quad $\psi$ \emph{has rank $3$ and} $m = 2$.   

\vskip2mm 

\noindent
In this case, $\psi$ is defined by the matrix$\, :$  
\[
\begin{pmatrix}
T_0 & T_1 & 0 & 0\\
0 & T_0 & T_1 & 0\\
0 & 0 & 0 & T_0\\
0 & 0 & 0 & T_1
\end{pmatrix}
\] 
hence $\phi$ is defined by the matrix$\, :$ 
\[
\begin{pmatrix}
X_0 & X_1 & 0   & X_2\\ 
0   & X_0 & X_1 & X_3
\end{pmatrix}
\, .
\]
The $2\times 2$ minors of this matrix generate the ideal$\, :$  
\[
(X_0^2,\, X_0X_1,\, X_1^2,\, X_1X_2,\, X_0X_3,\, X_0X_2 - X_1X_3)
\] 
of $S$. Let $L$ (resp., $L^\prim$) be the line of equation $X_2 = X_3 = 0$ 
(resp., $X_0 = X_1 = 0$). The ideal $(X_0^2,\, X_0X_1,\, X_1^2)$ defines the 
first infinitesimal neighbourhood of $L^\prim$ in $\piii$ with ideal sheaf 
$\sci_{L^\prim}^2$. Since $X_0X_2 - X_1X_3 = 0$ is a smooth quadric surface in 
$\piii$, one sees easily that the ideal $(X_1X_2,\, X_0X_3,\, X_0X_2 - X_1X_3)$ 
defines the subscheme $L \cup L^\prim$ of $\piii$. Consequently, the 
degeneracy scheme $D(\phi)$ is $L^\prim$. In particular, $\Cok \phi$ is an 
$\sco_{L^\prim}$-module. Restricting $\phi$ to $L^\prim$ one deduces, now, 
that $\Cok \phi \simeq \sco_{L^\prim}(2)$. 

\vskip2mm

\noindent 
{\bf Case 8.}\quad $\psi$ \emph{has rank $3$ and} $m = 3$. 

\vskip2mm 

\noindent
In this case, $\psi$ is defined by the matrix$\, :$ 
\[
\begin{pmatrix}
T_0 & T_1 & 0  & 0\\
0   & 0  & T_0 & 0\\
0   & 0  & T_1 & T_0\\
0   & 0  & 0   & T_1
\end{pmatrix}
\] 
hence $\phi$ is defined by the matrix$\, :$ 
\[
\begin{pmatrix}
X_0 & 0   & X_1 & X_2\\
0   & X_0 & X_2 & X_3
\end{pmatrix}
\, .
\]
The $2\times 2$ minors of this matrix define the ideal$\, :$  
\[
(X_0^2,\, X_0X_2,\, X_0X_3,\, X_0X_1,\, X_1X_3 - X_2^2)
\] 
of $S$. The ideal $(X_0^2,\, X_0X_2,\, X_0X_3,\, X_0X_1)$ defines the plane 
$X_0 = 0$, hence the degeneracy scheme $D(\phi)$ is the conic $C$ of equations 
$X_0 = 0$, $X_1X_3 - X_2^2 = 0$. Consider the parametrization$\, :$  
\[
\alpha : \p^1 \Izo C, \  (u_0,u_1) \mapsto (0,\, u_0^2,\, u_0u_1,\, u_1^2)\, .
\]
$\alpha^\ast(\phi)$ is defined by the matrix$\, :$ 
\[
\begin{pmatrix}
0 & 0 & u_0^2 & u_0u_1\\
0 & 0 & u_0u_1 & u_1^2
\end{pmatrix}
\, .
\]
Using the matrix identity$\, :$ 
\[
\begin{pmatrix}
u_0^2 & u_0u_1\\
u_0u_1 & u_1^2
\end{pmatrix}
 = 
\begin{pmatrix}
u_0\\
u_1
\end{pmatrix}
\begin{pmatrix}
u_0 & u_1
\end{pmatrix} 
\] 
one derives that $\Cok \phi \simeq \sco_{\p^1}(3)$. 
\end{remark}

\begin{lemma}\label{L:ilcupz2}
Let $Z$ be a $0$-dimensional subscheme of $\piii$ which is the zero scheme 
of a global section of ${\fam0 T}_\piii$ and let $L \subset \piii$ be a line 
not intersecting $Z$. If ${\fam0 length}(H \cap Z) \leq 1$, for every plane 
$H \supset L$, then $\sci_{L\cup Z}(2)$ is globally generated. 
\end{lemma}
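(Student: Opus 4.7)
The first step is to recall from the paragraphs immediately preceding the lemma that $\h^0(\sci_{L\cup Z}(2))=3$ and $\tH^1(\sci_{L\cup Z}(2))=0$, so global generation reduces to surjectivity of the evaluation $V\otimes_k\sco_\piii\to\sci_{L\cup Z}(2)$, $V:=\tH^0(\sci_{L\cup Z}(2))$, on every fibre $\otimes\kappa(x)$. The fibre has dimension $1$ for $x\notin L\cup Z$, $2$ for $x\in L\setminus Z$, and equal to $\mathrm{edim}_xZ$ for $x\in Z$. The hypothesis excludes the fat-point case $Z=\sci_p^2$, since the unique plane through $L$ and $p$ would meet $Z$ in length~$3$; so by Remark~\ref{R:2x4} one may assume $Z$ is locally a complete intersection of length four, and consequently $\mathrm{edim}_xZ\le 3$ everywhere on the support of $Z$.

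Surjectivity at a point $x\in L\setminus Z$ is immediate from the exact sequence $0\to\sci_{L\cup Z}(2)\to\sci_Z(2)\to\sco_L(2)\to 0$: taking cohomology and using $\tH^1(\sci_{L\cup Z}(2))=0$ gives the surjection $\tH^0(\sci_Z(2))\twoheadrightarrow\tH^0(\sco_L(2))$, which furnishes sections of $V$ realising both conormal directions of $L$ at $x$. For the remaining points I plan a case-by-case analysis along the five LCI cases of Remark~\ref{R:2x4}, producing in each case three explicit quadrics spanning $V$ and verifying that their scheme-theoretic base locus equals $L\cup Z$.

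The construction goes as follows. For Cases~1, 2, and~4 of Remark~\ref{R:2x4} (four general simple points; two simple points and a double point on a skew line; a simple point and a triple point on a plane conic), reducible quadrics $H\cup H'$ with $H\supset L$ and $H'$ containing $Z\setminus H$ already span $V$; Case~1 is the six-quadric argument $\{h_ih_i'\}$ already sketched in the body of the paper, and Cases~2 and~4 are analogous but with an auxiliary plane through the tangent direction of the non-reduced locus or through the plane conic. In Cases~3 and~5 (two double points on skew lines; one quadruple point on a twisted cubic) the hypothesis forces $L$ to be skew to the tangent line(s) of the non-reduced locus of $Z$, and no three reducible quadrics then span $V$; here I would complete the basis by the unique smooth doubly-ruled quadric containing $L$ together with those tangent lines, which automatically contains $Z$.

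The principal obstacle is to verify, in each case, that the scheme-theoretic base locus of the three quadrics just produced coincides with $L\cup Z$; equivalently, that their second-order jets at each support point $x$ of $Z$ span the $3$-dimensional local conormal space $\sci_Z/\fm_x\sci_Z$. Using the Koszul resolution $0\to\sco_\piii(-4)\to\Omega_\piii^2\to\Omega_\piii\to\sci_Z\to 0$ induced by the section of $\text{T}_\piii$ vanishing on $Z$, one extracts explicit local generators of $\sci_Z$ at each such point, and the condition for the constructed quadrics to separate them translates directly into the hypothesis that no tangent line of a non-reduced component of $Z$ is coplanar with $L$. This tangent-direction calculation, though mechanical, is the main technical content of the proof and is where the case analysis becomes delicate, particularly in Case~5.
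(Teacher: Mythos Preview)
Your overall route---casework over the five locally-complete-intersection types of $Z$ from Remark~\ref{R:2x4}, producing three explicit quadrics in each case---is genuinely different from the paper's. The paper argues uniformly: it first shows that $V\otimes\sco_\piii\to(\sci_L/\sci_L^2)(2)$ is surjective (by restricting to each plane $H\supset L$), deduces that a general $f\in V$ cuts out a \emph{smooth} quadric $Q\supset L\cup Z$, and then works entirely on $Q\simeq\p^1\times\p^1$, where the question becomes whether the pencil $\vert\sci_{Z,Q}(1,2)\vert$ has a fixed component. The latter is disposed of by a short case analysis on the possible bidegree of such a component, using only that no line meets $Z$ in length $>2$ and that the hypothesis bounds $\text{length}(H\cap Z)$. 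No explicit construction of quadrics and no separate treatment of Cases~1--5 is needed. Your approach trades this uniformity for concreteness; it is closer in spirit to the tetrahedron argument the paper gives for Case~1 in Subcase~4.6, but extending it to Cases~3 and~5 (where you need a smooth quadric through $L$ and prescribed tangent lines, \emph{and} must check the scheme-theoretic base locus on the non-reduced part of $Z$) is real work that your sketch does not carry out.

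There are two concrete problems. First, your argument at a point $x\in L$ is wrong: the surjection $\tH^0(\sci_Z(2))\twoheadrightarrow\tH^0(\sco_L(2))$ produces sections of $\sci_Z(2)$, not of $V=\tH^0(\sci_{L\cup Z}(2))$, which is the \emph{kernel} of that map; it tells you nothing about whether $V$ surjects onto the two-dimensional conormal fibre of $L$ at $x$. What is actually needed is the surjectivity of $V\otimes\sco_\piii\to(\sci_L/\sci_L^2)(2)$, and that requires an argument (the paper's is to restrict to each plane $H\supset L$ and use $\text{length}(H\cap Z)\leq 1$). Second, the fibre of $\sci_{L\cup Z}(2)$ at a point $x$ in the support of $Z$ is not $\text{edim}_xZ$; since $Z$ is locally a complete intersection of codimension~$3$, that fibre has dimension~$3$ at every such point, so your three quadrics must be \emph{linearly independent} modulo $\fm_x\sci_Z$ at each $x$---this is precisely the ``scheme-theoretic base locus'' verification you defer, and in Case~5 (quadruple point on a twisted cubic) it is not obviously mechanical.
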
 

\begin{proof}
The schemes $Z$ occuring in the statement of the lemma were classified in 
Remark~\ref{R:2x4}, Cases 1--5. Actually, recalling the exact sequence 
$0 \ra \Omega_\piii \ra 4\sco_\piii(-1) \ra \sco_\piii \ra 0$, any morphism 
$\tau : \Omega_\piii \ra \sco_\piii$ is induced by a morphism $\theta : 
4\sco_\piii(-1) \ra \sco_\piii$. $\theta$ is defined by four linear forms 
$h_0, \ldots , h_3$. If $\phi : 4\sco_\piii \ra 2\sco_\piii(1)$ is the morphism 
defined by the matrix$\, :$ 
\[
\begin{pmatrix} 
X_0 & X_1 & X_2 & X_3\\
h_0 & h_1 & h_2 & h_3 
\end{pmatrix}
\]  
then the arguments from the last part of Case 1 in Remark~\ref{R:2x4} show 
that $\text{Im}\, \tau = \sci_{D(\phi)}$. 

\vskip2mm

\noindent
{\bf Claim 1.}\quad \emph{For every plane} $H \supset L$, \emph{the map}$\, :$  
\[
\tH^0(\sci_{L \cup Z}(2)) \lra \tH^0(\sci_{L \cup (H \cap Z),H}(2))
\]
\emph{is surjective}. 

\vskip2mm
 
\noindent 
\emph{Indeed}, if $H \cap Z = \emptyset$ then the above map is injective, 
hence an isomorphism (for dimensional reasons). 

If $H \cap Z \neq \emptyset$ then$\, :$ 
\[
\sci_{H \cap Z,H} = \frac{\sci_Z + \sci_H}{\sci_H} \simeq 
\frac{\sci_Z}{\sci_Z \cap \sci_H}\, .
\] 
One has $\sci_Z \cap \sci_H = h\sci_W(-1)$ for a certain closed 
subscheme $W$ of $Z$, where $h = 0$ is the equation of $H$. 
From the fact that$\, :$ 
\[
\frac{\sci_{H \cap Z}}{\sci_Z} = \frac{\sci_H + \sci_Z}{\sci_Z} \simeq 
\frac{\sci_H}{\sci_Z \cap \sci_H} 
\]
one derives an exact sequence$\, :$  
\[
0 \lra \sco_W(-1) \lra \sco_Z \lra \sco_{H \cap Z} \lra 0  
\] 
from which one deduces that $\text{length}(W) = 3$. If $L^\prim \subset \piii$ 
is any line then $L^\prim \cap Z$ is the scheme of zeroes of a section of 
$\text{T}_\piii \vb L^\prim \simeq 2\sco_L(1) \oplus \sco_L(2)$. It 
follows that $\text{length}(L^\prim \cap Z) \leq 2$. Consequently, 
$\text{h}^0(\sci_W(1)) = 1$. Using the exact sequence$\, :$  
\[
0 \lra \sci_W(-1) \lra \sci_Z \lra \sco_H 
\]
one gets that the kernel of the restriction map $\tH^0(\sci_Z(2)) \ra 
\tH^0(\sco_H(2))$ has dimension 1. For dimensional reasons, it follows that 
the map$\, :$ 
\[
\tH^0(\sci_{L \cup Z}(2)) \lra \tH^0(\sci_{L \cup (H \cap Z),H}(2))
\]
is surjective.  

\vskip2mm 

\noindent
{\bf Claim 2.}\quad \emph{The composite map}$\, :$  
\[
\tH^0(\sci_{L \cup Z}(2))\otimes_k\sco_\piii \lra \sci_L(2) \lra 
(\sci_L/\sci_L^2)(2) 
\] 
\emph{is an epimorphism}. 

\vskip2mm
 
\noindent
\emph{Indeed}, $\sci_L/\sci_L^2 \simeq 2\sco_L(-1)$, 
$\tH^0(\sci_L(1)) \izo \tH^0((\sci_L/\sci_L^2)(1))$ and if $H \supset L$ is a 
plane of equation $h = 0$ then, denoting by $\overline{h}$ the image of 
$h$ into $\tH^0((\sci_L/\sci_L^2)(1))$, one has an exact sequence$\, :$  
\[
0 \lra \sco_L \overset{\overline{h}}{\lra} (\sci_L/\sci_L^2)(1) \lra 
(\sci_{L,H}/\sci_{L,H}^2)(1) \lra 0\, .
\]
Since, for every plane $H \supset L$, the maps$\, :$ 
\begin{gather*}
\tH^0(\sci_{L \cup Z}(2)) \lra \tH^0(\sci_{L \cup (H \cap Z),H}(2))\, ,\\  
\tH^0(\sci_{L \cup (H \cap Z),H}(2))\otimes_k\sco_H \lra \sci_{L,H}(2) \lra 
(\sci_{L,H}/\sci_{L,H}^2)(2)  
\end{gather*}
are epimorphisms (the first one from Claim 1 and the second one because 
$\text{length}(H \cap Z) \leq 1$) one deduces that the composite map from 
Claim~2 is an epimorphism (a morphism of vector bundles $E \ra 
W\otimes_k \sco_L$ is epi iff, $\forall \, \lambda : W \ra k$ non-zero linear 
function, the composite morphism $E \ra W\otimes_k \sco_L 
\overset{\lambda}{\lra} \sco_L$ is epi). 

\vskip2mm
 
Claim 2 implies that if $f$ is a general element of 
$\tH^0(\sci_{L \cup Z}(2))$ then its image into $\tH^0((\sci_L/\sci_L^2)(2))$ 
vanishes at no point of $L$ from which one gets that the surface $Q \subset 
\piii$ of equation $f = 0$ is nonsingular along the line $L$. But this 
implies that $Q$ is, actually, a smooth quadric surface. One has an exact 
sequence$\, :$ 
\[
0 \lra \sco_\piii \overset{f}{\lra} \sci_{L \cup Z}(2) \lra 
\sci_{L \cup Z,Q}(2) \lra 0 
\]
hence $\sci_{L \cup Z}(2)$ \emph{is globally generated iff} $\sci_{L \cup Z,Q}(2)$ 
\emph{is globally generated}. 

Fix an isomorphism $\p^1\times \p^1 \izo Q$ and assume that $L$ belongs to the 
linear system $\vb \sco_Q(1,0) \vb$ on $Q$. In this case, 
$\sci_{L \cup Z,Q}(2) \simeq \sci_{Z,Q}(1,2)$. Moreover, since 
$\text{h}^0(\sci_{L \cup Z}(2)) = 3$, one has $\text{h}^0(\sci_{Z,Q}(1,2)) = 2$. 
Taking into account that the selfintersection of $\sco_Q(1,2)$ on $Q$ is 4 
which equals $\text{length}(Z)$, it follows that in order to show  
that $\sci_{L \cup Z,Q}(2)$ is globally generated it suffices to show that the 
linear system $\vb \sci_{Z,Q}(1,2) \vb$ \emph{has no base components} (on $Q$). 

So, let $D$ be the (effective) base divisor of the linear system 
$\vb \sci_{Z,Q}(1,2) \vb$. 

\vskip2mm 

\noindent
$\bullet$\quad If $D \in \vb \sco_Q(0,1) \vb$ then there exists a 
complete intersection $Z^\prim$ of type $((1,1),(1,1))$ on $Q$  
(i.e., the intersection of $Q$ with a line in $\piii$) such that 
$\sci_{D,Q}\sci_{Z^\prim,Q} \subseteq \sci_{Z,Q}$. If $W$ is the closed subscheme 
of $Q$ defined by the ideal $(\sci_{Z,Q}:\sci_{D,Q})$ it follows that $W 
\subseteq Z^\prim$ hence $\text{length}(W) \leq 2$. Consider the exact 
sequence$\, :$ 
\[
0 \lra \sco_W[-D] \lra \sco_Z \lra \sco_{D \cap Z} \lra 0\, .
\]
Since $D \cup L$ is the intersection of $Q$ with a plane $H \supset L$ one 
has $\text{length}(D \cap Z) \leq 1$. Using the above exact sequence and the 
fact that $\text{length}(Z) = 4$, one gets a \emph{contradiction}. 

\vskip2mm 

\noindent
$\bullet$\quad If $D \in \vb \sco_Q(0,2) \vb$ then, taking into account the 
fact that 
$\text{h}^0(\sco_Q(1,0)) = 2$, one must have $Z \subset D$. But $D = L^\prim 
+ L^\secund$ with $L^\prim$ and $L^\secund$ two lines belonging to the linear 
system $\vb \sco_Q(0,1) \vb$. $L \cup L^\prim$ (resp., $L \cup L^\secund$) is 
the intersection of $Q$ with a plane $H^\prim \supset Q$ (resp., $H^\secund 
\supset Q$) hence $\text{length}(L^\prim \cap Z) \leq 1$ and 
$\text{length}(L^\secund \cap Z) \leq 1$. One deduces that $\text{length}(Z) 
\leq 2$, a \emph{contradiction}. (Actually, if $L^\prim = L^\secund$ one needs a 
little argument of commutative algebra: let $A$ be a regular local ring of 
dimension 2, $I \subset A$ an ideal and $x,\, y$ a regular system of 
parameters. Assume that $y^2 \in I$ and that $I + Ay = Ax + Ay$. Then there 
exists $x^\prim \in I$ and $a \in A$ such that $x^\prim + ay = x$. Then $x^\prim, 
\, y$ is a regular system of parameters of $A$, hence 
$\text{length}(A/Ax^\prim + Ay^2) = 2$, hence $\text{length}(A/I) \leq 2$.) 

\vskip2mm

\noindent 
$\bullet$\quad If $D \in \vb \sco_Q(1,1) \vb$ then, taking into account that 
$\text{h}^0(\sco_Q(0,1)) = 2$, one must have $Z \subset D$. $D$ is the 
intersection of $Q$ with a plane $H \subset \piii$. Since $\tH^0(\sci_Z(1)) 
= 0$, one gets a \emph{contradiction}. 

\vskip2mm

\noindent
$\bullet$\quad If $D \in \vb \sco_Q(1,0) \vb$ then, taking into account the 
fact that a 1-dimensional linear subsystem of $\vb \sco_Q(0,2) \vb$ which has 
no base component has, actually, no base point, one gets that $Z \subset D$. 
But $D$ is a line in $\piii$, hence, as we saw above, $\text{length}(D \cap Z) 
\leq 2$, a \emph{contradiction}. 

It remains that $D = 0$, hence $\sci_{L \cup Z,Q}(2,2)$ is globally generated, 
hence $\sci_{L \cup Z}(2)$ is globally generated.    
\end{proof} 

\begin{lemma}\label{L:z=fatpoint}
Using the notation from the beginning of this appendix, if $Z$ is a fat 
point, i.e., it is the subscheme of $\piii$ defined by the ideal sheaf 
$\sci_{\{x\}}^2$ for some point $x \in \piii$, then $E$ is globally 
generated.  
\end{lemma}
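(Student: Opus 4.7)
The plan is to mirror the approach of Subcase 4.6 in the lci case, but work directly with the normalized form of $\phi$ in the fat-point case. By Remark~\ref{R:2x4}, Case 6, we may choose coordinates so that
\[
\phi = \begin{pmatrix} X_0 & X_1 & X_2 & 0\\ 0 & X_0 & X_1 & X_2 \end{pmatrix}, \quad x=(0\!:\!0\!:\!0\!:\!1),
\]
with $Z$ defined by $\sci_{\{x\}}^2$. The reflexive sheaf $\scg=(\Ker\phi)(1)$ has Chern classes $c_1=0,\,c_2=2,\,c_3=4$, and by Thm.~\ref{T:spectrum} together with Prop.~\ref{P:spectrum2} its spectrum is forced to be $(-1,-1)$; in particular $\tH^2(\scg(-1))=0$, so (as at the start of Subcase 4.6) we may assume $\psi=X_3\widehat\psi$ with $\widehat\psi : \sco_\piii(-1)^{\op 2}\to\sco_\piii^{\op 2}$ given by a $2\times 2$ matrix $H$ of linear forms.

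The structural input is the factorization $\phi=(m\oplus m)\circ(T,T')$, where $T,T':\sco_\piii^{\op 4}\to\sco_\piii^{\op 3}$ are the shift-by-one projections $(f_1,f_2,f_3,f_4)\mapsto(f_1,f_2,f_3)$ and $(f_2,f_3,f_4)$, and $m:\sco_\piii^{\op 3}\to\sco_\piii(1)$ is the Koszul epimorphism with image $\sci_{\{x\}}(1)$. Using the Koszul resolution of $\sci_{\{x\}}(1)$, one gets $0\to\sco_\piii(-2)\to\sco_\piii(-1)^{\op 3}\to\Ker m\to 0$, and identifying $\scg(-1)$ with the pullback $(T,T')^{-1}(\Ker m\oplus\Ker m)$ yields a resolution of $\scg(-1)$ by line bundles of twists $\leq 0$. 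Splicing this with the Koszul-style resolution of $\sck=\Ker\widetilde\psi$ built from $H$ and $X_3$, through the exact sequence $0\to\scg(-1)\to E(-2)\to\sck\to 0$ deduced in Case 4, produces a free resolution of $E$ whose terms, after the overall twist by $2$, are globally generated.

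Global generation of $E$ then reduces to surjectivity of $\tH^0(E)\to E(y)$ for every $y\in\piii$, which can be verified on the resolution above. For $y\neq x$ this is immediate because the stalk of $\Cok\phi$ vanishes there, so $E_y$ inherits global generation from $\sco_\piii(2)^{\op 4}\oplus\sco_\piii(1)^{\op 2}$. The hard case is $y=x$, and this is where the main obstacle arises: unlike in Subcase~4.6, here $\Cok\phi(-1)$ is not the structure sheaf $\sco_Z$, so the auxiliary ``line $L=V(\lambda_0,\lambda_1)$'' and the condition ``$L$ avoids the six edges of the tetrahedron'' have no analogue. Instead, using the trivialization of $\sco_\piii(1)$ at $x$ by $X_3$ and the explicit shifted Koszul form of $\phi$, one checks directly on the resolution that the $\chi(E)=8$ global sections of $E$ surject onto the $4$-dimensional fiber $E(x)$ for every admissible $H$; crucially, this verification is unconditional in $H$ (requiring only the surjectivity of $\rho$), which is precisely the content of the lemma.
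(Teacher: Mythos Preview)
Your proposal has a genuine gap at the crucial step: the ``Koszul-style resolution of $\sck=\Ker\widetilde\psi$ built from $H$ and $X_3$'' is never actually constructed, and without it the splicing argument is empty. The sheaf $\sck$ is the kernel of an epimorphism $\sco_\piii(-1)^{\op 2}\to\Cok\phi(-1)$ onto a length-$4$ sheaf supported at the fat point; nothing you have written identifies this sheaf or resolves it. Even if you had resolutions of $\scg(-1)$ and of $\sck$, you cannot produce a free resolution of $E(-2)$ by ``splicing through'' $0\to\scg(-1)\to E(-2)\to\sck\to 0$ without controlling the extension class. And having the \emph{terms} of a resolution globally generated after twisting by $2$ does not by itself make $E$ globally generated.

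Two further problems. First, the claim for $y\neq x$ is not justified: the vanishing of $\Cok\phi$ at $y$ only says $\phi(y)$ is surjective; it does not imply that $\tH^0(E)\to E(y)$ is surjective. Second, ``the $\chi(E)=8$ global sections'' presumes $\tH^1(E)=0$, which in Remark~\ref{R:generalc2=8} is deduced \emph{from} global generation, so invoking it here is circular.

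What the paper does, and what your argument is missing, is a concrete identification of $\sck$. The key point is that in the fat-point normalization the matrix $A$ of $\phi$ involves only $X_0,X_1,X_2$, and on $\pii$ it defines an epimorphism $\sco_\pii(-1)^{\op 4}\to\sco_\pii^{\op 2}$ whose kernel $M(-2)$ satisfies $\tH^i(M)=0$, $i=0,1$. This forces the existence of a $4\times 4$ matrix $B$ of linear forms with $\widehat\psi\, A = A\, B$. Consequently $\widehat\psi$ induces an isomorphism $\Cok\phi(-2)\to\Cok\phi(-1)$, so $\sck=\Ker\widetilde\psi$ coincides with $\text{Im}\,\phi(-2)$ and one gets $0\to\scg(-3)\to\sco_\piii(-2)^{\op 4}\to\sck\to 0$. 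Now the extension $0\to\scg(1)\to E\to\sck(2)\to 0$ exhibits $E$ between two globally generated sheaves with $\tH^1(\scg(1))=0$, and global generation follows at every point simultaneously. Your factorization $\phi=(m\oplus m)\circ(T,T')$ is correct but is not a substitute for this step.
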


\begin{proof}
According to Remark~\ref{R:2x4}, up to a linear change of coordinates in 
$\piii$, one may assume that $x = (0:0:0:1)$ and that 
$\phi : 4\sco_\piii \ra 2\sco_\piii(1)$ is defined by the matrix: 
\[
A = 
\begin{pmatrix}
X_0 & X_1 & X_2 & 0\\
0 & X_0 & X_1 & X_2
\end{pmatrix}
\, .
\]  
As we noticed in Case 4 of the proof of Prop.~\ref{P:c1=4c2=8n3} (right 
before Subcase 4.6) we may assume that the component 
$\psi : 2\sco_\piii(-1) \ra 2\sco_\piii(1)$ of $\rho$ is of the 
form$\, :$ 
\[
2\sco_\piii(-1) \overset{{\widehat \psi}}{\lra} 2\sco_\piii  
\overset{X_3}{\lra} 2\sco_\piii(1)   
\] 
where $\widehat \psi$ has the property that the composite morphism 
(denoted $\widetilde \psi$)$\, :$ 
\[
2\sco_\piii(-1) \overset{{\widehat \psi}}{\lra} 2\sco_\piii  
\lra \Cok \phi(-1) 
\]
is an epimorphism. Since the morphism$\, :$ 
\[
(2\sco_\piii)\otimes_{\sco_\piii}\sco_{\{x\}} \lra \Cok \phi(-1)
\otimes_{\sco_\piii}\sco_{\{x\}} 
\]
is an isomorphism, it follows that ${\widehat \psi}(x) := 
{\widehat \psi}\otimes_{\sco_\piii}\sco_{\{x\}}$ is an isomorphism. 
One deduces that, up to an automorphism of 
$2\sco_\piii(-1)$, one may assume that $\widehat \psi$ coincides  
with  
\[
2\sco_\piii(-1) \overset{X_3}{\lra} 2\sco_\piii  
\]
after applying $-\otimes_{\sco_\piii}\sco_{\{x\}}$. In other words, one may 
assume that $\widehat \psi$ is defined by a matrix of the form$\, :$  
\[
\begin{pmatrix}
X_3 + \lambda_{00} & \lambda_{01}\\
\lambda_{10} & X_3 + \lambda_{11}
\end{pmatrix}
\]
with $\lambda_{ij} \in k[X_0,X_1,X_2]_1$. In this case, $\rho$ is defined by 
the matrix$\, :$ 
\[
\begin{pmatrix}
X_0 & X_1 & X_2 & 0 & X_3^2 + X_3\lambda_{00} & X_3\lambda_{01}\\ 
0 & X_0 & X_1 & X_2 & X_3\lambda_{10} & X_3^2 + X_3\lambda_{11}
\end{pmatrix}
\, .
\] 
Moreover, performing some elementary transformations on the columns of this 
matrix (i.e., applying a convenient automorphism of $4\sco_\piii \oplus 
2\sco_\piii(-1)$) one may assume that $\lambda_{01} = 0 = \lambda_{10}$, 
$\lambda_{00} \in k[X_1,X_2]_1$ and $\lambda_{11} \in k[X_0,X_1]_1$. 

Now, we assert that there exists a $4\times 4$ matrix $B$ of linear forms (in 
the indeterminates $X_0,\ldots ,X_3$) such that the diagram$\, :$  
\[
\begin{CD}
4\sco_\piii(-2) @>A>> 2\sco_\piii(-1)\\
@VBVV @VV{{\widehat \psi}}V\\
4\sco_\piii(-1) @>A>> 2\sco_\piii  
\end{CD}
\]
commutes. \emph{Indeed}, let $\text{I}_n$ denote the $n\times n$ unit matrix. 
Since $(X_3\text{I}_2)A = A(X_3\text{I}_4)$, it suffices to find a  
$4\times 4$ matrix $B^\prim$ of linear forms in the indeterminates 
$X_0,X_1,X_2$ such that$\, :$ 
\[
AB^\prim = (\lambda_{ij})A \, .
\]
$A$ defines on $\pii$ an epimorphism $4\sco_\pii(-1) \ra 
2\sco_\pii$ whose kernel is of the form $M(-2)$ where $M$ is a rank 2 
vector bundle with $c_1(M) = 0$. In particular, $M^\ast \simeq M$. One 
derives an exact sequence$\, :$  
\[
0 \lra 2\sco_\pii(-2) \overset{A^{\text{t}}}{\lra} 4\sco_\pii(-1)  
\lra M \lra 0
\] 
from which one deduces that $\tH^i(M) = 0$, $i = 0,\, 1$. Consequently$\, :$  
\[
\tH^0(4\sco_\pii(1)) \overset{A}{\lra} \tH^0(2\sco_\pii(2))
\]
is an isomorphism. It follows that there exists a morphism $\beta^\prim: 
4\sco_\pii(-2) \ra 4\sco_\pii(-1)$ making commutative the diagram$\, :$  
\[
\begin{CD}
4\sco_\pii(-2) @>A>> 2\sco_\pii(-1)\\
@V{\beta^\prim}VV @VV{(\lambda_{ij})}V\\
4\sco_\pii(-1) @>A>> 2\sco_\pii  
\end{CD}
\, .
\]
One can take $B^\prim = \tH^0(\beta^\prim(2))$. 

Finally, since ${\widehat \psi}A = AB$, ${\widehat \psi} : 2\sco_\piii(-1) \ra 
2\sco_\piii$ induces an epimorphism $\Cok \phi(-2) \ra \Cok \phi(-1)$. 
Since both sheaves have length 4, this epimorphism must be an isomorphism. 
Consequently, the kernel $\sck$ of the composite morphism $\widetilde \psi$
coincides with the kernel of $2\sco_\piii(-1) \ra \Cok \phi(-2)$ hence 
with the image of $\phi(-2) : 4\sco_\piii(-2) \ra 2\sco_\piii(-1)$. 
In particular, $\sck(2)$ is globally generated. 
The exact sequence \eqref{E:g(-1)e(-2)o(-1)} (from Case 4 of the proof of 
Prop.~\ref{P:c1=4c2=8n3}) induces an exact sequence$\, :$ 
\[
0 \lra 2\sco_\piii(-3) \xra{\phi^\ast(-2)} 4\sco_\piii(-2) 
\lra E(-2) \lra \sck \lra 0  
\]
hence $E$ is globally generated. 
\end{proof}

\end{document}